\documentclass[11pt]{amsart}

\usepackage{amssymb}
\usepackage{enumitem}
\usepackage{mathrsfs}
\usepackage{hyperref}
\usepackage{graphicx}



\renewcommand{\setminus}{\smallsetminus}
\renewcommand{\subset}{\subseteq}

\DeclareMathOperator{\R}{R}
\DeclareMathOperator{\grad}{grad}
\DeclareMathOperator{\Hess}{Hess}
\DeclareMathOperator{\Div}{div}
\DeclareMathOperator{\tr}{tr}

\DeclareMathOperator{\vol}{vol}

\newcommand{\Defn}[1]{{\boldmath\it\bfseries #1}}

\newtheorem{thm}{Theorem}[section]
\newtheorem{prop}[thm]{Proposition}
\newtheorem{cor}[thm]{Corollary}
\newtheorem{lemma}[thm]{Lemma}

\renewcommand{\epsilon}{\varepsilon}

\numberwithin{equation}{section} 

\title[Asymptotic gluing of shear-free data]{Asymptotic gluing of shear-free hyperboloidal initial data sets}
\author[Allen {\it et al.}]{Paul T.~Allen,\\ James Isenberg, \\John M.~Lee,\\ Iva Stavrov Allen}

\thanks{This work is partially supported by National Science Foundation grants DMS 1263431 and PHY 1707427 at the University of Oregon.}

\email{ptallen@lclark.edu}

\email{johnmlee@uw.edu}

\email{isenberg@uoregon.edu}

\email{istavrov@lclark.edu}

\address{Department of Mathematical Sciences, Lewis \& Clark College}

\address{Department of Mathematics, University of Washington}

\address{Department of Mathematics, University of Oregon}

\address{Department of Mathematical Sciences, Lewis \& Clark College}

\subjclass[2010]{Primary 35Q75; Secondary 53C80, 83C05}

\begin{document}

\maketitle

\begin{abstract}
We present a procedure for asymptotic gluing of hyperboloidal initial data sets for the Einstein field equations that preserves the shear-free condition.
Our construction is modeled on the gluing construction in \cite{ILS-Gluing}, but with significant modifications that incorporate the shear-free condition.
We rely on the special H\"older spaces, and the corresponding theory for elliptic operators on weakly asymptotically hyperbolic manifolds, introduced by the authors in \cite{WAH} and applied to the Einstein constraint equations in \cite{AHEM}.

\end{abstract}


\section{Introduction}
\label{introduction}

One of the most useful ways to mathematically define asymptotically flat spacetimes -- solutions to the Einstein field equations that model isolated gravitational systems -- is to require that they admit a conformal compactification; see \cite{Penrose-AsymptoticBehavior}, \cite{HawkingEllis}.
Such spacetimes can be foliated by spacelike leaves intersecting the conformal boundary along future null infinity.
The intrinsic and extrinsic geometry induced on such a leaf comprises a solution to the Einstein constraint equations, commonly referred to as {\it hyperboloidal} in the literature.

In \cite{AHEM} the authors have constructed constant-mean-curvature hyperboloidal solutions to the Einstein constraint equations satisfying a boundary condition, known as the {\it shear free condition}, along the conformal boundary.
Being shear-free is a necessary condition on the initial data set for
any spacetime development of that data to admit a regular conformal structure at future null infinity; see \cite{AnderssonChrusciel-Obstructions}.

In this paper we present an asymptotic gluing procedure for vacuum constant-mean-curvature shear-free hyperboloidal initial data as constructed in \cite{AHEM}. 
Previous gluing constructions for the solutions to the Einstein constraint equations with asymptotically hyperbolic geometry \cite{ChruscielDelay-Exotic},\cite{ILS-Gluing} have not accounted for the shear-free condition.

Topologically the gluing construction produces a connected sum of the conformal boundary.
While our construction is independent of the topological type of the boundary, we note the important special case that the original data consists of two connected components, each having a spherical conformal boundary; see Figure \ref{fig:bdy-glue}.
In this case, our construction yields ``two-body'' initial data sets having a spherical conformal boundary.

\begin{figure}[h]
   \centering
   \includegraphics[width=0.9\textwidth]{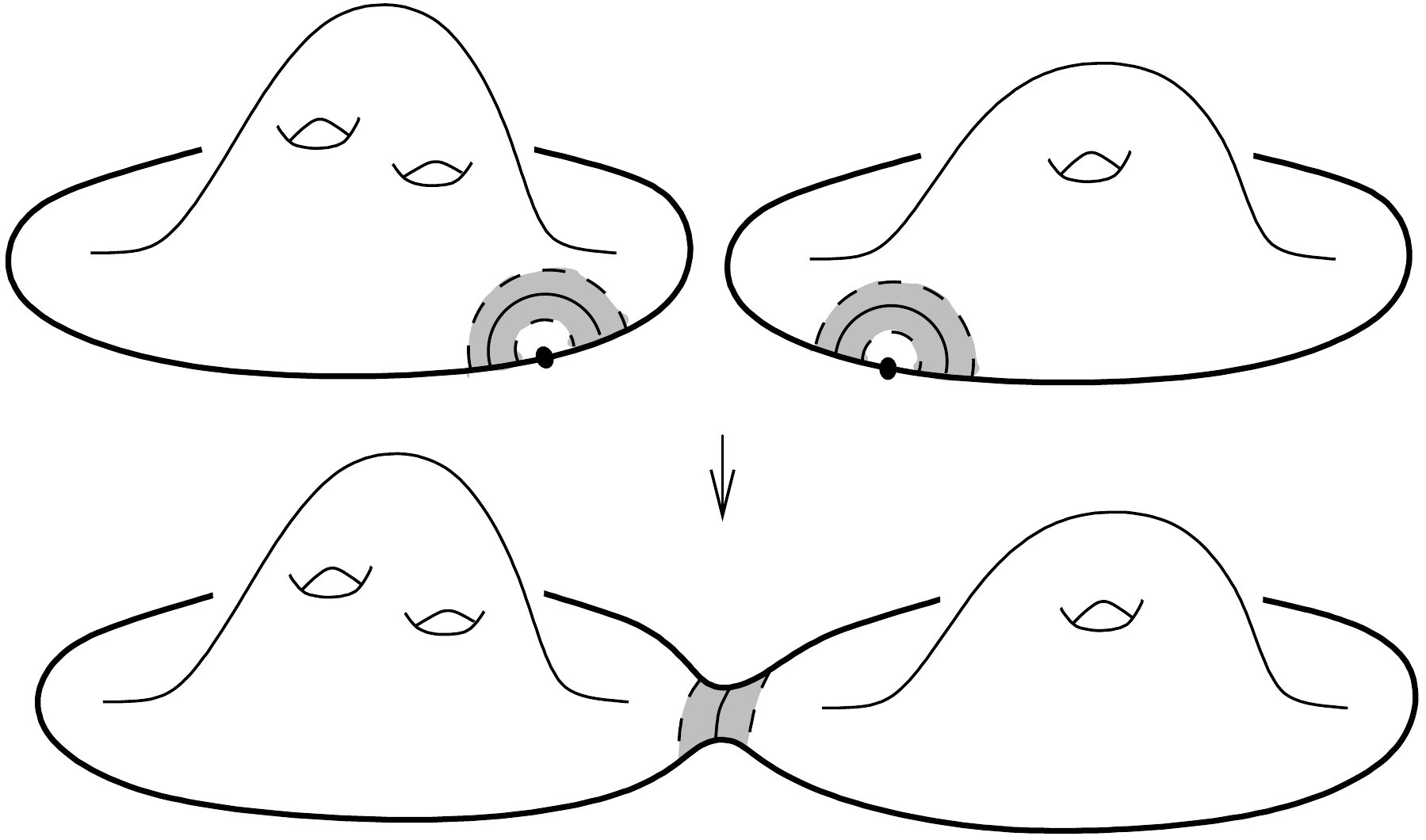} 
   \caption{This diagram, adapted from \cite{ILS-Gluing}, shows the boundary gluing construction in the case of two connected components.}
   \label{fig:bdy-glue}
\end{figure}

Our construction produces a one-parameter family of shear-free hyperboloidal initial data sets.
We are able to show, in the limit as the parameter tends to zero, that the geometry converges to that of the original data set.
Furthermore, the geometry in the center of the gluing region converges to a portion of the hyperboloid inside the Minkowski spacetime.

\subsection{Constant-mean-curvature hyperboloidal data}
We now give a definition of the hyperboloidal data to which our result applies.
This type of initial data has been discussed extensively in \cite{AHEM}, which in turn relies heavily on \cite{WAH}. 

First, we briefly review the definitions of the function spaces we work with; see \cite{WAH}, \cite{Lee-FredholmOperators}, and \S\ref{sec:function-spaces} below for more details.
We assume that $M$ is the interior of a compact 3-dimensional manifold $\overline M$ having boundary $\partial M$ and let $\rho$ be a smooth defining function on $\overline M$ (meaning $\rho$ vanishes to first order on $\partial M$ and is positive in $M$).
Let $C^{k,\alpha}(M)$ be the intrinsic H\"older space of tensor fields on $M$ and for $\delta\in \mathbb R$ let $C^{k,\alpha}_\delta(M) = \rho^\delta C^{k,\alpha}(M)$.
A covariant $2$-tensor field $u$ is defined to be of class $\mathscr C^{k,\alpha;m}(M)$ if
\begin{equation*}
\mathscr L_{X_1}\dots \mathscr L_{X_j}u \in C^{k-j,\alpha}_2(M)
\end{equation*}
for all $0\leq j \leq m$ and for all smooth vector fields $X_1,\dots, X_j$ on $\overline M$; here $\mathscr L$ denotes the Lie derivative.
For example, if $u\in C^{m,\alpha}(\overline M)$ then $u\in \mathscr C^{m,\alpha;m}(M)$.

A complete metric $g$ and a symmetric covariant 2-tensor $K$ (representing the second fundamental form) form a \Defn{constant-mean-curvature shear-free (CMCSF) hyperboloidal data set of class $\mathscr C^{k,\alpha;2}$} on $M$ if
\begin{enumerate}
\item
\label{define-wah} $g = \rho^{-2}\overline g$ for some $\overline g  \in \mathscr C^{k,\alpha;2}(M)$ that extends to a metric on $\overline M$ and is such that $|d\rho|_{\overline g} = 1+O(\rho)$;

\item $K = \Sigma -g$ for some traceless tensor $\Sigma = \rho^{-1}\overline\Sigma$ with $\overline\Sigma  \in \mathscr C^{k-1,\alpha;1}(M)$;

\item the shear-free condition holds, meaning that
\begin{equation}
\label{define-SF}
\overline\Sigma\Big|_{\rho=0} = \left[\Hess_{\overline g}\rho - \frac13(\Delta_{\overline g}\rho)\overline g \right]_{\rho=0};
\end{equation}
and

\item the vacuum constraint equations hold, meaning that
\begin{equation}
\label{CMC-constraints}
\R[g] + 6 - |\Sigma|^2_g  =0
\quad\text{ and }\quad
\Div_g\Sigma =0,
\end{equation}
where $\R[g]$ is the scalar curvature of $g$.
\end{enumerate}
A metric $g$ satisfying condition \eqref{define-wah} is said to be \Defn{weakly asymptotically hyperbolic of class $\mathscr C^{k,\alpha;2}$}.

An important example of CMCSF hyperboloidal data is the data induced on the unit hyperboloid in the Minkowski spacetime, given in the usual Cartesian coordinates by $\{ (x^0)^2 = (x^1)^2 + (x^2)^2 + (x^3)^2 + 1, x^0>0\}$.
The induced metric for this example is the hyperbolic metric $\breve g$, while the second fundamental form is given by $K=-\breve g$; thus $\Sigma =0$ for this data.

\subsection{Statement of the main result}
The asymptotic gluing procedure of \cite{ILS-Gluing} produces data which generically fails to be shear-free (cf.~Proposition 3.2 of \cite{AnderssonChrusciel-Obstructions}). 
Here we present a modification of the gluing method of \cite{ILS-Gluing} within the category of shear-free initial data. 
We now give a precise statement of our result.

\begin{thm}
\label{main}
Suppose that a metric $g$ and a tensor field $\Sigma$ give rise to a CMCSF hyperboloidal data set of class $\mathscr C^{k,\alpha;2}$ on $M$ for some $k\geq 3$ and $\alpha \in (0,1)$.
Fixing $p_1, p_2\in \partial M$, we define for each sufficiently small $\epsilon>0$ a manifold $ M_\epsilon$, which is the interior of a compact manifold $\overline M_\epsilon$ whose boundary is obtained from a  connected sum joining neighborhoods of $p_1, p_2$.

For each sufficiently small $\epsilon>0$ there exist a metric $g_\epsilon$ and a tensor field $\Sigma_\epsilon$ that give rise to a CMCSF hyperboloidal data set of class $\mathscr C^{k,\alpha;2}$ on $M_\epsilon$.
As $\epsilon\to 0$, the tensor fields $(g_\epsilon, \Sigma_\epsilon)$ converge to $(g,\Sigma)$ in the following sense:

\begin{description}

\item[Convergence in the exterior region]
For each sufficiently small $c>0$ we define an open set $E_c\subset M$, whose closure in $\overline M$ is disjoint from $p_1,p_2$.
The sets $E_c$ exhaust $M$ in the sense that
$
\bigcup_{c>0}  E_c = M.
$

For each $\epsilon\ll c$ there exists an embedding $\iota_\epsilon\colon  E_c\to M_\epsilon$.
Our convergence result in the exterior region is that for fixed $c$ we have
\begin{equation}
\label{ext-convergence}
(\rho^2\iota_\epsilon^* g_\epsilon, \rho \iota_\epsilon^*\Sigma_\epsilon) \to (\rho^2 g,\rho\Sigma )
\end{equation}
in the $\mathscr C^{k,\alpha;2}\times \mathscr C^{k-1,\alpha;1}$ topology on $E_c$.

\item[Convergence in the neck]
For each sufficiently small $c>0$ we define a subset $A_c$ of hyperbolic space $\mathbb H$ that in the half-space model corresponds to a semi-annular region; see \eqref{sets-in-hyperbolic-space}.
The sets $A_c$ exhaust hyperbolic space in the sense that
$
\bigcup_{c>0}  A_c = \mathbb H.
$

For each $\epsilon\ll c$ there exists an embedding $\Psi_\epsilon\colon A_c \to M_\epsilon$ such that 
$\Psi_\epsilon(A_c)\cap \iota_\epsilon(E_c) = \emptyset$.
Our convergence result in the neck is that the data converges to the unit hyperboloid of Minkowski space, in the sense that  for fixed $c$ we have
\begin{equation}
\label{neck-convergence}
(\breve\rho^2\Psi_\epsilon^* g_\epsilon, \breve\rho \Psi_\epsilon^*\Sigma_\epsilon)
 \to 
 (\breve\rho^2 \breve g, 0)
\end{equation}
in the $\mathscr C^{k,\alpha;2}\times \mathscr C^{k-1,\alpha;1}$ topology on $ A_c$.
Here $\breve\rho$ is a fixed defining function for hyperbolic space; see \eqref{breve-rho}.

\end{description}
\end{thm}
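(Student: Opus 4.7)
The plan is to follow the general blueprint of \cite{ILS-Gluing}: first construct an approximate data set $(g_\epsilon^{\sharp},\Sigma_\epsilon^{\sharp})$ on $M_\epsilon$ by a cut-and-paste argument (the \emph{pregluing}), then correct it to an exact CMCSF solution of \eqref{CMC-constraints} by solving a conformally covariant system. The novelty here is that every step must be carried out within the shear-free category. For the pregluing, near each $p_i\in\partial M$ I would fix a chart of $\overline M$ in which $\partial M$ is a coordinate hyperplane and $\rho$ is the transverse coordinate. After the rescaling $x\mapsto x/\epsilon$, both $(g,\Sigma)$ and the hyperboloid model $(\breve g, 0)$ have matching leading asymptotics at $p_i$; the shear-free condition \eqref{define-SF} is precisely what forces the subleading boundary trace of $\overline\Sigma$ to agree with its hyperboloidal counterpart (which is zero). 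One may therefore interpolate between the two via a smooth cutoff supported in an annular transition region of scale $\sim\epsilon c$, producing $(g_\epsilon^{\sharp},\Sigma_\epsilon^{\sharp})$ on the connected sum $\overline M_\epsilon$ together with a defining function $\rho_\epsilon$ built by the analogous interpolation of $\rho$ and $\breve\rho$. Away from the transition region the pregluing is exactly CMCSF; inside, Taylor expansion gives constraint and shear-free defects of size $O(\epsilon^{s})$ for some $s>0$ in the relevant $\mathscr C^{k,\alpha;2}$-norms.

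To correct $(g_\epsilon^{\sharp},\Sigma_\epsilon^{\sharp})$ to an exact solution I would employ the conformal-method ansatz used in \cite{AHEM},
\begin{equation*}
g_\epsilon = \phi_\epsilon^{4}\, g_\epsilon^{\sharp},
\qquad
\Sigma_\epsilon = \phi_\epsilon^{-2}\bigl(\Sigma_\epsilon^{\sharp} + \mathcal L_{g_\epsilon^{\sharp}} W_\epsilon\bigr),
\end{equation*}
with $\mathcal L$ the conformal Killing operator. The crucial feature, established in \cite{AHEM}, is that this ansatz preserves the shear-free condition provided $(\phi_\epsilon, W_\epsilon)$ have the appropriate asymptotic behavior at $\partial M_\epsilon$. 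Substitution into \eqref{CMC-constraints} converts the problem into a coupled semilinear elliptic system for $(\phi_\epsilon - 1, W_\epsilon)$ whose right-hand side is the pregluing defect. I would solve this system by a Banach fixed-point argument in the $\mathscr C^{k,\alpha;2}$ topology, appealing to the Fredholm and isomorphism theory of \cite{WAH,AHEM} for the linearization on weakly asymptotically hyperbolic manifolds of class $\mathscr C^{k,\alpha;2}$.

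The hard part will be showing that the linearized operator on $M_\epsilon$ is invertible with operator norm bounded \emph{uniformly in $\epsilon$} as the geometry develops a long hyperbolic neck: the isomorphism results of \cite{WAH,AHEM} apply to a fixed manifold, and a direct quotation only yields bounds that degenerate. Following \cite{ILS-Gluing}, I would handle this by a blow-up/contradiction argument: failure of uniform invertibility would produce, after rescaling, a nontrivial element in the kernel of the linearized operator on either $M$ (the exterior limit) or $\mathbb H$ (the neck limit), contradicting injectivity in either case. Combined with the $O(\epsilon^{s})$ bound on the pregluing defect, this uniform estimate produces a solution whose correction tends to zero as $\epsilon\to 0$; the convergence statements \eqref{ext-convergence} and \eqref{neck-convergence} then follow by restricting $(\phi_\epsilon, W_\epsilon)$ to $E_c$ and $A_c$ respectively and invoking the decay of the correction in those regions.
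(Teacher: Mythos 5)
Your high-level blueprint (preglue, then correct via the conformal method with a uniform-in-$\epsilon$ blowup argument for invertibility) matches the paper, and the Lichnerowicz/vector-Laplacian fixed-point structure and the contradiction scheme for the neck limit are on target. However, there is a genuine gap in how you handle the second fundamental form, and it is precisely the place where the shear-free condition does its work.

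You assert that the shear-free condition ``forces the subleading boundary trace of $\overline\Sigma$ to agree with its hyperboloidal counterpart (which is zero),'' and you propose to build $\Sigma_\epsilon^{\sharp}$ by interpolating $\Sigma$ directly against the model value $0$ with a cutoff. This is incorrect: the shear-free condition \eqref{define-SF} does \emph{not} say $\overline\Sigma\big|_{\rho=0}=0$; it says $\overline\Sigma\big|_{\rho=0}$ equals the traceless Hessian $[\Hess_{\overline g}\rho - \tfrac13(\Delta_{\overline g}\rho)\overline g]_{\rho=0}$, which is generically nonzero and depends on the metric. Rescaling by $\epsilon$ makes this boundary trace small (of order $\epsilon$), but the shear-free condition for the spliced seed data must hold \emph{exactly} relative to the spliced metric, not merely to leading order; the conformal method in \cite{AHEM} preserves the shear-free property of the seed data, it does not repair an $O(\epsilon)$ violation of it. A naive cutoff interpolation of $\Sigma$ against $0$ produces seed data whose boundary trace neither matches $\mathcal H_{\overline\lambda_\epsilon}(\rho_\epsilon)$ nor is zero, so the output of the conformal method would not be shear-free.

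The paper's resolution is the $\mathcal H$-tensor decomposition $\Sigma = \rho^{-1}\mathcal H_{\overline g}(\rho) + \nu$, where $\nu$ (not $\Sigma$) vanishes at the boundary. One splices only $\nu$, and replaces $\rho^{-1}\mathcal H_{\overline g}(\rho)$ by the term $\rho_\epsilon^{-1}\mathcal H_{\overline\lambda_\epsilon}(\rho_\epsilon)$ built \emph{from the spliced metric}, so the boundary trace of the seed tensor is automatically the correct Hessian of $\overline\lambda_\epsilon$. That term, however, is not small in the middle of the neck, so the paper adds a second correction $\nu_\epsilon^{\text{neck}}$ (essentially $-(yF)^{-1}\mathcal H_{F^2 g_{\text{E}}}(yF)$, traceless-projected) to make the pregluing a good approximate solution there; this is what makes Proposition \ref{prop:div-mu} and Lemma \ref{lemma:one-outside} come out to $O(\epsilon)$. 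Your proposal is missing both pieces. A lesser point: in the CMC setting the momentum constraint for $W_\epsilon$ decouples from the Lichnerowicz equation, so the paper solves them sequentially (linear equation for $W_\epsilon$, then a fixed-point for $\phi_\epsilon$) rather than as a coupled system; treating them as coupled is unnecessary and muddies the uniform estimates.
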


We emphasize that the above topology is the ``right space'' for convergence with regard to the shear free condition in view of the results of \cite{AHEM} that show that the shear-free condition is continuous in this topology, and the results of \cite{AllenStavrov-Density}, which shows that shear-free data sets are dense with respect to the weaker $C^{k,\alpha}$ topology.

We note that the class of initial data sets considered here includes those with polyhomogeneous regularity along the conformal boundary; see \cite{AnderssonChrusciel-Dissertationes},\cite{AHEM}.
The observant reader will note that each step in our construction preserves polyhomogeneity, and thus the application of Theorem \ref{main} to initial data that is both polyhomogeneous and shear-free yields polyhomogeneous data on $M_\epsilon$.
We refer the reader to \cite{AHEM}, and the references therein, for additional details concerning polyhomogeneous data.

\subsection{Overview of the construction}
\label{subsec:overview}
We begin our construction in the same manner as in \cite{ILS-Gluing}.
First, given $(g,\Sigma)$ on $M$, and given the two gluing points $p_1,p_2\in \partial M$, we use inversion with respect to half-spheres to construct a manifold $M_\epsilon$, along with a defining function $\rho_\epsilon$. We then use cutoff functions to construct a spliced metric $\lambda_\epsilon$ and spliced tensor field $\mu_\epsilon$ on $M_\epsilon$.
Second, we apply the conformal method of \cite{AHEM} to $(\lambda_\epsilon, \mu_\epsilon)$ in order to obtain $(g_\epsilon, \Sigma_\epsilon)$ satisfying the constraint equations \eqref{CMC-constraints}.

The spliced metric $\lambda_\epsilon$ is obtained from $g$ using a cutoff function.
To construct $\mu_\epsilon$ we follow the approach of \cite{AHEM}, and express the shear-free condition \eqref{define-SF} using a tensor $\mathcal H_{\overline g}(\rho)$ that, for the metrics appearing here, agrees with the traceless Hessian of $\rho$ along $\partial M$.
The definition and properties of $\mathcal H_{\overline g}(\rho)$ are detailed in \S\ref{sec:H-tensor}.
In order to splice the second fundamental forms, we write $\Sigma = \rho^{-1}\mathcal H_{\overline g}(\rho) +  \nu$ and then use a cutoff function to construct a tensor $\nu_\epsilon^\text{ext}$ that agrees with $\nu$ in the exterior region and vanishes in the neck.

We require that the metric $\lambda_\epsilon$, together with the tensor $\mu_\epsilon$, form a good approximate solution to the constraint equations.
In the middle of the neck we expect the solution to be very close to data corresponding to a hyperboloid in Minkowski space; for such data, $g = \breve g$ and $\Sigma =0$.
However, while $\nu_\epsilon^\text{ext}=0$ in the neck, the tensor $\rho_\epsilon^{-1}\mathcal H_{\overline\lambda_\epsilon}(\rho_\epsilon)$ is not small there.
Thus we must correct our approximate data by constructing a tensor $\nu_\epsilon^\text{neck}$ that counteracts the large terms in $\rho_\epsilon^{-1} \mathcal H_{\overline\lambda_\epsilon}(\rho_\epsilon)$.
The result is a family of  {\it spliced data sets}, each consisting of the metric $\lambda_\epsilon$ together with the tensor 
$$\mu_\epsilon = \rho_\epsilon^{-1}\mathcal H_{\overline\lambda_\epsilon}(\rho_\epsilon) 
+ \nu_\epsilon^\text{neck}+ \nu_\epsilon^\text{ext},$$
which approximately solve the constraint equations \eqref{CMC-constraints}.

In order to obtain an exact solution to the constraint equations from each  spliced data set $(\lambda_\epsilon, \mu_\epsilon)$, we make use of the conformal method; see \cite{AHEM} for a detailed description of the conformal method in this setting.
The first step of this method is to prove the existence of a vector field $W_\epsilon$ such that 
\begin{equation}
\label{vl-eqn}
L_{\lambda_\epsilon} W_\epsilon 
= (\Div_{\lambda_\epsilon}\mu_\epsilon)^\sharp.
\end{equation}
Here $L_{\lambda_\epsilon} = \mathcal D_{\lambda_\epsilon}^* \circ \mathcal D_{\lambda_\epsilon}$ is the \Defn{vector Laplace operator}, defined in terms of the \Defn{conformal Killing operator} $\mathcal D_{\lambda_\epsilon}$, which acts on vector fields by
\begin{equation}
\label{define-conformal-killing-operator}
\mathcal D_{\lambda_\epsilon}\colon W_\epsilon\mapsto \frac12\mathscr L_{W_\epsilon}{\lambda_\epsilon} - \frac13(\Div_{\lambda_\epsilon} W_\epsilon){\lambda_\epsilon}.
\end{equation}
Note that the adjoint $\mathcal D_{\lambda_\epsilon}^*$ acts on symmetric traceless covariant 2-tensors by 
\begin{equation*}
\mathcal D_{\lambda_\epsilon}^*\colon \mu_\epsilon \mapsto -(\Div_{\lambda_\epsilon} \mu_\epsilon)^\sharp
\end{equation*}
and thus $L_{\lambda_\epsilon} = -\Div_{\lambda_\epsilon}\circ \mathcal D_{\lambda_\epsilon}(\cdot) ^\sharp$.
That $W_\epsilon$ satisfies \eqref{vl-eqn} ensures that the tensor
\begin{equation}
\label{sigma-intro}
\begin{aligned}
\sigma_\epsilon 
&= \rho_\epsilon^{-1}\mathcal H_{\overline\lambda_\epsilon}(\rho_\epsilon) + \nu_\epsilon^\text{neck} + \nu_\epsilon^\text{ext} + \mathcal D_{\lambda_\epsilon} W_\epsilon
\\
&= \mu_\epsilon+ \mathcal D_{\lambda_\epsilon} W_\epsilon
\end{aligned}
\end{equation}
is divergence-free with respect to $\lambda_\epsilon$.
Furthermore, we solve for $W_\epsilon$ in a weighted function space that implies that the tensor $\overline\sigma_\epsilon = \rho_\epsilon\sigma_\epsilon$ satisfies 
\begin{equation*}
\overline\sigma_\epsilon\big|_{\rho_\epsilon=0} 
= \mathcal H_{\overline\lambda_\epsilon}(\rho_\epsilon)\big|_{\rho_\epsilon=0}.
\end{equation*}
This ensures that the resulting data set satisfies the shear-free condition.

Subsequently, we show the existence of a positive function $\phi_\epsilon$ satisfying the Lichnerowicz equation
\begin{equation}
\label{lichnero}
\Delta_{\lambda_\epsilon}\phi_\epsilon -\frac18\R[\lambda_\epsilon]\phi_\epsilon + \frac18|\sigma_\epsilon|^2_{\lambda_\epsilon}\phi_\epsilon^{-7} - \frac34 \phi_\epsilon^5=0
\end{equation}
and such that $\phi_\epsilon \to 1$ as $\rho_\epsilon \to 0$.
Direct computation shows that the metric $g_\epsilon = \phi_\epsilon^4\lambda_\epsilon$ and tensor $\Sigma_\epsilon = \phi_\epsilon^{-2}\sigma_\epsilon$ satisfy the constraint equations \eqref{CMC-constraints}, while a more delicate argument shows that in fact $g_\epsilon$ and $\Sigma_\epsilon$ have the necessary regularity to give rise to a CMCSF hyperboloidal data set as defined above.

In order to control the properties of $g_\epsilon$ and $\Sigma_\epsilon$, and thus establish the main theorem, the above process must be carried out in such a way that we obtain uniform control (in $\epsilon$) for each step of the process.
Quantifying this uniform control is a somewhat delicate matter, and we make use of specially weighted function spaces in order to accomplish the task.
Among other things, this requires uniform estimates on the mapping properties of various elliptic operators arising from $\lambda_\epsilon$.

Our work is organized as follows.
In \S\ref{sec:function-spaces} we define the regularity classes used, and recall from \cite{WAH,AHEM,Lee-FredholmOperators} their basic properties.
Subsequently in \S\ref{sec:H-tensor} we recall from \cite{WAH} the tensor $\mathcal H$, which is used in \cite{AHEM} to characterize the shear-free condition in a manner compatible with the conformal method.
The proof of Theorem \ref{main} begins in \S\ref{sec:splicing} with the construction of  the spliced manifolds $M_\epsilon$.
The spliced metrics $\lambda_\epsilon$ are defined in \S\ref{metric:details}, where their properties are established.
In \S\ref{sec:2ff} we construct the spliced tensors $\mu_\epsilon$ that give rise to the tensors $\sigma_\epsilon$, for which we obtain a number of crucial estimates.
We analyze the Lichnerowicz equation in \S\ref{sec:lich} before assembling the final bits of the proof in \S\ref{sec:proof}.
The uniform estimates for mapping properties of various elliptic operators arising from $\lambda_\epsilon$ involve a framework more general than our construction requires, and are placed in the appendix.

\section{Function spaces}
\label{sec:function-spaces}

Since the gluing construction uses the fact that the asymptotic geometry of $(M,g)$ is locally close to that of hyperbolic space, we first fix some notation involving hyperbolic space.
Using this, we briefly recall from \cite{WAH} the construction of various function spaces on $M$.

\subsection{Hyperbolic space}
\label{sec:hyperbolic-space}
Let $(\mathbb H, \breve g)$ denote the upper half space model of 3-dimensional hyperbolic space; in coordinates $X = (x,y)\in \mathbb R^2\times (0,\infty)$ we have 
\begin{equation*}
\breve g = \frac{(dx^1)^2 +  (dx^{2})^2 + dy^2}{y^2}.
\end{equation*}

For $r>0$ we define the following subsets of $\mathbb H$:
\begin{equation}
\label{sets-in-hyperbolic-space}
\begin{gathered}
\breve B_r = \{(x,y) : d_{\breve g}((x,y), (0,1))<r\},
\\
Y_r = \{(x,y) : |x|<r, y<r\},
\\
A_r = \{(x,y) : r^2 < |x|^2 + y^2 < 1/r^2\};
\end{gathered}
\end{equation}
here $|x|^2 = (x^1)^2 +  (x^{2})^2$.
We note for later use that, since $e^2<8$ and since $\breve B_r$ is determined by the hyperbolic metric $\breve g$, we have
\begin{equation}
\label{factor-of-8}
\breve B_2 \subset A_{1/8}.
\end{equation}

We make use of the fact that the inversion map
\begin{equation}
\label{define-inversion}
\mathcal I\colon (x,y) \mapsto \left( \frac{x}{|x|^2 + y^2}, \frac{y}{|x|^2 + y^2}\right)
\end{equation}
and the scaling maps
\begin{equation}
\label{define-scaling}
\mathcal S_\epsilon \colon (x,y)\mapsto (\epsilon x, \epsilon y),\qquad \epsilon >0,
\end{equation}
are isometries of $\mathbb H$. Note that $\mathcal I$ restricts to a map $A_r \to A_r$.

We identify $\overline{\mathbb H}$ with the half space $\{y\geq 0\}\subset \mathbb R^3$ and denote
\begin{equation}
\label{define-B-bar}
\begin{gathered}
 B_r = \{(x,y): |x|^2 + y^2 < r^2\}\subset \overline{\mathbb H},
\\
\overline B_r = \{(x,y): |x|^2 + y^2 \leq r^2\}\subset \overline{\mathbb H}.
\end{gathered}
\end{equation}

We make use of the defining function
\begin{equation}
\label{breve-rho}
\breve\rho = \frac{2y}{|x|^2 + (1+y)^2},
\end{equation}
which is the pullback to the half-space model of the standard defining function $\frac12(1-|u|^2)$ for the ball model.
On any fixed $\overline B_{r}$ we have
\begin{equation*}
\frac{1}{C_r}y \leq \breve\rho \leq C_r y,
\end{equation*}
for some constant $C_r$ depending only on $r$.

It is convenient to construct an inversion-invariant defining function on the annular region $A_c$.
To accomplish this, we first recall the following.
\begin{lemma}[Lemma 5 in \cite{ILS-Gluing}]
\label{cutoff1}
There exists a nonnegative and nondecreasing smooth cutoff
function $\chi\colon \mathbb{R}\to\mathbb{R}$ that is identically $1$ on $[2,\infty)$,
is supported in $(\frac{1}{2},\infty)$, and
satisfies the condition
\begin{equation}\label{phi-eqn}
\chi(r)+\chi \left(1/r\right)\equiv 1.
\end{equation}
\end{lemma}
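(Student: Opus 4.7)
The plan is to reduce the problem to constructing a symmetric smooth cutoff on the real line via the change of variable $r\mapsto \log r$. Under this substitution the involution $r\mapsto 1/r$ becomes the reflection $x\mapsto -x$, and the functional equation $\chi(r)+\chi(1/r)\equiv 1$ becomes the much simpler symmetry $\tau(x)+\tau(-x)\equiv 1$. Such a $\tau$ is easy to construct by averaging any standard cutoff against its reflection.

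Concretely, I would first fix a smooth nondecreasing function $\tilde\tau\colon\mathbb R\to[0,1]$ that vanishes on $(-\infty,-\log 2]$ and equals $1$ on $[\log 2,\infty)$; this is a standard mollification. Then define
\begin{equation*}
\tau(x)=\tfrac12\bigl(\tilde\tau(x)+1-\tilde\tau(-x)\bigr).
\end{equation*}
A direct check shows $\tau(x)+\tau(-x)\equiv 1$; that $\tau'(x)=\tfrac12(\tilde\tau'(x)+\tilde\tau'(-x))\ge 0$; and that the support properties of $\tilde\tau$ force $\tau\equiv 0$ on $(-\infty,-\log 2]$ and $\tau\equiv 1$ on $[\log 2,\infty)$. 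Now set
\begin{equation*}
\chi(r)=\begin{cases}\tau(\log r),& r>0,\\ 0,& r\le 0.\end{cases}
\end{equation*}
All of the required properties fall out: $\chi\equiv 1$ on $[2,\infty)$ because $\log r\ge\log 2$ there; $\chi\equiv 0$ on $(-\infty,1/2]$ because $\log r\le-\log 2$ for $0<r\le 1/2$, giving the support condition; $\chi$ is nondecreasing on $(0,\infty)$ since $\chi'(r)=\tau'(\log r)/r\ge 0$; and the symmetry $\chi(r)+\chi(1/r)=\tau(\log r)+\tau(-\log r)\equiv 1$ is immediate.

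The only point requiring any care is smoothness at $r=0$, where the definition is piecewise and the substitution $\log r$ blows up. This is harmless in practice: because $\chi$ is identically zero on the \emph{open} set $(-\infty,1/2)$ (the two pieces of the definition agree on the overlap $(0,1/2)$ and both vanish there), $\chi$ is trivially $C^\infty$ across the origin. Thus no actual obstacle arises; the construction is essentially forced by the logarithmic change of coordinates together with a symmetrization.
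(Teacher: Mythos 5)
The paper does not prove this lemma; it cites it from \cite{ILS-Gluing}, so there is no in-text proof to compare your argument against. Your logarithmic symmetrization is a correct and natural approach, and the verification of the functional equation, monotonicity, nonnegativity, and the identity $\chi\equiv 1$ on $[2,\infty)$ all go through as you write them.

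The one point that needs tightening is the support condition. With $\tilde\tau$ vanishing on exactly $(-\infty,-\log 2]$, the resulting $\chi$ vanishes on $(-\infty,\tfrac12]$ but may be nonzero arbitrarily close to $\tfrac12$ from the right, in which case the support of $\chi$ (the closure of $\{\chi\ne 0\}$) is $[\tfrac12,\infty)$, which is \emph{not} contained in the open interval $(\tfrac12,\infty)$ asserted in the lemma. The fix is immediate: choose $\tilde\tau$ to vanish on $(-\infty,-a]$ and equal $1$ on $[a,\infty)$ for some $0<a<\log 2$. Then $\tau\equiv 0$ on $(-\infty,-a]$, $\tau\equiv 1$ on $[a,\infty)$, and hence $\chi\equiv 0$ on $(-\infty,e^{-a}]$ and $\chi\equiv 1$ on $[e^{a},\infty)$; since $e^{-a}>\tfrac12$ and $e^{a}<2$, this gives $\mathrm{supp}\,\chi\subseteq[e^{-a},\infty)\subset(\tfrac12,\infty)$ while still having $\chi\equiv 1$ on $[2,\infty)$. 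With that adjustment the construction is complete and correct.
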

We now define the function $F\colon(0,\infty)\to (0,\infty)$ by
$$
F(r)=\chi(r)+\frac{1}{r^2}\chi(1/r).
$$
The following is immediate from this definition.
\begin{prop}\label{lemma:F}
The function $F\colon(0,\infty)\to (0,\infty)$ satisfies
$$
\begin{aligned}
&F\left(1/r\right)=r^2F(r), && r\in \mathbb R,\\
&F(r)=1, &&r\ge 2,\\
&F(r)=\frac{1}{r^2}, &&r\le 1/2.
\end{aligned}
$$
\end{prop}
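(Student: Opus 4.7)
The plan is to verify each of the three properties directly from the definition $F(r) = \chi(r) + r^{-2}\chi(1/r)$, using only the support and normalization properties of $\chi$ stated in Lemma \ref{cutoff1}. As the authors point out, no clever argument is needed: each assertion reduces to evaluating $\chi$ at points where it is known to equal $0$ or $1$.

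First I would check the functional equation $F(1/r) = r^2 F(r)$. Substituting $1/r$ into the definition gives $F(1/r) = \chi(1/r) + r^2\chi(r)$, whereas multiplying the defining expression by $r^2$ yields $r^2 F(r) = r^2\chi(r) + \chi(1/r)$, and these two quantities agree. Note that this identity is purely algebraic and does not invoke the condition \eqref{phi-eqn}; the condition $\chi(r) + \chi(1/r) \equiv 1$ will instead be used implicitly to guarantee positivity of $F$, i.e.\ that $F$ indeed takes values in $(0,\infty)$.

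Next, for $r \geq 2$, I would observe that $\chi(r) = 1$ by the normalization of $\chi$ on $[2,\infty)$, while $1/r \leq 1/2$ lies outside the support $(1/2,\infty)$ of $\chi$, so that $\chi(1/r) = 0$. Hence $F(r) = 1$. Symmetrically, for $r \leq 1/2$, the point $r$ lies outside the support of $\chi$, forcing $\chi(r) = 0$, while $1/r \geq 2$ gives $\chi(1/r) = 1$, so that $F(r) = r^{-2}$. Alternatively, this second case is an immediate consequence of the first together with the functional equation already proved.

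The main obstacle is essentially nonexistent: the proposition is a direct verification, and the only point requiring any care is matching the two halves of the definition of $F$ across the overlap region $r \in (1/2, 2)$, but no such matching is needed for any of the stated assertions since each concerns a range where one of the two terms vanishes identically.
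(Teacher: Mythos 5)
Your verification is correct and matches the paper's (implicit) approach exactly; the paper simply declares the result ``immediate from this definition,'' and your three direct computations are precisely the checks that make it so. Your side remark that \eqref{phi-eqn} is what guarantees strict positivity of $F$ is also accurate and a nice observation, though not strictly needed for the three displayed identities.
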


The functions $\chi$ and $F$ give rise to functions on $\mathbb H$, which we denote by the same symbols, by taking $r$ to be given by $r^2 = |x|^2 + y^2$.
Using this, we see that $yF$ is inversion-invariant; i.e.,~
$
\mathcal I^*(yF) = yF;
$
and that on each $A_c$ we have 
\begin{equation}
\label{annular-defining-functions}
\frac{1}{C} y \leq \breve \rho \leq {C} y 
\quad\text{ and }\quad
\frac{1}{C^\prime} yF\leq \breve \rho \leq {C^\prime} yF,
\end{equation}
for constants $C$ and $C^\prime$ depending only on $c$.

\subsection{Background coordinates and M\"obius parametrizations}
\label{subsec:background-coord}
The construction of function spaces on $M$ given in \cite{WAH} (see also \cite{Lee-FredholmOperators}) 
relies on identifying coordinate neighborhoods of $M$ with neighborhoods in hyperbolic space.
Here we slightly modify that construction to be compatible with our gluing construction.

By choosing a  collar neighborhood of $\partial M \subset \overline M$ and rescaling $\rho$ by a constant if necessary, we hereafter identify a neighborhood of $\partial M$ in $\overline M$ with $\partial M\times [0,1)$, and identify $\rho$ with the coordinate on $[0,1)$.

For each $\hat p\in \partial M$ we choose smooth coordinates $\theta$ taking a neighborhood $U(\hat p)\subset \partial M$ to a ball of radius $1$ in $\mathbb R^2$.
We extend these coordinates to smooth coordinates 
\begin{equation}
\label{background-coord}
\Theta = (\theta,\rho)\colon Z(\hat p) \to  Y_1,
\end{equation}
where $Z(\hat p) = U(\hat p) \times[0,1)$ and $Y_1$ is given by \eqref{sets-in-hyperbolic-space}.
We may assume that $\Theta$ extends smoothly to $\overline{Z(\hat p)}$.
We fix a finite collection of points $\hat p$ such that the corresponding sets $\Theta^{-1}(Y_{1/8})$ cover $\partial M\times[0,1/8)$.
The finite collection of coordinates $\Theta$ we refer to as \Defn{background coordinates}.

We may assume that the finite collection of points $\hat p$ contains the points $p_i$, $i=1,2$, of the main theorem.
About these two points we may further assume that we have \Defn{preferred background coordinates} 
$\Theta_i = (\theta_i,\rho)$ centered at $p_i$ satisfying the following conditions:
\begin{itemize}
\item $\Theta_i(p_i) = (0,0)$,
\item the coordinates $\Theta_i$ are defined on the set $Z(p_i)$ with
\begin{equation*}
\Theta_i(Z(p_i) )= \{(\theta,\rho): (\theta_i^1)^2 + (\theta_i^2)^2\leq 1\text{ and }  \rho^2 \leq 1\},
\end{equation*}
 and
\item in coordinates $\Theta_i$ the metric $\overline g = \rho^2g$ takes the form $\overline g\big|_{p_i} = \delta_{ab}d\Theta_i^a d\Theta_i^b$.
\end{itemize}
Note that we can arrange the preferred background coordinates so that $Z(p_1)\cap Z(p_2) = \emptyset$.
We also define, for use below, the coordinate half balls
\begin{equation}
\label{preferred-half-ball}
\begin{gathered}
 U_{i,r} = \{q\in Z(p_i) \colon  (\theta_i^1(q))^2 + (\theta_i^2(q))^2 + \rho(q)^2 < r^2 \},
\\
\overline U_{i,r} = \{q\in Z(p_i) \colon  (\theta_i^1(q))^2 + (\theta_i^2(q))^2 + \rho(q)^2 \leq r^2 \}.
\end{gathered}
\end{equation}

We fix a smooth \Defn{preferred background metric} $\overline h$ on $\overline M$ that satisfies $\overline h = \delta_{ab}d\Theta_i^ad\Theta_i^b$ in each of the two preferred background coordinate charts.
Let $\overline\nabla$ denote the Levi-Civita connection of $\overline h$ on $\overline M$.

For each $p\in M$ with $\rho(p) <1/8$ we define a \Defn{M\"obius parametrization} $\Phi_p\colon \breve B_2 \to M$ as follows.
Let $\Theta$ be a background coordinate chart with $p\in \Theta^{-1}(Y_{1/8})$; denote $\Theta(p)$ by $(\theta_p, \rho_p)$. Define $\Phi_p$ by $(\Theta\circ \Phi_p)(x,y) = (\theta_p + \rho_px, \rho_py)$.
The inclusion \eqref{factor-of-8} ensures that $\Phi_p$ is well-defined.
To this collection we append an additional finite number of smooth parametrizations $\Phi\colon \breve B_2\to M\setminus \{\rho<1/16\}$ such that the sets $\Phi(\breve B_1)$ cover $M\setminus \{\rho<1/8\}$ and such that $\Phi$ extends smoothly to the closure of $\breve B_2$.
We denote this extended collection by $\{\Phi\}$ and refer to them as M\"obius parametrizations.

\subsection{H\"older spaces on $M$}
H\"older spaces of tensor fields on $M$ are defined using the norms
\begin{equation*}
\|u\|_{C^{k,\alpha}(M)} = \sup_{\Phi} \| \Phi^*u\|_{C^{k,\alpha}(\breve B_2)},
\end{equation*}
where the $C^{k,\alpha}(\breve B_2)$ norm is computed using the Euclidean metric.
Uniformly equivalent norms are produced by replacing $\breve B_2$ by $\breve B_r$ for any $1\leq r\leq 2$.
If $U\subset M$ is open, the $C^{k,\alpha}(U)$ norms are defined by appropriately restricting the domains of the M\"obius parametrizations.

In \cite{WAH}, weighted function spaces are defined using $\rho$ as a weight function.
Here we generalize that construction; see \cite{ChruscielDelay-mapping-properties}.
We say that a smooth function $w\colon M\to (0,\infty)$ satisfies the \Defn{scaling hypotheses} if there exist constants $c_0$ and $c_k$ such that for every M\"obius parametrization $\Phi_p$ we have
\begin{equation}
\label{scaling-hypotheses}
\begin{gathered}
c_0^{-1} w(p) \leq \Phi_p^*w \leq c_0 w(p) \quad\text{ and }
\\
\|\Phi_p^*w\|_{C^k(\breve B_2)} \leq c_k w(p),\quad k\geq 1.
\end{gathered}
\end{equation}
It is straightforward to see that if functions $w_1$ and $w_2$ each satisfy the scaling hypotheses, then so do the functions $w_1w_2$ and $w_1/w_2$.

Let $U\subset M$ be open.
Two functions $w_1$ and $w_2$ satisfying the scaling hypotheses are said to be \Defn{equivalent weight functions on $U$} if there exists a constant $C_U$ such that
\begin{equation}
\label{weight-function-equivalence}
\frac{1}{C_U} w_1(p)\leq w_2(p) \leq C_U w_1(p),
\qquad p\in U.
\end{equation}

For any function $w$ satisfying the scaling hypotheses and for any $\delta\in \mathbb R$, we endow the weighted H\"older spaces $C^{k,\alpha}_\delta (M;w) = w^\delta C^{k,\alpha}(M)$ with the norm
\begin{equation*}
\|u\|_{C^{k,\alpha}_\delta(M;w)} = \|w^{-\delta}u\|_{C^{k,\alpha}_\delta(M)}.
\end{equation*}

\begin{lemma}
\label{lemma:basic-weight-function-equivalence}
Let $U\subset M$ be open.
\begin{enumerate}
\item
\label{MobiusNormEquivalence}
 Suppose $w$ satisfies the scaling hypotheses.
 For each M\"obius parametrization $\Phi$ set $w_\Phi = w(\Phi(0,1))$.
For any $1\leq r\leq 2$ we have the norm equivalence
\begin{equation*}
\frac{1}{C} \|u\|_{C^{k,\alpha}_\delta(U;w)} 
\leq 
\sup_\Phi w_\Phi^{-\delta} \| \Phi^*u\|_{C^{k,\alpha}(\breve B_r)}
\leq
{C} \|u\|_{C^{k,\alpha}_\delta(U;w)} ,
\end{equation*}
where the constant $C$ depends only on $k$, $\delta$, and where the constants $c_0,\dots,c_{k+1}$  appearing in the scaling hypotheses for $w$.

\item If $w_1$ and $w_2$ are equivalent on $U$ then we have the norm equivalence
\begin{equation*}
\frac{1}{ C^\prime} \|u\|_{C^{k,\alpha}_\delta(U;w_1)} 
\leq
\|u\|_{C^{k,\alpha}_\delta(U;w_2)} 
\leq 
{ C^\prime} \|u\|_{C^{k,\alpha}_\delta(U;w_1)} ,
\end{equation*}
where the constant $ C^\prime$ depends only the constant $C$ from part \eqref{MobiusNormEquivalence} and the constant $C_U$ in \eqref{weight-function-equivalence}.
\end{enumerate}

\end{lemma}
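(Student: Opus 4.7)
The plan is to reduce everything to the definitions and to Leibniz-type estimates applied to the weight function $w$ on a fixed Möbius parametrization, exploiting the fact that the scaling hypotheses make $\Phi^*w$ essentially a positive constant on each coordinate ball $\breve B_r$.

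For part (a), I would begin by unpacking the definition of the weighted norm:
\begin{equation*}
\|u\|_{C^{k,\alpha}_\delta(U;w)} = \|w^{-\delta}u\|_{C^{k,\alpha}(U)} = \sup_\Phi \|(\Phi^*w)^{-\delta}\,\Phi^*u\|_{C^{k,\alpha}(\breve B_r)},
\end{equation*}
where the supremum is over M\"obius parametrizations whose image interacts with $U$ appropriately, and where I have used that pullback commutes with multiplication (and with raising to a real power of a positive function). The key auxiliary estimate, which I would establish first, is that for every M\"obius parametrization $\Phi$ with $\Phi(0,1) = p$, one has
\begin{equation*}
\tfrac{1}{C}\,w_\Phi^{-\delta} \le (\Phi^*w)^{-\delta}(x,y) \le C\,w_\Phi^{-\delta}\quad\text{on }\breve B_r,
\qquad
\bigl\| (\Phi^*w)^{\pm\delta}\bigr\|_{C^{k,\alpha}(\breve B_r)} \le C\, w_\Phi^{\pm\delta},
\end{equation*}
with $C$ depending only on $k$, $\delta$, and $c_0,\dots,c_{k+1}$. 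The pointwise bound is immediate from the first line of \eqref{scaling-hypotheses}. For the $C^{k,\alpha}$ bound on $(\Phi^*w)^{\pm\delta}$, I would apply the chain rule and Leibniz rule iteratively, using $\|\Phi^*w\|_{C^k(\breve B_2)} \le c_k w_\Phi$ together with $\Phi^*w \ge c_0^{-1} w_\Phi$ to bound each derivative; the Hölder seminorm of the top derivative is controlled by the $C^{k+1}$ norm on a bounded domain via the mean value theorem, which is why $c_{k+1}$ appears in the constant.

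Once this auxiliary estimate is in place, both directions of the inequality in (a) follow from the standard product estimate $\|fg\|_{C^{k,\alpha}(\breve B_r)} \le C\|f\|_{C^{k,\alpha}(\breve B_r)}\|g\|_{C^{k,\alpha}(\breve B_r)}$: for the upper bound, apply it with $f = (\Phi^*w)^{-\delta}$ and $g = \Phi^*u$; for the lower bound, write $\Phi^*u = (\Phi^*w)^{\delta}\cdot(\Phi^*w)^{-\delta}\Phi^*u$ and apply the same product estimate with $f = (\Phi^*w)^\delta$, using the auxiliary bound with $\delta$ replaced by $-\delta$. Taking the supremum over $\Phi$ then yields the norm equivalence.

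Part (b) is an immediate consequence of (a). Since $w_1$ and $w_2$ each satisfy the scaling hypotheses, part (a) gives
\begin{equation*}
\tfrac{1}{C}\|u\|_{C^{k,\alpha}_\delta(U;w_i)} \le \sup_\Phi w_{i,\Phi}^{-\delta}\|\Phi^*u\|_{C^{k,\alpha}(\breve B_r)} \le C\|u\|_{C^{k,\alpha}_\delta(U;w_i)},\qquad i=1,2.
\end{equation*}
For each admissible $\Phi$ the center $\Phi(0,1)$ lies in $U$, so the pointwise equivalence \eqref{weight-function-equivalence} yields $C_U^{-|\delta|}w_{1,\Phi}^{-\delta} \le w_{2,\Phi}^{-\delta} \le C_U^{|\delta|} w_{1,\Phi}^{-\delta}$. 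Combining these estimates gives the conclusion with $C^\prime = C^2 C_U^{|\delta|}$, as required. The main subtlety throughout is the $C^{k,\alpha}$ bound on powers of $\Phi^*w$; everything else is a routine manipulation of pullback norms.
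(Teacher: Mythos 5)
Your proposal is correct and supplies exactly the details that the paper elides: the paper simply cites Lemma 3.5 of \cite{Lee-FredholmOperators} (which treats the case $w=\rho$, where $\Phi_p^*\rho$ is literally $\rho(p)y$) and remarks that the present statement is a straightforward generalization, and your argument --- using the scaling hypotheses \eqref{scaling-hypotheses} to replace the exact identity by the two-sided pointwise bound and the $C^{k+1}$ bound on $\Phi^*w$, then running the chain/Leibniz rule and the $C^{k,\alpha}$ product estimate --- is precisely that generalization, with the correct observation that $c_{k+1}$ is needed to control the H\"older seminorm of the $k$-th derivative.

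One small inaccuracy in your phrasing: when the supremum defining $\|\cdot\|_{C^{k,\alpha}(U)}$ ranges over M\"obius parametrizations whose image merely meets $U$ (with the domain restricted), the center $\Phi(0,1)$ need not itself lie in $U$. This does not affect the argument: the first line of \eqref{scaling-hypotheses} gives $c_0^{-1}w_\Phi \le w(q) \le c_0 w_\Phi$ for every $q\in\Phi(\breve B_r)$, and since such a $q$ can be chosen in $U$, the pointwise equivalence \eqref{weight-function-equivalence} at $q$ transfers to $\Phi(0,1)$ at the cost of an extra factor $c_0^2$ absorbed into $C'$. Also, your initial equalities should be read as equivalences rather than identities when $\breve B_2$ is replaced by $\breve B_r$, as the paper itself notes.
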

\begin{proof}
The first claim is a straightforward generalization of \cite[Lemma 3.5]{Lee-FredholmOperators}, while the second follows immediately from the first.
\end{proof}

It follows from \cite[Lemma 3.3]{Lee-FredholmOperators} that the defining function $\rho$ satisfies the scaling hypotheses.
We suppress explicit reference to the weight function if $w=\rho$, so that $C^{k,\alpha}_\delta(M) = C^{k,\alpha}_\delta(M;\rho)$.

It is straightforward to verify that for any fixed $0<c<1$ the functions $\breve\rho$, $y$, and $yF$ satisfy the scaling hypotheses on $A_c\subset \mathbb H$.
Furthermore, from \eqref{annular-defining-functions} we see that these functions are equivalent weight functions on $A_c$.
Unless otherwise specified, we use the weight function $\breve\rho$.
Thus the norms
\begin{equation*}
\|u\|_{C^{k,\alpha}_\delta(A_c)},
\qquad
\|u\|_{C^{k,\alpha}_\delta(A_c;y)},
\qquad
\|u\|_{C^{k,\alpha}_\delta(A_c;yF)}
\end{equation*}
are all equivalent.

The following lemma relates H\"older spaces on $M$ and  $\overline M$.
We define the \Defn{weight} of a tensor field to be the covariant rank less the contravariant rank; thus a Riemannian metric has weight $2$.
\begin{lemma}
\label{lemma:barM-M}
\cite[Lemma 3.7]{Lee-FredholmOperators}
Suppose $u$ is a tensor field of weight $r$.
\begin{enumerate}
\item If $u\in C^{k,\alpha}(\overline M)$ and $|u|_{\overline h} = O(\rho^s)$ then $u\in C^{k,\alpha}_{r+s}(M)$ with 
$$\|u\|_{C^{k,\alpha}_{r+s}(M)} \leq C \|u\|_{C^{k,\alpha}(\overline M)}$$
for some constant $C$.
\item If $u\in C^{k,\alpha}_{k+\alpha + r}(M)$ then $u\in C^{k,\alpha}(\overline M)$ with 
$$\|u\|_{C^{k,\alpha}(\overline M)}\leq C \|u\|_{C^{k,\alpha}_{k+\alpha+r}(M)}$$
for some constant $C$.
\end{enumerate}
\end{lemma}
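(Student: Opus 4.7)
The plan is to work in background coordinate charts $\Theta = (\theta, \rho)$ and to exploit the explicit scaling behavior of tensor components under M\"obius parametrizations. Since the finite subcollection of M\"obius maps $\Phi$ associated to the interior region $\{\rho \geq 1/16\}$ contributes only bounded quantities, the substantive content lies near the boundary, where each $\Phi_p$ factors through a background chart via $(\Theta \circ \Phi_p)(x,y) = (\theta_p + \rho_p x, \rho_p y)$. Consequently $\Phi_p^* d\theta^i = \rho_p\, dx^i$, $\Phi_p^* d\rho = \rho_p\, dy$, and dually $\Phi_{p*}\partial_{x^i} = \rho_p\partial_{\theta^i}$, $\Phi_{p*}\partial_y = \rho_p\partial_\rho$. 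Writing a component of $u$ with respect to a coordinate monomial basis whose net covariant rank minus contravariant rank equals $r$ as $u_I$, we obtain
\begin{equation*}
(\Phi_p^* u)_I(x,y) = \rho_p^{\,r}\, u_I(\theta_p + \rho_p x, \rho_p y),
\end{equation*}
and each $x$- or $y$-derivative produces an extra factor of $\rho_p$. This scaling identity, combined with the comparability of $\rho$ to $\rho_p$ on $\Phi_p(\breve B_2)$, drives both halves of the argument.

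For part (a), the assumption $u \in C^{k,\alpha}(\overline M)$ yields uniform bounds on the background-coordinate components $u_I$ and their coordinate derivatives to order $k$, while $|u|_{\overline h} = O(\rho^s)$ forces $|u_I| \leq C\rho^s$ up to the boundary. Substituting into the scaling identity gives $\|\Phi_p^* u\|_{C^{k,\alpha}(\breve B_2)} \leq C\rho_p^{\,r+s}\|u\|_{C^{k,\alpha}(\overline M)}$, and Lemma \ref{lemma:basic-weight-function-equivalence}(a) with weight function $\rho$ immediately gives $\|u\|_{C^{k,\alpha}_{r+s}(M)} \leq C\|u\|_{C^{k,\alpha}(\overline M)}$.

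For part (b), the hypothesis $\|u\|_{C^{k,\alpha}_{k+\alpha+r}(M)} \leq C$ translates, via the same scaling identity and Lemma \ref{lemma:basic-weight-function-equivalence}(a), into the pointwise bounds $|\partial_\Theta^j u_I(q)| \leq C\rho(q)^{k+\alpha-j}$ for $0 \leq j \leq k$, together with a uniform local H\"older estimate $[\partial_\Theta^k u_I]_{\alpha,\, \Phi_p(\breve B_2)} \leq C$ computed with the Euclidean distance in the background chart. The pointwise bound lets each $\partial_\Theta^j u_I$ extend continuously by zero across $\partial M$; in particular $\partial_\Theta^k u_I = O(\rho^\alpha)$. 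The hard step, as I expect, is upgrading this local H\"older control to a uniform bound on $[\partial_\Theta^k u_I]_{\alpha}$ over all of $\overline M$, up to and including the boundary. I would handle this by case analysis: if $q, q^\prime$ lie in a common M\"obius image $\Phi_p(\breve B_2)$ the local bound applies directly; otherwise the geometry of the M\"obius cover forces $|\Theta(q) - \Theta(q^\prime)| \geq c\max(\rho(q), \rho(q^\prime))$ in the background chart, so the difference quotient is controlled by the decay $|\partial_\Theta^k u_I| = O(\rho^\alpha)$. Boundary points are handled by continuity, and combining these cases over a finite cover of a neighborhood of $\partial M$, together with the trivial interior estimate on $\{\rho \geq c\}$, completes the proof.
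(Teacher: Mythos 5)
This lemma is stated with an explicit citation to Lee's \emph{Fredholm Operators and Einstein Metrics on Conformally Compact Manifolds}; the paper imports it without reproving it, so there is no in-paper proof to compare against. That said, your scaling-in-background-coordinates approach is the natural one, and, as far as I recall, it matches Lee's proof in structure.

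Your argument for part (b) is essentially complete and correct: the translation of the weighted-norm bound into $|\partial_\Theta^j u_I| \leq C\rho^{k+\alpha-j}$ plus a local H\"older seminorm bound, followed by the two-case dichotomy (nearby points in a common M\"obius ball, far points separated by at least $c\max(\rho(q),\rho(q'))$), is exactly the right way to globalize the H\"older estimate up to the boundary, and the dichotomy really is exhaustive once the constants are chosen.

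There is one genuine elision in part (a). The assertion ``substituting into the scaling identity gives $\|\Phi_p^* u\|_{C^{k,\alpha}(\breve B_2)} \leq C\rho_p^{r+s}\|u\|_{C^{k,\alpha}(\overline M)}$'' requires, for derivatives of order $j$ with $0 < j < s$, the decay $|\partial_\Theta^j u_I| = O(\rho^{s-j})$. This does \emph{not} follow merely from $|u_I| = O(\rho^s)$ together with $u \in C^{k,\alpha}(\overline M)$ by ``substitution''; one has to argue via Taylor expansion in $\rho$ that the vanishing of $u_I$ to order $s$ on $\partial M$, together with the $C^{k,\alpha}$ regularity, forces the normal derivatives $\partial_\rho^j u_I|_{\rho=0}$ to vanish for $j < s$, and then (differentiating these identities tangentially) that all mixed derivatives of order $< s$ vanish on $\partial M$, whence the claimed decay rates. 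For $s \leq 1$ the point is vacuous (only the $j=0$ term needs the decay hypothesis, and all higher $\rho_p^{j-s}$ factors are automatically $\leq 1$), which covers every application of the lemma made in this paper; but Lee's statement allows any $0 \leq s \leq k+\alpha$, and for $s > 1$ this Taylor step needs to be made explicit rather than absorbed into ``substituting.''
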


We now introduce the spaces $\mathscr C^{k,\alpha;m}(M)$, intermediate between $C^{k,\alpha}(M)$ and $C^{k,\alpha}(\overline M)$, first defined in \cite{WAH}.
For $0\leq m\leq k$ and $\alpha\in (0,1)$ we say that a tensor field $u$ having weight $r$ is in $\mathscr C^{k,\alpha;m}(M)$ if
\begin{equation*}
\mathscr L_{ X_1}\dots \mathscr L_{ X_j}u \in C^{k-j,\alpha}_r(M)
\end{equation*}
for all $0\leq j \leq m$ and for all smooth vector fields $X_1, \dots, X_j$ on $\overline M$.
By \cite[Lemma 2.2]{WAH} this is equivalent to requiring that the norm
\begin{equation}
\label{define-curly-norm}
\|u\|_{\mathscr C^{k,\alpha;m}(M)} = \sum_{j=0}^m \|
\overline\nabla{}^ju\|_{C^{k-j,\alpha}_{r+j}(M)}
\end{equation}
be finite;
recall that $\overline\nabla$ is the connection associated to the preferred background metric $\bar h$.
We also have occasion to use norms such as $\| \cdot \|_{\mathscr C^{k,\alpha;m}(M;w)}$, defined by replacing $C^{k-j,\alpha}_{r+j}(M)$ by $C^{k-j,\alpha}_{r+j}(M;w)$ in \eqref{define-curly-norm}.

We also define similar norms on $\mathbb H$. The 
H\"older norms are defined as above using the half-space model and the M\"obius parametrizations $\breve\Phi\colon \breve B_2\to \mathbb H$ of the form
\begin{equation}
\label{hyp-mobius}
\breve\Phi\colon (x,y) \mapsto (x_* + y_*x, y_*y).
\end{equation}
On $B_{1/c}$ for any $c<1/4$, we define the $\mathscr C^{k,\alpha;m}$ norms using $^\text{E}\nabla$, the connection associated to the Euclidean metric $g_\text{E}$, and the hyperbolic defining function $\breve\rho$.

The following proposition records several important properties of the function spaces described above.
\begin{prop}[{\cite[Lemma 2.3]{WAH}}]
\label{prop:basic-inclusions}
\hfill
\begin{enumerate}

\item The space of tensor fields on $M$ of a specific type and of class $\mathscr C^{k,\alpha;m}$ forms a Banach space with the norm \eqref{define-curly-norm}.
The space of all tensor fields of class $\mathscr C^{k,\alpha;m}$ forms a Banach algebra under the tensor product, and is invariant under contraction.

\item
\label{curly-to-weighted}
If $u\in C^{k,\alpha}_{r+m}(M)$ is a tensor field of weight $r$ then $u\in \mathscr C^{k,\alpha;m}(M)$ with
\begin{equation*}
\| u \|_{\mathscr C^{k,\alpha;m}(M)}
\leq C\| u\|_{ C^{k,\alpha}_{r+m}(M)}.
\end{equation*}

\item 
\label{IvasMagicLemma}
If $u\in \mathscr C^{k,\alpha;m}(M)$ is a tensor field of weight $r$ and
\begin{equation*}
|\overline\nabla{}^ju|_{\overline h} \to 0 \text{ and }\rho\to 0,\quad 0\leq j\leq m-1
\end{equation*}
then $u\in C^{k,\alpha}_{r+m}(M)$ with $\|u\|_{C^{k,\alpha}_{r+m}(M)} \leq C \| u \|_{\mathscr C^{k,\alpha;m}(M)}$.

\end{enumerate}
\end{prop}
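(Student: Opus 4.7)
The plan is to treat the three parts in order; parts (a) and (b) are largely structural, while I expect part (c) to be the main obstacle.

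For part (a), the Banach space property follows from observing that the map $u\mapsto (u,\overline\nabla u,\ldots,\overline\nabla^m u)$ is an isometric embedding of $\mathscr C^{k,\alpha;m}(M)$, endowed with the norm \eqref{define-curly-norm}, into the finite direct sum $\bigoplus_{j=0}^m C^{k-j,\alpha}_{r+j}(M)$, whose summands are standard weighted H\"older spaces and hence Banach by \cite{Lee-FredholmOperators}. The image is closed because distributional derivatives pass to limits of Cauchy sequences. The Banach algebra property under tensor product reduces to Leibniz's rule
$$\overline\nabla^j(u\otimes v) = \sum_{i=0}^j \binom{j}{i}\,\overline\nabla^i u\otimes \overline\nabla^{j-i}v,$$
combined with the routine pointwise multiplication estimate $C^{k-i,\alpha}_{r_1+i}\times C^{k-(j-i),\alpha}_{r_2+j-i}\to C^{k-j,\alpha}_{r_1+r_2+j}$, which can be checked directly in M\"obius parametrizations. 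Invariance under contraction is then immediate, since contraction commutes with $\overline\nabla$ and is bounded on each $C^{k,\alpha}_\delta$.

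For part (b), the key input is the continuous mapping $\overline\nabla\colon C^{k,\alpha}_\delta(M)\to C^{k-1,\alpha}_\delta(M)$: the function-space weight $\delta$ is preserved under differentiation, even though the tensor weight increases by one. This can be verified in a M\"obius parametrization $\Phi_p$ of scale $y_*\sim\rho(p)$, where a direct change of variables shows that $\Phi_p^*(\overline\nabla u)$ equals the Euclidean derivative $\partial_x(\Phi_p^*u)$ up to a Christoffel correction of order $y_*$; the smoothness of $\overline h$ on $\overline M$ ensures the pulled-back Christoffel symbols are uniformly bounded in every $C^{\ell}(\breve B_2)$. Iterating this bound $j$ times, starting from $u\in C^{k,\alpha}_{r+m}(M)$, gives $\overline\nabla^j u\in C^{k-j,\alpha}_{r+m}(M)$. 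Since $\rho$ is bounded above on $M$, this space embeds continuously into $C^{k-j,\alpha}_{r+j}(M)$ whenever $j\le m$, and summing over $j$ yields the desired estimate.

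Part (c) is the substantive direction and I expect it to be the main obstacle. The issue is that membership in $\mathscr C^{k,\alpha;m}(M)$ only controls $\overline\nabla^j u$ with function-space weight $r+j$, short of the target weight $r+m$ by a factor of $\rho^{m-j}$ on the $j$-th derivative; the vanishing hypothesis on the lower derivatives must be exploited to recover this deficit by integration from the boundary. My strategy would be an inductive boundary-integration argument: starting from the strongest bound $\overline\nabla^m u\in C^{k-m,\alpha}_{r+m}(M)$ together with the vanishing of $\overline\nabla^{m-1}u$ at $\{\rho=0\}$, the fundamental theorem of calculus applied along curves transverse to $\partial M$ upgrades $\overline\nabla^{m-1}u$ to have weight $r+m$, gaining one power of $\rho$. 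Iterating $m$ times, with each step consuming one of the $m$ vanishing hypotheses, eventually lands at $u\in C^{k,\alpha}_{r+m}(M)$. The main technical hurdle is performing these integrations so as to preserve the full H\"older norm uniformly across M\"obius patches; I would handle this by transferring the argument to the pullbacks on $\breve B_2$, where the scaling hypotheses on $\rho$ guarantee $\rho/y_*$ is uniformly bounded above and away from zero, so the local weight gains in each M\"obius patch transfer cleanly to the global $C^{k,\alpha}_{r+m}$ norm.
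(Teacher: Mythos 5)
The paper states this result as a citation (Lemma~2.3 of \cite{WAH}) and does not give its own proof, so there is no in-paper argument to compare against; I will assess your proposal on its own merits. Parts~(a) and (b) are correct and standard: the direct-sum embedding and closedness argument gives the Banach space claim, Leibniz plus the weight arithmetic gives the Banach algebra claim, and for~(b) the observation that $\overline\nabla$ preserves the function-space weight $\delta$ (the Christoffel correction in a M\"obius chart at scale $\rho_p$ contributes only $O(\rho_p)$ times the lower-order pullback) combined with $\rho$ being bounded gives the inclusion. Your part~(c) strategy --- Gronwall-type integration along $\rho$ from $\partial M$ to obtain the pointwise decay $|\overline\nabla^j u|_{\overline h}\lesssim\rho^{m-j}$, then M\"obius-chart estimates for the full H\"older norm --- is also correct, but two points deserve to be made explicit where you wave at them. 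First, the integration itself must be carried out in background coordinates on the collar, not ``on the pullbacks on $\breve B_2$'': the image $\Phi_p(\breve B_2)$ has $\rho$ comparable to $\rho_p>0$ throughout and never touches $\partial M$, so the boundary vanishing cannot be invoked inside a single M\"obius patch; the pointwise decay is obtained in the collar and then transferred into the patch via the scaling relation $\|\Phi_p^*\overline\nabla^j u\|_{C^0(\breve B_2)}\sim \rho_p^{r+j}\sup|\overline\nabla^j u|_{\overline h}$. Second, to control the derivative part of the H\"older norm one uses $\partial_x\Phi_p^*\overline\nabla^j u = \Phi_p^*\overline\nabla^{j+1}u + \rho_p\,\Gamma\cdot\Phi_p^*\overline\nabla^j u$, and the $\rho_p\Gamma$ term must be \emph{absorbed} into the left-hand side: writing $w_j = \|\Phi_p^*\overline\nabla^j u\|_{C^{k-j,\alpha}(\breve B_2)}$, one has $w_j\le \|\Phi_p^*\overline\nabla^j u\|_{C^0} + w_{j+1} + C\rho_p\,w_j$, and for $\rho_p$ small the last term can be moved to the left, giving the single downward pass $w_m\lesssim\rho_p^{r+m}\Rightarrow w_{m-1}\lesssim\rho_p^{r+m}\Rightarrow\cdots\Rightarrow w_0\lesssim\rho_p^{r+m}$. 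With these two clarifications your argument goes through.
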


We record a regularization result that follows from \cite[Theorem 2.6]{WAH}.
\begin{prop}
\label{prop:regularization}
Suppose $\tau\in \mathscr C^{k,\alpha;m}(M)$ is a tensor field of weight $r$. Then there exists a tensor field $\tilde\tau$ such that $\tilde\tau\in \mathscr C^{l,\beta;m}(M)$ for all $l$ and $\beta$, and such that $\tilde\tau - \tau \in C^{k,\alpha}_{r+m}(M)$.

Furthermore, for each $l$ and $\beta$ there exists a constant $C$ such that $\| \tilde \tau \|_{\mathscr C^{l,\beta;m}(M)} \leq C \| \tau \|_{\mathscr C^{k,\alpha;m}(M)}$.
 
Finally, in the case that $M = \mathbb H$ and $\tau$ is supported in $A_r$ then for any $0<\tilde r< r$ it can be arranged that $\tilde \tau$ is supported in $A_{\tilde r}$.
\end{prop}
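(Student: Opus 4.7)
The plan is to combine a mollification in the interior with a ``boundary-jet smoothing'' near $\partial M$, so that one can extract a smooth representative of the $m$-jet of $\tau$ along $\partial M$ while absorbing the rest into the $C^{k,\alpha}_{r+m}(M)$ remainder that the conclusion allows.

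More concretely, I would first use the fact that a tensor field $\tau \in \mathscr C^{k,\alpha;m}(M)$ of weight $r$ admits, in any background coordinate chart $(\theta,\rho)$ near $\partial M$, a finite Taylor-type expansion
\begin{equation*}
\tau = \sum_{j=0}^{m-1} \rho^{j}\, \tau_j(\theta) + \rho^{m} R,
\end{equation*}
where each $\tau_j$ is a tensor field on $\partial M$ of class $C^{k-j,\alpha}$ and $R \in C^{k,\alpha}_{r}(M)$. This follows from the definition of $\mathscr C^{k,\alpha;m}$ together with Proposition~\ref{prop:basic-inclusions}\eqref{IvasMagicLemma} applied iteratively to differences of $\tau$ and its jet terms. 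The remainder $\rho^m R$ then lies in $C^{k,\alpha}_{r+m}(M)$, so replacing $\tau$ by any representative agreeing with $\sum \rho^j \tau_j$ to order $m$ changes it by something in $C^{k,\alpha}_{r+m}(M)$.

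To produce $\tilde\tau$, I would smooth each boundary tensor $\tau_j$ separately. Using a partition of unity subordinate to the background coordinate covering of $\partial M$, apply standard Euclidean mollification on $\partial M$ to each $\tau_j$ to obtain $\tilde\tau_j \in C^{\infty}(\partial M)$, with $\|\tilde\tau_j\|_{C^{l,\beta}(\partial M)} \leq C_l \|\tau_j\|_{C^{k-j,\alpha}(\partial M)}$ for the fixed (small) mollification scale. Extend $\tilde\tau_j$ smoothly into $M$ using the product structure on the collar $\partial M\times[0,1)$, and set $\tilde\tau := \chi\bigl(\sum_{j=0}^{m-1}\rho^j \tilde\tau_j\bigr) + (1-\chi)\cdot (\text{interior mollification of }\tau)$, where $\chi$ is a cutoff supported in the collar. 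Then $\tilde\tau\in\mathscr C^{l,\beta;m}(M)$ for every $l,\beta$ (the boundary-jet part is smooth by construction, and the weighted behavior at $\partial M$ is preserved). The difference $\tilde\tau-\tau$ lies in $C^{k,\alpha}_{r+m}(M)$ because the $m$-jet along $\partial M$ is unchanged modulo $\rho^m$-terms, while interior differences are compactly supported away from $\partial M$ and hence lie in every weighted space. This is precisely the content of [WAH, Theorem~2.6], which I would invoke as a black box for the first two assertions; the norm estimate $\|\tilde\tau\|_{\mathscr C^{l,\beta;m}} \leq C\|\tau\|_{\mathscr C^{k,\alpha;m}}$ follows from the continuity of mollification in H\"older norms at a fixed scale, together with the norm equivalence from \eqref{define-curly-norm}.

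The main point requiring care, and the one not contained directly in [WAH, Theorem~2.6], is the support preservation in the hyperbolic case. Suppose $\tau$ is supported in $A_r$ and fix $\tilde r<r$; since $A_r\subset A_{\tilde r}$ there is a positive hyperbolic distance between $\partial A_{\tilde r}$ and the support of $\tau$. Because mollification in the background coordinates (equivalently, in the M\"obius parametrizations~\eqref{hyp-mobius}) spreads support by at most a bounded Euclidean amount in each chart, and because the charts covering $A_r$ can be chosen to have uniformly small diameter relative to the gap $A_{\tilde r}\setminus A_r$, choosing the mollification scale small enough guarantees that $\tilde\tau$ remains supported in $A_{\tilde r}$. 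The hardest bookkeeping step is ensuring that the scale selection is compatible across all background charts and all orders $j=0,\ldots,m-1$ simultaneously, but this is straightforward since only finitely many charts meet $A_{\tilde r}\setminus A_r$ after accounting for the inversion symmetry, and the mollification scale can be taken uniformly small.
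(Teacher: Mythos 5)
Your proposal takes the same approach as the paper for the first two assertions: invoke \cite[Theorem 2.6]{WAH} as a black box (the paper does exactly this, giving no independent proof), and the support-preservation argument you add for the hyperbolic case is sound in spirit. The structure of boundary-jet extraction plus tangential smoothing is indeed the right picture for what the cited theorem does.

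Two small imprecisions are worth flagging, though neither is a genuine gap in the argument since both live inside the black box or are matters of phrasing. First, the claimed regularity $\tau_j\in C^{k-j,\alpha}(\partial M)$ of the Taylor coefficients does not follow from $\tau\in\mathscr C^{k,\alpha;m}(M)$ alone via the tools listed in this paper: the condition $\overline\nabla{}^j\tau\in C^{k-j,\alpha}_{r+j}(M)$ is strictly weaker than $\overline\nabla{}^j\tau\in C^{k-j,\alpha}(\overline M)$ (compare Lemma \ref{lemma:barM-M}(b), which would require the much stronger weight $k-j+\alpha+r+j$). The coefficients $\tau_j$ do exist and can be mollified tangentially, but attributing a specific H\"older class to them requires care. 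Second, your appeal to the ``positive hyperbolic distance'' between $\partial A_{\tilde r}$ and the support of $\tau$ is not really the right notion: what makes the fixed-scale tangential mollification preserve support is the positive \emph{Euclidean} (i.e.\ background-coordinate) gap between the annuli $\{r<|x|<1/r\}$ and $\{\tilde r<|x|<1/\tilde r\}$ on $\partial\mathbb H=\mathbb R^2$, together with the fact that the collar extension and cutoff stay within a Euclidean neighborhood of $A_r$. Framing it this way avoids the need to compare M\"obius-chart diameters to the gap, which is what your phrasing implicitly requires.
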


We recall also the following version of Taylor's theorem.
\begin{prop}[Lemma 3.2 of \cite{WAH}]
\label{prop:bdy-taylor}
Suppose $g$ is weakly asymptotically hyperbolic of class $\mathscr C^{k,\alpha;2}$.
Then for any function $u\in \mathscr C^{k,\alpha;2}(M)\cap C^{k,\alpha}_1(M)$ we have $u - \rho \langle d\rho, du\rangle_{\overline g} \in C^{k-1,\alpha}_2(M)$ with
\begin{equation*}
\| u - \rho \langle d\rho, du\rangle_{\overline g}\|_{ C^{k-1,\alpha}_2(M)}
\leq C \| u \|_{\mathscr C^{k,\alpha;2}(M)},
\end{equation*}
where the constant $C$ depends only on $\| \overline g\|_{\mathscr C^{k,\alpha;2}(M)}$.
\end{prop}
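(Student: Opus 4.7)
The plan is to split $u - \rho\langle d\rho, du\rangle_{\overline g}$ algebraically and bound each piece in $C^{k-1,\alpha}_2(M)$. Writing $u = \rho\tilde u$ with $\tilde u \in C^{k,\alpha}(M)$ (possible since $u\in C^{k,\alpha}_1(M)$), the product rule and contraction with $\overline g^{-1}(d\rho,\cdot)$ give
\begin{equation*}
u - \rho\langle d\rho, du\rangle_{\overline g} = u\bigl(1 - |d\rho|^2_{\overline g}\bigr) - \rho^2 \langle d\rho, d\tilde u\rangle_{\overline g},
\end{equation*}
so the task reduces to placing each summand on the right in $C^{k-1,\alpha}_2(M)$.

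For the first summand I would invoke Proposition~\ref{prop:basic-inclusions}\eqref{IvasMagicLemma}. The function $1 - |d\rho|^2_{\overline g}$ is built polynomially from the smooth tensor $d\rho$ and from $\overline g^{-1}$, so by the Banach-algebra property (Proposition~\ref{prop:basic-inclusions}(a)) it belongs to $\mathscr C^{k,\alpha;2}(M)$. The weakly asymptotically hyperbolic condition $|d\rho|_{\overline g} = 1 + O(\rho)$ ensures its vanishing at $\partial M$, so Proposition~\ref{prop:basic-inclusions}\eqref{IvasMagicLemma} with $m = 1$ gives $1 - |d\rho|^2_{\overline g} \in C^{k,\alpha}_1(M)$, with norm bounded in terms of $\|\overline g\|_{\mathscr C^{k,\alpha;2}(M)}$. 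Multiplying by $u\in C^{k,\alpha}_1(M)$ places the first summand in $C^{k,\alpha}_2(M)\subset C^{k-1,\alpha}_2(M)$.

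For the second summand the decisive observation is that the $\mathscr C^{k,\alpha;2}$ hypothesis forces $\tilde u$ to have bounded ambient first derivatives, beyond the mere intrinsic regularity $\tilde u\in C^{k,\alpha}(M)$. In background coordinates $(\theta,\rho)$ one computes $\partial_{\theta^a}\tilde u = \rho^{-1}\partial_{\theta^a}u$ and $\partial_\rho\tilde u = \rho^{-1}(\partial_\rho u - u/\rho)$; the first is bounded because $u|_{\partial M}=0$ and $\partial_\rho\partial_{\theta^a}u$ is bounded, which gives $\partial_{\theta^a}u = O(\rho)$ by Taylor, while the second follows similarly from boundedness of $\partial_\rho^2 u$ and continuity of $\partial_\rho u$ at $\partial M$. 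This ambient boundedness supplies exactly the factor of $\rho_p$ needed to cancel the $\rho_p^{-1}$ that appears when pulling $\overline g^{-1}(d\rho,d\tilde u)$ back through a M\"obius parametrization, via the chain-rule identity $\partial_{x^b}(\Phi_p^*\tilde u) = \rho_p\,\Phi_p^*(\partial_{\Theta^b}\tilde u)$. Combined with Lemma~\ref{lemma:basic-weight-function-equivalence}, this yields $\langle d\rho, d\tilde u\rangle_{\overline g}\in C^{k-1,\alpha}_0(M)$, and hence $\rho^2\langle d\rho, d\tilde u\rangle_{\overline g}\in C^{k-1,\alpha}_2(M)$.

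The hard part will be carrying out this last step at the level of H\"older seminorms rather than only pointwise bounds: one must also control first-order background derivatives of $\partial_b\tilde u$, which requires carefully extracting the second-order information hidden in the $\mathscr C^{k,\alpha;2}$ hypothesis and tracking it through the M\"obius pullback. The unavoidable loss from $C^{k,\alpha}_2$ in the first summand down to $C^{k-1,\alpha}_2$ in the statement reflects that the second summand already involves a single derivative of $\tilde u \in C^{k,\alpha}(M)$.
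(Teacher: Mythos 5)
Your algebraic identity and your handling of the first summand are correct. The weakly-asymptotically-hyperbolic condition gives $1-|d\rho|^2_{\overline g}=O(\rho)$, and since this quantity lies in $\mathscr C^{k,\alpha;2}(M)$ by the Banach-algebra property, Proposition~\ref{prop:basic-inclusions}\eqref{IvasMagicLemma} with $m=1$ puts it in $C^{k,\alpha}_1(M)$; the product with $u\in C^{k,\alpha}_1(M)$ then lands in $C^{k,\alpha}_2(M)$.

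The second summand is where all the content of the lemma lives, and there your argument is a sketch of a $C^0$ bound rather than a proof of the required $C^{k-1,\alpha}_2$ estimate. From $\tilde u\in C^{k,\alpha}(M)$ alone one gets only $d\tilde u\in C^{k-1,\alpha}_0(M)$, so contracting with $\overline g^{-1}\in C^{k,\alpha}_{-2}(M)$ and $d\rho\in C^{k,\alpha}_1(M)$ and multiplying by $\rho^2$ gives $C^{k-1,\alpha}_1(M)$ --- one power of $\rho$ short. Your pointwise bound $|\partial_b\tilde u|=O(1)$ improves only the $C^0(\breve B_2)$ piece of $\|\Phi_p^*d\tilde u\|_{C^{k-1,\alpha}(\breve B_2)}$ to $O(\rho_p)$; the higher seminorms, which under the M\"obius scaling behave like $\rho_p^j\,|\partial^j\tilde u|$ for $2\le j\le k$, are not improved, because $\tilde u\in C^{k,\alpha}(M)$ only gives $|\partial^j\tilde u|=O(\rho^{-j})$ and your argument supplies no additional control on the second and higher background derivatives of $\tilde u$. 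You correctly identify this as ``the hard part,'' but flagging the gap does not close it, and it is precisely this H\"older-level cancellation that constitutes the cited Lemma~3.2 of \cite{WAH}.

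It may help to notice that the tangential part of the second summand can be dispatched with tools already at hand: for a smooth vector field $X$ tangent to $\partial M$, $Xu$ lies in $\mathscr C^{k-1,\alpha;1}(M)$ and vanishes along $\partial M$, so Proposition~\ref{prop:basic-inclusions}\eqref{IvasMagicLemma} with $m=1$ puts $Xu\in C^{k-1,\alpha}_1(M)$ at the full H\"older level, which controls the terms involving tangential derivatives of $\tilde u$. What cannot be handled this way is the normal contribution, which after your substitution reduces to an estimate on $\rho\partial_\rho u - u = \rho^2\partial_\rho\tilde u$ in $C^{k-1,\alpha}_2(M)$; to use Proposition~\ref{prop:basic-inclusions}\eqref{IvasMagicLemma} with $m=2$ one would need $\rho\partial_\rho u - u\in\mathscr C^{k-1,\alpha;2}(M)$, which requires a third background derivative of $u$ and is therefore not available from the hypothesis. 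That Taylor-type commutator estimate is the genuine missing step in your proposal.
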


We conclude this section by noting the effects of the scaling maps \eqref{define-scaling} on weighted H\"older norms in hyperbolic space.
Direct computation using the M\"obius parametrizations \eqref{hyp-mobius} shows that for a tensor field $u$ of weight $r$ we have
\begin{equation}
\label{scaling-of-norms}
\begin{gathered}
\|\mathcal S_\epsilon^*u \|_{C^{k,\alpha}_\delta(B_{1/c};y)}
=
\epsilon^\delta \|u\|_{C^{k,\alpha}_\delta(B_{\epsilon/c};y)}
\\
\| ({}^\text{E}\nabla)^j (\mathcal S_\epsilon^*u) \|_{C^{k-j,\alpha}_{r+j}(B_{1/c};y)}
=
\epsilon^{r+j} \|({}^\text{E}\nabla)^ju\|_{C^{k-j,\alpha}_{r+j}(B_{\epsilon/c};y)}.
\end{gathered}
\end{equation}

\section{The shear-free condition and the tensor $\mathcal H$}
\label{sec:H-tensor}

We recall here the tensor $\mathcal H_{\overline g}(\omega)$ introduced in \cite{WAH} and used in \cite{AHEM} to incorporate the shear-free condition into the conformal method.
For any metric $\overline g$ and for any function $\omega$ we define the tensor $\mathcal H_{\overline g}(\omega)$ by
\begin{equation}
\label{H-formula}
\mathcal H_{\overline g}(\omega)=
|d\omega|_{\overline  g}^6\,\mathcal D_{\overline  g}\left(|d\omega|^{-2}_{\overline g} \grad_{\overline  g}\omega\right)
+ A_{\overline  g}(\omega) \left( d\omega \otimes d\omega - \frac{1}{3}|d\omega|^2_{\overline  g} \overline  g \right),
\end{equation}
where
\begin{equation*}
A_{\overline  g}(\omega) 
= \frac{1}{2} |d\omega|_{\overline g} \Div_{\overline g}\left( |d\omega|_{\overline  g}\grad_{\overline  g}\omega\right)
\end{equation*}
and where $\mathcal D_{\overline g}$ is the conformal-Killing operator defined in \eqref{define-conformal-killing-operator}.
The following properties of this tensor are established in \S4 of \cite{WAH}.
\begin{prop}
\label{H-properties}
For any $C^1$ metric $\overline g$ and $C^2$ function $\omega$ we have the following.
\begin{enumerate}
\item\label{B-symmetric-tf}
 $\mathcal H_{\overline g}(\omega)$ is symmetric and trace-free.

\item\label{B-TransverseProperty} $\mathcal H_{\overline g}(\omega)(\grad_{\overline g}\omega, \cdot)=0$.

\item\label{B-Scaling} $\mathcal H_{\overline g}(c \omega)=c^5\mathcal H_{\overline g}(\omega)$ for all constants $c$.

\item\label{B-ConformalScaling} 
For any strictly positive function $\theta$, we have $\mathcal H_{\theta\overline g}(\omega)=\theta^{-2}\mathcal H_{\overline g}(\omega)$.

\end{enumerate}

If $\rho$ is a smooth defining function, we furthermore have the following.

\begin{enumerate}[resume]

\item \label{H-regularity}
If $\overline g\in \mathscr C^{k,\alpha;2}(M)$, then $\mathcal H_{\overline g}(\rho)\in \mathscr C^{k-1,\alpha;1}(M)$ and $\Div_{\overline g}\mathcal H_{\overline g}(\rho)\in C^{k-2,\alpha}_1(M)$.

\item 
\label{H-Hessian}
If $\overline g\in \mathscr C^{k,\alpha;2}(M)$ and $g = \rho^{-2}\overline g$ satisfies $\R[g]+6 = C^{k-2,\alpha}_2(M)$, then 
\begin{equation}
\label{BisHessian}
\mathcal H_{\overline g}(\rho) - \left(\Hess_{\overline g}\rho - \frac{1}{3}(\Delta_{\overline g}\rho)\overline g\right)\in C^{k-1,\alpha}_3(M).
\end{equation}
\end{enumerate}
\end{prop}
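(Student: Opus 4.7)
The plan splits naturally. Properties (a)--(d) are algebraic consequences of the defining formula \eqref{H-formula}, while (e)--(f) are analytic facts requiring the structure of weakly asymptotically hyperbolic metrics. For (a), $\mathcal{D}_{\overline g}$ produces symmetric traceless 2-tensors by construction and $d\omega\otimes d\omega - \tfrac{1}{3}|d\omega|_{\overline g}^{2}\overline g$ is manifestly such. Property (c) amounts to counting powers of $c$ under $\omega\mapsto c\omega$: the first summand scales as $c^{6}\cdot c^{-1}=c^{5}$ (from $|d\omega|^{6}$ and $\mathcal{D}(|d\omega|^{-2}\grad\omega)$, using linearity of $\mathcal{D}$), while the second scales as $c^{3}\cdot c^{2}=c^{5}$ (from $A_{\overline g}(\omega)$ and $d\omega\otimes d\omega - \tfrac{1}{3}|d\omega|^{2}\overline g$). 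For (d), I would use the standard conformal covariance $\mathcal{D}_{\theta\overline g}(V)=\theta\,\mathcal{D}_{\overline g}(V)$ on vector fields together with $|d\omega|^{2}_{\theta\overline g} = \theta^{-1}|d\omega|_{\overline g}^{2}$ and $\grad_{\theta\overline g}\omega = \theta^{-1}\grad_{\overline g}\omega$; a short direct calculation then gives $A_{\theta\overline g}(\omega) = \theta^{-2}A_{\overline g}(\omega)$, and assembling both summands produces the $\theta^{-2}$ factor overall.

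Property (b) is the most intricate algebraic step. I would set $u = |d\omega|_{\overline g}^{2}$ and use the Hessian-symmetry identity $(\grad\omega)^{i}\nabla_{i}\nabla_{j}\omega = \tfrac{1}{2}\nabla_{j}u$ to expand $\mathcal{D}_{\overline g}(u^{-1}\grad\omega)$, obtaining
\begin{equation*}
u^{3}\mathcal{D}_{\overline g}(u^{-1}\grad\omega)_{ij}(\grad\omega)^{i} = -\tfrac{1}{6}u\,\langle du,d\omega\rangle_{\overline g}\,\nabla_{j}\omega - \tfrac{1}{3}u^{2}(\Delta_{\overline g}\omega)\,\nabla_{j}\omega.
\end{equation*}
Rewriting $A_{\overline g}(\omega) = \tfrac{1}{4}\langle du,d\omega\rangle_{\overline g} + \tfrac{1}{2}u\,\Delta_{\overline g}\omega$ and contracting the second summand of $\mathcal{H}$ with $(\grad\omega)^{i}$ gives exactly the negative of the above, so the two contributions cancel.

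For (e), both $|d\rho|_{\overline g}^{2}$ and $\grad_{\overline g}\rho$ are algebraic combinations of $\overline g\in\mathscr{C}^{k,\alpha;2}$ and smooth $\rho$, while $\mathcal{D}_{\overline g}$ costs one derivative of its argument and introduces one of $\overline g$ via the Christoffel symbols. Tracking weighted regularity via Proposition \ref{prop:basic-inclusions} produces $\mathcal{H}_{\overline g}(\rho)\in\mathscr{C}^{k-1,\alpha;1}$ and the weighted bound on $\Div_{\overline g}\mathcal{H}_{\overline g}(\rho)$. The main obstacle is (f). The naive limit $|d\rho|_{\overline g}^{2}\to 1$ collapses the first summand of $\mathcal{H}$ to $\Hess_{\overline g}\rho-\tfrac{1}{3}(\Delta_{\overline g}\rho)\overline g$, but the second summand leaves a residual of $\tfrac{1}{2}(\Delta_{\overline g}\rho)(d\rho\otimes d\rho - \tfrac{1}{3}\overline g)$, which is generically $O(1)$ at $\partial M$. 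The scalar-curvature hypothesis is needed precisely to kill this residual: combining the dimension-three identity $\R[g]+6 = \rho^{2}\R[\overline g] + 4\rho\,\Delta_{\overline g}\rho - 6v$ (with $v = |d\rho|_{\overline g}^{2}-1$) with $\R[g]+6 \in C^{k-2,\alpha}_2$ forces $v = \tfrac{2}{3}\rho(\Delta_{\overline g}\rho)\big|_{\rho=0} + O(\rho^{2})$, whence $dv\big|_{\rho=0} = \tfrac{2}{3}(\Delta_{\overline g}\rho)\big|_{\rho=0}\,d\rho$. Substituting this expansion into the $\nabla u$ cross-terms in the first summand of $\mathcal{H}$ produces exactly the cancellation needed, and Proposition \ref{prop:bdy-taylor} packages the higher-order terms into $C^{k-1,\alpha}_3(M)$.
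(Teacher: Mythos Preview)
The paper does not actually prove this proposition: it states the properties and immediately refers the reader to \S4 of \cite{WAH}, where they are established. So there is no ``paper's own proof'' to compare against here.

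Your sketch is sound and recovers what that reference contains. The algebraic parts (a)--(d) are handled correctly; in particular your contraction for (b) is accurate, and the conformal transformation law $\mathcal D_{\theta\overline g}(V)=\theta\,\mathcal D_{\overline g}(V)$ together with the observation that $|d\omega|^{-2}_{\overline g}\grad_{\overline g}\omega$ is conformally invariant is exactly the mechanism behind (d). For (f), your identification of the key input---that $\R[g]+6\in C^{k-2,\alpha}_2$ forces $|d\rho|^2_{\overline g}-1 = \tfrac{2}{3}\rho\,\Delta_{\overline g}\rho$ modulo $C^{k-2,\alpha}_2$, hence $dv|_{\partial M}=\tfrac{2}{3}(\Delta_{\overline g}\rho)\,d\rho$---is correct. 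One minor sharpening: the cancellation is not entirely carried by the $\nabla u$ cross-terms in the first summand. The $\tfrac{1}{4}\langle du,d\rho\rangle$ piece of $A_{\overline g}(\rho)$ also picks up a contribution $\tfrac{1}{6}\Delta_{\overline g}\rho$ at the boundary, shifting the second summand's coefficient from $\tfrac{1}{2}\Delta_{\overline g}\rho$ to $\tfrac{2}{3}\Delta_{\overline g}\rho$; this then cancels exactly against the $-\tfrac{2}{3}\Delta_{\overline g}\rho\,(d\rho\otimes d\rho-\tfrac{1}{3}\overline g)$ produced by the cross-terms in the first summand. With that bookkeeping in place, your argument goes through.
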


Due to the property \eqref{BisHessian}, the shear-free condition \eqref{define-SF} is equivalent to requiring
$\overline\Sigma = \mathcal H_{\overline g}(\rho)$ along $\partial M.$

\section{The spliced manifold $M_\epsilon$} 
\label{sec:splicing}
We now begin the proof of the main theorem. We consider a CMCSF hyperboloidal data set $(g, K)$ of class $\mathscr C^{k,\alpha;2}$ on $M$ for fixed $k\geq 3$ and $\alpha\in (0,1)$.
As outlined above, the first step of the proof is to construct the spliced manifold $M_\epsilon$ and the spliced defining function $\rho_\epsilon$, as well as various function spaces on $M_\epsilon$.

\subsection{The splicing construction}
Recall the definition of $\overline B_r$ in \eqref{define-B-bar} and let $\epsilon>0$ be a small parameter.
For each of the gluing points $p_i$, $i=1,2$, let the mapping $\alpha_{\epsilon,i}\colon \overline{B}_{1/\epsilon} \to \overline M$ be given in preferred background coordinates by 
$$\Theta_i = (\theta_i,\rho)=\alpha_{\epsilon, i}(x,y)=(\epsilon x, \epsilon y).$$
The mappings $\alpha_{\epsilon,i}$ give us scaled parametrizations of  neighborhoods of $p_i\in \overline M$. 
For future use we note that because $\alpha_{\epsilon,i}= \Theta_i^{-1}\circ \mathcal S_\epsilon$ and $\Theta_i$ is an isometry from $\overline h$ to $g_E$, 
it follows from \eqref{scaling-of-norms} that
for any tensor $u\in \mathscr C^{k,\alpha;m}(M)$ of weight $r$ 
and any $0\leq j\leq m$, we have
\begin{equation}
\label{norm-scaling}
\begin{aligned}
\|({}^E\nabla)^j \alpha^*_{\epsilon,i}u\|_{C^{k-j}_{r+j}(B_{1/c};y)}
&\leq C \epsilon^{r+j}\| \overline\nabla^j u\|_{C^{k-j,\alpha}_{r+j}(M)}
\\
&\leq C \epsilon^{r+j} \|u\|_{\mathscr C^{k,\alpha;m}(M)},
\end{aligned}
\end{equation}
where the constant $C$ depends on $c$, but is independent of $\epsilon$.

Recall the sets $\overline U_{i,r}$ defined in \eqref{preferred-half-ball}.
Consider the equivalence relation $\sim$ on 
$$\overline M\setminus \left(\alpha_{\epsilon, 1}(\overline B_\epsilon)\cup \alpha_{\epsilon, 2}(\overline B_\epsilon)\right)
=
\overline M \setminus \left(\overline U_{1,\epsilon^2} \cup \overline U_{2,\epsilon^2} \right)$$ generated by 
$$
\alpha_{\epsilon,1}(x,y)\sim \left(\alpha_{\epsilon, 2}\circ \mathcal{I}\right) (x,y),
$$ 
where $\mathcal{I}$ is the inversion map defined in \eqref{define-inversion}. 
Define the \Defn{spliced manifold} $\overline M_\epsilon$ as the quotient manifold whose points are the equivalence classes of $\sim$. 
It is clear from the construction that $\overline M_\epsilon$ is a family of smooth manifolds with boundary. 
In addition, define $M_\epsilon$ as the subset of $\overline M_\epsilon$ consisting of points whose representatives are elements of $M$; thus $M_\epsilon$ is the interior of $\overline M_\epsilon$. 
Denote the underlying quotient map by $\pi_\epsilon$. The map 
\begin{equation*}
\Psi_\epsilon= \pi_\epsilon \circ \alpha_{\epsilon,1} =  \pi_\epsilon \circ \alpha_{\epsilon,2}\circ \mathcal I
\colon A_\epsilon \to M_\epsilon
\end{equation*}
is used throughout this paper to parametrize a region of $M_\epsilon$ that we refer to as the \Defn{neck}.

Recall the definition of $\overline U_{i,r}$ in \eqref{preferred-half-ball}.
For each sufficiently small $c>0$ we define the \Defn{exterior region} $E_c\subset M$ by 
$$
E_c = M \setminus \left( \overline U_{1,c}\cup  \overline U_{2,c}\right).
$$
Note that $\overline U_{i,c} = \alpha_{\epsilon, i}(\overline B_{c/\epsilon})$ and thus
$$E_c = M\setminus \left(\alpha_{\epsilon, 1}(\overline B_{c/\epsilon})\cup \alpha_{\epsilon, 2}(\overline B_{c/\epsilon})\right).$$
Clearly $\bigcup_{c>0}E_c = M$.

For the rest of this paper we assume 
\begin{equation}
\label{epsilon-estimate}
0<\epsilon < {c^2} < \frac{1}{64}.
\end{equation}
In particular, this implies that the map $\iota_\epsilon = \pi_\epsilon\big|_{E_c}\colon E_c\to M_\epsilon$ is an embedding.
Note that $\Psi_\epsilon^{-1}(E_c) = A_\epsilon \setminus \overline A_{\epsilon/c}$.

In establishing the main theorem, it is important to obtain estimates that are uniform in $\epsilon$; thus we adopt the following notational convention.
For quantities $X$ and $Z$, both depending on $c$ and $\epsilon$, we write $X\lesssim Z$ to mean that $X\leq CZ$ for some constant $C$ that may depend on $c$, but is independent of $\epsilon$ satisfying \eqref{epsilon-estimate}.
We write $X\approx Z$ when both $X\lesssim Z$ and $Z\lesssim X$.

\subsection{The defining function $\rho_\epsilon$}

We now introduce a family of defining functions $\rho_\epsilon$ on $M_\epsilon$. These functions agree with the original defining function $\rho$ away from the neck $\Psi_\epsilon(A_\epsilon)$, while on $\Psi_\epsilon(A_\epsilon)$ they are determined by 
\begin{equation}
\label{defn-of-rho-epsilon}
\Psi_\epsilon^*\rho_\epsilon=\epsilon yF.
\end{equation} 
Thus
\begin{equation}
\label{rho-transition}
\Psi_\epsilon^*\rho_\epsilon
=\begin{cases}
\alpha_{\epsilon, 1}^*\rho & \text{ where }r>2,
\\
(\alpha_{\epsilon,2}\circ\mathcal I)^*\rho & \text{ where } r< 1/2.
\end{cases}
\end{equation}
Furthermore, since $yF$ satisfies the scaling hypotheses on $A_\epsilon$, the functions $\rho_\epsilon$ satisfy the scaling hypotheses on $M_\epsilon$.

\begin{lemma}
Provided \eqref{epsilon-estimate} holds we have the following.
\begin{enumerate}
\item
\label{claim:exterior-rho}
 On $E_c$ we have $\iota_\epsilon^*\rho_\epsilon = \rho$.
\item
\label{claim:interior-rho}
 On $A_c$ the weight functions 
\begin{equation*}
\Psi_\epsilon^*\rho_\epsilon=\epsilon yF,\quad \epsilon y,\quad \epsilon \breve\rho,
\end{equation*}
are all equivalent, uniformly in $\epsilon$.
\end{enumerate}
\end{lemma}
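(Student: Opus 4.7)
The plan is a careful bookkeeping exercise using the explicit definitions and the constraint \eqref{epsilon-estimate}.

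For part \eqref{claim:exterior-rho}, I would split $E_c$ into the portion away from the neck image $\Psi_\epsilon(A_\epsilon)$ and the overlap $E_c\cap \iota_\epsilon^{-1}(\Psi_\epsilon(A_\epsilon))$. On the first portion, $\rho_\epsilon$ agrees with $\rho$ by the definition of $\rho_\epsilon$ and the fact that $\iota_\epsilon = \pi_\epsilon|_{E_c}$, so there is nothing to show. On the overlap, I would use the hint $\Psi_\epsilon^{-1}(E_c) = A_\epsilon \setminus \overline A_{\epsilon/c}$: in coordinates $r^2 = |x|^2 + y^2$, this consists of points with either $\epsilon < r \leq \epsilon/c$ or $c/\epsilon \leq r < 1/\epsilon$. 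Since \eqref{epsilon-estimate} gives $\epsilon/c < c < 1/8 < 1/2$ and $c/\epsilon > 1/c > 8 > 2$, the inner shell has $r<1/2$ and the outer shell has $r>2$. Thus \eqref{rho-transition} applies on each shell and $\Psi_\epsilon^*\rho_\epsilon$ is the pullback of $\rho$ via the relevant $\alpha$-map. Combined with the factorization $\Psi_\epsilon = \pi_\epsilon \circ \alpha_{\epsilon,i} (\circ \mathcal I)$, this yields $\iota_\epsilon^*\rho_\epsilon = \rho$ on the overlap as well.

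For part \eqref{claim:interior-rho}, I would first note that since $\epsilon < c^2 < c$, we have $A_c \subset A_\epsilon$, so $\Psi_\epsilon$ (and hence $\Psi_\epsilon^*\rho_\epsilon = \epsilon yF$) is defined on $A_c$. The equivalence of $\breve\rho$, $y$, and $yF$ on $A_c$ with constants depending only on $c$ has already been recorded in \eqref{annular-defining-functions}. Multiplying by the common factor $\epsilon$ preserves this equivalence with the same constants, proving the claim.

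I do not anticipate a real obstacle here: the content is essentially checking that the threshold $\epsilon < c^2$ is strong enough to keep the overlap of the exterior region with the neck inside the region where the cutoff $\chi$ in the definition of $F$ has already saturated to $0$ or $1$, so that $yF$ literally equals one of the two pullbacks of $\rho$. The only step requiring care is translating between the coordinate description of $E_c$ via the $\alpha_{\epsilon,i}$ and the description of $\Psi_\epsilon^{-1}(E_c)$ as a complement of $\overline A_{\epsilon/c}$ inside $A_\epsilon$.
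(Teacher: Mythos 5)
Your proof is correct and takes essentially the same approach as the paper, which gives a one-line proof citing \eqref{rho-transition} for part \eqref{claim:exterior-rho} and \eqref{annular-defining-functions} for part \eqref{claim:interior-rho}. You have simply spelled out the bookkeeping (the $r$-range computation $\epsilon/c < 1/2 < 2 < c/\epsilon$ under \eqref{epsilon-estimate}, and the observation that $A_c \subset A_\epsilon$) that the paper leaves implicit.
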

\begin{proof}
Part \eqref{claim:exterior-rho} is due to \eqref{rho-transition}, while
part \eqref{claim:interior-rho} is a consequence of \eqref{annular-defining-functions}.
\end{proof}

\subsection{Function spaces on $M_\epsilon$}
\label{subsec:functions-on-Me}
In order to define function spaces on $M_\epsilon$, we first construct a  collection of M\"obius parametrizations for $M_\epsilon$ of two types:
\begin{itemize}
\item parametrizations of the form $\pi_\epsilon \circ \Phi \colon \breve B_2 \to M_\epsilon$, where $\Phi$ is a M\"obius parametrization for $M$ such that 
$$\Phi(\breve B_2)\subset  M\setminus \left(\overline U_{1,\epsilon^2} \cup \overline U_{2, \epsilon^2}\right),$$
 and
\item parametrizations of the form $\Psi_\epsilon \circ \breve\Phi \colon \breve B_2\to M_\epsilon$, where $\Breve\Phi\colon \breve B_2 \to \mathbb H$ is a M\"obius parametrization of $\mathbb H$ such that $\breve\Phi(\breve B_2)\subset A_{\epsilon}$.
\end{itemize}
The second type of parametrizations allow us to compare the geometry of the neck with that of hyperbolic space.
It is easy to see that these parametrizations cover all of $M_\epsilon$, and that this remains true if restricted to $\breve B_r$ for any $1\leq r\leq 2$.

This collection of M\"obius parametrizations is used to define the intrinsic H\"older spaces $C^{k,\alpha}(M_\epsilon)$ with norms 
\begin{equation*}
\|u\|_{C^{k,\alpha}(M_\epsilon)} = \sup \|\Phi_\epsilon^*u\|_{C^{k,\alpha} (\breve B_2)};
\end{equation*}
as before, we obtain alternate norms, uniformly equivalent in $\epsilon$, by replacing $\breve B_2$ with $\breve B_r$  for any $1\leq r\leq 2$.

Suppose that $\Phi_p\colon \breve B_2 \to M$ is a M\"obius parametrization arising from one of the preferred background coordinate charts $\Theta_i$ such that $\Phi_p(\breve B_2)\cap B_{\epsilon^2}(p_i) = \emptyset$.
Then the corresponding M\"obius parametrization $\pi_\epsilon \circ \Phi_p\colon \breve B_2\to M_\epsilon$ coincides with the parametrization $\Psi_\epsilon \circ \breve\Phi \colon \breve B_2\to M_\epsilon$ centered at $X_*$ with $\alpha_{\epsilon,i}(X_*) =p$.
Such parametrizations, which can be viewed as arising either from hyperbolic space or from the manifold $M$, cover the the transitional region of $M_\epsilon$ between the exterior region $\iota_\epsilon(E_c)$ and the neck region $\Psi_\epsilon(A_c)$.

The following is immediate.
\begin{lemma}
\label{lemma:pullback-norms}
For any $l$ and $\beta$ we have
\begin{enumerate}
\item $\|u\|_{C^{l,\beta}(\iota_\epsilon(E_c))} 
= \|\iota_\epsilon^*u\|_{C^{l,\beta}(E_c)}$,
\item $\|u\|_{C^{l,\beta}(\Psi_\epsilon(A_c))} 
= \|\Psi_\epsilon^*u\|_{C^{l,\beta}(A_c)}$.
\end{enumerate}
\end{lemma}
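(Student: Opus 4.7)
The plan is to treat both identities as tautological consequences of the definition of the M\"obius parametrizations on $M_\epsilon$ given in \S\ref{subsec:functions-on-Me}. Recall that $\|u\|_{C^{l,\beta}(V)}$ for an open $V\subset M_\epsilon$ is the supremum of $\|\Phi_\epsilon^*u\|_{C^{l,\beta}(\breve B_2)}$ over M\"obius parametrizations $\Phi_\epsilon$ whose images lie in $V$; likewise for open subsets of $M$ and $\mathbb H$. Since $(\Phi_\epsilon\circ\cdot)^*$ commutes with composition, the two identities will follow once we match up the relevant families of parametrizations.

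For part (a), the key observation is that the hypothesis \eqref{epsilon-estimate} forces $\overline U_{i,\epsilon^2}\subset \overline U_{i,c}$, so that any M\"obius parametrization $\Phi\colon \breve B_2\to M$ with $\Phi(\breve B_2)\subset E_c$ automatically satisfies $\Phi(\breve B_2)\subset M\setminus(\overline U_{1,\epsilon^2}\cup \overline U_{2,\epsilon^2})$. Hence $\iota_\epsilon\circ \Phi=\pi_\epsilon\circ \Phi$ is a M\"obius parametrization of $M_\epsilon$ of the first type listed in \S\ref{subsec:functions-on-Me}, with image in $\iota_\epsilon(E_c)$. Conversely, because $\iota_\epsilon$ is an embedding, any M\"obius parametrization of $M_\epsilon$ with image in $\iota_\epsilon(E_c)$ either arises this way directly, or is of the form $\Psi_\epsilon\circ\breve\Phi$; in the latter case the discussion following the construction of the parametrizations shows that $\Psi_\epsilon\circ\breve\Phi=\pi_\epsilon\circ\Phi_p$ for some M\"obius parametrization $\Phi_p$ of $M$ whose image lies in $E_c$. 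Since $(\iota_\epsilon\circ\Phi)^*u=\Phi^*(\iota_\epsilon^*u)$, the two suprema are taken over the same quantities, giving the equality.

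Part (b) is proved identically, using instead the second family of parametrizations $\Psi_\epsilon\circ\breve\Phi$. Every M\"obius parametrization $\breve\Phi\colon \breve B_2\to\mathbb H$ with $\breve\Phi(\breve B_2)\subset A_c$ lifts, via composition with $\Psi_\epsilon$, to a M\"obius parametrization of $M_\epsilon$ with image in $\Psi_\epsilon(A_c)$, and conversely every M\"obius parametrization of $M_\epsilon$ with image in $\Psi_\epsilon(A_c)$ is of this form (in the transitional region where both descriptions apply, they literally coincide as maps $\breve B_2\to M_\epsilon$). The relation $(\Psi_\epsilon\circ\breve\Phi)^*u=\breve\Phi^*(\Psi_\epsilon^*u)$ then yields the claimed equality.

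There is no real obstacle here: the two identities are genuine equalities rather than equivalences precisely because the M\"obius parametrizations of $M_\epsilon$ were arranged in \S\ref{subsec:functions-on-Me} to be push-forwards of those on $M$ and $\mathbb H$ under $\iota_\epsilon$ and $\Psi_\epsilon$ respectively. The only point requiring attention is the transitional region, and the cited coincidence of the two types of parametrizations handles it.
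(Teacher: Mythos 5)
The paper gives no proof of this lemma beyond the remark ``The following is immediate,'' and your argument spells out exactly what makes it immediate: the M\"obius parametrizations of $M_\epsilon$ listed in \S\ref{subsec:functions-on-Me} are, by construction, precisely the pushforwards under $\pi_\epsilon$ and $\Psi_\epsilon$ of M\"obius parametrizations of $M$ and $\mathbb H$, and the ambiguity in the transitional region is handled by the coincidence of the two types noted just before the lemma. The only small caveat is that you take the $C^{l,\beta}(U)$ norm to be a supremum over parametrizations whose images lie entirely in $U$, whereas the paper's convention (``appropriately restricting the domains of the M\"obius parametrizations'') is a supremum over all parametrizations with domains restricted to $\Phi^{-1}(U)$; the identity holds under either reading, and the mechanism you describe is the right one.
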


Notice that $\Psi_\epsilon^*\rho_\epsilon \approx \epsilon y$ becomes degenerate as $\epsilon \to 0$.
In order to avoid difficulties associated with this degeneracy, we define weighted H\"older spaces and intermediate spaces on $M_\epsilon$ using an alternate defining function $\tilde\rho_\epsilon$ that scales better in the neck as $\epsilon\to 0$.
In order to accomplish this, let
$\psi\colon (0,\infty) \to (0,1]$ be a smooth, nondecreasing function such that
\begin{equation*}
\psi(x) = \begin{cases}
2x & \text{ if } 0< x \leq \frac14
\\
1 & \text{ if } x \geq \frac34.
\end{cases}
\end{equation*}
We subsequently define a smooth function $\omega_\epsilon$ on $\overline M_\epsilon$ by setting $\omega_\epsilon = 1$ outside $\Psi_\epsilon(A_\epsilon)$ and requiring that
\begin{equation}
\label{define-w-epsilon}
\Psi_\epsilon^*\omega_\epsilon = \psi(\epsilon r + \epsilon/r)
\end{equation}
on $A_\epsilon$.
Note that
\begin{equation}
\label{omega-epsilon-cases}
\Psi_\epsilon^*\omega_\epsilon
=\begin{cases}
2\left(\epsilon r + \frac{\epsilon}{r}\right) & \text{ on }A_{8\epsilon},
\\
1 & \text{ on } A_\epsilon \setminus A_{4\epsilon/3}.
\end{cases}
\end{equation}
Note also that in preferred background coordinates
$\Theta_i = (\theta, \rho)$ we have $\pi_\epsilon^*\omega_\epsilon = \psi(|(\theta, \rho)| + \epsilon^2 |(\theta,\rho)|^{-1})$, while outside the domain of those coordinates we have $\pi_\epsilon^*\omega_\epsilon =1$.
Thus on $E_c$ we have $\iota_\epsilon^*\omega_\epsilon = \widetilde \omega_\epsilon \approx 1$.

With the function $\omega_\epsilon$ in hand we define 
\begin{equation}
\label{define-tilde-rho}
\tilde\rho_\epsilon = \rho_\epsilon / \omega_\epsilon.
\end{equation}
Direct computation shows that both $\rho_\epsilon$ and $\omega_\epsilon$, and hence $\tilde\rho_\epsilon$, satisfy the scaling hypotheses \eqref{scaling-hypotheses}.
Furthermore, for each fixed $c$, on $A_c$ we have
\begin{equation}
\label{rho-tilde-equivalence}
\Psi_\epsilon^*\tilde\rho_\epsilon = \frac{yF}{2\left(r + \frac{1}{r}\right)} \approx  yF \approx  y \approx  \breve\rho,
\end{equation}
together with analogous uniform estimates for all derivatives of $\Psi_\epsilon^*\tilde\rho_\epsilon$, while on $E_c$ we have $\iota_\epsilon^*\tilde\rho_\epsilon \approx \rho$.
Combining this with Lemma \ref{lemma:pullback-norms} yields the following.
\begin{lemma}
\label{lemma:pullback-weighted-norms}
For any $l$, $\beta$, and $\delta$ we have
\begin{enumerate}
\item $\|u\|_{C^{l,\beta}_\delta (\iota_\epsilon(E_c);\tilde\rho_\epsilon)} 
\approx \|\iota_\epsilon^*u\|_{C^{l,\beta}_\delta(E_c)}$,

\item $\|u\|_{C^{l,\beta}_\delta(\Psi_\epsilon(A_c);\tilde\rho_\epsilon)} 
\approx \|\Psi_\epsilon^*u\|_{C^{l,\beta}_\delta(A_c)}$.
\end{enumerate}
\end{lemma}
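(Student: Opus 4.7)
The plan is to reduce everything to the unweighted pullback identities of Lemma \ref{lemma:pullback-norms} together with the weight-function equivalence part (b) of Lemma \ref{lemma:basic-weight-function-equivalence}. The two assertions are structurally parallel, so I will focus on (a); (b) goes through verbatim with $\iota_\epsilon$ replaced by $\Psi_\epsilon$ and $\rho$ replaced by $\breve\rho$.

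First, unpack the definition of the weighted norm:
\begin{equation*}
\|u\|_{C^{l,\beta}_\delta(\iota_\epsilon(E_c);\tilde\rho_\epsilon)}
= \|\tilde\rho_\epsilon^{-\delta} u\|_{C^{l,\beta}(\iota_\epsilon(E_c))}.
\end{equation*}
Pulling back along the embedding $\iota_\epsilon\colon E_c\to M_\epsilon$ and invoking Lemma \ref{lemma:pullback-norms}(a) gives
\begin{equation*}
\|\tilde\rho_\epsilon^{-\delta} u\|_{C^{l,\beta}(\iota_\epsilon(E_c))}
= \|(\iota_\epsilon^*\tilde\rho_\epsilon)^{-\delta}\,\iota_\epsilon^*u\|_{C^{l,\beta}(E_c)}
= \|\iota_\epsilon^*u\|_{C^{l,\beta}_\delta(E_c;\,\iota_\epsilon^*\tilde\rho_\epsilon)}.
\end{equation*}
It therefore suffices to prove that $\iota_\epsilon^*\tilde\rho_\epsilon$ and $\rho$ are equivalent weight functions on $E_c$, uniformly in $\epsilon$, and then invoke Lemma \ref{lemma:basic-weight-function-equivalence}(b).

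The equivalence itself is stated in the paragraph just before the lemma: since $\omega_\epsilon\equiv 1$ outside $\Psi_\epsilon(A_\epsilon)$, on the image $\iota_\epsilon(E_c)$ we have $\tilde\rho_\epsilon = \rho_\epsilon/\omega_\epsilon$ with $\omega_\epsilon$ bounded and bounded away from $0$ (and with uniformly controlled derivatives coming from the fixed cutoff $\psi$), while $\iota_\epsilon^*\rho_\epsilon = \rho$ by Lemma (a) of the preceding lemma. Thus $\iota_\epsilon^*\tilde\rho_\epsilon$ and $\rho$ are pointwise comparable on $E_c$ with constants independent of $\epsilon$. Both functions satisfy the scaling hypotheses \eqref{scaling-hypotheses} with constants independent of $\epsilon$ — for $\rho$ this is \cite[Lemma 3.3]{Lee-FredholmOperators}, and for $\iota_\epsilon^*\tilde\rho_\epsilon$ it follows because the M\"obius parametrizations of $M_\epsilon$ lying in $\iota_\epsilon(E_c)$ are of the form $\iota_\epsilon\circ \Phi$ with $\Phi$ a M\"obius parametrization of $M$, reducing the verification to one on $M$. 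Hence Lemma \ref{lemma:basic-weight-function-equivalence}(b) applies with a constant $C^\prime$ depending only on the comparability constants and the scaling constants, all of which are uniform in $\epsilon$, yielding
\begin{equation*}
\|\iota_\epsilon^*u\|_{C^{l,\beta}_\delta(E_c;\,\iota_\epsilon^*\tilde\rho_\epsilon)}
\approx \|\iota_\epsilon^*u\|_{C^{l,\beta}_\delta(E_c)}.
\end{equation*}
Combining the displays gives (a).

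For (b), one applies Lemma \ref{lemma:pullback-norms}(b) to replace $\|\cdot\|_{C^{l,\beta}(\Psi_\epsilon(A_c))}$ by $\|\Psi_\epsilon^*(\cdot)\|_{C^{l,\beta}(A_c)}$, and then uses \eqref{rho-tilde-equivalence}, which asserts precisely $\Psi_\epsilon^*\tilde\rho_\epsilon \approx \breve\rho$ together with the analogous uniform derivative bounds needed for the scaling hypotheses. As the previously noted equivalence of the weight functions $\breve\rho$, $y$, and $yF$ on $A_c$ is uniform in $\epsilon$ for $\epsilon<c^2$ by \eqref{annular-defining-functions} and \eqref{epsilon-estimate}, the constants in Lemma \ref{lemma:basic-weight-function-equivalence}(b) are again $\epsilon$-independent. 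There is no genuine obstacle here — the only point requiring care is the uniformity of the constants in $\epsilon$, which is ultimately a consequence of the fact that the cutoff $\psi$ in the definition \eqref{define-w-epsilon} of $\omega_\epsilon$ was chosen once and for all independently of $\epsilon$, so that all derivative bounds on $\omega_\epsilon$ in the relevant M\"obius charts are uniform.
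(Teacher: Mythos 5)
Your proof is correct and follows essentially the same route as the paper: reduce to the unweighted pullback identities of Lemma~\ref{lemma:pullback-norms} and then invoke the weight-function equivalences $\iota_\epsilon^*\tilde\rho_\epsilon\approx\rho$ on $E_c$ and $\Psi_\epsilon^*\tilde\rho_\epsilon\approx\breve\rho$ on $A_c$ (from \eqref{rho-tilde-equivalence} and the surrounding text) via Lemma~\ref{lemma:basic-weight-function-equivalence}(b). The only blemish is your phrase ``since $\omega_\epsilon\equiv 1$ outside $\Psi_\epsilon(A_\epsilon)$'': $\iota_\epsilon(E_c)$ does meet $\Psi_\epsilon(A_\epsilon)$, so that clause alone does not justify $\iota_\epsilon^*\omega_\epsilon\approx 1$; the actual justification is the explicit formula $\pi_\epsilon^*\omega_\epsilon=\psi(|(\theta,\rho)|+\epsilon^2|(\theta,\rho)|^{-1})$ together with $|(\theta,\rho)|\ge c$ on $E_c$, as stated just before the lemma, which you do cite.
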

For any region $U\subset M_\epsilon$ we note that $C^{l,\beta}_\delta(U;\rho_\epsilon)$ and $C^{l,\beta}_\delta(U;\tilde\rho_\epsilon)$ coincide as sets; thus we only indicate the weight function $\tilde\rho_\epsilon$ if we are referring to the corresponding norms.

In order to define the intermediate spaces $\mathscr C^{k,\alpha;m}$ on $M_\epsilon$, we construct a family of smooth background metrics $\overline h_\epsilon$.
Recall the preferred background metric $\overline h$ defined in \S\ref{subsec:background-coord} and note that $h = \rho^{-2}\overline h$ satisfies 
$$
\alpha_{\epsilon, 1}^*h = \breve g = (\alpha_{\epsilon, 2}\circ\mathcal I)^*h.
$$
Thus $h$ descends to a metric $h_\epsilon$ on $M_\epsilon$ under the quotient map $\pi_\epsilon$.
We set $\overline h_\epsilon = \tilde\rho_\epsilon^2h_\epsilon$.
Note that on $A_c$ we have
\begin{equation*}
\Psi_\epsilon^*\overline h_\epsilon  = F^2 g_\text{E} \approx g_\text{E}.
\end{equation*}

Using $\overline h_\epsilon$ and $\tilde\rho_\epsilon$, we define the intermediate norm $\mathscr C^{k,\alpha;m}$ of a tensor field $u$ having weight $r$ as follows:
\begin{equation*}
\|u\|_{\mathscr C^{k,\alpha;m}(M_\epsilon;\tilde\rho_\epsilon)} = \sum_{j=0}^m \|^\epsilon\overline\nabla{}^j u\|_{C^{k-j,\alpha}_{r+j}(M_\epsilon,\tilde\rho_\epsilon)},
\end{equation*}
where ${}^\epsilon\overline\nabla$ is the connection associated to $\overline h_\epsilon$.

The following lemma follows directly from the various definitions involved.
\begin{lemma}
\label{lemma:pullback-curly-norms}
For any $l$, $\beta$, and $m$ we have
\begin{enumerate}
\item $\|u\|_{\mathscr C^{l,\beta;m} (\iota_\epsilon(E_c);\tilde\rho_\epsilon)} 
\approx \|\iota_\epsilon^*u\|_{\mathscr C^{l,\beta;m}(E_c)}$,

\item $\|u\|_{\mathscr C^{l,\beta;m}(\Psi_\epsilon(A_c);\tilde\rho_\epsilon)} 
\approx \|\Psi_\epsilon^*u\|_{\mathscr C^{l,\beta;m}(A_c)}$.
\end{enumerate}
\end{lemma}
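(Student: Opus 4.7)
The plan is to reduce the claim, one covariant derivative at a time, to Lemma \ref{lemma:pullback-weighted-norms}. Unwinding the definition \eqref{define-curly-norm} of the $\mathscr C^{l,\beta;m}$ norm expresses each side as a sum over $j=0,\dots,m$ of weighted H\"older norms of $j$-fold covariant derivatives; Lemma \ref{lemma:pullback-weighted-norms} already handles the weighted H\"older portion of each summand, so all that remains is to show that the $j$-fold covariant derivatives taken with respect to ${}^\epsilon\overline\nabla$ (on $M_\epsilon$) and with respect to $\overline\nabla$ on $E_c$, or ${}^E\nabla$ on $A_c$, differ by terms whose norms are dominated uniformly in $\epsilon$ by the $\mathscr C^{l,\beta;m}$ norm of the pulled-back tensor.

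The key computation is that the two connections are related by a contorsion tensor that is uniformly smooth in $\epsilon$. On the exterior, since $\iota_\epsilon^* h_\epsilon = h$ and $\iota_\epsilon^*\rho_\epsilon = \rho$, we have $\iota_\epsilon^*\overline h_\epsilon = \tilde\omega_\epsilon^{-2}\overline h$ with $\tilde\omega_\epsilon := \iota_\epsilon^*\omega_\epsilon$. In preferred background coordinates $\tilde\omega_\epsilon = \psi\bigl(|(\theta_i,\rho)| + \epsilon^2 /|(\theta_i,\rho)|\bigr)$, and since $|(\theta_i,\rho)|\geq c$ while $\epsilon < c^2$ on $E_c$, the argument of $\psi$ lies in a compact subinterval on which $\psi$ is smooth. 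Consequently $\tilde\omega_\epsilon$ is bounded above and below by positive constants and has all derivatives bounded uniformly in $\epsilon$, so the contorsion tensor $T_\epsilon := {}^\epsilon\overline\nabla - \overline\nabla$, built algebraically from $\tilde\omega_\epsilon^{\pm 1}$ and $\overline\nabla\tilde\omega_\epsilon$, is uniformly $C^\infty$-bounded on $E_c$. In the neck, $\Psi_\epsilon^*\overline h_\epsilon$ is an $\epsilon$-independent smooth metric uniformly equivalent to $g_\text{E}$ on $A_c$, so the corresponding contorsion $\tilde T_\epsilon := {}^\epsilon\overline\nabla - {}^E\nabla$ is likewise uniformly $C^\infty$-bounded.

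With these estimates in hand, I would iterate the identity ${}^\epsilon\overline\nabla = \overline\nabla + T_\epsilon$ to expand $\iota_\epsilon^*({}^\epsilon\overline\nabla^j u)$ as a linear combination of tensor products of $T_\epsilon, \overline\nabla T_\epsilon, \dots, \overline\nabla^{j-1} T_\epsilon$ with $\overline\nabla^{j'}(\iota_\epsilon^* u)$ for $0\leq j'\leq j$. Because the $T_\epsilon$-factors are uniformly smooth, the Banach-algebra and contraction property of the intermediate spaces (Proposition \ref{prop:basic-inclusions}(a)) controls the $C^{l-j,\beta}_{r+j}(E_c)$ norm of each term by $\|\iota_\epsilon^*u\|_{\mathscr C^{l,\beta;m}(E_c)}$ times an $\epsilon$-independent constant; summing over $j$ yields one direction of the equivalence. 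The reverse follows by writing $\overline\nabla = {}^\epsilon\overline\nabla - T_\epsilon$ and running the same expansion. Part (b) is proved identically with $\iota_\epsilon, E_c, \overline\nabla$ replaced by $\Psi_\epsilon, A_c, {}^E\nabla$.

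The only step requiring real care is the uniform $C^\infty$ control of the contorsion tensors. On $A_c$ this is immediate since $A_c$ is $\epsilon$-independent and $\Psi_\epsilon^*\overline h_\epsilon$ depends only on the fixed function $F$; on $E_c$ it rests on the explicit form of $\omega_\epsilon$ together with the standing hypothesis \eqref{epsilon-estimate}. Once these uniform bounds are established, the rest of the argument is a bookkeeping exercise combining Lemma \ref{lemma:pullback-weighted-norms} with the algebra structure of the $\mathscr C$-spaces.
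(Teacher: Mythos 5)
Your proof is correct and supplies the details that the paper dispenses with in one line (``follows directly from the various definitions involved''). The reduction to Lemma \ref{lemma:pullback-weighted-norms} plus a contorsion-tensor comparison between ${}^\epsilon\overline\nabla$ and $\overline\nabla$ (resp.\ ${}^E\nabla$) is exactly the right mechanism, and your verification that $\iota_\epsilon^*\overline h_\epsilon = \tilde\omega_\epsilon^{-2}\overline h$ with $\tilde\omega_\epsilon$ uniformly smooth and bounded away from zero on $E_c$, and that $\Psi_\epsilon^*\overline h_\epsilon$ is an $\epsilon$-independent metric uniformly comparable to $g_\text{E}$ on $A_c$, are the two substantive points that make the argument go. One small citation mismatch worth noting: the estimate on each term $(\overline\nabla^{a_1}T_\epsilon)\otimes\cdots\otimes(\overline\nabla^{a_p}T_\epsilon)\otimes\overline\nabla^{j'}(\iota_\epsilon^*u)$ in the weighted H\"older norm $C^{l-j,\beta}_{r+j}$ is really driven by the multiplicative structure of the $C^{k,\alpha}_\delta$ spaces together with Lemma \ref{lemma:barM-M}(a), which converts the uniform $C^\infty(\overline{E_c})$-bound on $T_\epsilon$ (a weight-$1$ tensor, bounded at the boundary) into a $C^{l,\alpha}_1(E_c)$-bound; Proposition \ref{prop:basic-inclusions}(a) concerns the algebra structure of the $\mathscr C^{k,\alpha;m}$-spaces themselves and is not quite the lemma doing the work here. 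This is a labeling issue, not a gap, and the remainder of the bookkeeping is sound.
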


\section{The spliced metrics}
\label{metric:details}
For each $0<\epsilon< {1}/{64}$, we  define the \Defn{spliced metric} $\lambda_\epsilon$ to be the metric on $M_\epsilon$ that agrees with $(\pi_\epsilon)_*g$ away from the neck $\Psi_\epsilon(A_\epsilon)$, while on $\Psi_\epsilon(A_\epsilon)$ it satisfies
\begin{equation}\label{def-g}
\Psi_\epsilon^*\lambda_\epsilon^{-1} =\chi(r) [(\alpha_{\epsilon,1})^*g]^{-1} 
+
\chi\left(1/r\right) [(\alpha_{\epsilon,2}\circ \mathcal{I})^* g]^{-1};
\end{equation}
here $\chi$ is the cutoff in \eqref{phi-eqn}.
The reason for splicing cometrics rather than metrics is that it is easier to verify that the asymptotic hyperbolicity property holds if we work with cometrics; see Proposition \ref{prop:WAH} below.
We set $\overline\lambda_\epsilon = \rho_\epsilon^2 \lambda_\epsilon$.
Note that
\begin{equation}
\label{exterior-metric}
\Psi_\epsilon^*\lambda_\epsilon
=
\begin{cases} 
\alpha_{\epsilon,1}^*g & \text{ where }r>2,
\\
(\alpha_{\epsilon,2}\circ\mathcal I)^*g & \text{ where } r<1/2.
\end{cases}
\end{equation}

In order to establish estimates for the spliced metric $\lambda_\epsilon$, we first analyze the pullback metrics $\alpha_{\epsilon,i}^*g$.
Following \cite{ILS-Gluing}, we write
\begin{equation*}
\alpha_{\epsilon,i}^*g = y^{-2}(g_\text{E} + m_{\epsilon, i})
\end{equation*}
for tensors $m_{\epsilon, i}$; here $g_\text{E}$ represents the Euclidean metric on the half space.
We furthermore define the contravariant $2$-tensor fields $j_{\epsilon,i}$ by
\begin{equation*}
(g_\text{E} + m_{\epsilon,i})^{-1} = g_\text{E}^{-1} + j_{\epsilon,i}.
\end{equation*}

Recall that throughout we let $c$ be a fixed constant less than $1/8$ and assume that $\epsilon$ satisfies \eqref{epsilon-estimate}.
\begin{prop}
\label{prop:estimate-m}
The tensors $m_{\epsilon, i}$  and $j_{\epsilon, i}$ are in $\mathscr C^{k,\alpha;2}(A_c)$ and satisfy
\begin{enumerate}
\item $\|m_{\epsilon, i}\|_{\mathscr C^{k,\alpha;2}(A_c)} \lesssim \epsilon$, 

\item $\|j_{\epsilon, i}\|_{\mathscr C^{k,\alpha;2}(A_c)} \lesssim \epsilon$.
\end{enumerate}

\end{prop}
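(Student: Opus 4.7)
The plan is to relate $m_{\epsilon,i}$ to the pullback of a tensor on $M$ that vanishes at the gluing point $p_i$, and then extract an additional factor of $\epsilon$ beyond the naive scaling by exploiting this vanishing. In the preferred background coordinates $\Theta_i$ near $p_i$, the defining property $\overline g|_{p_i} = \delta_{ab}d\Theta_i^a d\Theta_i^b$ of these coordinates means that $\overline g$ agrees with the preferred background metric $\overline h$ at $p_i$, so $v := \overline g - \overline h$ is an element of $\mathscr C^{k,\alpha;2}(M)$ that vanishes at $p_i$. Using $\alpha_{\epsilon,i}^*\rho = \epsilon y$ and $\alpha_{\epsilon,i}^*\overline h = \epsilon^2 g_E$ (the latter since $\alpha_{\epsilon,i}^*(d\Theta_i^a) = \epsilon\, dX^a$), a direct calculation gives
\[
m_{\epsilon,i} = \epsilon^{-2}\alpha_{\epsilon,i}^*v.
\]
A naive application of the scaling estimate \eqref{norm-scaling} (with weight $r = 2$) to $v$ only yields $\|\alpha_{\epsilon,i}^*v\|_{\mathscr C^{k,\alpha;2}(A_c;y)} \lesssim \epsilon^2$ and hence $\|m_{\epsilon,i}\|_{\mathscr C^{k,\alpha;2}(A_c)} \lesssim 1$; the improvement to the desired $O(\epsilon)$ must come from the vanishing $v|_{p_i} = 0$.

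To capture this improvement, I would apply the integral form of Taylor's theorem in preferred coordinates: write $v_{ab}(\Theta) = \Theta^c\int_0^1 (\partial_c v_{ab})(t\Theta)\, dt$, so that pulling back produces an explicit factor of $\Theta^c\circ\alpha_{\epsilon,i} = \epsilon X^c$ and thus
\[
\alpha_{\epsilon,i}^*v = \epsilon^3\, X^c\left(\int_0^1 \partial_c v_{ab}(t\epsilon X)\,dt\right) dX^a\, dX^b.
\]
Since the coordinate fields $\partial_{\Theta_i^c}$ are smooth on $\overline M$, the $\mathscr C^{k,\alpha;2}$ regularity of $v$ controls the component derivatives $\partial_c v_{ab}$ and gives an analogue of \eqref{norm-scaling} for the integrand, uniformly in $\epsilon$; because $|X|$ is bounded on the fixed annulus $A_c$, the explicit prefactor $\epsilon X^c$ then yields $\|\alpha_{\epsilon,i}^*v\|_{\mathscr C^{k,\alpha;2}(A_c;y)} \lesssim \epsilon^3$. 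Dividing by $\epsilon^2$, and invoking the equivalence of the weight functions $y$ and $\breve\rho$ on $A_c$ from \eqref{annular-defining-functions} together with Lemma \ref{lemma:basic-weight-function-equivalence}, gives $\|m_{\epsilon,i}\|_{\mathscr C^{k,\alpha;2}(A_c)} \lesssim \epsilon$.

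The bound for $j_{\epsilon,i}$ is then algebraic. The identity $j_{\epsilon,i} = -g_E^{-1}\,m_{\epsilon,i}\,(g_E + m_{\epsilon,i})^{-1}$, combined with the Banach algebra property of $\mathscr C^{k,\alpha;2}$ from Proposition \ref{prop:basic-inclusions}, reduces the estimate to uniformly controlling $(g_E + m_{\epsilon,i})^{-1}$. Since $g_E^{-1}$ has uniformly bounded $\mathscr C^{k,\alpha;2}$ norm on $A_c$ and $\|m_{\epsilon,i}\|_{\mathscr C^{k,\alpha;2}(A_c)} \lesssim \epsilon \ll 1$ for $\epsilon$ sufficiently small, the Neumann series $(I + g_E^{-1} m_{\epsilon,i})^{-1} = \sum_{n\geq 0}(-g_E^{-1} m_{\epsilon,i})^n$ converges in $\mathscr C^{k,\alpha;2}$, yielding $\|j_{\epsilon,i}\|_{\mathscr C^{k,\alpha;2}(A_c)} \lesssim \|m_{\epsilon,i}\|_{\mathscr C^{k,\alpha;2}(A_c)} \lesssim \epsilon$. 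The main obstacle in the whole argument is the Taylor step for $m_{\epsilon,i}$: extracting the extra $\epsilon$ requires that the vanishing $v|_{p_i} = 0$ translate into an honest gain in each of the three weighted seminorms defining $\mathscr C^{k,\alpha;2}$, and this in turn requires a careful interplay between the tangential and normal smooth vector fields on $\overline M$ controlled by $\mathscr C^{k,\alpha;2}$ and the structure of the preferred background coordinates at $p_i$.
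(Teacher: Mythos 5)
Your proof is correct and takes essentially the same route as the paper's. Both hinge on the same key observation — that the preferred background coordinates force the rescaled tensor to vanish at the origin ($m_{\epsilon,i}(0,0) = 0$, equivalently $v = \overline g - \overline h$ vanishes at $p_i$) — and both then couple this vanishing with the scaling estimate \eqref{norm-scaling} to gain the extra power of $\epsilon$. The only cosmetic differences are technical vehicles: you extract the $\epsilon$ via the integral form of Taylor's theorem, whereas the paper does it more compactly via the mean value theorem, bounding $\|m_{\epsilon,i}\|_{C^{k,\alpha}_2(B_{1/c})}$ by $\|{}^\text{E}\nabla m_{\epsilon,i}\|_{C^{k-1,\alpha}_2(B_{1/c})}$ (so that only the $j=1,2$ scaling exponents appear, each of which already carries the needed $\epsilon$); and for $j_{\epsilon,i}$ you invoke a Neumann series in the $\mathscr C^{k,\alpha;2}$ Banach algebra (valid once $\epsilon$ is small enough to beat the implicit constants), while the paper invokes uniform invertibility directly from $\overline g \in C^{1,1}(\overline M)$, which avoids any further shrinkage of $\epsilon$. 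The MVT route is a bit cleaner in practice because it sidesteps the need to track the weighted $\mathscr C^{k,\alpha;2}$ regularity of the Taylor remainder integrand, which is exactly the "careful interplay" you flag as the main remaining obstacle.
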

\begin{proof}
Note that $\overline g = \rho^2g$ satisfies
\begin{equation}
\label{pull-bar-g}
\alpha_{\epsilon,i}^*\overline g = \epsilon^2 (g_\text{E} + m_{\epsilon, i}).
\end{equation}
Since $\overline g\in \mathscr C^{k,\alpha;2}(M)$ and $g_\text{E} \in \mathscr C^{k,\alpha;2}(B_{1/c})$, we immediately have $m_{\epsilon, i}\in \mathscr C^{k,\alpha;2}(B_{1/c})$.
This inclusion, however, does not come with an estimate uniform in $\epsilon$.

The preferred background coordinates are constructed so that $m_{\epsilon, i} =0$ at $(x,y) = (0,0)$.
Thus the mean value theorem implies that
\begin{equation*}
\|m_{\epsilon, i}\|_{C^0_2(B_{1/c})}
\lesssim
\|{}^\text{E} \nabla m_{\epsilon, i}\|_{C^0_2(B_{1/c})}.
\end{equation*}
As a consequence
\begin{equation*}
\|m_{\epsilon, i}\|_{C^{k,\alpha}_2(B_{1/c})} 
\lesssim 
\|{}^\text{E} \nabla m_{\epsilon, i}\|_{C^{k-1,\alpha}_2(B_{1/c})}.
\end{equation*}
Using  this, \eqref{pull-bar-g}, and \eqref{norm-scaling} we find
\begin{align*}
\|m_{\epsilon, i}&\|_{\mathscr C^{k,\alpha;2}(A_c)}
\\
&\lesssim 
\|^\text{E}\nabla m_{\epsilon, i}\|_{C^{k-1,\alpha}_2(B_{1/c})}
\\ &\qquad
+ \|^\text{E}\nabla m_{\epsilon, i}\|_{C^{k-1,\alpha}_3(B_{1/c})}
+ \|({}^\text{E}\nabla)^2 m_{\epsilon, i}\|_{C^{k-2,\alpha}_4(B_{1/c})}
\\
&\lesssim
 \|^\text{E}\nabla m_{\epsilon, i}\|_{C^{k-1,\alpha}_3(B_{1/c};y)}
+ \|({}^\text{E}\nabla)^2 m_{\epsilon, i}\|_{C^{k-2,\alpha}_4(B_{1/c};y)}
\\
&=
 \epsilon^{-2} \|^\text{E}\nabla \alpha_{\epsilon, i}^*\overline g\|_{C^{k-1,\alpha}_3(B_{1/c};y)}
+ \epsilon^{-2}\|({}^\text{E}\nabla)^2 \alpha_{\epsilon, i}^*\overline g\|_{C^{k-2,\alpha}_4(B_{1/c};y)}
\\
&\lesssim \epsilon \|\overline g\|_{\mathscr C^{k,\alpha;2}(M)},
\end{align*}
which establishes the first claim.
The second claim follows from similar reasoning, together with the fact that $\overline g\in C^{1,1}(\overline M)$ (see \cite[Lemma 2.2]{WAH}), from which we easily obtain a uniform invertibility bound.
\end{proof}

\begin{prop}
\label{prop:vanishing-j}
Along $y=0$ we have 
$$j_{\epsilon, i}(dy,dy) =0
\quad\text{ and }\quad
(\mathcal I^*j_{\epsilon, i})(dy, dy)=0.$$
\end{prop}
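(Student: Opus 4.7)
The key input is the weak asymptotic hyperbolicity of $g$, namely $|d\rho|_{\overline g}^2 = \overline g^{-1}(d\rho, d\rho) = 1 + O(\rho)$. In particular, the scalar $\overline g^{-1}(d\rho, d\rho)$ takes the value $1$ along $\partial M$. Both claims will follow by pulling this scalar identity back to $\mathbb H$ in two different ways.

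For the first claim, I pull the identity back via $\alpha_{\epsilon,i}$. By naturality of tensor pullback,
\[\alpha_{\epsilon,i}^*\bigl[\overline g^{-1}(d\rho, d\rho)\bigr] = \bigl(\alpha_{\epsilon,i}^*\overline g^{-1}\bigr)\bigl(d(\alpha_{\epsilon,i}^*\rho), d(\alpha_{\epsilon,i}^*\rho)\bigr).\]
I use $\alpha_{\epsilon,i}^*\rho = \epsilon y$ together with the inverted relation $\alpha_{\epsilon,i}^*\overline g^{-1} = \epsilon^{-2}(g_\text{E}^{-1} + j_{\epsilon,i})$ (obtained from $\alpha_{\epsilon,i}^*\overline g = \epsilon^2(g_\text{E} + m_{\epsilon,i})$) to simplify the right-hand side to $(g_\text{E}^{-1} + j_{\epsilon,i})(dy, dy) = 1 + j_{\epsilon,i}(dy, dy)$. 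Restricting to $\{y = 0\}$, which corresponds to $\rho = 0$, the left-hand side equals $1$, hence $j_{\epsilon,i}(dy, dy)\big|_{y = 0} = 0$.

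For the second claim, I exploit the naturality identity $(\mathcal I^* T)(\mathcal I^* \alpha, \mathcal I^* \beta) = \mathcal I^*[T(\alpha, \beta)]$ with $T = j_{\epsilon,i}$ and $\alpha = \beta = dy$. A direct computation of $\mathcal I^* dy = d(y/(|x|^2 + y^2))$ shows that at a boundary point $(x_0, 0)$ with $|x_0| > 0$, the $dx^a$ contributions all carry a factor of $y$ and hence vanish, leaving $\mathcal I^* dy\big|_{(x_0, 0)} = |x_0|^{-2}\, dy\big|_{(x_0, 0)}$. Multilinearity then gives
\[(\mathcal I^* j_{\epsilon,i})(dy, dy)\big|_{(x_0, 0)} = |x_0|^4 \cdot j_{\epsilon,i}(dy, dy)\big|_{\mathcal I(x_0, 0)} = 0,\]
where the final equality uses that $\mathcal I(x_0, 0) = (x_0/|x_0|^2, 0)$ again lies on $\{y = 0\}$, so the first claim applies.

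There is no substantive obstacle: the proof is essentially bookkeeping for pullbacks, combining the WAH estimate at the source with the fact that $\mathcal I$ preserves the boundary $\{y = 0\}$ of $\mathbb H$ away from the origin (which is excluded from the annular regions on which the splicing is performed).
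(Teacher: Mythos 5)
Your proof is correct and follows essentially the same route as the paper: for the first claim you both pull back $|d\rho|^2_{\overline g} = 1$ on $\partial M$ via $\alpha_{\epsilon,i}$ using \eqref{pull-bar-g}, and for the second claim you both use that $\mathcal I$ preserves $\{y=0\}$ together with the fact that, at a boundary point, $\mathcal I^*dy$ is a nonzero scalar multiple of $dy$ (the paper phrases this as $\mathcal I_*dy = r^{-2}dy + O(y)$). The only difference is that you spell out the naturality bookkeeping more explicitly, which the paper leaves implicit.
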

\begin{proof}
In view of \eqref{pull-bar-g} we have
\begin{equation*}
\alpha_{\epsilon, i}^*(|d\rho|^2_{\bar g})
= |\epsilon dy|^2_{\epsilon^2(g_\text{E} + m_{\epsilon, i})}
= |dy|^2_{g_\text{E} + m_{\epsilon,i}}
= (g_\text{E}^{-1} + j_{\epsilon,i})(dy, dy).
\end{equation*}
Thus the assumption that $|d\rho|_{\overline g} = 1$ along $\partial M$ implies that  $ j_{\epsilon,i}(dy, dy) =0$ where $y=0$.
The second claim follows from the first and the fact that $\mathcal I_*dy = r^{-2}dy + O(y)$.
\end{proof}

We now obtain uniform bounds on the metric $\lambda_\epsilon$; these give rise to uniform bounds for geometric differential operators on $M_\epsilon$.
We subsequently obtain stronger estimates in the neck region $\Psi_\epsilon(A_c)$.

\begin{prop} 
\label{prop:lambda-estimates}
\hfill
\begin{enumerate}

\item  $\|\lambda_\epsilon^{-1}\|_{C^{k,\alpha}(M_\epsilon)}\lesssim 1$.

\item  $\|\lambda_\epsilon \|_{C^{k,\alpha}(M_\epsilon)}\lesssim 1$.

\end{enumerate}
\end{prop}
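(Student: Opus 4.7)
The plan is to estimate, uniformly in $\epsilon$, the pullback of $\lambda_\epsilon^{\pm 1}$ under each Möbius parametrization of $M_\epsilon$. By \S\ref{subsec:functions-on-Me} these fall into two types, $\pi_\epsilon\circ\Phi$ and $\Psi_\epsilon\circ\breve\Phi$, with the coincidence in the transitional region reducing the work to handling: (a) type-one parametrizations whose image avoids the splicing support $\{1/2<r<2\}$ in the neck, where $\lambda_\epsilon$ equals the pushforward of $g$ and so $(\pi_\epsilon\circ\Phi)^*\lambda_\epsilon = \Phi^*g$ is bounded by $\|g\|_{C^{k,\alpha}(M)}$; and (b) type-two parametrizations. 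In case (b) a further reduction is available: on the parts of $A_\epsilon$ with $r>2$ (or, symmetrically, $r<1/2$) the splicing formula \eqref{def-g} collapses to a single term $(\alpha_{\epsilon,i}^*g)^{-1}$, and $\Psi_\epsilon\circ\breve\Phi$ then factors as $\pi_\epsilon\circ\Theta_i^{-1}\circ(\mathcal S_\epsilon\circ\breve\Phi)$, where $\mathcal S_\epsilon\circ\breve\Phi$ is again a Möbius parametrization of $\mathbb H$; the bound $\|g\|_{C^{k,\alpha}(M)}$ applies once more. Therefore only the bounded splicing zone, contained in $A_c$ for any fixed $c<1/2$, requires a genuinely new argument.

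On the splicing zone I would derive the decomposition
\begin{equation*}
\Psi_\epsilon^*\lambda_\epsilon^{-1}
= y^2 g_\text{E}^{-1}
+ \chi(r)\,y^2 j_{\epsilon,1}
+ \chi(1/r)\,\mathcal I^*\bigl(y^2 j_{\epsilon,2}\bigr),
\end{equation*}
using $(\alpha_{\epsilon,i}^*g)^{-1} = y^2(g_\text{E}^{-1}+j_{\epsilon,i})$, the isometry identity $\mathcal I^*(y^2 g_\text{E}^{-1}) = y^2 g_\text{E}^{-1}$, and the partition of unity $\chi(r)+\chi(1/r)\equiv 1$ from \eqref{phi-eqn}. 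The leading term is the hyperbolic cometric, whose Möbius pullbacks on $\mathbb H$ are a fixed smooth tensor uniformly bounded on $\breve B_2$. For the correction terms, Proposition \ref{prop:estimate-m} together with the equivalence of the weight functions $\breve\rho$ and $y$ on $A_c$ yields $\|y^2 j_{\epsilon,i}\|_{C^{k,\alpha}(A_c)}\lesssim \epsilon$; the cutoffs $\chi(r),\chi(1/r)$ are smooth bounded functions of $r^2 = |x|^2+y^2$ with uniformly bounded intrinsic Hölder norms on $A_c$; and $\mathcal I$, being an isometry of $\breve g$, preserves intrinsic $C^{k,\alpha}$ norms on the inversion-invariant set $A_c$. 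Lemma \ref{lemma:basic-weight-function-equivalence} then gives the uniform bound $\|\lambda_\epsilon^{-1}\|_{C^{k,\alpha}(M_\epsilon)}\lesssim 1$ upon pulling back by any $\Psi_\epsilon\circ\breve\Phi$.

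For part (b) of the proposition, I would invert the matrix representation of $\Psi_\epsilon^*\lambda_\epsilon^{-1}$ in Cartesian components on each $\breve B_2$. On the splicing zone this matrix is an $O(\epsilon)$-perturbation of the uniformly positive-definite hyperbolic cometric; elsewhere it equals the pullback of $g^{-1}$, itself uniformly positive-definite by the asymptotic-hyperbolicity hypothesis on $g$. The determinant is therefore bounded below uniformly in $\epsilon$, and matrix inversion is smooth on such a bounded open set of symmetric positive-definite matrices, preserving $C^{k,\alpha}(\breve B_2)$ estimates with uniform constants. \textbf{The main technical care} lies in bookkeeping Cartesian tensor components against intrinsic Hölder and weighted norms under two families of Möbius parametrizations at once; this is precisely the bookkeeping that the framework of \S\ref{sec:function-spaces} and Proposition \ref{prop:estimate-m} have been set up to make routine.
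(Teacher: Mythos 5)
Your proposal is correct, and the broad skeleton matches the paper: reduce to the neck region $\Psi_\epsilon(A_c)$ since $\lambda_\epsilon$ equals $(\pi_\epsilon)_*g$ elsewhere, estimate the pullback of the cometric there, and deduce the bound on $\lambda_\epsilon$ itself from uniform invertibility of the cometric (supplied by Proposition \ref{prop:estimate-m}). Where you diverge is in part (a): you expand $\Psi_\epsilon^*\lambda_\epsilon^{-1}$ as the exact hyperbolic cometric $y^2g_\text{E}^{-1}$ plus the $O(\epsilon)$ corrections $\chi(r)\,y^2 j_{\epsilon,1}$ and $\chi(1/r)\,\mathcal I^*(y^2 j_{\epsilon,2})$, invoking Proposition \ref{prop:estimate-m} and weight-function equivalence to bound the corrections. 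The paper does not perform this decomposition for part (a); it simply estimates each term of \eqref{def-g} directly, noting that the cutoffs have bounded $C^{k,\alpha}(A_c)$ norm and that $\|\alpha_{\epsilon,i}^*g^{-1}\|_{C^{k,\alpha}(A_c)}$ and $\|(\alpha_{\epsilon,2}\circ\mathcal I)^*g^{-1}\|_{C^{k,\alpha}(A_c)}$ are controlled by $\|g^{-1}\|_{C^{k,\alpha}(M)}$, which is finite because $\overline g{}^{-1}\in\mathscr C^{k,\alpha;2}(M)$. The paper's route is thus slightly more economical (Proposition \ref{prop:estimate-m} is only invoked for the invertibility needed in part (b)), while your route yields, as a by-product, the stronger statement that $\Psi_\epsilon^*\lambda_\epsilon^{-1}$ is an $O(\epsilon)$ perturbation of $\breve g^{-1}$ in $C^{k,\alpha}$; the paper records that refinement separately in Proposition \ref{prop:WAH}\eqref{me-small}. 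One small bookkeeping note: the lemma you want for transferring the estimate from $A_c$ back to $\Psi_\epsilon(A_c)\subset M_\epsilon$ is Lemma \ref{lemma:pullback-norms} rather than Lemma \ref{lemma:basic-weight-function-equivalence}, which governs equivalence of weight functions rather than pullback under $\Psi_\epsilon$.
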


\begin{proof}
Due to \eqref{exterior-metric} it suffices to establish the estimates on $\Psi_\epsilon(A_{1/2})$.
Since $A_{1/2}\subset A_c$ by \eqref{epsilon-estimate}, we use Lemma \ref{lemma:pullback-norms} and \eqref{def-g} to estimate 
\begin{align*}
\|\lambda_\epsilon^{-1}\|_{C^{k,\alpha}(\Psi_\epsilon(A_{1/2}))}
&\leq 
\|\Psi_\epsilon^*\lambda_\epsilon^{-1}\|_{C^{k,\alpha}(A_{c})}
\\
&\lesssim 
\|\alpha_{1,\epsilon}^*g^{-1}\|_{C^{k,\alpha}(A_{c})}
\\
&\quad+
\|(\alpha_{2,\epsilon}\circ \mathcal I)^*g^{-1}\|_{C^{k,\alpha}(A_{c})}
\\
&\lesssim \| g^{-1}\|_{C^{k,\alpha}(M)},
\end{align*}
which is finite due to the fact that $\overline g = \rho^2 g\in \mathscr C^{k,\alpha;2}(M)$, and therefore $\overline g{}^{-1} \in \mathscr C^{k,\alpha;2}(M)$.

Proposition \ref{prop:estimate-m} implies that $\Psi_\epsilon^*\lambda_\epsilon$ is uniformly invertible on $A_c$, and thus the desired bound on $\lambda_\epsilon$ follows from the corresponding bound on $\lambda_\epsilon^{-1}$.
\end{proof}

The bounds in the previous proposition imply estimates for differential operators arising from $\lambda_\epsilon$.
We say that a differential operator $ P =  P[g]$ is a \Defn{geometric operator of order $l$} determined by the metric $g$ if in any coordinate frame the components of $ Pu$ are linear functions of $u$ and its partial derivatives, whose coefficients are universal polynomials in the components of $g$, their partial derivatives, and $(\det g_{ij})^{-1/2}$.
We furthermore require that the coefficients of the $j^\text{th}$ derivatives of $u$ involve no more than $l-j$ derivatives of the metric.
Examples of geometric operators include the scalar Laplacian $\Delta_{g}$, the divergence operator, and the conformal Killing operator $\mathcal D_g$.
The mapping properties of geometric operators arising from asymptotically hyperbolic metrics are studied systematically in \cite{Lee-FredholmOperators} (see also \cite{Andersson-EllipticSystems}); the extension of that work to the weakly asymptotically hyperbolic setting appears in \cite{AHEM}.
From these works we deduce the following.

\begin{prop}
\label{prop:generic-operators}
Let $ P$ be a geometric operator of order $l$ and suppose that $l\leq j\leq k$ and $\delta\in \mathbb R$.
\begin{enumerate}
\item If $u\in C^{j,\alpha}_\delta(M_\epsilon;\tilde\rho_\epsilon)$ then 
\begin{equation*}
\|  P[\lambda_\epsilon]u \|_{C^{j-l,\alpha}_\delta(M_\epsilon;\tilde\rho_\epsilon)}
\lesssim \|u\|_{C^{j,\alpha}_\delta(M_\epsilon;\tilde\rho_\epsilon)}.
\end{equation*}

\item Furthermore, if $ P$ is an elliptic operator and $u\in C^{0}_\delta(M_\epsilon;\tilde\rho_\epsilon)$ with $ P[\lambda_\epsilon]u\in C^{j-l,\alpha}_\delta(M_\epsilon;\tilde\rho_\epsilon)$ then $u\in C^{j,\alpha}_\delta(M_\epsilon;\tilde\rho_\epsilon)$ with
\begin{equation*}
\|u\|_{C^{j,\alpha}_\delta(M_\epsilon;\tilde\rho_\epsilon)}
\lesssim 
\|  P[\lambda_\epsilon]u \|_{C^{j-l,\alpha}_\delta(M_\epsilon;\tilde\rho_\epsilon)}
+
\|u\|_{C^{0}_\delta(M_\epsilon;\tilde\rho_\epsilon)}.
\end{equation*}
\end{enumerate}

\end{prop}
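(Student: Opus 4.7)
The strategy is to reduce both statements to standard Schauder-type estimates on the fixed Euclidean ball $\breve B_2$ by pulling back via the two families of M\"obius parametrizations of $M_\epsilon$ constructed in \S\ref{subsec:functions-on-Me}. The key input is that on each such parametrization $\Phi_\epsilon\colon \breve B_2\to M_\epsilon$, the pulled-back metric $\Phi_\epsilon^*\lambda_\epsilon$ and its inverse are bounded in $C^{k,\alpha}(\breve B_2)$ uniformly in $\epsilon$, by Proposition~\ref{prop:lambda-estimates} together with the definition of $\|\cdot\|_{C^{k,\alpha}(M_\epsilon)}$. Since Proposition~\ref{prop:estimate-m} gives a uniform positive lower bound on $\det \Phi_\epsilon^*\lambda_\epsilon$ as well, we obtain uniform ellipticity constants for $P[\Phi_\epsilon^*\lambda_\epsilon]$ on $\breve B_2$.

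For part~(a), I would use the diffeomorphism-equivariance of geometric operators: $\Phi_\epsilon^*(P[\lambda_\epsilon]u) = P[\Phi_\epsilon^*\lambda_\epsilon](\Phi_\epsilon^*u)$. Because the coefficients of $P[\Phi_\epsilon^*\lambda_\epsilon]$ in Euclidean coordinates on $\breve B_2$ are universal polynomials in $\Phi_\epsilon^*\lambda_\epsilon$, its first $l$ derivatives, and $(\det \Phi_\epsilon^*\lambda_\epsilon)^{-1/2}$, the uniform bounds above give uniform $C^{k-l,\alpha}(\breve B_2)$ bounds on those coefficients. The standard Euclidean product estimate then yields
\begin{equation*}
\|\Phi_\epsilon^*(P[\lambda_\epsilon]u)\|_{C^{j-l,\alpha}(\breve B_1)}
\lesssim
\|\Phi_\epsilon^*u\|_{C^{j,\alpha}(\breve B_2)}.
\end{equation*}
Since $\tilde\rho_\epsilon$ satisfies the scaling hypotheses uniformly in $\epsilon$, the norm-equivalence in Lemma~\ref{lemma:basic-weight-function-equivalence}\eqref{MobiusNormEquivalence} holds on $M_\epsilon$ uniformly in $\epsilon$. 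Multiplying the previous display by $\tilde\rho_\epsilon(\Phi_\epsilon(0,1))^{-\delta}$ and taking the supremum over the M\"obius parametrizations of both types (using $r=1$ on the image side and $r=2$ on the source side, which gives equivalent intrinsic norms) delivers the claimed inequality.

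For part~(b), the uniform ellipticity and uniform $C^{k-l,\alpha}$ control on the coefficients of $P[\Phi_\epsilon^*\lambda_\epsilon]$ allow an application of the interior Schauder estimate on $\breve B_2$:
\begin{equation*}
\|\Phi_\epsilon^* u\|_{C^{j,\alpha}(\breve B_1)}
\lesssim
\|P[\Phi_\epsilon^*\lambda_\epsilon](\Phi_\epsilon^*u)\|_{C^{j-l,\alpha}(\breve B_2)}
+ \|\Phi_\epsilon^*u\|_{C^0(\breve B_2)},
\end{equation*}
with constants depending only on the uniform ellipticity constant and on the uniform $C^{k-l,\alpha}(\breve B_2)$ bound of the coefficients. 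Again using equivariance to rewrite the first term as $\|\Phi_\epsilon^*(P[\lambda_\epsilon]u)\|_{C^{j-l,\alpha}(\breve B_2)}$, multiplying through by $\tilde\rho_\epsilon(\Phi_\epsilon(0,1))^{-\delta}$, and taking the supremum over $\Phi_\epsilon$ gives the stated weighted estimate.

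The only real subtlety is uniformity: one must verify that the two types of M\"obius parametrizations (the ones descending from $M$ and the ones coming from $\mathbb H$ via $\Psi_\epsilon$) both see $\lambda_\epsilon$ as a uniformly bounded, uniformly nondegenerate $C^{k,\alpha}$ metric on $\breve B_2$. In the exterior and transitional regions this is immediate from \eqref{exterior-metric} and the intrinsic definition of $C^{k,\alpha}(M_\epsilon)$; in the neck one combines \eqref{def-g} with Proposition~\ref{prop:estimate-m}, which ensures that $\Psi_\epsilon^*\lambda_\epsilon$ differs from $\breve g$ by a tensor of size $O(\epsilon)$. Once this uniform metric control is in place, both parts follow mechanically from Euclidean Schauder theory, and the main estimates cited from \cite{Lee-FredholmOperators} and \cite{AHEM} do the rest.
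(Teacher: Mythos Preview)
Your proposal is correct and follows essentially the same approach as the paper: the paper simply cites \cite[Lemmas 4.6 and 4.8]{Lee-FredholmOperators} together with the uniform metric bounds of Proposition~\ref{prop:lambda-estimates}, and what you have written is precisely an unpacking of how those lemmas are proved---pulling back via M\"obius parametrizations, using uniform $C^{k,\alpha}$ control of $\Phi_\epsilon^*\lambda_\epsilon$ to get uniform coefficient and ellipticity bounds, applying Euclidean Schauder theory on $\breve B_2$, and reassembling via Lemma~\ref{lemma:basic-weight-function-equivalence}\eqref{MobiusNormEquivalence}. Your added remarks on verifying uniformity across both types of M\"obius charts are accurate and make explicit the one point the paper leaves implicit in the citation.
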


\begin{proof}
The statements follow directly from \cite[Lemmas 4.6 and 4.8]{Lee-FredholmOperators}, making use of the uniform bounds established in Proposition \ref{prop:lambda-estimates}.
\end{proof}

The previous proposition immediately implies the following.
\begin{prop}
\label{prop:operator-estimates}
\hfill
\begin{enumerate}
\item The conformal Killing operator $\mathcal D_{\lambda_\epsilon}$ defined in \eqref{define-conformal-killing-operator} satisfies
$$
\|\mathcal{D}_{\lambda_\epsilon} W\|_{C^{k-1,\alpha}_2(M_\epsilon,\tilde\rho_\epsilon)}\lesssim \|W\|_{C^{k,\alpha}_2(M_\epsilon,\tilde\rho_\epsilon)}
$$
for any vector field $W\in C^{k,\alpha}_2(M_\epsilon,\tilde\rho_\epsilon)$.

\item The divergence operator satisfies
$$
\|\Div_{\lambda_\epsilon} T\|_{C^{k-2,\alpha}_2(M_\epsilon,\tilde\rho_\epsilon)}
\lesssim \|T\|_{C^{k-1,\alpha}_2(M_\epsilon,\tilde\rho_\epsilon)}
$$
for any covariant $2$-tensor field $T\in C^{k-1,\alpha}_2(M_\epsilon,\tilde\rho_\epsilon)$.

\item 
Let $L_{\lambda_\epsilon}$ be the  vector Laplace operator defined in \eqref{vl-eqn}.
For any vector field $W\in C^{0}_2(M_\epsilon,\tilde\rho_\epsilon)$ 
with $L_{\lambda_\epsilon} W\in C^{k-2,\alpha}_2(M_\epsilon,\tilde\rho_\epsilon)$
we have $W\in C^{k,\alpha}_2(M_\epsilon,\tilde\rho_\epsilon)$ with
$$
\|W\|_{C^{k,\alpha}_2(M_\epsilon,\tilde\rho_\epsilon)}
\lesssim \|L_\epsilon W\|_{C^{k-2,\alpha}_2(M_\epsilon,\tilde\rho_\epsilon)}+\|W\|_{C^{0}_2(M_\epsilon,\tilde\rho_\epsilon)}.
$$

\item Suppose the functions $f_\epsilon\in C^{k-2,\alpha}(M_\epsilon)$ satisfy $\| f_\epsilon \|_{C^{k-2,\alpha}(M_\epsilon)}\leq K$.
Let $\mathcal{P}_\epsilon=\Delta_{\lambda_\epsilon}+ f_\epsilon$.
For any function $u\in C^{0}_2(M_\epsilon,\tilde\rho_\epsilon)$ with $\mathcal P_\epsilon u \in C^{k-2,\alpha}_2(M_\epsilon,\tilde\rho_\epsilon)$ we have
$$
\|u\|_{C^{k,\alpha}_2(M_\epsilon,\tilde\rho_\epsilon)}
\lesssim \|\mathcal{P}_\epsilon u\|_{C^{k-2,\alpha}_2(M_\epsilon,\tilde\rho_\epsilon)}
+ \|u\|_{C^{0}_2(M_\epsilon,\tilde\rho_\epsilon)},
$$
where the implicit constant depends on $K$.
\end{enumerate}
\end{prop}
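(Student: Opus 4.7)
The plan is to derive all four estimates from Proposition \ref{prop:generic-operators}, verifying in each case that the operator in question is geometric of the stated order (and elliptic, when needed). Parts (a) and (b) are the most direct: the conformal Killing operator $\mathcal D_{\lambda_\epsilon}$ in \eqref{define-conformal-killing-operator} is manifestly a first-order geometric operator determined by $\lambda_\epsilon$, and the divergence of a covariant $2$-tensor is likewise a first-order geometric operator. Applying the first conclusion of Proposition \ref{prop:generic-operators} (with $l=1$, $j=k$, $\delta=2$) to $\mathcal D_{\lambda_\epsilon}$ gives (a), while applying it to $\Div_{\lambda_\epsilon}$ with $j=k-1$ gives (b).

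For part (c), the vector Laplacian $L_{\lambda_\epsilon}=\mathcal D_{\lambda_\epsilon}^*\circ\mathcal D_{\lambda_\epsilon}$ is a second-order geometric operator, and a symbol calculation shows that its principal symbol agrees up to a positive multiple with that of the rough Laplacian on vector fields, so $L_{\lambda_\epsilon}$ is elliptic. The estimate in (c) then follows directly from the elliptic regularity conclusion of Proposition \ref{prop:generic-operators} with $l=2$, $j=k$, and $\delta=2$.

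Part (d) is the only part requiring additional argument, since $\mathcal P_\epsilon=\Delta_{\lambda_\epsilon}+f_\epsilon$ is not a geometric operator: the coefficient $f_\epsilon$ is not a universal polynomial in $\lambda_\epsilon$ and its derivatives. I would handle the principal part $\Delta_{\lambda_\epsilon}$, which is a geometric elliptic operator of order $2$, directly via Proposition \ref{prop:generic-operators} to obtain
\[
\|u\|_{C^{k,\alpha}_2(M_\epsilon;\tilde\rho_\epsilon)}
\lesssim
\|\Delta_{\lambda_\epsilon} u\|_{C^{k-2,\alpha}_2(M_\epsilon;\tilde\rho_\epsilon)}
+ \|u\|_{C^{0}_2(M_\epsilon;\tilde\rho_\epsilon)}.
\]
Substituting $\Delta_{\lambda_\epsilon}u=\mathcal P_\epsilon u - f_\epsilon u$ and using the product estimate
\[
\|f_\epsilon u\|_{C^{k-2,\alpha}_2(M_\epsilon;\tilde\rho_\epsilon)}
\lesssim \|f_\epsilon\|_{C^{k-2,\alpha}(M_\epsilon)}\|u\|_{C^{k-2,\alpha}_2(M_\epsilon;\tilde\rho_\epsilon)}
\leq K\|u\|_{C^{k-2,\alpha}_2(M_\epsilon;\tilde\rho_\epsilon)},
\]
which follows by localizing to M\"obius parametrizations and invoking the usual Banach-algebra property on $\breve B_2$, yields
\[
\|u\|_{C^{k,\alpha}_2}\lesssim \|\mathcal P_\epsilon u\|_{C^{k-2,\alpha}_2} + K\|u\|_{C^{k-2,\alpha}_2} + \|u\|_{C^{0}_2}.
\]
A standard interpolation inequality of the form $\|u\|_{C^{k-2,\alpha}_2}\le \eta\|u\|_{C^{k,\alpha}_2}+C_\eta\|u\|_{C^0_2}$, which holds uniformly on each M\"obius ball and hence transfers globally to $M_\epsilon$ via the definition of the intrinsic norms, allows one to absorb the intermediate term into the left-hand side by choosing $\eta$ sufficiently small relative to $K$.

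The main obstacle I anticipate is ensuring that all implicit constants, in particular those in the product and interpolation estimates, are uniform in $\epsilon$. This reduces to the uniformity of the two families of M\"obius parametrizations defining $C^{k,\alpha}(M_\epsilon)$ in \S\ref{subsec:functions-on-Me}, together with the uniform metric bounds of Proposition \ref{prop:lambda-estimates}; given these, the Euclidean interpolation and multiplication estimates on $\breve B_r$ transfer to the intrinsic weighted norms with $\epsilon$-independent constants, which justifies writing $\lesssim$ throughout.
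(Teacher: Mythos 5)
Your approach matches the paper's, which derives all four estimates from Proposition \ref{prop:generic-operators} (via \cite[Lemmas 4.6 and 4.8]{Lee-FredholmOperators} together with the uniform bounds of Proposition \ref{prop:lambda-estimates}). You correctly observe that part (d) is not literally immediate from Proposition \ref{prop:generic-operators}, since $\mathcal P_\epsilon = \Delta_{\lambda_\epsilon} + f_\epsilon$ is not a geometric operator, and your fix --- applying the estimate to $\Delta_{\lambda_\epsilon}$ alone, bounding $\|f_\epsilon u\|_{C^{k-2,\alpha}_2}$ by the Banach-algebra property, and absorbing the intermediate norm via interpolation --- is valid; one should add a short bootstrap remark to justify $u\in C^{k-2,\alpha}_2$ before interpolating, since a priori only $u\in C^0_2$ is assumed, but this is standard. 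The paper's ``immediately'' implicitly relies on the fact that the local Schauder estimates underlying \cite[Lemma 4.8]{Lee-FredholmOperators} allow an extra bounded zeroth-order coefficient, whose $C^{k-2,\alpha}$ norm enters the constant; that is exactly where the dependence on $K$ comes from, and your interpolation route reaches the same conclusion. One small imprecision in (c): the principal symbol of $L_{\lambda_\epsilon}=\mathcal D^*_{\lambda_\epsilon}\mathcal D_{\lambda_\epsilon}$ is not a scalar multiple of the rough Laplacian's symbol --- a direct computation shows it weights the component of $W$ parallel to $\xi$ differently from the transverse components --- but it is nonetheless positive definite because $\mathcal D_{\lambda_\epsilon}$ is overdetermined elliptic in dimension $\geq 3$, so $L_{\lambda_\epsilon}$ is elliptic and your conclusion stands.
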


We now use Proposition \ref{prop:estimate-m} to obtain additional estimates for $\lambda_\epsilon$ in the neck region.
To accomplish this, we write
\begin{equation}
\label{decompose-lambda}
\Psi_\epsilon^*\lambda_\epsilon = y^{-2}(g_\text{E} + m_\epsilon)
\quad\text{ and }\quad
(\Psi_\epsilon^*\lambda_\epsilon)^{-1} = y^2(g_\text{E}^{-1} + j_\epsilon).
\end{equation}
Since $\mathcal I^*y = y/r^2$, the tensor $j_\epsilon$ takes the form 
\begin{equation}
\label{split-j}
j_\epsilon = \chi(r) j_{\epsilon,1} +  \frac{\chi(1/r)}{r^4} \mathcal I^*j_{\epsilon,2}.
\end{equation}

The following proposition uses \eqref{def-g} and \eqref{decompose-lambda} to show that $\lambda_\epsilon$ satisfies the regularity and boundary conditions necessary to be part of a CMCSF hyperboloidal data set of class $\mathscr C^{k,\alpha;2}$ on $M_\epsilon$.
In particular, $\lambda_\epsilon$ satisfies the hypotheses of the conformal method in \cite{AHEM}.

\begin{prop}
\label{prop:WAH}
For each $\epsilon$ satisfying \eqref{epsilon-estimate} the metric $\lambda_\epsilon$ satisfies 
\begin{enumerate}
\item $\overline\lambda_\epsilon = \rho_\epsilon^2 \lambda_\epsilon \in \mathscr C^{k,\alpha;2}(M_\epsilon)$ and
\item $|d\rho_\epsilon|^2_{\overline\lambda_\epsilon} = 1$ along $\partial M_\epsilon$.
\end{enumerate}
Furthermore,
\begin{enumerate}[resume]

\item in the exterior region $E_c$ we have $\iota_\epsilon^*\lambda_\epsilon = g$, while

\item
\label{me-small}
 in the neck region we have
$$
 \| y^2\left(\Psi_\epsilon^*\lambda_\epsilon - \breve g \right)\|_{\mathscr C^{k,\alpha;2}(A_c)} = \| m_\epsilon \|_{\mathscr C^{k,\alpha;2}(A_c)} \lesssim \epsilon
$$
and thus
\begin{equation*}
\| y^2 \Psi_\epsilon^*\lambda_\epsilon \|_{\mathscr C^{k,\alpha;2}(A_c)} \lesssim 1
\quad\text{ and }\quad
\| y^{-2}( \Psi_\epsilon^*\lambda_\epsilon)^{-1} \|_{\mathscr C^{k,\alpha;2}(A_c)} \lesssim 1.
\end{equation*}

\end{enumerate}
\end{prop}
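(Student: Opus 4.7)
The plan is to dispatch the four items roughly in the order (3), (4), (1), (2), since the content of (1) and (2) reduces to information already contained in (3)--(4).

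Item (3) is essentially tautological from the splicing construction. Outside the neck, $\lambda_\epsilon = (\pi_\epsilon)_* g$ by definition, and $\iota_\epsilon^*\rho_\epsilon = \rho$ by part (a) of the preceding lemma, so $\iota_\epsilon^*\lambda_\epsilon = g$ there. On the overlap $E_c\cap \Psi_\epsilon(A_\epsilon)$, the hypothesis $\epsilon<c^2<1/64$ forces $\epsilon/c<1/2$ and $c/\epsilon>2$, so \eqref{exterior-metric} and \eqref{rho-transition} produce the unspliced expressions, verifying consistency.

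The core of the proof is item (4). The identity $y^2(\Psi_\epsilon^*\lambda_\epsilon - \breve g) = m_\epsilon$ is immediate from \eqref{decompose-lambda} since $\breve g = y^{-2} g_\text{E}$. For the estimate, I would first pass through the cometric: inserting $[\alpha_{\epsilon,i}^*g]^{-1} = y^2(g_\text{E}^{-1}+j_{\epsilon,i})$ into \eqref{def-g} and using the identities $\mathcal I^*y^2 = y^2/r^4$ and $\mathcal I^*g_\text{E}^{-1}=r^4 g_\text{E}^{-1}$ (so that $\mathcal I^*(y^2 g_\text{E}^{-1}) = y^2 g_\text{E}^{-1}$) together with $\chi(r)+\chi(1/r)\equiv 1$ gives exactly \eqref{split-j}. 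On $A_c$ the factor $r$ is bounded above and below, the cutoffs $\chi(r)$ and $\chi(1/r)/r^4$ are smooth with bounded $\mathscr C^{k,\alpha;2}$ norms, and $\mathcal I$ is an isometry of $\mathbb H$ that restricts to a diffeomorphism $A_c\to A_c$ respecting the M\"obius parametrizations \eqref{hyp-mobius}; therefore pullback by $\mathcal I$ preserves the $\mathscr C^{k,\alpha;2}(A_c)$ norm. Combining this with the Banach algebra property (Proposition \ref{prop:basic-inclusions}) and Proposition \ref{prop:estimate-m} yields $\|j_\epsilon\|_{\mathscr C^{k,\alpha;2}(A_c)}\lesssim \epsilon$. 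Finally, once $\epsilon$ is small enough the identity $g_\text{E}+m_\epsilon = (g_\text{E}^{-1}+j_\epsilon)^{-1}$ can be inverted by a Neumann series in the Banach algebra $\mathscr C^{k,\alpha;2}(A_c)$, giving $m_\epsilon = -(g_\text{E}+m_\epsilon)j_\epsilon g_\text{E}$ and hence $\|m_\epsilon\|_{\mathscr C^{k,\alpha;2}(A_c)}\lesssim \epsilon$. The stated bounds on $y^2\Psi_\epsilon^*\lambda_\epsilon$ and its inverse follow by adding $g_\text{E}$ (respectively $g_\text{E}^{-1}$) and applying the algebra property.

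Item (1) is then obtained by covering $M_\epsilon$ using Lemma \ref{lemma:pullback-curly-norms}. On $\iota_\epsilon(E_c)$, the regularity reduces via item (3) to $\overline g \in \mathscr C^{k,\alpha;2}(M)$, which is given. In the neck, $\Psi_\epsilon^*\overline\lambda_\epsilon = \epsilon^2 F^2(g_\text{E}+m_\epsilon)$ from \eqref{defn-of-rho-epsilon} and \eqref{decompose-lambda}; this lies in $\mathscr C^{k,\alpha;2}(A_c)$ by item (4) together with smoothness of $F^2$ on $A_c$, and the lemma transfers membership back to $M_\epsilon$. For item (2) I compute $|d\rho_\epsilon|^2_{\overline\lambda_\epsilon}$ at the boundary pointwise. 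In the exterior this is $|d\rho|^2_{\overline g}\big|_{\rho=0}=1$ by hypothesis. In the neck, $\Psi_\epsilon^*\rho_\epsilon = \epsilon y F$ with $d(\epsilon y F)\big|_{y=0} = \epsilon F\,dy$ (the $dF$ contribution carries a factor of $y$), and $(\Psi_\epsilon^*\overline\lambda_\epsilon)^{-1} = \epsilon^{-2}F^{-2}(g_\text{E}^{-1}+j_\epsilon)$, so the expression reduces to $(g_\text{E}^{-1}+j_\epsilon)(dy,dy)\big|_{y=0}$, which by Proposition \ref{prop:vanishing-j} applied termwise in \eqref{split-j} equals $g_\text{E}^{-1}(dy,dy)=1$.

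The main obstacle is the uniform $\epsilon$-estimate in item (4): one has to invert the cometric splicing, track how $\mathcal I$ transports the estimates of Proposition \ref{prop:estimate-m} (which are natural near the gluing points) into estimates on the annular region $A_c$, and close the Neumann series inside the intermediate Banach algebra rather than a plain H\"older space. Once this smallness is secured, the remaining items are essentially bookkeeping.
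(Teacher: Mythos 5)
Your proof is correct and follows essentially the same route as the paper: item (3) by construction, item (2) via Proposition \ref{prop:vanishing-j} together with the decomposition \eqref{split-j} at $y=0$, item (4) by bounding $j_\epsilon$ through Proposition \ref{prop:estimate-m} and the $\mathcal I$-pullback and then inverting, and item (1) by combining the regularity in the two regions. The one place where your argument is looser than the paper's is the assertion that $\mathcal I$-pullback preserves the $\mathscr C^{k,\alpha;2}(A_c)$ norm ``because $\mathcal I$ respects the M\"obius parametrizations''; since $\mathcal I$ is not Euclidean-conformal-affine and the $\mathscr C$ norms use $^E\nabla$, the precise statement one needs is that the Jacobian of $\mathcal I$ and its derivatives are uniformly bounded on $A_c$ --- the paper makes exactly this point by citing the bounded rational coefficients $Q^{ab}_{cd}$ from \cite{ILS-Gluing}.
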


\begin{proof}
Note that the cutoff functions in \eqref{def-g} extend smoothly to $\overline M$.
Thus Proposition \ref{prop:estimate-m} implies that $\overline\lambda_\epsilon \in \mathscr C^{k,\alpha;2}(M_\epsilon)$.

Using Proposition \ref{prop:vanishing-j} and \eqref{split-j} we have that
\begin{equation}
\label{neck-ah-identity}
|dy|^2_{g_\text{E}+m_\epsilon}= 1
\quad \text{ where }\quad
y=0.
\end{equation}
This, together with the computation
\begin{equation*}
\begin{aligned}
\Psi_\epsilon^*(|d\rho_\epsilon|^2_{\overline\lambda_\epsilon})
&=|d(yF)|^2_{F^2(g_\text{E}+m_\epsilon)}
\\
&= |dy|^2_{g_\text{E}+m_\epsilon}+2\frac{F'}{F} y\langle dy, dr\rangle_{g_\text{E}+m_\epsilon}+y^2\left(\frac{F'}{F}\right)^2|dr|^2_{g_\text{E}+m_\epsilon},
\end{aligned}
\end{equation*}
shows that $|d\rho_\epsilon|^2_{\overline\lambda_\epsilon} = 1$ where $\rho_\epsilon =0$. 

The identity in the exterior region is immediate from the construction.
The estimate in the neck region follows from Proposition \ref{prop:estimate-m} and the fact that the coordinate expression for $\mathcal I^*j_{\epsilon,2}$ is
\begin{equation*}
(\mathcal I^*j_{\epsilon,2})^{ab} = Q^{ab}_{cd}(j_{\epsilon,2})^{cd}
\end{equation*}
for some rational functions $Q^{ab}_{cd}$ that are uniformly bounded on $A_c$; see equations (36)--(37) of \cite{ILS-Gluing}.
\end{proof}

Finally, we obtain the following estimates in the neck region.
\begin{prop}
\label{prop:g-neck}
\hfill
\begin{enumerate}
\item $\| 1-|dy|^2_{g_\text{E} + m_\epsilon}\|_{\mathscr C^{k,\alpha;2}(A_c)}\lesssim \epsilon$,

\item $\| y\Delta_{g_\text{E} + m_\epsilon} y\|_{\mathscr C^{k-1,\alpha;2}(A_c)}\lesssim \epsilon$,

\item
\label{g-neck-scalar} $\|\R[g_\text{E} + m_\epsilon] \|_{C^{k-2,\alpha}(A_c)}\lesssim \epsilon$,

\item $\| 1-|dy|^2_{g_\text{E} + m_\epsilon}\|_{ C^{k,\alpha}_1(A_c)}\lesssim \epsilon$,

\item $\| y\Delta_{g_\text{E} + m_\epsilon} y\|_{C^{k-1,\alpha}_1(A_c)}\lesssim \epsilon$.
\end{enumerate}
\end{prop}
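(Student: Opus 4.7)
The plan is to derive all five estimates from Proposition \ref{prop:WAH}\eqref{me-small}, which already gives $\|m_\epsilon\|_{\mathscr C^{k,\alpha;2}(A_c)}\lesssim \epsilon$, together with the Banach-algebra structure of $\mathscr C^{k,\alpha;2}(A_c)$ recorded in Proposition \ref{prop:basic-inclusions}. A preliminary observation, used throughout, is that the inverse perturbation $j_\epsilon$ also satisfies $\|j_\epsilon\|_{\mathscr C^{k,\alpha;2}(A_c)}\lesssim \epsilon$: from the identity $(g_{\text E}+m_\epsilon)^{-1}-g_{\text E}^{-1} = -g_{\text E}^{-1}\, m_\epsilon\, (g_{\text E}+m_\epsilon)^{-1}$ and the uniform smallness of $m_\epsilon$ in the algebra, a Neumann-series argument gives the bound for all $\epsilon$ below a fixed threshold.

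For parts (1), (2), and (3), I would write each quantity as a smooth algebraic expression in $m_\epsilon$, $j_\epsilon$, and their first- or second-order partial derivatives in which every monomial carries a factor vanishing when $m_\epsilon=0$. Specifically, $1-|dy|^2_{g_{\text E}+m_\epsilon}=-j_\epsilon(dy,dy)$ since $g_{\text E}^{-1}(dy,dy)=1$; the Laplacian expression $\Delta_{g_{\text E}+m_\epsilon} y = -(\delta^{ij}+j_\epsilon^{ij})\,\Gamma^{y}_{ij}(g_{\text E}+m_\epsilon)$ has each term containing a factor of $\partial m_\epsilon$, because $g_{\text E}$ has constant coefficients (so that $\Delta_{g_{\text E}} y=0$); and $\R[g_{\text E}+m_\epsilon]$ is similarly a polynomial in $m_\epsilon$, $j_\epsilon$, $\partial m_\epsilon$, and $\partial^2 m_\epsilon$ with no constant term, since $\R[g_{\text E}]=0$. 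Applying the algebra property of $\mathscr C^{k,\alpha;2}$---and noting that the coordinate function $y$ has uniformly bounded $\mathscr C^{k-1,\alpha;2}(A_c)$ norm---delivers the three $\epsilon$-estimates at once, with the expected drop in regularity in (2) and (3) coming from the order of the operator applied.

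For parts (4) and (5) I would upgrade the $\mathscr C^{k,\alpha;2}$ bounds just obtained to the sharper weighted $C^{k,\alpha}_1$ bounds using part \eqref{IvasMagicLemma} of Proposition \ref{prop:basic-inclusions}, applied with weight $r=0$ and $m=1$: any weight-zero function $u\in \mathscr C^{k,\alpha;1}(A_c)$ satisfying $u\to 0$ as $y\to 0$ lies automatically in $C^{k,\alpha}_1(A_c)$ with $\|u\|_{C^{k,\alpha}_1(A_c)}\lesssim \|u\|_{\mathscr C^{k,\alpha;1}(A_c)}\leq \|u\|_{\mathscr C^{k,\alpha;2}(A_c)}$. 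For (4) the required boundary vanishing is precisely Proposition \ref{prop:vanishing-j} combined with \eqref{split-j}, since both summands contributing to $j_\epsilon(dy,dy)$ vanish at $y=0$; for (5) it is immediate from the explicit factor of $y$, since $\Delta_{g_{\text E}+m_\epsilon} y$ is bounded near the boundary by what was established for (2). I anticipate no serious obstacle beyond careful bookkeeping: all the nontrivial input (the Banach-algebra structure, the Neumann-series inversion, and the boundary-vanishing criterion) is already in place in the paper, and the proof reduces to tracking how each algebraic term inherits its $\epsilon$-smallness from Proposition \ref{prop:WAH}\eqref{me-small}.
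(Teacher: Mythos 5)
Your argument is correct and follows the same route as the paper, which simply cites Proposition~\ref{prop:WAH}\eqref{me-small} for parts (1)--(3) and additionally part~\eqref{IvasMagicLemma} of Proposition~\ref{prop:basic-inclusions} for parts (4)--(5); you have merely written out the algebraic bookkeeping that the paper leaves implicit. Your identification of the boundary vanishing for (4) with Proposition~\ref{prop:vanishing-j} and \eqref{split-j} is the same fact as \eqref{neck-ah-identity}, and the choice $m=1$ in the application of the Taylor-type lemma is the correct one; the only minor redundancy is re-deriving the smallness of $j_\epsilon$ via a Neumann series, which is already available from Proposition~\ref{prop:estimate-m} and \eqref{split-j}.
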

\begin{proof}
The first three claims follow directly from Proposition \ref{prop:WAH}\eqref{me-small}, while the latter two also make use of part \eqref{IvasMagicLemma} of Proposition \ref{prop:basic-inclusions}.
\end{proof}

\section{The second fundamental form}
\label{sec:2ff}
In this section we obtain, for each $\epsilon>0$ satisfying \eqref{epsilon-estimate}, a symmetric covariant $2$-tensor $\sigma_\epsilon$ such that the pair $(\lambda_\epsilon, \sigma_\epsilon)$ satisfies the shear-free condition \eqref{define-SF}, approximately solves the constraint equations \eqref{CMC-constraints}, and satisfies estimates compatible with the convergence statements in the main theorem.
Our construction differs significantly from that used in \cite{ILS-Gluing}, as the procedure there does not account for the shear-free condition.

\subsection{The spliced second fundamental form}
Recall that we express the second fundamental form of the original initial data as $K = \Sigma - g$.
We decompose the traceless part $\Sigma$ as
$$\Sigma=\rho^{-1}\mathcal H_{\overline g}(\rho)+\nu.$$ 
By hypothesis, $\overline\Sigma = \rho\Sigma \in \mathscr C^{k-1,\alpha;1}(M)$, and by Proposition \ref{H-properties} $\mathcal H_{\overline g}(\rho)\in \mathscr C^{k-1,\alpha;1}(M)$.
Thus $\rho\nu \in \mathscr C^{k-1,\alpha;1}(M)$.
The shear-free condition implies that $\rho\nu$ vanishes on $\partial M$; thus by part \eqref{IvasMagicLemma} of Proposition \ref{prop:basic-inclusions}  we have $\nu\in C^{k-1,\alpha}_2(M)$.

We splice the tensor $\nu$ using a cutoff function to construct a tensor $\nu_\epsilon^\text{ext}$ that agrees with $\nu$ in the exterior region and vanishes inside the neck so as to ensure that $\nu_\epsilon^\text{ext}$ is trace-free with respect to $\lambda_\epsilon$.
More precisely, we set $\nu_\epsilon^\text{ext} = (\pi_\epsilon)_*\nu$ outside of $\Psi_\epsilon(A_\epsilon)$, and on $\Psi_\epsilon(A_\epsilon)$  require
\begin{equation}\label{splicedeta}
\Psi_\epsilon^*\nu_\epsilon^\text{ext}=\chi\big(\tfrac{r^2}{8}\big)(\alpha_{\epsilon,1})^*\nu
+\chi\big(\tfrac{1}{8r^2}\big)(\alpha_{\epsilon,2}\circ \mathcal{I})^*\nu. 
\end{equation}
Note that
\begin{equation}
\label{exterior-nu-ext}
\Psi_\epsilon^*\nu_\epsilon^\text{ext}
=
\begin{cases}
(\alpha_{\epsilon,1})^*\nu &\text{ where } r>4,
\\
0 & \text{ where } \frac12 < r < 2,
\\
(\alpha_{\epsilon,2}\circ \mathcal{I})^*\nu & \text{ where } r<\frac{1}{4}.
\end{cases}
\end{equation}
Furthermore, we have the following.
\begin{lemma}
\label{lemma:nu-ext-reg}
The tensor $\nu_\epsilon^\text{ext}$ is trace-free with respect to $\lambda_\epsilon$ and is an element of $C^{k-1,\alpha}_2(M_\epsilon)$
with $\|\nu_\epsilon^\text{ext}\|_{C^{k-1,\alpha}_2(M_\epsilon)}\lesssim 1$.
\end{lemma}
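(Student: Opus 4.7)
The plan is to verify the two claims separately. Tracelessness is pointwise: since $\Sigma$ is traceless by the CMCSF definition and $\mathcal H_{\overline g}(\rho)$ is traceless with respect to $\overline g$ by Proposition \ref{H-properties}\eqref{B-symmetric-tf}, so is $\nu = \Sigma - \rho^{-1}\mathcal H_{\overline g}(\rho)$, with respect to both $\overline g$ and $g$. Outside $\Psi_\epsilon(A_\epsilon)$ this immediately yields tracelessness of $\nu_\epsilon^{\text{ext}}=(\pi_\epsilon)_*\nu$ with respect to $\lambda_\epsilon = (\pi_\epsilon)_*g$. Inside the neck, I would exploit the compatibility between the cutoffs in \eqref{splicedeta} and those in \eqref{def-g}: the cutoff $\chi(r^2/8)$ is supported where $r^2/8>1/2$, i.e., where $r>2$, and \eqref{exterior-metric} gives $\Psi_\epsilon^*\lambda_\epsilon = \alpha_{\epsilon,1}^*g$ there; hence $\chi(r^2/8)\alpha_{\epsilon,1}^*\nu$ is traceless with respect to $\Psi_\epsilon^*\lambda_\epsilon$ on its support. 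Analogously the second term is supported in $\{r<1/2\}$, where $\Psi_\epsilon^*\lambda_\epsilon = (\alpha_{\epsilon,2}\circ\mathcal I)^*g$. The two supports are disjoint, and between them $\nu_\epsilon^{\text{ext}}$ vanishes.

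For the regularity and norm bound, the strategy is to cover $M_\epsilon$ by the two families of M\"obius parametrizations from \S\ref{subsec:functions-on-Me} and bound $\nu_\epsilon^{\text{ext}}$ on the exterior region $\iota_\epsilon(E_c)$ and on the neck region $\Psi_\epsilon(A_c)$ separately, by appeal to Lemma \ref{lemma:pullback-weighted-norms}. On the exterior the identity $\iota_\epsilon^*\nu_\epsilon^{\text{ext}} = \nu$ together with $\nu\in C^{k-1,\alpha}_2(M)$ provides the required bound without further work. On the neck, $\Psi_\epsilon^*\nu_\epsilon^{\text{ext}}$ is a sum of smooth cutoff functions (uniformly bounded in $C^k(A_c)$ in $\epsilon$, since $\chi$ is a fixed smooth function and $A_c$ is fixed) times the tensors $\alpha_{\epsilon,1}^*\nu$ and $(\alpha_{\epsilon,2}\circ\mathcal I)^*\nu$, so the task reduces to bounding each of these pullbacks uniformly in $C^{k-1,\alpha}_2(A_c;y)$.

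The main obstacle is the scaling estimate for $\alpha_{\epsilon,i}^*\nu$: the statement \eqref{norm-scaling} is recorded for the curly class $\mathscr C^{k,\alpha;m}(M)$, but $\nu$ lies only in the weighted H\"older space $C^{k-1,\alpha}_2(M)$. The underlying mechanism nevertheless applies. Since $\alpha_{\epsilon,i} = \Theta_i^{-1}\circ\mathcal S_\epsilon$ pulls $\rho$ back to $\epsilon y$, and any M\"obius parametrization of a ball in $A_c\subset\mathbb H$ composes with $\alpha_{\epsilon,i}$ to yield, up to the fixed smooth chart $\Theta_i^{-1}$, a M\"obius parametrization of a neighborhood in $M$ with $\rho$-coordinate scaled by a factor of $\epsilon$, the intrinsic H\"older control of $\rho^{-2}\nu$ yields
\begin{equation*}
\|\alpha_{\epsilon,i}^*\nu\|_{C^{k-1,\alpha}_2(A_c;y)} \;\lesssim\; \epsilon^2 \,\|\nu\|_{C^{k-1,\alpha}_2(M)}.
\end{equation*}
The inversion $\mathcal I$ is a hyperbolic isometry preserving $A_c$, so the same bound holds for the second term. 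Combining with Lemma \ref{lemma:pullback-weighted-norms} and the equivalence $\Psi_\epsilon^*\tilde\rho_\epsilon\approx y$ on $A_c$, we obtain $\|\nu_\epsilon^{\text{ext}}\|_{C^{k-1,\alpha}_2(\Psi_\epsilon(A_c);\tilde\rho_\epsilon)}\lesssim \epsilon^2$, which together with the exterior estimate yields the stated uniform bound (and in fact smallness in the neck).
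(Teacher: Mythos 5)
Your tracelessness argument is correct and in essence the same as the paper's: the paper simply observes that the support of $\nu_\epsilon^\text{ext}$ is contained in the region where $\lambda_\epsilon = (\pi_\epsilon)_*g$, which is exactly the compatibility between the cutoff supports and \eqref{exterior-metric} that you verify. For the norm bound, however, you take a genuinely different route. The paper works entirely on $M$: it writes $\nu_\epsilon^\text{ext} = (\pi_\epsilon)_*(\chi_\epsilon\nu)$ where $\chi_\epsilon$ is a global cutoff on $M$ with $\alpha_{\epsilon,i}^*\chi_\epsilon = \chi(r^2/8)$, shows $\|\chi_\epsilon\|_{C^k(M)}\lesssim 1$ (the $\epsilon$-scale variation of $\chi_\epsilon$ is tamed because it occurs only where $\rho\lesssim\epsilon$, so the M\"obius rescaling by $\rho_p$ flattens it), and then invokes the algebra property $\|\chi_\epsilon\nu\|_{C^{k-1,\alpha}_2(M)}\le\|\chi_\epsilon\|_{C^k(M)}\|\nu\|_{C^{k-1,\alpha}_2(M)}$. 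That one-line product estimate on $M$, pushed forward, gives the whole lemma. Your approach instead decomposes $M_\epsilon$ into $\iota_\epsilon(E_c)$ and $\Psi_\epsilon(A_c)$ and invokes the scaling estimate coming from \eqref{scaling-of-norms}. (Incidentally, the ``obstacle'' you flag is not one: for a tensor of weight $r$, $\mathscr C^{k,\alpha;0}(M) = C^{k,\alpha}_r(M)$, so \eqref{norm-scaling} with $m=j=0$ applies directly to $\nu\in C^{k-1,\alpha}_2(M)$.)

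There are two genuine gaps in your argument. First, $\iota_\epsilon(E_c)\cup\Psi_\epsilon(A_c)$ does \emph{not} cover $M_\epsilon$. The set $\iota_\epsilon(E_c)$ misses $\pi_\epsilon(\overline U_{i,c})$, while $\Psi_\epsilon(A_c) = \pi_\epsilon(U_{i,\epsilon/c}\setminus\overline U_{i,\epsilon c})$, so the annular region $\pi_\epsilon(U_{i,c}\setminus\overline U_{i,\epsilon/c})$ is in neither (it is non-empty because $\epsilon<c^2$). This is precisely the ``transitional region'' the paper names in \S\ref{subsec:functions-on-Me}; covering $M_\epsilon$ by the two families of M\"obius parametrizations is not the same as covering it by these two open sets. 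Your estimate therefore does not control the norm everywhere; you would need to extend the neck estimate to $A_{\sim\epsilon}$, or, as the paper does, carry the exterior argument through the whole support of $\chi_\epsilon\nu$, where $\chi_\epsilon\equiv 1$ and $\pi_\epsilon^*\rho_\epsilon=\rho$. Second, the lemma's norm uses the default weight $\rho_\epsilon$, but Lemma \ref{lemma:pullback-weighted-norms} produces $\tilde\rho_\epsilon$-weighted estimates. On $\Psi_\epsilon(A_c)$ these are not uniformly equivalent: $\Psi_\epsilon^*\rho_\epsilon\approx\epsilon y$ while $\Psi_\epsilon^*\tilde\rho_\epsilon\approx y$, so the conversion factor is $\epsilon^{-2}$, which exactly eats the $\epsilon^2$ you gained. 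The conclusion $\lesssim 1$ survives, but your parenthetical ``and in fact smallness in the neck'' is not correct for the $\rho_\epsilon$-weighted norm claimed in the lemma, and the step should be made explicit.
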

\begin{proof}
We may view $\nu_\epsilon^\text{ext}$  as the pushforward under the projection $\pi_\epsilon$ of the tensor field $\chi_\epsilon\nu$ on $M,$ where the function $\chi_\epsilon$ is identically equal to $1$ except in the vicinity of the gluing points where, for $i=1,2$, we have
\begin{equation}\label{splice:eta}
\alpha_{\epsilon,i}^*\chi_\epsilon=
\chi({r^2}/{8}).
\end{equation}
In preferred background coordinates $\Theta_i$ on $M$ we have 
$$
\chi_\epsilon(\Theta_i) = \chi\left(\frac{|\Theta_i|^2}{8\epsilon^2}\right)
\quad\text{ where }\quad
\frac12 \leq \frac{|\Theta|^2}{8\epsilon^2} \leq 2;
$$ 
otherwise $\chi_\epsilon$ is constant.
From this it follows that
\begin{equation}
\label{chi-epsilon-est}
\|\chi_\epsilon\|_{C^k(M)}\lesssim 1.
\end{equation}
Thus $$
\| \chi_\epsilon\nu\|_{ C^{k-1,\alpha}_2(M)}
\leq 
\| \chi_\epsilon\|_{C^k(M)} \|\nu\|_{ C^{k-1,\alpha}_2(M)}
\lesssim 1.$$
As $\nu_\epsilon^\text{ext} = (\pi_\epsilon)_*(\chi_\epsilon\nu)$, this implies the desired estimate.

Note that the support of $\nu_\epsilon^\text{ext}$ is contained in the region where $\lambda_\epsilon = (\pi_\epsilon)_*g$.
Since $\nu$ is trace-free with respect to $g$, it follows that $\nu_\epsilon^\text{ext}$ is trace-free with respect to $\lambda_\epsilon$.
\end{proof}

Since $\nu_\epsilon^\text{ext}\in C^{k-1,\alpha}_2(M_\epsilon)$, one could for each sufficiently small $\epsilon>0$ apply the results of \cite{AHEM} to the pair $(\lambda_\epsilon, \nu_\epsilon^\text{ext})$ and obtain a shear-free initial data set via the conformal method.
However, the resulting solutions to the constraint equations might not satisfy the convergence statements of the main theorem.
This is because the term $\rho_\epsilon^{-1}\mathcal H_{\overline\lambda_\epsilon}(\rho_\epsilon)$ is generally of significant size in the neck, and thus corrections arising from the conformal method are generally not small.
For this reason we add a correction term, supported in $\Psi_\epsilon(A_\epsilon)$, to the tensor $\nu_\epsilon^\text{ext}$. 

It follows from \eqref{decompose-lambda} that $\Psi_\epsilon^*\overline\lambda_\epsilon = \epsilon^2 F^2(g_\text{E} + m_\epsilon)$.
Direct computation using parts \eqref{B-Scaling} and \eqref{B-ConformalScaling} of Proposition \ref{H-properties} shows 
\begin{equation}
\label{pullback-H-term}
\Psi_\epsilon^*(\rho_\epsilon^{-1}\mathcal H_{\overline\lambda_\epsilon}(\rho_\epsilon))
=
(yF)^{-1}\mathcal H_{F^2(g_\text{E}+m_\epsilon)}(yF).
\end{equation}
Thus our plan is to approximate $\Psi_\epsilon^*(\rho_\epsilon^{-1}\mathcal H_{\overline\lambda_\epsilon}(\rho_\epsilon))$ by
$
(yF)^{-1}\mathcal H_{F^2g_\text{E}}(yF).
$
We note the following properties of this approximating tensor.
\begin{lemma}
\label{lemma:Euclidean-H}
\hfill
\begin{enumerate}
\item $\mathcal H_{F^2g_\text{E}}(yF)$ is supported on $A_{1/2}$,
\item  $\mathcal H_{F^2g_\text{E}}(yF)$ vanishes at $y=0$, and
\item $(yF)^{-1}\mathcal H_{F^2 g_\text{E}}(yF)\in C^{k-1,\alpha}_2(A_{1/2})$.
\end{enumerate}
\end{lemma}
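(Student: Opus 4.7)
The plan splits the three claims into two parts: identifying the support of $\mathcal{H}_{F^2g_{\text{E}}}(yF)$ (claim (a)), and analyzing its boundary behavior on $A_{1/2}$ (claims (b) and (c)).

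For claim (a), I would verify vanishing outside $A_{1/2}$ separately in the two regimes of Proposition \ref{lemma:F}. On $\{r\ge 2\}$ we have $F\equiv 1$, so $(F^2g_{\text{E}}, yF) = (g_{\text{E}}, y)$; substituting into \eqref{H-formula} and using that $\partial_y$ is a divergence-free Killing field of $g_{\text{E}}$ of unit length makes every term collapse, yielding $\mathcal{H}_{g_{\text{E}}}(y)=0$. On $\{r\le 1/2\}$ we have $F(r) = r^{-2}$, so $F^2g_{\text{E}} = r^{-4}g_{\text{E}}$ and $yF = y/r^2$. Since the inversion $\mathcal{I}$ is a hyperbolic isometry with $\mathcal{I}^*y = y/r^2$, we recognize $(F^2g_{\text{E}}, yF) = (\mathcal{I}^*g_{\text{E}}, \mathcal{I}^*y)$, and because all the operations in \eqref{H-formula} are diffeomorphism-natural, this gives $\mathcal{H}_{F^2g_{\text{E}}}(yF) = \mathcal{I}^*\mathcal{H}_{g_{\text{E}}}(y) = 0$.

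For claims (b) and (c), I would apply Proposition \ref{H-properties}\eqref{H-Hessian} on $A_{1/2}$ with $\overline g = F^2g_{\text{E}}$ and $\rho = yF$. The rescaled metric $(yF)^{-2}F^2g_{\text{E}} = y^{-2}g_{\text{E}} = \breve g$ is the hyperbolic metric, hence $\R + 6 = 0$, and the proposition produces
\[
\mathcal{H}_{F^2g_{\text{E}}}(yF) - \bigl[\Hess_{F^2g_{\text{E}}}(yF) - \tfrac{1}{3}(\Delta_{F^2g_{\text{E}}}(yF))F^2g_{\text{E}}\bigr] \in C^{k-1,\alpha}_3(A_{1/2}).
\]
The task thus reduces to showing that the trace-free Hessian term itself lies in $C^{k-1,\alpha}_3(A_{1/2})$, which by Lemma \ref{lemma:barM-M} amounts to checking that this smooth tensor field vanishes on $\{y=0\}$. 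Because $F=F(r)$ and $\partial_y r$ vanishes at $y=0$, we have $\partial_y F|_{y=0}=0$; using this together with the Christoffel symbols of the conformally flat metric $F^2g_{\text{E}}$, a direct computation yields $\Hess_{F^2g_{\text{E}}}(yF)|_{y=0} = 0$, and taking a trace then gives $\Delta_{F^2g_{\text{E}}}(yF)|_{y=0}=0$. Combining with the previous inclusion, $\mathcal{H}_{F^2g_{\text{E}}}(yF) \in C^{k-1,\alpha}_3(A_{1/2})$, from which claim (b) is immediate and claim (c) follows by dividing by $yF$, which is comparable to $\breve\rho$ and of weight one on $A_{1/2}$.

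The main obstacle is the boundary Hessian computation itself. A useful reality check is that the identity $\Hess_{F^2g_{\text{E}}}(yF)|_{y=0}=0$ is equivalent to the statement that the Minkowski hyperboloid data $(\breve g, K=-\breve g)$, viewed on a neighborhood of $\{y=0\}$ in $A_{1/2}$ with the inversion-invariant defining function $yF$, satisfies the shear-free condition \eqref{define-SF}; indeed, $\overline\Sigma\equiv 0$ identically for this data.
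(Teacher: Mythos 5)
Your argument for claim (a) is essentially the same as the paper's: the paper first uses the conformal scaling $\mathcal H_{F^2g_\text{E}}(yF) = F^{-4}\mathcal H_{g_\text{E}}(yF)$ and then handles the regions $\{r\geq 2\}$ and $\{r\leq 1/2\}$ by naturality under the inversion $\mathcal I$, which is what you do as well. The interesting divergence is in claims (b) and (c). The paper estimates the constituent terms of the formula \eqref{H-formula} directly on $A_{1/2}$: it computes $d(yF) = Fdy + ydF$, $|d(yF)|^2_{g_\text{E}} = F^2 + O(y^2)$, and shows that ${}^\text{E}\nabla(|d(yF)|^{-2}_{g_\text{E}}d(yF))$ has antisymmetric leading part, from which $\mathcal D_{g_\text{E}}(|d(yF)|^{-2}_{g_\text{E}}\grad_{g_\text{E}}(yF)) = O(y)$, and similarly for the $A_{g_\text{E}}(yF)$ term; combining with smoothness and Lemma \ref{lemma:barM-M} gives $\mathcal H_{g_\text{E}}(yF)\in C^{k-1,\alpha}_3$. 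You instead invoke Proposition \ref{H-properties}\eqref{H-Hessian} with $\overline g = F^2 g_\text{E}$, $\rho = yF$ (noting that $g = (yF)^{-2}F^2 g_\text{E} = \breve g$ has $\R + 6 = 0$) to replace $\mathcal H_{F^2g_\text{E}}(yF)$ by the trace-free Hessian modulo $C^{k-1,\alpha}_3$, and then compute directly that $\Hess_{F^2g_\text{E}}(yF)$ vanishes on $\{y=0\}$. Both routes are correct; yours re-uses the Hessian characterization that the paper has already established, trading the term-by-term estimate of \eqref{H-formula} for a Christoffel-symbol computation that exploits $\partial_y F|_{y=0}=0$. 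One small imprecision in your closing ``reality check'': the shear-free condition for $(\breve g, -\breve g)$ with $\overline\Sigma\equiv 0$ only forces the \emph{trace-free} Hessian of $yF$ to vanish at $y=0$, so the equivalence as you phrase it is not quite right (though your computation does in fact show the full Hessian vanishes, which is stronger). This does not affect the proof.
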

\begin{proof}
Note that $\mathcal H_{F^2g_\text{E}}(yF) 
= F^{-4} \mathcal H_{g_\text{E}}(yF)$.
Thus to establish the first property, it suffices to consider $\mathcal H_{g_\text{E}}(yF)$.
Where $r\geq 2$ we have $\mathcal H_{g_\text{E}}(yF) = \mathcal H_{g_\text{E}}(y) =0$, while for $r<1/2$ we have
\begin{equation*}
\mathcal H_{g_\text{E}}(yF)
=
r^{-8} \mathcal H_{r^{-4} g_\text{E}}({y}/{r^2})
=
r^{-8}\mathcal H_{\mathcal{I}^*g_\text{E}}(\mathcal{I}^*y)
=r^{-8}\mathcal{I}^*\mathcal H_{g_\text{E}}(y)
= 0.
\end{equation*}
This establishes the first claim.

For the second claim, we compute and estimate the various terms appearing in \eqref{H-formula}, restricting to the domain $A_{1/2}$, where $\mathcal H_{g_\text{E}}(yF)$ is supported.
First note that
\begin{equation*}
d(yF) = Fdy + ydF
\quad\text{ and }\quad
dF = \frac{F^\prime(r)}{r} \left(x^1 dx^2 + x^2 dx^2 + y dy\right).
\end{equation*}
Thus
\begin{equation*}
g_\text{E}(dy, dF) = O(y)
\quad\text{ and }\quad
|d(yF)|^2_{g_\text{E}} = F^2 + O(y^2).
\end{equation*}
Direct computation shows that
\begin{equation}
\label{first-H-yF}
{}^E\nabla\left(|d(yF)|_{g_\text{E}}^{-2} d(yF)\right) 
= F^{-2}\left( dy\otimes dF - dF\otimes dy \right) + O(y).
\end{equation}
Since $\mathcal D_{g_\text{E}}\left(|d(yF)|_{g_\text{E}}^{-2} \grad_{g_\text{E}}(yF)\right)$ is the symmetric and traceless part of   \eqref{first-H-yF}, we conclude that $\mathcal D_{g_\text{E}}\left(|d(yF)|_{g_\text{E}}^{-2} \grad_{g_\text{E}}(yF)\right) =O(y)$.
We furthermore compute
\begin{equation*}
|d(yF)|_{g_\text{E}} A_{g_\text{E}}(yF) = O(y).
\end{equation*}
Thus we establish the second claim.

Since $\mathcal H_{g_\text{E}}(yF)$ is a smooth covariant $2$ tensor on $\overline M_\epsilon$ and vanishes at the boundary we have  $\mathcal H_{g_\text{E}}(yF)\in C^{k-1,\alpha}_3(M_\epsilon)$ by 
Lemma \eqref{lemma:barM-M}.
This establishes the third claim.
\end{proof}

Lemma \ref{lemma:Euclidean-H} implies that we may obtain a well-defined $C^{k-1,\alpha}_2$ tensor field $\nu_\epsilon^\text{approx}$ on $M_\epsilon$ by requiring
$$
\Psi_\epsilon^*\nu_\epsilon^\text{approx}
= -(yF)^{-1} \mathcal H_{F^2g_\text{E}}(yF) ,
$$ 
and by setting $\nu_\epsilon^\text{approx} =0$ outside $\Psi_\epsilon(A_\epsilon)$.
However, the tensor $\nu_\epsilon^\text{approx}$ is not trace-free with respect to $\lambda_\epsilon$.
Thus we define the correction term $\nu_\epsilon^\text{neck}$ by
\begin{equation}
\label{define-nu-neck}
\nu_\epsilon^\text{neck} = \nu_\epsilon^\text{approx} - \frac13\left(\tr_{\lambda_\epsilon} \nu_\epsilon^\text{approx}\right) \lambda_\epsilon.
\end{equation}
We note the following basic properties of $\nu_\epsilon^\text{neck}$.
\begin{lemma}
\label{lemma:nu-neck-reg}
We have $\nu_\epsilon^\text{neck}\in C^{k-1,\alpha}_2(M_\epsilon)$
with $\|\nu_\epsilon^\text{neck}\|_{C^{k-1,\alpha}_2(M_\epsilon)} \lesssim 1$.
Furthermore, $\nu_\epsilon^\text{neck}$ is supported in $\Psi_\epsilon(A_{1/2})$, where we have
\begin{equation*}
\Psi_\epsilon^*\nu_\epsilon^\text{neck}
=
-(yF)^{-1} \left(\mathcal H_{F^2g_\text{E}}(yF) 
-
\frac13(j_\epsilon^{ab} \mathcal H_{F^2g_\text{E}}(yF)_{ab}) (g_\text{E} + m_\epsilon)\right).
\end{equation*}
\end{lemma}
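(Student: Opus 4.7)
The plan is to verify the claimed pullback formula by direct computation, read off the support from the formula, and then bound the $C^{k-1,\alpha}_2(M_\epsilon)$ norm by combining the $\epsilon$-independent estimate from Lemma \ref{lemma:Euclidean-H}(c) with the uniform control from Propositions \ref{prop:estimate-m} and \ref{prop:lambda-estimates}.

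First I would apply $\Psi_\epsilon^*$ to the definition \eqref{define-nu-neck} of $\nu_\epsilon^\text{neck}$. Using \eqref{decompose-lambda}, the pullback of the trace term is
\[
\Psi_\epsilon^*(\tr_{\lambda_\epsilon}\nu_\epsilon^\text{approx})
= -y^2(yF)^{-1}(g_\text{E}^{ab} + j_\epsilon^{ab})\mathcal H_{F^2 g_\text{E}}(yF)_{ab}.
\]
The key observation is that by part \eqref{B-symmetric-tf} of Proposition \ref{H-properties}, the tensor $\mathcal H_{F^2 g_\text{E}}(yF)$ is trace-free with respect to $F^2 g_\text{E}$ and hence also with respect to $g_\text{E}$; the $g_\text{E}^{ab}$ contribution therefore vanishes, leaving only the $j_\epsilon^{ab}$ term. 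Substituting this back into \eqref{define-nu-neck} together with $\Psi_\epsilon^*\lambda_\epsilon = y^{-2}(g_\text{E} + m_\epsilon)$ and $\Psi_\epsilon^*\nu_\epsilon^\text{approx} = -(yF)^{-1}\mathcal H_{F^2 g_\text{E}}(yF)$ produces the claimed formula, with the $y^{\pm 2}$ factors canceling cleanly.

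The support statement is then immediate: each summand in the bracketed expression contains $\mathcal H_{F^2 g_\text{E}}(yF)$ as a factor, so by Lemma \ref{lemma:Euclidean-H}(a) the pullback is supported in $A_{1/2}$, and the inclusion $A_{1/2}\subset A_\epsilon$ (valid under \eqref{epsilon-estimate} since $c<1/8$) makes $\Psi_\epsilon(A_{1/2})$ meaningful in $M_\epsilon$. For the norm bound, the leading piece $(yF)^{-1}\mathcal H_{F^2 g_\text{E}}(yF)$ lies in $C^{k-1,\alpha}_2(A_{1/2})$ with an $\epsilon$-independent norm by Lemma \ref{lemma:Euclidean-H}(c). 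The correction piece involves $j_\epsilon$ and $g_\text{E}+m_\epsilon$, both uniformly bounded in $\mathscr C^{k,\alpha;2}(A_{1/2})$ by Proposition \ref{prop:estimate-m}, so the Banach algebra and weight-equivalence properties in Proposition \ref{prop:basic-inclusions} yield a uniform $C^{k-1,\alpha}_2$ bound for the product on $A_{1/2}$. Transferring from $A_{1/2}$ to $\Psi_\epsilon(A_{1/2})\subset M_\epsilon$ via Lemma \ref{lemma:pullback-weighted-norms} and extending by zero yields $\|\nu_\epsilon^\text{neck}\|_{C^{k-1,\alpha}_2(M_\epsilon)}\lesssim 1$.

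The main obstacle I anticipate is a bookkeeping one rather than a deep difficulty: one must apply the trace-free identity at exactly the right step so that the leading-order $g_\text{E}^{ab}$ contribution to the trace cancels, leaving only a term proportional to $j_\epsilon$. Without this cancellation the trace correction would have the same size as the leading tensor, and the clean formula in the lemma statement would fail. Once this cancellation is in place, the remaining estimates are routine applications of the product and weight-equivalence statements assembled in \S\ref{sec:function-spaces}.
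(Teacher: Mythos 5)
Your proposal is correct and follows essentially the same route as the paper: the trace-free identity $\tr_{g_\text{E}}\mathcal H_{g_\text{E}}(yF)=0$ (equivalent to trace-freeness with respect to $F^2 g_\text{E}$, since the two metrics are conformal) makes the $g_\text{E}^{ab}$ contribution to the trace drop out, the $y^{\pm 2}$ factors cancel exactly as you say, and the norm bound comes from Lemma \ref{lemma:Euclidean-H}(c) plus the uniform control on $m_\epsilon$ and $j_\epsilon$ from Proposition \ref{prop:estimate-m}. The paper's proof is a terse version of yours, with no substantive difference in method.
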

\begin{proof}
Lemma \ref{lemma:Euclidean-H} yields an $\epsilon$-independent estimate for $\nu_\epsilon^\text{approx}$. 
Together with Proposition \ref{prop:estimate-m}, this implies the $C^{k-1,\alpha}_2$ estimate.
The remaining claims follow directly from the definition and from the fact that $\tr_{g_\text{E}}\mathcal H_{g_\text{E}}(yF) =0$.
\end{proof}

We now define, as discussed in the introduction, the spliced second fundamental form $\mu_\epsilon$ by
\begin{equation}
\label{defn:kappa}
\mu_\epsilon=\rho_\epsilon^{-1}\mathcal H_{\overline\lambda_\epsilon}(\rho_\epsilon) + \nu_\epsilon^\text{neck} + \nu_\epsilon^\text{ext}.
\end{equation}
It is important to note that $\mu_\epsilon = (\pi_\epsilon)_*\Sigma$ outside of  $\Psi_\epsilon(A_{1/4})$.

\subsection{Estimates for $\mu_\epsilon$}
Lemmas \ref{lemma:nu-ext-reg} and \ref{lemma:nu-neck-reg} indicate that  $\nu_\epsilon^\text{neck} + \nu_\epsilon^\text{ext}$ has the regularity required for using the results of \cite{AHEM} in order to obtain a shear-free initial data set from each pair $\lambda_\epsilon, \mu_\epsilon$ according to the procedure outlined in the introduction.

We now establish estimates on $\mu_\epsilon$ needed for the convergence results of the main theorem.
Recall that we are assuming \eqref{epsilon-estimate} throughout.
\begin{prop}
\label{prop:estimate-mu}
For all  $\epsilon$ satisfying \eqref{epsilon-estimate} we have
\begin{enumerate}
\item $\| \mu_\epsilon\|_{C^{k-1,\alpha}(M_\epsilon)}\lesssim 1.$
\end{enumerate}
Furthermore, in the neck and exterior regions we have
\begin{enumerate}[resume]
\item $\|y \Psi_\epsilon^*\mu_\epsilon \|_{\mathscr C^{k-1,\alpha;1}(A_c)}\lesssim \epsilon$, 
\item $\iota_\epsilon^*\mu_\epsilon = \Sigma$.
\end{enumerate}
\end{prop}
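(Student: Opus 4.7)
The proof divides along the three claims. Part (c) is essentially tautological. Under \eqref{epsilon-estimate} we have $\epsilon/c<c<1/8$, so $\Psi_\epsilon^{-1}(E_c) = A_\epsilon\setminus \overline A_{\epsilon/c}$ is disjoint both from $A_{1/2}$, which contains the support of $\Psi_\epsilon^*\nu_\epsilon^{\text{neck}}$ by Lemma \ref{lemma:nu-neck-reg}, and from the region in which the cutoff $\chi_\epsilon$ of \eqref{splice:eta} differs from $1$. Combining $\iota_\epsilon^*\lambda_\epsilon = g$ and $\iota_\epsilon^*\rho_\epsilon=\rho$ (Proposition \ref{prop:WAH}) with the geometric naturality of $\mathcal H$ yields $\iota_\epsilon^*\mu_\epsilon = \rho^{-1}\mathcal H_{\overline g}(\rho)+\nu = \Sigma$.

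For part (a), the terms $\nu_\epsilon^{\text{ext}}$ and $\nu_\epsilon^{\text{neck}}$ are uniformly bounded in $C^{k-1,\alpha}_2(M_\epsilon)$, and hence in $C^{k-1,\alpha}(M_\epsilon)$, by Lemmas \ref{lemma:nu-ext-reg} and \ref{lemma:nu-neck-reg}. The term $\rho_\epsilon^{-1}\mathcal H_{\overline\lambda_\epsilon}(\rho_\epsilon)$ I would estimate via the two kinds of M\"obius parametrizations of $M_\epsilon$ from \S\ref{subsec:functions-on-Me}: on parametrizations $\pi_\epsilon\circ\Phi$ arising from $M$, the pullback agrees with $\rho^{-1}\mathcal H_{\overline g}(\rho) = \Sigma-\nu\in C^{k-1,\alpha}(M)$; on hyperbolic parametrizations $\Psi_\epsilon\circ\breve\Phi$, the formula \eqref{pullback-H-term} together with the uniform bound $\|\overline\lambda_\epsilon\|_{\mathscr C^{k,\alpha;2}(M_\epsilon)}\lesssim 1$ from Proposition \ref{prop:WAH} provides a uniform estimate through the polynomial dependence of $\mathcal H$ on the metric components and their first two derivatives.

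The heart of the proposition is part (b). Combining \eqref{pullback-H-term} with the expression for $\Psi_\epsilon^*\nu_\epsilon^{\text{neck}}$ in Lemma \ref{lemma:nu-neck-reg} and multiplying by $y$, the leading $\mathcal H_{F^2 g_\text{E}}(yF)$ contributions cancel, yielding
\begin{equation*}
y\Psi_\epsilon^*\bigl(\rho_\epsilon^{-1}\mathcal H_{\overline\lambda_\epsilon}(\rho_\epsilon)+\nu_\epsilon^{\text{neck}}\bigr)
= F^{-1}\bigl[\mathcal H_{F^2(g_\text{E}+m_\epsilon)}(yF)-\mathcal H_{F^2 g_\text{E}}(yF)\bigr]
+ \tfrac{1}{3} F^{-1} \bigl(j_\epsilon^{ab}\mathcal H_{F^2 g_\text{E}}(yF)_{ab}\bigr)(g_\text{E}+m_\epsilon).
\end{equation*}
This displays the purpose of the correction $\nu_\epsilon^{\text{neck}}$: the order-one term $\mathcal H_{F^2 g_\text{E}}(yF)$ has been removed, so that every remaining piece contains a factor of either $m_\epsilon$ or $j_\epsilon$. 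The second (trace) term is estimated via the Banach algebra property (Proposition \ref{prop:basic-inclusions}), Lemma \ref{lemma:Euclidean-H}, and $\|j_\epsilon\|_{\mathscr C^{k,\alpha;2}(A_c)}\lesssim\epsilon$ from Proposition \ref{prop:estimate-m}. The first bracketed term is handled by a first-order Taylor expansion of $\mathcal H_{g_\text{E}+tm_\epsilon}(yF)$ in $t$, valid because $g_\text{E}+m_\epsilon$ is uniformly invertible (Proposition \ref{prop:estimate-m}): the linearization is a bounded multilinear expression in $m_\epsilon$, $\overline\nabla m_\epsilon$, and $\overline\nabla^2 m_\epsilon$ with coefficients controlled by $\overline\lambda_\epsilon$ and $yF$, so it is $\lesssim\|m_\epsilon\|_{\mathscr C^{k,\alpha;2}(A_c)}\lesssim\epsilon$. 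Finally, $y\Psi_\epsilon^*\nu_\epsilon^{\text{ext}}$ is estimated via the scaling identity \eqref{norm-scaling} applied to $\rho\nu\in\mathscr C^{k-1,\alpha;1}(M)$ (whose weighted pullback carries an extra $\epsilon^3$), giving an even stronger $O(\epsilon^2)$ bound in $\mathscr C^{k-1,\alpha;1}(A_c)$.

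The main obstacle is the Taylor-expansion estimate for $\mathcal H_{g_\text{E}+m_\epsilon}(yF)-\mathcal H_{g_\text{E}}(yF)$ in the $\mathscr C^{k-1,\alpha;1}(A_c)$ norm. Since $\mathcal H$ is second order in the metric, its linearization involves $\overline\nabla^2 m_\epsilon$, so the full $\mathscr C^{k,\alpha;2}$ control on $m_\epsilon$ is essential; nonlinear factors such as $(\det(g_\text{E}+m_\epsilon))^{-1/2}$ and $|d(yF)|_{g_\text{E}+m_\epsilon}^{-2}$ must be Taylor-expanded with uniform remainders using the Banach algebra structure of the $\mathscr C$-spaces.
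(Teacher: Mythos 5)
Your proposal follows essentially the same route as the paper: the decomposition of $y\Psi_\epsilon^*\mu_\epsilon$ so that the order-one term $\mathcal H_{F^2 g_{\mathrm E}}(yF)$ cancels against $\nu_\epsilon^{\text{neck}}$, the estimation of the remaining $m_\epsilon$- and $j_\epsilon$-bearing pieces via Proposition \ref{prop:estimate-m}, and the scaling argument for $y\Psi_\epsilon^*\nu_\epsilon^{\text{ext}}$. The one genuine difference of method is that you sketch a Taylor expansion of $\mathcal H_{g_{\mathrm E}+t m_\epsilon}(yF)$ in $t$ to bound the difference of Hessian terms, whereas the paper simply invokes the local Lipschitz estimate for $\overline\lambda\mapsto\mathcal H_{\overline\lambda}(\rho)$ from \cite[Lemma 7.5(b)]{AHEM}; these are morally equivalent, and citing the Lipschitz estimate avoids having to manage the nonlinear factors you mention (such as $|d(yF)|_{g_{\mathrm E}+m_\epsilon}^{-2}$) by hand.

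One quantitative slip: your final parenthetical claims the bound on $y\Psi_\epsilon^*\nu_\epsilon^{\text{ext}}$ in $\mathscr C^{k-1,\alpha;1}(A_c)$ is $O(\epsilon^2)$. Using $y\alpha_{\epsilon,1}^*\nu = \epsilon^{-1}\alpha_{\epsilon,1}^*(\rho\nu)$ and \eqref{norm-scaling} with $\rho\nu$ a weight-$2$ tensor, the $j=0$ term of $\|\alpha_{\epsilon,i}^*(\rho\nu)\|_{\mathscr C^{k-1,\alpha;1}(A_c)}$ scales as $\epsilon^{2}$ (the $j=1$ term scales as $\epsilon^{3}$, which is where your "extra $\epsilon^3$" comes from, but it is not the dominant piece). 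Dividing by $\epsilon$ gives $O(\epsilon)$, not $O(\epsilon^2)$. This does not affect the proposition, since $O(\epsilon)$ is exactly the bound claimed, but the stronger bound you state is not justified.
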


\begin{proof}
Due to Lemmas \ref{lemma:nu-ext-reg} and \ref{lemma:nu-neck-reg}, the first claim is established once we have estimated $\rho_\epsilon^{-1}\mathcal H_{\overline\lambda_\epsilon}(\rho_\epsilon)$.
From \eqref{rho-transition} and \eqref{exterior-metric}, we see that both $\rho_\epsilon$ and $\lambda_\epsilon$ agree with the projections of $\rho$ and $g$, respectively, outside of $\Psi_\epsilon (A_{1/2}) \subset \Psi_\epsilon(A_c)$.
Thus it suffices to obtain an estimate in the neck region.
Noting that each term in $\rho_\epsilon^{-1}\mathcal H_{\overline\lambda_\epsilon}(\rho_\epsilon)$ is a contraction of $\rho_\epsilon^{-1}(\otimes^3\overline\lambda_\epsilon{}^{-1})\otimes (\otimes^4 d\rho_\epsilon) \otimes \Hess_{\overline\lambda_\epsilon}\rho_\epsilon$,
we see that the last estimate in Proposition \ref{prop:WAH} implies the desired uniform bound.

We now establish the estimate in the neck region.
First we consider $y\Psi_\epsilon^*\nu_\epsilon^\text{ext}$.
Since $\chi(r^2/8)$, $\chi(1/8r^2)$ are smooth and uniformly bounded on $A_c$, it suffices to estimate $y\alpha_{\epsilon,1}^*\nu$ and $y(\alpha_{\epsilon,2}\circ\mathcal I)^*\nu$.
Note that 
\begin{equation*}
y\alpha_{\epsilon,1}^*\nu  = \frac{1}{\epsilon} \alpha_{\epsilon,1}^*(\rho \nu).
\end{equation*}
Since $\rho\nu \in \mathscr C^{k-1,\alpha;1}(M)$, it follows from \eqref{norm-scaling} that
\begin{equation*}
\| y\alpha_{\epsilon,1}^*\nu\|_{\mathscr C^{k-1,\alpha;1}(A_c)}\lesssim \epsilon.
\end{equation*}
A similar computation, relying on uniform estimates on $A_c$ for the inversion map, yields a corresponding estimate for $y(\alpha_{\epsilon,2}\circ\mathcal I)^*\nu$.
This gives the desired estimate for $y\Psi_\epsilon^*\nu_\epsilon^\text{ext}$.

In order to estimate $y\Psi_\epsilon^*(\rho_\epsilon^{-1}\mathcal H_{\overline\lambda_\epsilon}(\rho_\epsilon) + \nu_\epsilon^\text{neck})$ we make use of \cite[Lemma 7.5(b)]{AHEM}, which implies that $\overline\lambda \mapsto \mathcal H_{\overline\lambda}(\rho)$ is a locally Lipschitz mapping $\mathscr C^{k,\alpha;2} \to \mathscr C^{k-1,\alpha;1}$.
Thus the boundedness of $F$ and its derivatives on $A_c$ implies that
\begin{equation*}
\| \mathcal H_{F^2(g_\text{E} + m_\epsilon)}(yF)
- \mathcal H_{F^2g_\text{E}}(yF)\|_{\mathscr C^{k-1,\alpha;1}(A_c)} \lesssim \|m_\epsilon \|_{\mathscr C^{k,\alpha;2}(A_c)}.
\end{equation*}
Furthermore, we have
\begin{equation*}
\|(j_\epsilon^{ab} \mathcal H_{F^2g_\text{E}}(yF)_{ab}) (g_\text{E} + m_\epsilon) \|_{\mathscr C^{k-1,\alpha;1}(A_c)} 
\lesssim \|j_\epsilon \|_{\mathscr C^{k,\alpha;2}(A_c)}.
\end{equation*}
Thus Proposition \ref{prop:estimate-m} implies the desired estimate.
\end{proof}

We now show that $\mu_\epsilon$ is close to being divergence-free as measured by the weight function $\tilde\rho_\epsilon$; see \eqref{define-tilde-rho} and \eqref{rho-tilde-equivalence}.
\begin{prop}
\label{prop:div-mu}
$\| \Div_{\lambda_\epsilon}\mu_\epsilon\|_{C^{k-2,\alpha}_2(M_\epsilon, \tilde\rho_\epsilon)}
\lesssim \epsilon$
\end{prop}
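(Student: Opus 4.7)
My proof begins by localizing the support of $\Div_{\lambda_\epsilon}\mu_\epsilon$. The comment following \eqref{defn:kappa} tells us that $\mu_\epsilon=(\pi_\epsilon)_*\Sigma$ on $M_\epsilon\setminus\Psi_\epsilon(A_{1/4})$, while \eqref{def-g} gives $\lambda_\epsilon=(\pi_\epsilon)_*g$ on $M_\epsilon\setminus\Psi_\epsilon(A_{1/2})$, which contains $M_\epsilon\setminus\Psi_\epsilon(A_{1/4})$. The momentum constraint $\Div_g\Sigma = 0$ from \eqref{CMC-constraints} then forces $\Div_{\lambda_\epsilon}\mu_\epsilon\equiv 0$ on $M_\epsilon\setminus\Psi_\epsilon(A_{1/4})$, and since our standing assumption $c<1/8$ gives $A_{1/4}\subset A_c$, Lemma~\ref{lemma:pullback-weighted-norms} reduces the proposition to
\[
\|\Div_{\Psi_\epsilon^*\lambda_\epsilon}(\Psi_\epsilon^*\mu_\epsilon)\|_{C^{k-2,\alpha}_2(A_c)}\lesssim\epsilon.
\]

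On $A_c$ I would write $\Psi_\epsilon^*\lambda_\epsilon = y^{-2}\tilde g$ with $\tilde g := g_\text{E}+m_\epsilon$, and set $\hat T := y\,\Psi_\epsilon^*\mu_\epsilon$, so that Proposition~\ref{prop:estimate-mu}(b) yields $\|\hat T\|_{\mathscr C^{k-1,\alpha;1}(A_c)}\lesssim\epsilon$. Combining the Leibniz rule (to handle the factor of $y^{-1}$) with the conformal transformation law for the divergence of a symmetric trace-free $2$-tensor in dimension $3$ (valid since $\mu_\epsilon$ is $\lambda_\epsilon$-traceless and hence $\hat T$ is $\tilde g$-traceless) yields the identity
\[
\bigl(\Div_{\Psi_\epsilon^*\lambda_\epsilon}(\Psi_\epsilon^*\mu_\epsilon)\bigr)_i
= y\,(\Div_{\tilde g}\hat T)_i - 2\,\tilde g^{jy}\hat T_{ij}.
\]
For the first term, the bounds on $\hat T$ and on $\tilde g$ (the latter from Proposition~\ref{prop:estimate-m}) place $\Div_{\tilde g}\hat T$ into a $C^{k-2,\alpha}_2(A_c)$-type space of size $\epsilon$, and the leading factor of $y$ only improves this, delivering a $C^{k-2,\alpha}_2(A_c)$-bound $\lesssim \epsilon$.

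The crux of the argument is the second term: a pointwise bound from $|\hat T|_{g_\text{E}}\lesssim \epsilon$ alone puts it only into $C^{k-2,\alpha}_1$, one weight short of what is needed. I would therefore show that $\tilde g^{jy}\hat T_{ij}$ \emph{vanishes identically at $y=0$}, so that a Hadamard--Taylor argument in the spirit of Proposition~\ref{prop:bdy-taylor}, fed by the $\mathscr C^{k-1,\alpha;1}$-regularity of $\hat T$, extracts the missing factor of $y$. The boundary vanishing is established by decomposing $\hat T$ according to the three summands of $\mu_\epsilon$: for the $\rho_\epsilon^{-1}\mathcal H_{\overline\lambda_\epsilon}(\rho_\epsilon)$-piece the key observation is that $\tilde g^{jy}\partial_j|_{y=0}$ equals $F\,\grad_{F^2\tilde g}(yF)|_{y=0}$, so the tangency identity of Proposition~\ref{H-properties}\eqref{B-TransverseProperty} annihilates the contraction; for the $\nu_\epsilon^{\text{neck}}$-piece, the vanishing $\mathcal H_{F^2g_\text{E}}(yF)|_{y=0}=0$ from Lemma~\ref{lemma:Euclidean-H}(b) kills both contributions in the formula of Lemma~\ref{lemma:nu-neck-reg}; and for the $\nu_\epsilon^{\text{ext}}$-piece, the explicit factor of $y$ multiplying $\Psi_\epsilon^*\nu_\epsilon^{\text{ext}}$ kills its boundary value since $\nu\in C^{k-1,\alpha}_2(M)$ has bounded boundary trace (this inclusion itself being a consequence of the shear-free condition, as noted at the start of \S\ref{sec:2ff}). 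The main technical obstacle is the first of these three cases: one must keep careful track of the distinction between $\grad_{F^2\tilde g}(yF)$ and $\partial_y$ so that the $O(\epsilon)$ deformation encoded by $j_\epsilon$ does not produce a spurious $O(1)$ defect, and here the tangency identity is just barely strong enough to absorb the residue.
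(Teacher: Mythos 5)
Your argument is correct, and it takes a genuinely different route from the paper. The paper's proof also begins by localizing $\Div_{\lambda_\epsilon}\mu_\epsilon$ to the neck, but then writes $\Psi_\epsilon^*(\Div_{\lambda_\epsilon}\mu_\epsilon)$ as a telescoping sum: it adds and subtracts $\Div_{y^{-2}g_\text{E}}\bigl((yF)^{-1}\mathcal H_{F^2g_\text{E}}(yF)\bigr)$, invokes the Lipschitz dependence of $\overline\lambda\mapsto\Div_{\rho^{-2}\overline\lambda}\bigl(\rho^{-1}\mathcal H_{\overline\lambda}(\rho)\bigr)$ from \cite[Lemma~7.5(c)]{AHEM}, uses a Lipschitz estimate for the divergence operator itself from \cite[Proposition~7.9]{AHEM} for the cross term, handles the trace correction in $\nu_\epsilon^{\text{neck}}$ via $\Div_{\lambda_\epsilon}(u\lambda_\epsilon)=du$, and treats the $\nu_\epsilon^{\text{ext}}$ contribution separately by scaling. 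Your approach instead applies the conformal transformation law for the divergence of a symmetric trace-free $2$-tensor once, then establishes $C^{k-2,\alpha}_2$ bounds for the two resulting terms: the first by direct weight-counting, the second by proving boundary vanishing of the contraction $\tilde g^{jy}\hat T_{ij}$ and then gaining a weight via Proposition~\ref{prop:basic-inclusions}\eqref{IvasMagicLemma}. This is more self-contained — it replaces the external Lipschitz lemmas with the tangency identity $\mathcal H_{\overline g}(\omega)(\grad_{\overline g}\omega,\cdot)=0$, the vanishing of $\mathcal H_{F^2g_\text{E}}(yF)$ at $y=0$, and a Hadamard-type gain of one order of vanishing — and it makes visible exactly where the shear-free structure enters. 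The tradeoff is that you must separately verify boundary vanishing for each of the three summands in $\mu_\epsilon$, whereas the paper absorbs that bookkeeping into the cited Lipschitz results.

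One small weight-counting slip that does not affect correctness: from $\hat T\in\mathscr C^{k-1,\alpha;1}(A_c)$ (so $\hat T\in C^{k-1,\alpha}_2$ and ${}^\text{E}\nabla\hat T\in C^{k-2,\alpha}_3$) and the contraction with $\tilde g^{-1}\in C^{k,\alpha}_{-2}$, one obtains $\Div_{\tilde g}\hat T\in C^{k-2,\alpha}_1(A_c)$, not $C^{k-2,\alpha}_2$. It is then precisely the explicit factor of $y$ in your formula that restores the second weight, giving $y\,\Div_{\tilde g}\hat T\in C^{k-2,\alpha}_2(A_c)$ with norm $\lesssim\epsilon$. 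Your final conclusion is unchanged, but the phrase ``only improves this'' overstates the regularity of $\Div_{\tilde g}\hat T$ by one order.
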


\begin{proof}
If restricted to the complement of $\Psi_\epsilon(A_{1/4})$, we have $\pi_\epsilon^*\lambda_\epsilon = g$ and $\pi_\epsilon^*\mu_\epsilon = \Sigma$.
Thus, as $\Div_g\Sigma =0$ by hypothesis, $\Div_{\lambda_\epsilon}\mu_\epsilon$ is supported on $\Psi_\epsilon(A_{1/4})\subset \Psi_\epsilon(A_c)$.
From Lemma \ref{lemma:pullback-weighted-norms} we have
$$
\|\Div_{\lambda_\epsilon}\mu_\epsilon\|_{C^{k-2,\alpha}_2(\Psi_\epsilon(A_c);\tilde\rho_\epsilon)}
\approx
\|\Psi_\epsilon^*(\Div_{\lambda_\epsilon}\mu_\epsilon)\|_{C^{k-2,\alpha}_2(A_c)}.
$$
Using \eqref{decompose-lambda}, \eqref{pullback-H-term}, and \eqref{define-nu-neck}, and adding and subtracting a term, we write
\begin{align*}
\Psi_\epsilon^*(\Div_{\lambda_\epsilon}\mu_\epsilon) 
=& \Div_{y^{-2}(g_\text{E} + m_\epsilon)}\left( (yF)^{-1}\mathcal H_{F^2(g_\text{E} + m_\epsilon)}(yF)\right)
\\
&
- \Div_{y^{-2}g_\text{E}}\left( (yF)^{-1}\mathcal H_{F^2g_\text{E} }(yF)\right)
\\
&
+ \Div_{y^{-2}g_\text{E}}\left( (yF)^{-1}\mathcal H_{F^2g_\text{E} }(yF)\right)
\\
&-\Div_{y^{-2}(g_\text{E} + m_\epsilon)}\left( (yF)^{-1}\mathcal H_{F^2g_\text{E}}(yF)\right)
\\
&+\frac13\Div_{y^{-2}(g_\text{E} + m_\epsilon)}\left( (yF)^{-1}(j_\epsilon^{ab} \mathcal H_{F^2g_\text{E}}(yF)_{ab}) (g_\text{E} + m_\epsilon) \right)
\\
& + \Psi_\epsilon^*(\Div_{\lambda_\epsilon}\nu_\epsilon^\text{ext}).
\end{align*}
We now invoke \cite[Lemma 7.5(c)]{AHEM}, which implies that $\overline\lambda \mapsto \Div_{\rho^{-2}\overline\lambda} (\rho^{-1}\mathcal H_{\overline\lambda}(\rho))$ is a locally Lipschitz mapping $\mathscr C^{k,\alpha;2} \to C^{k-2,\alpha}_2$. 
Thus
\begin{multline*}
\|\Div_{y^{-2}(g_\text{E} + m_\epsilon)}\left( (yF)^{-1}\mathcal H_{F^2(g_\text{E} + m_\epsilon)}(yF)\right)
\\
-\Div_{y^{-2}g_\text{E}}\left( (yF)^{-1}\mathcal H_{F^2g_\text{E} }(yF)\right)\|_{C^{k-2,\alpha}_2(A_c)}
\\
\lesssim \|m_\epsilon \|_{\mathscr C^{k,\alpha;2}(A_c)}\lesssim \epsilon.
\end{multline*}
Next, we apply \cite[Proposition 7.9]{AHEM} to the divergence operator,  concluding that
\begin{multline*}
\| \Div_{y^{-2}g_\text{E}}\left( (yF)^{-1}\mathcal H_{F^2g_\text{E} }(yF)\right)
\\
-\Div_{y^{-2}(g_\text{E} + m_\epsilon)}\left( (yF)^{-1}\mathcal H_{F^2g_\text{E}}(yF)\right)\|_{C^{k-2,\alpha}_2(A_c)}
\\
\lesssim \|m_\epsilon \|_{\mathscr C^{k,\alpha;2}(A_c)}\lesssim \epsilon.
\end{multline*}
Since $\Div_{\lambda_\epsilon}(u\lambda_\epsilon) = du$ for any function $u$,
\begin{multline*}
\Div_{y^{-2}(g_\text{E} + m_\epsilon)}\left( (yF)^{-1}(j_\epsilon^{ab} \mathcal H_{F^2g_\text{E}}(yF)_{ab}) (g_\text{E} + m_\epsilon) \right)
\\
=
d\left( y^2(yF)^{-1}(j_\epsilon^{ab} \mathcal H_{F^2g_\text{E}}(yF)_{ab}) \right).
\end{multline*}
Using this, together with Lemma \ref{lemma:Euclidean-H}(3), we easily see that
\begin{multline*}
\|\Div_{y^{-2}(g_\text{E} + m_\epsilon)}\left( (yF)^{-1}(j_\epsilon^{ab} \mathcal H_{F^2g_\text{E}}(yF)_{ab}) (g_\text{E} + m_\epsilon) \right)\|_{C^{k-2,\alpha}_2(A_c)}
\\
\lesssim  \|j_\epsilon \|_{\mathscr C^{k,\alpha;2}(A_c)}\lesssim \epsilon.
\end{multline*}

Thus it remains only to estimate
\begin{equation*}
\| \Psi_\epsilon^*(\Div_{\lambda_\epsilon}\nu_\epsilon^\text{ext}) \|_{C^{k-2,\alpha}_2(A_{1/4})}.
\end{equation*}
By inversion symmetry, it suffices to consider the set where $r>2$.
On this set we have 
\begin{align*}
\Psi_\epsilon^*(\Div_{\lambda_\epsilon}\nu_\epsilon^\text{ext})
&=\Div_{\Psi_\epsilon^*\lambda_\epsilon} (\chi(r^2/8) \alpha_{\epsilon,1}^*\nu)
\\
&=
\chi(r^2/8) \alpha_{\epsilon,1}^*(\Div_{g} \nu)
+
\alpha_{\epsilon,1}^*\nu(\grad_{\Psi_\epsilon^*\lambda_\epsilon} \chi(r^2/8), \cdot) 
\end{align*}
due to \eqref{exterior-metric}.
As the cutoff function is smooth and uniformly bounded, we may use \eqref{norm-scaling} with the indices $m$ and $j$ set to zero, together with Proposition \ref{prop:operator-estimates}, to conclude that 
\begin{align*}
\| \Psi_\epsilon^*(\Div_{\lambda_\epsilon}\nu_\epsilon^\text{ext}) \|_{C^{k-2,\alpha}_2(A_{1/4})}
&\lesssim \| \alpha_{1,\epsilon}^*\nu\|_{C^{k-1,\alpha}_2(A_{1/4})} + \| \alpha_{2,\epsilon}^*\nu\|_{C^{k-1,\alpha}_2(A_{1/4})}
\\
&\lesssim 
\epsilon^2 \|\nu\|_{C^{k-1,\alpha}_2(M)}
\\
&\lesssim \epsilon^2.
\qedhere
\end{align*}
\end{proof}

\subsection{The tensor $\sigma_\epsilon$}
The metric $\lambda_\epsilon$ and tensor $\mu_\epsilon$ satisfy regularity and boundary conditions suitable to apply the conformal method in order to obtain CMCSF hyperboloidal solutions to the constraint equations
as outlined in the introduction.
The first step in that procedure is to solve 
\begin{equation}
\label{vl-new}
L_{\lambda_\epsilon}W_\epsilon = (\Div_{\lambda_\epsilon}\mu_\epsilon)^\sharp
\end{equation}
 for a vector field $W_\epsilon$ and subsequently define a tensor field $\sigma_\epsilon$ by \eqref{sigma-intro}.
We now establish that this process can be accomplished with appropriate uniform estimates in $\epsilon$.
\begin{lemma}
\label{lemma:W-estimates}
For each $\epsilon$ satisfying \eqref{epsilon-estimate} there exists a unique vector field $W_\epsilon\in C^{k,\alpha}_2(M_\epsilon;\tilde\rho_\epsilon)$ satisfying \eqref{vl-new}.
Furthermore we have the uniform estimates
\begin{gather*}
\| W_\epsilon\|_{C^{k,\alpha}_2(M_\epsilon;\tilde\rho_\epsilon)}
\lesssim \epsilon,
\\
\| \mathcal D_{\lambda_\epsilon}W_\epsilon\|_{C^{k-1,\alpha}_2(M_\epsilon;\tilde\rho_\epsilon)}
\lesssim \epsilon,
\\
\| \tilde\rho_\epsilon  \mathcal D_{\lambda_\epsilon}W_\epsilon\|_{\mathscr C^{k-1,\alpha;1}(M_\epsilon;\tilde\rho_\epsilon)}
\lesssim \epsilon.
\end{gather*}
\end{lemma}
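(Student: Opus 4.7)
The plan is to reduce the lemma to a uniform isomorphism statement for the vector Laplacian $L_{\lambda_\epsilon}$ acting between the appropriate weighted spaces, and then extract each estimate in turn from the already-established bounds on the right-hand side and on the conformal Killing operator.

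First I would invoke the uniform elliptic theory for geometric operators on the spliced manifolds. Specifically, the family $\{\lambda_\epsilon\}$ satisfies uniform bounds by Proposition \ref{prop:lambda-estimates}, so the machinery from the appendix (together with Proposition \ref{prop:generic-operators} and the elliptic regularity in part (3) of Proposition \ref{prop:operator-estimates}) should yield that $L_{\lambda_\epsilon}\colon C^{k,\alpha}_2(M_\epsilon;\tilde\rho_\epsilon)\to C^{k-2,\alpha}_2(M_\epsilon;\tilde\rho_\epsilon)$ is a topological isomorphism, with the norm of the inverse bounded uniformly in $\epsilon$. The weight $\delta=2$ is inside the Fredholm range for the vector Laplacian on an asymptotically hyperbolic 3-manifold, and uniqueness follows from an integration-by-parts/energy argument since $L_{\lambda_\epsilon}W_\epsilon=0$ together with the decay $W_\epsilon\in C^{k,\alpha}_2$ forces $\mathcal D_{\lambda_\epsilon}W_\epsilon=0$, which has only the trivial solution for such generic hyperboloidal data.

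Granting the uniform isomorphism, Proposition \ref{prop:div-mu} places the right-hand side $(\Div_{\lambda_\epsilon}\mu_\epsilon)^\sharp$ in $C^{k-2,\alpha}_2(M_\epsilon;\tilde\rho_\epsilon)$ with norm $\lesssim \epsilon$, so
\begin{equation*}
\|W_\epsilon\|_{C^{k,\alpha}_2(M_\epsilon;\tilde\rho_\epsilon)}
\lesssim \|(\Div_{\lambda_\epsilon}\mu_\epsilon)^\sharp\|_{C^{k-2,\alpha}_2(M_\epsilon;\tilde\rho_\epsilon)}
\lesssim \epsilon,
\end{equation*}
which is the first estimate. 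The second estimate is then immediate from Proposition \ref{prop:operator-estimates}(1), which gives $\|\mathcal D_{\lambda_\epsilon}W_\epsilon\|_{C^{k-1,\alpha}_2(M_\epsilon;\tilde\rho_\epsilon)} \lesssim \|W_\epsilon\|_{C^{k,\alpha}_2(M_\epsilon;\tilde\rho_\epsilon)}\lesssim \epsilon$.

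For the third estimate I would combine the second with a weight shift. Since $\mathcal D_{\lambda_\epsilon}W_\epsilon$ is a symmetric trace-free covariant 2-tensor (weight $r=2$) lying in $C^{k-1,\alpha}_2$, multiplying by $\tilde\rho_\epsilon$ yields $\tilde\rho_\epsilon\mathcal D_{\lambda_\epsilon}W_\epsilon \in C^{k-1,\alpha}_3(M_\epsilon;\tilde\rho_\epsilon) = C^{k-1,\alpha}_{r+1}(M_\epsilon;\tilde\rho_\epsilon)$ with norm $\lesssim \epsilon$. Applying the inclusion of Proposition \ref{prop:basic-inclusions}\eqref{curly-to-weighted} (in the form adapted to $M_\epsilon$ with weight $\tilde\rho_\epsilon$, using that $\overline h_\epsilon$ has uniformly controlled geometry by construction) gives the third bound. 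The main obstacle in this whole argument is really the uniform isomorphism for $L_{\lambda_\epsilon}$: one must verify that the degeneration of the geometry in the neck as $\epsilon\to 0$ does not cause the inverse to blow up, which is the purpose of the appendix and relies on the fact that in the neck the metric is uniformly close to $\breve g$ by Proposition \ref{prop:WAH}\eqref{me-small}, so a local parametrix argument combined with global Fredholm theory gives $\epsilon$-independent constants.
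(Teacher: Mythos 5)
Your proof follows essentially the same route as the paper's: reduce to the uniform invertibility of $L_{\lambda_\epsilon}$ (the content of Proposition \ref{invertibility:main} in the appendix), plug in the $\lesssim\epsilon$ bound for the source from Proposition \ref{prop:div-mu}, pass from $W_\epsilon$ to $\mathcal D_{\lambda_\epsilon}W_\epsilon$ via Proposition \ref{prop:operator-estimates}, and convert the weighted H\"older estimate to an intermediate-space estimate via Proposition \ref{prop:basic-inclusions}\eqref{curly-to-weighted}. Two small points where the paper is cleaner than your sketch: for pointwise-in-$\epsilon$ existence and uniqueness the paper simply cites Proposition 6.3 of \cite{AHEM} rather than sketching an energy argument, and your phrase ``for such generic hyperboloidal data'' is misleading --- the triviality of the kernel of $\mathcal D_{\lambda_\epsilon}$ in $C^{k,\alpha}_2$ holds without any genericity assumption, since decaying conformal Killing fields on a weakly asymptotically hyperbolic manifold must vanish.
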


\begin{proof}
Since $\overline\lambda_\epsilon\in \mathscr C^{k,\alpha;2}(M_\epsilon)$ and $\nu_\epsilon^\text{neck} + \nu_\epsilon^\text{ext}\in C^{k-1,\alpha}_2(M_\epsilon)$ we may apply Proposition 6.3 of \cite{AHEM} to conclude that there exists a unique vector field $W_\epsilon\in C^{k,\alpha}_2(M_\epsilon)$ satisfying \eqref{vl-new}.

To obtain the uniform estimates we first use Proposition \ref{invertibility:main} to obtain
\begin{align*}
\|W_\epsilon\|_{C^{k,\alpha}_2(M_\epsilon, \tilde\rho_\epsilon)} 
&\lesssim
\|L_{\lambda_\epsilon}W_\epsilon\|_{C^{k-2,\alpha}_2(M_\epsilon, \tilde\rho_\epsilon)}
\\
&=
\|(\Div_{\lambda_\epsilon}\mu_\epsilon)^\sharp\|_{C^{k-2,\alpha}_2(M_\epsilon, \tilde\rho_\epsilon)}.
\end{align*}
Using Propositions \ref{prop:lambda-estimates} and \ref{prop:div-mu} we have
\begin{equation*}
\|(\Div_{\lambda_\epsilon}\mu_\epsilon)^\sharp\|_{C^{k-2,\alpha}_2(M_\epsilon, \tilde\rho_\epsilon)}
\leq \| \lambda_\epsilon^{-1} \|_{C^{k,\alpha}(M_\epsilon)} \| \Div_{\lambda_\epsilon}\mu_\epsilon\|_{C^{k-2,\alpha}_2(M_\epsilon, \tilde\rho_\epsilon)}
\lesssim \epsilon,
\end{equation*}
which proves the first estimate.
The second estimate then follows from Proposition \ref{prop:operator-estimates}, and the third from Proposition \ref{prop:basic-inclusions}\eqref{curly-to-weighted} .
\end{proof}

Using $W_\epsilon$, we define the symmetric, trace-free tensor $\sigma_\epsilon$ by
\begin{align*}
\sigma_\epsilon &= \rho_\epsilon^{-1}\mathcal H_{\overline\lambda_\epsilon}(\rho_\epsilon) + \nu_\epsilon^\text{neck} + \nu_\epsilon^\text{ext} + \mathcal D_{\lambda_\epsilon}W_\epsilon
\\
&= \mu_\epsilon + \mathcal D_{\lambda_\epsilon}W_\epsilon.
\end{align*}
\begin{prop}
\label{estimate:sigma}
We have
\begin{enumerate}
\item $\overline\sigma_\epsilon = \rho_\epsilon\sigma_\epsilon \in \mathscr C^{k-1,\alpha;1}(M_\epsilon)$,
\item $\Div_{\lambda_\epsilon}\sigma_\epsilon =0$, and
\item $\overline\sigma_\epsilon = \mathcal H_{\overline\lambda_\epsilon}(\rho_\epsilon)$ along $\partial M_\epsilon$.
\end{enumerate}
Furthermore we have the global estimate
\begin{enumerate}[resume]
\item $\| \sigma_\epsilon\|_{C^{k-1,\alpha}(M_\epsilon)}\lesssim 1.$
\end{enumerate}
Finally, in the neck and exterior regions we have the uniform estimates
\begin{enumerate}[resume]
\item $\|y \Psi_\epsilon^*\sigma_\epsilon \|_{\mathscr C^{k-1,\alpha;1}(A_c)}\lesssim \epsilon$, and
\item $\|\rho ( \iota_\epsilon^*\sigma_\epsilon - \Sigma) \|_{\mathscr C^{k-1,\alpha;1}(E_c)}\lesssim \epsilon$
\end{enumerate}

\end{prop}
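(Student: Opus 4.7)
The plan is to decompose $\sigma_\epsilon = \mu_\epsilon + \mathcal D_{\lambda_\epsilon}W_\epsilon$ and handle each piece using the established bounds for $\mu_\epsilon$ (Proposition~\ref{prop:estimate-mu}) and for $W_\epsilon$ (Lemma~\ref{lemma:W-estimates}). Claim (b) is essentially immediate: by construction $W_\epsilon$ satisfies $L_{\lambda_\epsilon}W_\epsilon = (\Div_{\lambda_\epsilon}\mu_\epsilon)^\sharp$, and since $L_{\lambda_\epsilon} = -\Div_{\lambda_\epsilon}\circ \mathcal D_{\lambda_\epsilon}(\cdot)^\sharp$, this gives $\Div_{\lambda_\epsilon}(\mathcal D_{\lambda_\epsilon}W_\epsilon) = -\Div_{\lambda_\epsilon}\mu_\epsilon$, whence $\Div_{\lambda_\epsilon}\sigma_\epsilon = 0$.

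For (a) and (c), I would split $\overline\sigma_\epsilon = \mathcal H_{\overline\lambda_\epsilon}(\rho_\epsilon) + \rho_\epsilon(\nu_\epsilon^\text{neck} + \nu_\epsilon^\text{ext}) + \rho_\epsilon \mathcal D_{\lambda_\epsilon}W_\epsilon$. Proposition~\ref{prop:WAH} gives $\overline\lambda_\epsilon \in \mathscr C^{k,\alpha;2}(M_\epsilon)$, so Proposition~\ref{H-properties}\eqref{H-regularity} places the first summand in $\mathscr C^{k-1,\alpha;1}(M_\epsilon)$. Lemmas~\ref{lemma:nu-ext-reg} and~\ref{lemma:nu-neck-reg} place $\nu_\epsilon^\text{ext}$ and $\nu_\epsilon^\text{neck}$ in $C^{k-1,\alpha}_2(M_\epsilon)$, so multiplication by $\rho_\epsilon$ produces elements of $C^{k-1,\alpha}_3(M_\epsilon) \subset \mathscr C^{k-1,\alpha;1}(M_\epsilon)$ whose $\overline\lambda_\epsilon$-norm is $O(\rho_\epsilon)$ and hence vanishes pointwise at $\partial M_\epsilon$. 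Finally, Lemma~\ref{lemma:W-estimates} gives $\tilde\rho_\epsilon \mathcal D_{\lambda_\epsilon}W_\epsilon \in \mathscr C^{k-1,\alpha;1}(M_\epsilon;\tilde\rho_\epsilon)$; writing $\rho_\epsilon = \omega_\epsilon \tilde\rho_\epsilon$ with $\omega_\epsilon$ smooth and uniformly bounded on $\overline M_\epsilon$, the same membership holds for $\rho_\epsilon \mathcal D_{\lambda_\epsilon}W_\epsilon$, and once again $|\rho_\epsilon\mathcal D_{\lambda_\epsilon}W_\epsilon|_{\overline\lambda_\epsilon} \lesssim \rho_\epsilon \to 0$ at the boundary. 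This establishes (a), and comparing at $\partial M_\epsilon$ yields (c).

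For (d), combine Proposition~\ref{prop:estimate-mu}(a) with the bound $\|\mathcal D_{\lambda_\epsilon}W_\epsilon\|_{C^{k-1,\alpha}_2(M_\epsilon;\tilde\rho_\epsilon)} \lesssim \epsilon$ from Lemma~\ref{lemma:W-estimates}; since $\tilde\rho_\epsilon$ is uniformly bounded on $M_\epsilon$, this controls the unweighted $C^{k-1,\alpha}(M_\epsilon)$ norm. For (e) and (f), I would pull back via Lemma~\ref{lemma:pullback-curly-norms}, using the equivalences $\Psi_\epsilon^*\tilde\rho_\epsilon \approx y$ on $A_c$ (see~\eqref{rho-tilde-equivalence}) and $\iota_\epsilon^*\tilde\rho_\epsilon \approx \rho$ on $E_c$. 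The $\mu_\epsilon$ contribution to (e) is exactly Proposition~\ref{prop:estimate-mu}(b); for (f), Proposition~\ref{prop:estimate-mu}(c) gives $\iota_\epsilon^*\mu_\epsilon = \Sigma$, so the left-hand side of (f) reduces to $\rho \iota_\epsilon^*(\mathcal D_{\lambda_\epsilon}W_\epsilon)$. In both cases the third estimate of Lemma~\ref{lemma:W-estimates}, phrased intrinsically as a bound on $\tilde\rho_\epsilon \mathcal D_{\lambda_\epsilon}W_\epsilon$, transfers through the weight equivalences above to the stated $\epsilon$-uniform bound on the pulled-back conformal Killing piece.

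The main technical obstacle is the careful bookkeeping of weight functions: $\rho_\epsilon$ degenerates by a factor of order $\epsilon$ in the middle of the neck while $\tilde\rho_\epsilon$ does not, which is precisely why Lemma~\ref{lemma:W-estimates} is phrased in terms of $\tilde\rho_\epsilon \mathcal D_{\lambda_\epsilon}W_\epsilon$ rather than $\rho_\epsilon \mathcal D_{\lambda_\epsilon}W_\epsilon$. Everything else is routine once one matches the right weight on each side of the substitution $\rho_\epsilon \leftrightarrow \tilde\rho_\epsilon$: using $\omega_\epsilon$ smoothness to preserve regularity for (a) and (c), and using the equivalences on $A_c$ and $E_c$ to recover the pointwise weights $y$ and $\rho$ demanded by (e) and (f).
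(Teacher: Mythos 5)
Your proposal is correct and follows the same route the paper takes: the paper's proof simply remarks that claims (a)--(c) are "direct consequences of the construction and regularity involved" (citing Theorem 8.2 of \cite{AHEM}) and that (d)--(f) "follow from Proposition \ref{prop:estimate-mu} and Lemma \ref{lemma:W-estimates}," and your argument supplies exactly those details via the decomposition $\sigma_\epsilon = \mu_\epsilon + \mathcal D_{\lambda_\epsilon}W_\epsilon$. The bookkeeping on the weight functions ($\rho_\epsilon$ versus $\tilde\rho_\epsilon$, with $\omega_\epsilon$ smooth) and the transfer through Lemma \ref{lemma:pullback-curly-norms} and \eqref{rho-tilde-equivalence} are handled correctly.
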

\begin{proof}
The first three claims are direct consequences of the construction and regularity involved; see also \cite[Theorem 8.2]{AHEM}.
The remaining claims follow from Proposition \ref{prop:estimate-mu}  and Lemma \ref{lemma:W-estimates}.
\end{proof}

The following additional estimates on $ |\sigma_\epsilon|^2_{\lambda_\epsilon}$ are required in our analysis of the Lichnerowicz equation.
\begin{prop}
\label{prop:sigma-aux-estimates}
We have
\begin{gather}
\label{norm-sigma}
\| |\sigma_\epsilon|^2_{\lambda_\epsilon}\|_{C^{k-1,\alpha}(M_\epsilon)}\lesssim 1,
\\
\label{norm-sigma-ext}
\| \iota_\epsilon^*|\sigma_\epsilon|^2_{\lambda_\epsilon} - |\Sigma|^2_g \|_{C^{k-1,\alpha}_2(E_c)} \lesssim \epsilon,
\\
\label{norm-sigma-neck}
\| \Psi_\epsilon^*  |\sigma_\epsilon|^2_{\lambda_\epsilon} \|_{C^{k-1,\alpha}_2(A_c)}
\lesssim \epsilon,
\\
\label{norm-sigma-difference}
\| |\sigma_\epsilon|^2 - |\mu_\epsilon|^2 \|_{C^{k-1,\alpha}_2(M_\epsilon;\tilde\rho_\epsilon)}\lesssim \epsilon.
\end{gather}
\end{prop}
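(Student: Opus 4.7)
The plan is to deduce each of the four bounds by combining the already-established estimates for $\sigma_\epsilon$, $\mu_\epsilon$, $\mathcal D_{\lambda_\epsilon}W_\epsilon$, and $\lambda_\epsilon^{\pm 1}$ with the Banach algebra structure of the weighted H\"older spaces under tensor product and contraction (Proposition \ref{prop:basic-inclusions}), using the pullback equivalences of Lemmas \ref{lemma:pullback-norms}, \ref{lemma:pullback-weighted-norms}, and \ref{lemma:pullback-curly-norms} to switch between the three regions. Throughout, I expand $|\sigma_\epsilon|^2_{\lambda_\epsilon} = \lambda_\epsilon^{ac}\lambda_\epsilon^{bd}\sigma_{\epsilon,ab}\sigma_{\epsilon,cd}$ and estimate factor-by-factor.

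For \eqref{norm-sigma}, I would combine the uniform bounds $\|\sigma_\epsilon\|_{C^{k-1,\alpha}(M_\epsilon)}\lesssim 1$ from Proposition \ref{estimate:sigma}(4) and $\|\lambda_\epsilon^{-1}\|_{C^{k,\alpha}(M_\epsilon)}\lesssim 1$ from Proposition \ref{prop:lambda-estimates}, invoking the Banach algebra property of $C^{k-1,\alpha}(M_\epsilon)$ — which is immediate from the supremum-over-M\"obius-parametrizations definition together with the corresponding property on Euclidean balls.

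For \eqref{norm-sigma-ext}, I note that Proposition \ref{prop:WAH}(c) gives $\iota_\epsilon^*\lambda_\epsilon = g$ on $E_c$, so $\iota_\epsilon^*|\sigma_\epsilon|^2_{\lambda_\epsilon} = |\iota_\epsilon^*\sigma_\epsilon|^2_g$, and the difference factors as
$$|\iota_\epsilon^*\sigma_\epsilon|^2_g - |\Sigma|^2_g = \langle \iota_\epsilon^*\sigma_\epsilon - \Sigma,\ \iota_\epsilon^*\sigma_\epsilon + \Sigma\rangle_g.$$
Proposition \ref{estimate:sigma}(6) tells us that $\rho(\iota_\epsilon^*\sigma_\epsilon - \Sigma)\in \mathscr C^{k-1,\alpha;1}(E_c)$ with norm $\lesssim \epsilon$, which (since a weight-$2$ tensor in $\mathscr C^{k-1,\alpha;1}$ lies in $C^{k-1,\alpha}_2$) yields $\iota_\epsilon^*\sigma_\epsilon - \Sigma \in C^{k-1,\alpha}_1(E_c)$ with norm $\lesssim \epsilon$. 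Meanwhile $\Sigma = \rho^{-1}\overline\Sigma \in C^{k-1,\alpha}_1(E_c)$ and therefore $\iota_\epsilon^*\sigma_\epsilon + \Sigma \in C^{k-1,\alpha}_1(E_c)$ with norm $\lesssim 1$. Together with $g^{-1}\in C^{k,\alpha}_2(M)$, the contraction lands in $C^{k-1,\alpha}_{2+2+1+1} = C^{k-1,\alpha}_6 \subset C^{k-1,\alpha}_2$ on $E_c$ with norm $\lesssim \epsilon$.

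For \eqref{norm-sigma-neck}, I work in the neck coordinates. Proposition \ref{prop:WAH}\eqref{me-small} gives $\|(\Psi_\epsilon^*\lambda_\epsilon)^{-1}\|_{C^{k,\alpha}_0(A_c)} \lesssim 1$, while Proposition \ref{estimate:sigma}(5) and the containment $\mathscr C^{k-1,\alpha;1} \subset C^{k-1,\alpha}_2$ (for a weight-$2$ tensor) give $\Psi_\epsilon^*\sigma_\epsilon \in C^{k-1,\alpha}_1(A_c)$ with norm $\lesssim \epsilon$ after dividing by $y\approx \breve\rho$. Contracting yields $\Psi_\epsilon^*|\sigma_\epsilon|^2_{\lambda_\epsilon} \in C^{k-1,\alpha}_{0+0+1+1}(A_c) = C^{k-1,\alpha}_2(A_c)$ with norm $\lesssim \epsilon^2 \lesssim \epsilon$. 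For \eqref{norm-sigma-difference}, I expand
$$|\sigma_\epsilon|^2_{\lambda_\epsilon} - |\mu_\epsilon|^2_{\lambda_\epsilon} = 2\langle \mu_\epsilon,\ \mathcal D_{\lambda_\epsilon}W_\epsilon\rangle_{\lambda_\epsilon} + |\mathcal D_{\lambda_\epsilon}W_\epsilon|^2_{\lambda_\epsilon}$$
and combine $\|\mu_\epsilon\|_{C^{k-1,\alpha}(M_\epsilon)}\lesssim 1$ (Proposition \ref{prop:estimate-mu}) with $\|\mathcal D_{\lambda_\epsilon}W_\epsilon\|_{C^{k-1,\alpha}_2(M_\epsilon;\tilde\rho_\epsilon)}\lesssim \epsilon$ (Lemma \ref{lemma:W-estimates}) and the uniform bound on $\lambda_\epsilon^{-1}$, giving a first term bounded by $\epsilon$ and a second by $\epsilon^2$ in $C^{k-1,\alpha}_2(M_\epsilon;\tilde\rho_\epsilon)$.

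None of these estimates is technically difficult once the prior propositions are in place; the main bookkeeping challenge is keeping the weight exponents consistent across the three distinct sets of weight functions ($\rho$ on the exterior, $y \approx \breve\rho$ on the neck, and the global $\tilde\rho_\epsilon$), and verifying that the pullback norm equivalences preserve the implicit constants uniformly in $\epsilon$. For \eqref{norm-sigma-neck} in particular, the fact that the neck bound beats $\epsilon^2$ (rather than merely $\epsilon$) comes from two copies of the small factor $y\Psi_\epsilon^*\sigma_\epsilon$, and it is worth being careful that both copies of $(\Psi_\epsilon^*\lambda_\epsilon)^{-1}$ are uniformly bounded rather than merely tempered, which is guaranteed by Proposition \ref{prop:WAH}\eqref{me-small}.
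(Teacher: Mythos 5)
Your argument follows the paper's proof almost exactly: view $|\sigma_\epsilon|^2_{\lambda_\epsilon}$ and the various differences as contractions of tensor products, then use the Banach algebra property together with the previously established bounds (Propositions \ref{prop:lambda-estimates}, \ref{prop:WAH}, \ref{prop:estimate-mu}, \ref{estimate:sigma}, and Lemma \ref{lemma:W-estimates}). The decomposition for \eqref{norm-sigma-difference} via $\sigma_\epsilon - \mu_\epsilon = \mathcal D_{\lambda_\epsilon}W_\epsilon$ is the same (the paper contracts $\lambda_\epsilon^{-1}\otimes\lambda_\epsilon^{-1}\otimes(\sigma_\epsilon+\mu_\epsilon)\otimes\mathcal D_{\lambda_\epsilon}W_\epsilon$; your expansion $2\langle\mu_\epsilon,\mathcal D_{\lambda_\epsilon}W_\epsilon\rangle + |\mathcal D_{\lambda_\epsilon}W_\epsilon|^2$ is an algebraic rearrangement of this).

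There is one minor weight-exponent error in your accounting for \eqref{norm-sigma-ext}: you assert $g^{-1}\in C^{k,\alpha}_2(M)$. In fact $g^{-1}\in C^{k,\alpha}_0(M) = C^{k,\alpha}(M)$: since $\overline g^{-1} = \rho^2 g^{-1}$ is a weight-$(-2)$ tensor bounded in $\overline h$-norm, Lemma \ref{lemma:barM-M} gives $\overline g^{-1}\in C^{k,\alpha}_{-2}(M)$, whence $g^{-1}\in C^{k,\alpha}_0(M)$; equivalently, in any M\"obius parametrization, $\Phi_p^* g^{-1}$ is uniformly bounded but does not vanish at the boundary, so there is no extra $\rho^2$ decay. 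The correct weight sum is then $0 + 0 + 1 + 1 = 2$, not $6$, which of course still gives the target space $C^{k-1,\alpha}_2(E_c)$; the error happens to be harmless here only because $C^{k-1,\alpha}_6\subset C^{k-1,\alpha}_2$, but it would matter in a setting where the sharp exponent were needed.
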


\begin{proof}
Note that $|\sigma_\epsilon|^2_{\lambda_\epsilon}$ is a contraction of
\begin{equation*}
\lambda_\epsilon^{-1} \otimes \sigma_\epsilon \otimes\sigma_\epsilon.
\end{equation*}
This, together with Propositions \ref{prop:lambda-estimates} and \ref{estimate:sigma}, yields \eqref{norm-sigma}.
For \eqref{norm-sigma-neck}, we view $\Psi_\epsilon^*  |\sigma_\epsilon|^2_{\lambda_\epsilon}$ as a contraction of
\begin{equation*}
y^{-2}(\Psi_\epsilon^*\lambda_\epsilon)^{-1} \otimes y \Psi_\epsilon^*\sigma_\epsilon  \otimes y \Psi_\epsilon^*\sigma_\epsilon
\end{equation*}
and use Propositions \ref{prop:WAH} and \ref{estimate:sigma}.
Analogous reasoning yields \eqref{norm-sigma-ext}.
For the final estimate we note that $|\sigma_\epsilon|^2 - |\mu_\epsilon|^2$ is a contraction of
\begin{equation*}
\lambda_\epsilon^{-1}
\otimes
\lambda_\epsilon^{-1}
\otimes(\sigma_\epsilon + \mu_\epsilon)
\otimes (\sigma_\epsilon - \mu_\epsilon)
=
\lambda_\epsilon^{-1}
\otimes
\lambda_\epsilon^{-1}
\otimes(\sigma_\epsilon + \mu_\epsilon)
\otimes \mathcal D_{\lambda_\epsilon}W_\epsilon.
\end{equation*}
Using Propositions \ref{prop:lambda-estimates}, \ref{prop:estimate-mu}, and \ref{estimate:sigma}, together with Lemma \ref{lemma:W-estimates}, we have
\begin{align*}
\| |\sigma_\epsilon|^2 &- |\mu_\epsilon|^2 \|_{C^{k-1,\alpha}_2(M_\epsilon;\tilde\rho_\epsilon)}
\\
&\leq 
\| \lambda_\epsilon^{-1}\|_{C^{k,\alpha}(M_\epsilon)}^2
\| \sigma_\epsilon + \mu_\epsilon\|_{C^{k-1,\alpha}(M_\epsilon)}
\| \mathcal D_{\lambda_\epsilon}W_\epsilon \|_{C^{k-1,\alpha}_2(M_\epsilon;\tilde\rho_\epsilon)}
\\
&\lesssim \epsilon,
\end{align*}
which concludes the proof.
\end{proof}

\section{Analysis of the Lichnerowicz equation}
\label{sec:lich}
As discussed in \S\ref{subsec:overview}, the results of \cite{AHEM} (see also \cite{WAH}) imply that for each $\lambda_\epsilon$ and $\sigma_\epsilon$ there exists a positive solution $\phi_\epsilon$ to the Lichnerowicz equation \eqref{lichnero}; the solution $\phi_\epsilon$ is the unique such function satisfying $\phi_\epsilon - 1\in C^{k,\alpha}_1(M_\epsilon)$.
In this section we establish the following estimates for $\phi_\epsilon$ in the exterior region $E_c$ and gluing region $A_c$.

\begin{prop}
\label{prop:main-phi-estimate}
Suppose that \eqref{epsilon-estimate} holds.
There exists $\epsilon_*>0$, depending on $c$, such that for $0<\epsilon < \epsilon_*$ we have
\begin{equation}
\begin{gathered}
\label{main-phi-estimate}
\|\iota_\epsilon^*\phi_\epsilon - 1\|_{\mathscr C^{k,\alpha;2}(E_c)}\lesssim \epsilon,
\\
\|\Psi_\epsilon^*\phi_\epsilon - 1\|_{\mathscr C^{k,\alpha;2}(A_c)}\lesssim \epsilon.
\end{gathered}
\end{equation}
\end{prop}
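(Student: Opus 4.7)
The plan is to set $u_\epsilon = \phi_\epsilon - 1$, linearize \eqref{lichnero} around $\phi = 1$, and close a contraction-mapping argument in $C^{k,\alpha}_2(M_\epsilon;\tilde\rho_\epsilon)$; the resulting small solution will be identified with the given $\phi_\epsilon$ via maximum-principle uniqueness of positive solutions of the Lichnerowicz equation. Expanding $(1+u)^5$ and $(1+u)^{-7}$ puts the equation in the form
\[
\mathcal{L}_\epsilon u_\epsilon = F_\epsilon + N_\epsilon(u_\epsilon),
\]
where $\mathcal{L}_\epsilon = \Delta_{\lambda_\epsilon} - V_\epsilon$ with $V_\epsilon = \tfrac{1}{8}\R[\lambda_\epsilon] + \tfrac{7}{8}|\sigma_\epsilon|^2_{\lambda_\epsilon} + \tfrac{15}{4}$, the source term
$F_\epsilon = \tfrac{1}{8}\bigl(\R[\lambda_\epsilon] + 6 - |\sigma_\epsilon|^2_{\lambda_\epsilon}\bigr)$
measures the defect of the Hamiltonian constraint for the approximate data, and $N_\epsilon(u) = O(u^2)$ collects the Taylor remainders, with coefficients bounded uniformly in $C^{k-2,\alpha}(M_\epsilon)$ by Propositions~\ref{prop:lambda-estimates} and \ref{prop:sigma-aux-estimates}.

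The first major step is to show $\|F_\epsilon\|_{C^{k-2,\alpha}_2(M_\epsilon;\tilde\rho_\epsilon)} \lesssim \epsilon$. I would decompose
\[
F_\epsilon = \tfrac{1}{8}\bigl(\R[\lambda_\epsilon] + 6 - |\mu_\epsilon|^2_{\lambda_\epsilon}\bigr) + \tfrac{1}{8}\bigl(|\mu_\epsilon|^2_{\lambda_\epsilon} - |\sigma_\epsilon|^2_{\lambda_\epsilon}\bigr);
\]
the second summand is $O(\epsilon)$ directly by \eqref{norm-sigma-difference}. The first summand vanishes outside $\Psi_\epsilon(A_{1/2})$, where $\lambda_\epsilon$ and $\mu_\epsilon$ coincide with the pushforwards of $g$ and $\Sigma$ so that \eqref{CMC-constraints} holds identically. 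In the neck, the fact that the Minkowski hyperboloid data $(\breve g, 0)$ satisfies the constraint exactly forces cancellations in the Taylor expansion of $\R[\lambda_\epsilon] + 6 - |\mu_\epsilon|^2_{\lambda_\epsilon}$ in the perturbation parameters $m_\epsilon$ and the splice correction $\nu_\epsilon^{\text{neck}}$; combining these cancellations with Propositions~\ref{prop:WAH}, \ref{prop:g-neck} and \ref{prop:sigma-aux-estimates}\eqref{norm-sigma-neck} delivers the required $\epsilon$-decay.

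Next, $V_\epsilon$ is bounded below by a positive constant uniformly in $\epsilon$ (approaching $|\Sigma|^2_g + 3$ on the exterior end and $3$ in the neck), so $-\mathcal{L}_\epsilon$ is uniformly coercive and satisfies a uniform maximum principle. The appendix's abstract framework, combined with Proposition~\ref{prop:operator-estimates}(4), then yields the uniform inverse bound
\[
\|u\|_{C^{k,\alpha}_2(M_\epsilon;\tilde\rho_\epsilon)} \lesssim \|\mathcal{L}_\epsilon u\|_{C^{k-2,\alpha}_2(M_\epsilon;\tilde\rho_\epsilon)}.
\]
Since pointwise multiplication maps $C^{k,\alpha}_2 \times C^{k,\alpha}_2$ into $C^{k,\alpha}_4 \hookrightarrow C^{k,\alpha}_2$ with a quadratic norm bound, the map $u \mapsto \mathcal{L}_\epsilon^{-1}(F_\epsilon + N_\epsilon(u))$ is a contraction on a ball of radius $C\epsilon$ in $C^{k,\alpha}_2(M_\epsilon;\tilde\rho_\epsilon)$ for $\epsilon$ small; its fixed point $\tilde u_\epsilon$ yields a positive solution of \eqref{lichnero} and, by the maximum-principle uniqueness of positive solutions (as in \cite{AHEM}), coincides with $\phi_\epsilon - 1$. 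Localizing via Lemma~\ref{lemma:pullback-weighted-norms} and applying Proposition~\ref{prop:basic-inclusions}\eqref{curly-to-weighted} (with $r = 0$, $m = 2$) upgrades the $C^{k,\alpha}_2$ control to the $\mathscr{C}^{k,\alpha;2}$ estimates in \eqref{main-phi-estimate}.

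The principal obstacle is twofold: first, the uniform-in-$\epsilon$ invertibility of $\mathcal{L}_\epsilon$ in the weighted spaces adapted to the degenerating defining function $\tilde\rho_\epsilon$, which is precisely what the general framework in the appendix is designed to provide; and second, extracting the full $\epsilon$-decay of $F_\epsilon$ in the neck, which requires tracking the cancellation between $\R[\lambda_\epsilon] + 6$ and $|\mu_\epsilon|^2_{\lambda_\epsilon}$ beyond leading order, since neither quantity alone has the required $C^{k-2,\alpha}_2$ control.
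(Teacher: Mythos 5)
Your high-level scheme (linearize, show the defect is small, uniform inverse bound, contraction, identify with $\phi_\epsilon$, upgrade to $\mathscr C^{k,\alpha;2}$) matches the paper's structure, but the crucial first estimate in your argument is not true, and the way you propose to salvage it does not work. Specifically, you linearize about the constant function $1$ and claim $\|F_\epsilon\|_{C^{k-2,\alpha}_2(M_\epsilon;\tilde\rho_\epsilon)}\lesssim\epsilon$, where $F_\epsilon = \tfrac18(\R[\lambda_\epsilon]+6-|\sigma_\epsilon|^2_{\lambda_\epsilon})$. The paper states explicitly (just after Lemma \ref{lemma:one-outside}) that $\R[\lambda_\epsilon]+6$ need \emph{not} be small in $C^{k-2,\alpha}_2(M_\epsilon;\tilde\rho_\epsilon)$, and the estimates in the paper bear this out: from \eqref{conformal-R}, $\Psi_\epsilon^*\R[\lambda_\epsilon]+6 = 6v_\epsilon + y^2\R[g_{\mathrm E}+m_\epsilon]$, and Lemma \ref{lemma:v-epsilon} gives $v_\epsilon$ control only in $C^{k-1,\alpha}_1(A_c)$ (i.e., $v_\epsilon = O(\epsilon\,\breve\rho)$), with $C^{k-2,\alpha}_2$ decay available only on the outer annulus $A_c\setminus A_{1/2}$, where the original constraint holds. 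In the inner part of the neck $v_\epsilon$ is generically $O(\epsilon\,\breve\rho)$ and not $O(\epsilon\,\breve\rho^2)$.

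Your proposed cancellation against $|\mu_\epsilon|^2_{\lambda_\epsilon}$ cannot close this gap: by \eqref{norm-sigma-neck} together with \eqref{norm-sigma-difference}, $\Psi_\epsilon^*|\mu_\epsilon|^2_{\lambda_\epsilon}$ is already $O(\epsilon)$ in $C^{k-1,\alpha}_2(A_c)$, i.e. it is $O(\epsilon\,\breve\rho^2)$, which is an order in $\breve\rho$ smaller than the problematic $6v_\epsilon$ term. Subtracting a strictly smaller quantity cannot improve the weighted decay of the larger one, and the Minkowski-hyperboloid data satisfying the constraint exactly does not force the splice to satisfy it beyond the leading $O(\breve\rho)$ order. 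This is precisely why the paper does \emph{not} linearize about $1$, but instead constructs the approximate solution $\theta_\epsilon = 1 - \tfrac{3}{16}\tilde v_\epsilon$: the operator identity $\Delta_{\breve g}\tilde v_\epsilon + \tilde v_\epsilon = y^2\Delta_{g_{\mathrm E}+m_\epsilon}\tilde v_\epsilon + \left(\tilde v_\epsilon - y\langle d\tilde v_\epsilon, dy\rangle_{g_{\mathrm E}+m_\epsilon}\right)$ in \eqref{gustav}, combined with Proposition \ref{prop:bdy-taylor}, is what converts the $O(\epsilon\,\breve\rho)$ obstruction into an $O(\epsilon\,\breve\rho^2)$ source. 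Without this correction step, the fixed-point map you build is not a contraction on a ball of radius $O(\epsilon)$ in $C^{k,\alpha}_2(M_\epsilon;\tilde\rho_\epsilon)$.

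Your remaining steps (the uniform inverse bound from the appendix, the quadratic estimate on the Taylor remainder, the identification of the fixed point with $\phi_\epsilon$ via uniqueness of positive solutions with $\phi_\epsilon-1\in C^{k,\alpha}_1$, and the upgrade to $\mathscr C^{k,\alpha;2}$ via Proposition \ref{prop:basic-inclusions}\eqref{curly-to-weighted}) are all in line with the paper's Proposition \ref{prop:uniformly-invert-Pe}, Proposition \ref{prop:Q-mapping}, Lemma \ref{lemma:contraction} and Proposition \ref{prop:phi-theta}, with the difference that the paper linearizes about $\theta_\epsilon$ rather than $1$ and also runs the contraction simultaneously at weights $\delta=0$ and $\delta=2$ to control both the size and the decay of $u_\epsilon$. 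You would need both of those adjustments to make the argument go through.
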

\noindent
The proof of Proposition \ref{prop:main-phi-estimate} appears in \S\ref{sec:last-lich} below.

The proof of Theorem \ref{main} makes use of the following, which is an immediate consequence of $\iota_\epsilon^*\phi_\epsilon$ and $\Psi_\epsilon^*\phi_\epsilon$ being uniformly bounded close to $1$, and thus away from zero.
\begin{cor}
For any integer $p$ we have
\begin{equation}
\label{cor:phi}
\begin{gathered}
\|\iota_\epsilon^*(\phi_\epsilon^p - 1)\|_{\mathscr C^{k,\alpha;2}(E_c)}\lesssim \epsilon,
\\
\|\Psi_\epsilon^*(\phi_\epsilon^p - 1)\|_{\mathscr C^{k,\alpha;2}(A_c)}\lesssim \epsilon.
\end{gathered}
\end{equation}
\end{cor}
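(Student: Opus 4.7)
The plan is to factor $\phi_\epsilon^p-1$ so that it has $\phi_\epsilon-1$ as one factor and then invoke the Banach algebra property of $\mathscr C^{k,\alpha;2}$ from Proposition \ref{prop:basic-inclusions}.  The case $p=0$ is trivial; I handle $p>0$ first, and then reduce $p<0$ to the positive case by inverting $\phi_\epsilon$.

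For $p>0$, I would use the factorization
\begin{equation*}
\phi_\epsilon^p - 1 = (\phi_\epsilon-1)\sum_{j=0}^{p-1}\phi_\epsilon^j.
\end{equation*}
Pulling back by $\iota_\epsilon$ (or $\Psi_\epsilon$) and applying the Banach algebra property, I need a uniform $\mathscr C^{k,\alpha;2}$ bound on each $\iota_\epsilon^*\phi_\epsilon^j$ on $E_c$ (respectively $\Psi_\epsilon^*\phi_\epsilon^j$ on $A_c$).  The triangle inequality combined with Proposition \ref{prop:main-phi-estimate} gives
\begin{equation*}
\|\iota_\epsilon^*\phi_\epsilon\|_{\mathscr C^{k,\alpha;2}(E_c)}
\leq \|1\|_{\mathscr C^{k,\alpha;2}(E_c)} + \|\iota_\epsilon^*\phi_\epsilon-1\|_{\mathscr C^{k,\alpha;2}(E_c)}
\lesssim 1 + \epsilon \lesssim 1,
\end{equation*}
and iterating the algebra property yields $\|\iota_\epsilon^*\phi_\epsilon^j\|_{\mathscr C^{k,\alpha;2}(E_c)}\lesssim 1$ for each fixed $j\leq p-1$, with the same estimate on $A_c$.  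Multiplying through gives \eqref{cor:phi}.

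For $p<0$, write $p=-q$ with $q>0$.  The identity
\begin{equation*}
\phi_\epsilon^{-q}-1 = -\phi_\epsilon^{-q}\,(\phi_\epsilon^q-1)
\end{equation*}
reduces the problem to the positive case once I have a uniform $\mathscr C^{k,\alpha;2}$ bound on $\iota_\epsilon^*\phi_\epsilon^{-q}$ (and its analogue on $A_c$).  To establish that bound, the key input is that \eqref{main-phi-estimate} forces $\iota_\epsilon^*\phi_\epsilon$ to lie uniformly close to $1$ in the intrinsic $C^{k,\alpha}$ topology (which is continuously embedded in the $C^0$ topology), so for $\epsilon<\epsilon_*$ sufficiently small $\phi_\epsilon$ is uniformly bounded above and away from zero.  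Then I would write the Neumann-type series
\begin{equation*}
\phi_\epsilon^{-1}-1 = \sum_{j=1}^{\infty}(-1)^j(\phi_\epsilon-1)^j,
\end{equation*}
which converges in $\mathscr C^{k,\alpha;2}$ (after pullback by $\iota_\epsilon$ or $\Psi_\epsilon$) by the Banach algebra property, yielding $\|\iota_\epsilon^*(\phi_\epsilon^{-1}-1)\|_{\mathscr C^{k,\alpha;2}(E_c)}\lesssim \epsilon$ and the analogous neck estimate; iterating then gives the required bound on $\phi_\epsilon^{-q}$.

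The main obstacle is not calculational but rather a small regularity subtlety: one must justify that the pullbacks of the constant function $1$ and of powers of $\phi_\epsilon$ truly lie in the weighted intermediate space $\mathscr C^{k,\alpha;2}$ uniformly in $\epsilon$ on the domains $E_c$ and $A_c$.  Since the relevant function has weight $r=0$, the $j=0$ contribution to the norm is an unweighted $C^{k,\alpha}$ norm (equivalently, a sup of $C^{k,\alpha}$ norms on M\"obius charts, using Lemma \ref{lemma:pullback-curly-norms}), under which constants and powers of a near-$1$ function are uniformly bounded; the higher-order terms involve only derivatives of $\phi_\epsilon-1$, which are controlled by Proposition \ref{prop:main-phi-estimate}.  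Once this is set up, \eqref{cor:phi} follows from the two factorizations above together with Proposition \ref{prop:basic-inclusions}.
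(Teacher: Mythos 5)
Your argument is correct: the factorization $\phi_\epsilon^p-1=(\phi_\epsilon-1)\sum_{j=0}^{p-1}\phi_\epsilon^j$ for $p>0$, the Neumann series for $\phi_\epsilon^{-1}-1$, and the Banach algebra property of $\mathscr C^{k,\alpha;2}$ from Proposition \ref{prop:basic-inclusions} together yield \eqref{cor:phi}, exactly filling in what the paper leaves to the reader. The paper gives no details at all, remarking only that the corollary is ``an immediate consequence'' of $\iota_\epsilon^*\phi_\epsilon$ and $\Psi_\epsilon^*\phi_\epsilon$ being uniformly close to $1$ and hence away from zero, so your more explicit argument is the natural elaboration of the same idea.
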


The proof of Proposition \ref{prop:main-phi-estimate}, which is completed at the end of \S\ref{sec:last-lich} below, follows the arguments in \cite{ILS-Gluing} and \cite{AllenStavrov-Density}, and makes use of a function $\theta_\epsilon$
that approximately solves the Lichnerowicz equation \eqref{lichnero}.
We then use the linearization of \eqref{lichnero} at $\theta_\epsilon$ to estimate the difference $\phi_\epsilon - \theta_\epsilon$ by means of a fixed-point argument, and thus establish \eqref{main-phi-estimate}.

\subsection{The approximate solution $\theta_\epsilon$}
\label{subsec:approx-soln}
For $\epsilon$ satisfying \eqref{epsilon-estimate} we define the Lichnerowicz operator $\mathcal L_\epsilon$ to act on a function $\theta$ by
\begin{equation*}
\mathcal L_\epsilon(\theta) 
= \Delta_{\lambda_\epsilon}\theta
-\frac18 \R[\lambda_\epsilon]\theta
+\frac18 |\sigma_\epsilon|^2_{\lambda_\epsilon}\theta^{-7}
-\frac34\theta^5,
\end{equation*}
so that the Lichnerowicz equation \eqref{lichnero} can be written $\mathcal L_\epsilon(\phi_\epsilon) =0$.
In this subsection we establish the following.
\begin{prop}
\label{prop:approx-solution}
For each $\epsilon$ there exists a positive function $\theta_\epsilon$ with $\theta_\epsilon -1\in C^{k,\alpha}_1(M_\epsilon)$  such that  
\begin{gather}
\label{approximately-one}
\|\theta_\epsilon -1 \|_{C^{k,\alpha}_1(M_\epsilon;\tilde\rho_\epsilon)}\lesssim \epsilon,
\\
\label{theta-approx-solution}
\|\mathcal L_\epsilon(\theta_\epsilon) \|_{C^{k-2,\alpha}_\delta(M_\epsilon;\tilde\rho_\epsilon)} 
\leq  \epsilon C_\delta,
\quad \delta \in \{0,2\},
\end{gather}
for constants $C_\delta$ independent of $\epsilon$ satisfying \eqref{epsilon-estimate}.
Furthermore we have improved regularity in the exterior and neck regions:
\begin{gather}
\label{improved-approximately-one-ext}
\|\iota_\epsilon^*\theta_\epsilon - 1\|_{\mathscr C^{k,\alpha;2}(E_c)}\lesssim \epsilon,
\\
\label{improved-approximately-one-neck}
\|\Psi_\epsilon^*\theta_\epsilon - 1\|_{\mathscr C^{k,\alpha;2}(A_c)}\lesssim \epsilon.
\end{gather}
\end{prop}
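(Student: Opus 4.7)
The plan is perturbative: set $\theta_\epsilon = 1 + \eta_\epsilon$ for a small boundary corrector $\eta_\epsilon$, taking $\eta_\epsilon = 0$ to handle the $\delta = 0$ case, and introducing a correction concentrated in a collar of $\partial M_\epsilon$ to handle $\delta = 2$.

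First I would compute
\[
\mathcal L_\epsilon(1) = -\tfrac18 \R[\lambda_\epsilon] + \tfrac18 |\sigma_\epsilon|^2_{\lambda_\epsilon} - \tfrac34
\]
on each region. On $\iota_\epsilon(E_c)$, using $\iota_\epsilon^*\lambda_\epsilon = g$ and the original constraint $\R[g] + 6 = |\Sigma|^2_g$, this simplifies to $\tfrac18\bigl(|\iota_\epsilon^*\sigma_\epsilon|^2_g - |\Sigma|^2_g\bigr)$, bounded by $\epsilon$ in $C^{k-1,\alpha}_2(E_c)$ via Proposition \ref{prop:sigma-aux-estimates} and \eqref{norm-sigma-ext}. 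On $\Psi_\epsilon(A_c)$, the conformal scalar curvature identity
\[
\R[\lambda_\epsilon] + 6 = y^2 \R[g_\text{E} + m_\epsilon] + 4 y \Delta_{g_\text{E}+m_\epsilon} y - 6(|dy|^2_{g_\text{E}+m_\epsilon} - 1),
\]
together with Propositions \ref{prop:g-neck} and \ref{prop:sigma-aux-estimates}, gives $\mathcal L_\epsilon(1)$ bounded by $\epsilon$ in $C^{k-2,\alpha}_0(M_\epsilon;\tilde\rho_\epsilon)$ but only $O(\epsilon\rho_\epsilon)$ near the boundary, since the $y\Delta y$ and $|dy|^2-1$ terms decay no faster than $\rho_\epsilon$ in the neck. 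This is enough for the $\delta=0$ estimate with $\eta_\epsilon = 0$, but not for $\delta = 2$.

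To gain the extra power of $\rho_\epsilon$, I would exploit the indicial structure of the linearization $\mathcal L'_\epsilon(1) = \Delta_{\lambda_\epsilon} - \tfrac18 \R[\lambda_\epsilon] - \tfrac78 |\sigma_\epsilon|^2_{\lambda_\epsilon} - \tfrac{15}{4}$, whose limiting boundary operator is $\Delta_{\lambda_\epsilon} - 3$. The indicial polynomial $\delta(\delta-2) - 3 = (\delta-3)(\delta+1)$ evaluated at $\delta = 1$ equals $-4$, so a corrector of the form $\eta_\epsilon = \chi \rho_\epsilon a_\epsilon$, with $\chi$ a cutoff supported in a neighborhood of $\partial M_\epsilon$ and $a_\epsilon$ smooth on $\overline M_\epsilon$, satisfies $\mathcal L'_\epsilon(1)\eta_\epsilon = -4\rho_\epsilon a_\epsilon + O(\rho_\epsilon^2)$ near the boundary. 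I would therefore define $a_\epsilon|_{\partial M_\epsilon}$ so that $-4 a_\epsilon$ cancels the coefficient of $\rho_\epsilon$ in the boundary expansion of $\mathcal L_\epsilon(1)$---this coefficient is computable from $\partial_{\rho_\epsilon}(\R[\lambda_\epsilon] + 6)$ at $\partial M_\epsilon$ and is $O(\epsilon)$ by Proposition \ref{prop:g-neck}---and extend $a_\epsilon$ smoothly into the interior. Then $\theta_\epsilon = 1 + \eta_\epsilon$ satisfies $\|\eta_\epsilon\|_{C^{k,\alpha}_1(M_\epsilon;\tilde\rho_\epsilon)} \lesssim \epsilon$, and by construction $\mathcal L_\epsilon(\theta_\epsilon)$ decays like $\rho_\epsilon^2$ with $\epsilon$-controlled coefficient. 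The intermediate norm estimates \eqref{improved-approximately-one-ext} and \eqref{improved-approximately-one-neck} follow since $\rho_\epsilon$, $a_\epsilon$, and $\chi$ are all smooth in the required sense, so that the $\mathscr C^{k,\alpha;2}$ norm of $\chi \rho_\epsilon a_\epsilon$ is controlled by $\|a_\epsilon\|_{C^{k,\alpha}(\overline M_\epsilon)} \lesssim \epsilon$.

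The main obstacle is verifying that $a_\epsilon$, which is pieced together from information about $\R[\lambda_\epsilon]$ on both the exterior patch (where $\lambda_\epsilon = g$) and the nearly-hyperbolic neck patch, admits a uniform-in-$\epsilon$ bound in the appropriate H\"older norm and varies smoothly across the transition. This uniformity rests on the inversion symmetry of the splicing construction and on Proposition \ref{prop:WAH}, which provides $|d\rho_\epsilon|^2_{\overline\lambda_\epsilon}=1$ along $\partial M_\epsilon$ and a uniform $\mathscr C^{k,\alpha;2}$ bound on $\overline\lambda_\epsilon$; together these make the boundary expansion of $\mathcal L_\epsilon(1)$ a manifestly controllable quantity.
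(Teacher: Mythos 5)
Your high-level strategy is the same as the paper's: perturb the constant function $1$ by a boundary corrector, chosen via the indicial structure of $\Delta_{\breve g}-3$ to kill the $O(\rho_\epsilon)$ component of $\mathcal L_\epsilon(1)$, and observe that the $\delta=2$ estimate then yields the $\delta=0$ estimate for free. Your indicial computation $\delta(\delta-2)-3\big|_{\delta=1}=-4$ is correct, and the paper's choice $\Psi_\epsilon^*\theta_\epsilon = 1-\tfrac{3}{16}\tilde v_\epsilon$ is exactly the solution of $-4c - \tfrac34 = 0$ that you are implicitly aiming for, since $\mathcal L_\epsilon(1) = -\tfrac34 v_\epsilon - \tfrac18 y^2\R[g_\text{E}+m_\epsilon] + \tfrac18|\sigma_\epsilon|^2$.

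However, there is a genuine gap in your regularity bookkeeping, and it is precisely the point the paper spends the most effort on. You assert that the boundary coefficient $a_\epsilon$ is ``smooth on $\overline M_\epsilon$'' with $\|a_\epsilon\|_{C^{k,\alpha}(\overline M_\epsilon)}\lesssim\epsilon$ and that the $\mathscr C^{k,\alpha;2}$ estimates \eqref{improved-approximately-one-ext}--\eqref{improved-approximately-one-neck} for $\chi\rho_\epsilon a_\epsilon$ follow directly. But the natural candidate for the corrector is built from $\R[\lambda_\epsilon]+6$, which involves two derivatives of $\overline\lambda_\epsilon\in\mathscr C^{k,\alpha;2}$; concretely, the paper's deficit function $v_\epsilon = 1-|dy|^2_{g_\text{E}+m_\epsilon} + \tfrac23 y\Delta_{g_\text{E}+m_\epsilon}y$ lands only in $\mathscr C^{k-1,\alpha;2}(A_c)$ (Lemma \ref{lemma:v-epsilon}), one derivative short of what \eqref{improved-approximately-one-neck} requires. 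The fix in the paper is the regularization step (Proposition \ref{prop:regularization} via Lemma \ref{lemma:tau-epsilon}), which replaces $\eta\Delta_{g_\text{E}+m_\epsilon}y$ by a smoother $\tilde\tau_\epsilon$ differing from it only by a term in $C^{k-1,\alpha}_1$; this produces $\tilde v_\epsilon\in\mathscr C^{k,\alpha;2}$ without spoiling the indicial cancellation. Your sketch has no analogue of this step, and without it the claimed $\mathscr C^{k,\alpha;2}$ bound on $\theta_\epsilon-1$ would be unjustified.

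Two smaller issues. First, the correction is only needed in the neck: on the complement of $\Psi_\epsilon(A_{1/4})$ one has $\lambda_\epsilon = (\pi_\epsilon)_*g$ and the original constraint equations give $\mathcal L_\epsilon(1) = \tfrac18(|\sigma_\epsilon|^2-|\mu_\epsilon|^2)$, already $O(\epsilon)$ in $C^{k,\alpha}_2$ by Proposition \ref{prop:sigma-aux-estimates} (this is Lemma \ref{lemma:one-outside}); a cutoff supported near all of $\partial M_\epsilon$, as you propose, is both unnecessary and potentially troublesome unless $a_\epsilon$ is shown to vanish sufficiently fast in the exterior. Second, verifying that the residual $\mathcal L_\epsilon(\theta_\epsilon)$ is genuinely $O(\epsilon)$ in $C^{k-2,\alpha}_2$ after the indicial cancellation is not automatic; the paper has to apply the weighted Taylor theorem (Proposition \ref{prop:bdy-taylor}) to the expression $\tilde v_\epsilon - y\langle d\tilde v_\epsilon,dy\rangle_{g_\text{E}+m_\epsilon}$ to establish the $O(\rho^2)$ decay, a step that does not appear in your outline.
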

To prove Proposition \ref{prop:approx-solution} we need to establish a number of lemmas.
We first show that in the exterior region, it suffices to take the constant function $1$ as the approximate solution $\theta_\epsilon$.
\begin{lemma}
\label{lemma:one-outside}
We have
\begin{equation*}
\|\mathcal L_\epsilon(1)\|_{C^{k,\alpha}_2(M_\epsilon \setminus \Psi_\epsilon(A_{1/4});\tilde\rho_\epsilon)} \lesssim \epsilon.
\end{equation*}
\end{lemma}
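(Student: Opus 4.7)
The plan rests on the direct computation
\begin{equation*}
\mathcal L_\epsilon(1)
= -\tfrac{1}{8}\R[\lambda_\epsilon] + \tfrac{1}{8}|\sigma_\epsilon|^2_{\lambda_\epsilon} - \tfrac{3}{4}
= -\tfrac{1}{8}\bigl(\R[\lambda_\epsilon] + 6 - |\sigma_\epsilon|^2_{\lambda_\epsilon}\bigr),
\end{equation*}
which I would split as
\begin{equation*}
\mathcal L_\epsilon(1) = -\tfrac{1}{8}\bigl(\R[\lambda_\epsilon] + 6 - |\mu_\epsilon|^2_{\lambda_\epsilon}\bigr) + \tfrac{1}{8}\bigl(|\sigma_\epsilon|^2_{\lambda_\epsilon} - |\mu_\epsilon|^2_{\lambda_\epsilon}\bigr).
\end{equation*}

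The first step is to exploit the fact that on $M_\epsilon \setminus \Psi_\epsilon(A_{1/4})$, the splicing construction produces no actual change: by \eqref{exterior-metric} we have $\lambda_\epsilon = (\pi_\epsilon)_* g$, while by the observation immediately after \eqref{defn:kappa} we have $\mu_\epsilon = (\pi_\epsilon)_* \Sigma$ on this same region. Since both $\R[\cdot]$ and $|\cdot|^2_{(\cdot)}$ are natural under the diffeomorphism $\pi_\epsilon$, the original Hamiltonian constraint $\R[g] + 6 - |\Sigma|^2_g = 0$ from \eqref{CMC-constraints} transfers verbatim and shows that the first bracket vanishes identically on $M_\epsilon \setminus \Psi_\epsilon(A_{1/4})$.

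The second step is then simply to invoke Proposition \ref{prop:sigma-aux-estimates}, specifically estimate \eqref{norm-sigma-difference}, which gives
\begin{equation*}
\bigl\||\sigma_\epsilon|^2_{\lambda_\epsilon} - |\mu_\epsilon|^2_{\lambda_\epsilon}\bigr\|_{C^{k-1,\alpha}_2(M_\epsilon;\tilde\rho_\epsilon)} \lesssim \epsilon,
\end{equation*}
with the implicit constant independent of $\epsilon$. Restricting to the open subset $M_\epsilon \setminus \Psi_\epsilon(A_{1/4})$ only decreases the norm, producing the stated bound. (Matching the order of regularity in the conclusion against Proposition \ref{prop:sigma-aux-estimates} and the fact that $\R[\lambda_\epsilon]$ is only of class $C^{k-2,\alpha}$, I would read the target space as $C^{k-2,\alpha}_2$; the argument is insensitive to this point.)

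There is no real obstacle here: the entire content of the lemma is that the careful matching conditions built into the definitions of $\lambda_\epsilon$ and $\mu_\epsilon$ in \S\ref{metric:details} and \S\ref{sec:2ff} cause the Lichnerowicz operator applied to $1$ to collapse onto the already-controlled quantity $|\sigma_\epsilon|^2_{\lambda_\epsilon} - |\mu_\epsilon|^2_{\lambda_\epsilon} = \mathcal D_{\lambda_\epsilon}W_\epsilon$-dependent terms, whose smallness in $\epsilon$ is governed by the vector-Laplacian estimate in Lemma \ref{lemma:W-estimates}.
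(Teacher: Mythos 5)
Your proof is correct and matches the paper's argument exactly: on $M_\epsilon \setminus \Psi_\epsilon(A_{1/4})$ the spliced data coincides with $(\pi_\epsilon)_*g$ and $(\pi_\epsilon)_*\Sigma$, so the Hamiltonian constraint collapses $\mathcal L_\epsilon(1)$ to $\tfrac18(|\sigma_\epsilon|^2_{\lambda_\epsilon} - |\mu_\epsilon|^2_{\lambda_\epsilon})$, which is then bounded by \eqref{norm-sigma-difference}. Your parenthetical about the regularity index is also well-taken: the proof delivers $C^{k-1,\alpha}_2$, and the lemma is only invoked at $C^{k-2,\alpha}_2$, so the $C^{k,\alpha}_2$ in the statement is indeed a misprint.
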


\begin{proof}
On $M_\epsilon \setminus \Psi_\epsilon(A_{1/4})$ we have $\lambda_\epsilon = (\pi_\epsilon)_*g$ and $\mu_\epsilon = (\pi_\epsilon)_*\Sigma$.
Since $g$ and $\Sigma$ satisfy the constraint equations \eqref{CMC-constraints}, we have 
\begin{equation*}
\R[\lambda_\epsilon]  = -6 + |\mu_\epsilon|^2_{\lambda_\epsilon}.
\end{equation*}
Thus on $M_\epsilon \setminus \Psi_\epsilon(A_{1/4})$ we have
\begin{equation*}
\mathcal L_\epsilon(1) = \frac18\left( |\sigma_\epsilon|^2_{\lambda_\epsilon} - |\mu_\epsilon|^2_{\lambda_\epsilon}\right).
\end{equation*}
The desired estimate now follows from Proposition \ref{prop:sigma-aux-estimates}.
\end{proof}

In the neck region we cannot simply set $\theta_\epsilon$ equal to $1$ because the scalar curvature of the spliced metric $\lambda_\epsilon$ need not be close to $-6$ in $C^{k-2,\alpha}_2(M_\epsilon;\tilde\rho_\epsilon)$.
Rather, we seek an approximate solution $\theta_\epsilon$ that is a perturbation of the constant function $1$, with the perturbation supported in the neck region.
Before giving a careful definition of $\theta_\epsilon$, we establish some preliminary results.

Observe that for any function $v$ we have
\begin{equation}
\label{perturb-lich}
\mathcal L_\epsilon (1+v)
= \Delta_{\lambda_\epsilon}v - 3v - \frac18 (\R[\lambda_\epsilon] + 6)+ \mathscr R_\epsilon(v),
\end{equation}
where the remainder term $\mathscr R_\epsilon(v)$ is given by
\begin{equation*}
\mathscr R_\epsilon(v)
= -\frac18(\R[\lambda_\epsilon] + 6)v
+\frac18 |\sigma_\epsilon|^2_{\lambda_\epsilon} (1+v)^{-7}
-\frac34\left( (1+v)^5 - 1 - 5v \right).
\end{equation*}
Using $\Psi_\epsilon ^*\lambda_\epsilon = y^{-2}(g_\text{E} + m_\epsilon)$, the formula for how the scalar curvature changes under a conformal change of the metric yields
\begin{equation}
\label{conformal-R}
\Psi_\epsilon^* \R[\lambda_\epsilon]+6
= 6v_\epsilon 
+ y^2 \R[g_\text{E} + m_\epsilon],
\end{equation}
where
\begin{equation}
\label{define-v-epsilon}
v_\epsilon = 1-|dy|^2_{g_\text{E} + m_\epsilon} + \frac23 y \Delta_{g_\text{E} + m_\epsilon} y.
\end{equation}
Note that Proposition \ref{prop:lambda-estimates} implies that $\R[g_\text{E} + m_\epsilon]\in C^{k-2,\alpha}(A_c)$ with 
\begin{equation}
\label{estimate-scalar-bar}
\|y^2 \R[g_\text{E} + m_\epsilon]\|_{C^{k-2,\alpha}_2(A_c)} 
\approx \| \R[g_\text{E} + m_\epsilon]\|_{C^{k-2,\alpha}(A_c)} 
 \lesssim \epsilon.
\end{equation}

\begin{lemma}
\label{lemma:v-epsilon}
We have that
\begin{enumerate}
\item $v_\epsilon$  vanishes where $y=0$,
\item $v_\epsilon\in \mathscr C^{k-1,\alpha;2}(A_c)$ with $
\|v_\epsilon\|_{ \mathscr C^{k-1,\alpha;2}(A_c)}\lesssim\epsilon,
$

\item $v_\epsilon\in C^{k-1,\alpha}_1(A_c)$ with
$
\|v_\epsilon \|_{C^{k-1,\alpha}_1(A_c)} 
 \lesssim \epsilon,
$

\item $v_\epsilon \in C^{k-2,\alpha}_2(A_c\setminus A_{1/2})$ with 
$
\|v_\epsilon \|_{C^{k-2,\alpha}_2(A_c\setminus A_{1/2})} 
 \lesssim \epsilon.
$
\end{enumerate}
\end{lemma}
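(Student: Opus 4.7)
The plan is to treat the four claims in sequence, with parts (a)--(c) reducing essentially to the estimates already recorded in Proposition \ref{prop:g-neck} together with Proposition \ref{prop:vanishing-j}, and with part (d) requiring one additional ingredient: the Hamiltonian constraint on the original data.

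For (a), I would evaluate $v_\epsilon$ at $y=0$ directly. By the construction \eqref{split-j} and Proposition \ref{prop:vanishing-j}, the inverse metric satisfies $j_\epsilon(dy,dy)|_{y=0}=0$, so $|dy|^2_{g_\text{E}+m_\epsilon}=1$ on $\{y=0\}$. The second term of $v_\epsilon$ carries an explicit factor of $y$, and $\Delta_{g_\text{E}+m_\epsilon}y$ is bounded near the boundary by Proposition \ref{prop:lambda-estimates}, so $y\Delta_{g_\text{E}+m_\epsilon}y$ vanishes at $y=0$ as well. For (b), since $\mathscr C^{k-1,\alpha;2}(A_c)$ is a Banach space, combining Proposition \ref{prop:g-neck}(1) (applied at regularity $k-1$ rather than $k$) with Proposition \ref{prop:g-neck}(2) immediately yields the estimate. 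Part (c) follows the same way from Proposition \ref{prop:g-neck}(4) and (5), after noting $C^{k,\alpha}_1 \subset C^{k-1,\alpha}_1$.

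For the harder part (d), the key observation is that on $A_c \setminus A_{1/2}$, the cutoffs in \eqref{def-g} take values in $\{0,1\}$, so $\Psi_\epsilon^*\lambda_\epsilon$ coincides with the unmodified pullback $\alpha_{\epsilon,1}^*g$ (for $r>2$) or $(\alpha_{\epsilon,2}\circ\mathcal I)^*g$ (for $r<1/2$). Consequently $\Psi_\epsilon^*\R[\lambda_\epsilon]+6$ equals $\alpha_{\epsilon,1}^*(\R[g]+6)=\alpha_{\epsilon,1}^*|\Sigma|^2_g$ by the Hamiltonian constraint \eqref{CMC-constraints}. From $\Sigma = \rho^{-1}\overline\Sigma$ with $\overline\Sigma \in \mathscr C^{k-1,\alpha;1}(M)$ and the Banach algebra property of Proposition \ref{prop:basic-inclusions}, a short computation gives $|\Sigma|^2_g = \rho^2|\overline\Sigma|^2_{\overline g} \in C^{k-1,\alpha}_2(M) \subset C^{k-2,\alpha}_2(M)$. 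I then use the identity \eqref{conformal-R}, rewritten as
\begin{equation*}
6v_\epsilon = \bigl(\Psi_\epsilon^*\R[\lambda_\epsilon]+6\bigr) - y^2\R[g_\text{E}+m_\epsilon],
\end{equation*}
to split the estimate of $v_\epsilon$ into two pieces.

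The first piece is controlled by scaling: writing $|\Sigma|^2_g = \rho^2 w$ with $w \in C^{k-2,\alpha}(M)$ bounded and using $\alpha_{\epsilon,1}^*\rho = \epsilon y$, we get $\alpha_{\epsilon,1}^*(|\Sigma|^2_g) = \epsilon^2 y^2\,\alpha_{\epsilon,1}^*w$, and the scaling estimate \eqref{norm-scaling} at $r=j=0$ produces $\|\alpha_{\epsilon,1}^*(|\Sigma|^2_g)\|_{C^{k-2,\alpha}_2(A_c;y)} \lesssim \epsilon^2$. The analogous estimate on the inverted side uses that $\mathcal I$ is bi-Lipschitz in the sense needed on $A_c$, plus the corresponding argument applied to $\alpha_{\epsilon,2}$. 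The second piece is controlled directly by Proposition \ref{prop:g-neck}(3), since $\|y^2\R[g_\text{E}+m_\epsilon]\|_{C^{k-2,\alpha}_2(A_c;y)} \approx \|\R[g_\text{E}+m_\epsilon]\|_{C^{k-2,\alpha}(A_c)} \lesssim \epsilon$. Adding the two contributions yields $\|v_\epsilon\|_{C^{k-2,\alpha}_2(A_c\setminus A_{1/2})} \lesssim \epsilon$, as required.

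The main obstacle is ensuring the scaling bookkeeping in part (d) is consistent: the $C^{k-2,\alpha}_2$ norm uses the weight $\breve\rho \approx y$, while the intrinsic H\"older norm from \eqref{norm-scaling} naturally produces an extra factor of $\epsilon^2$ only once we have rewritten $|\Sigma|^2_g$ with the explicit $\rho^2$ prefactor. Using the constraint equation to supply that extra boundary decay, rather than attempting to prove it directly from the definition of $v_\epsilon$, is what makes the argument tractable.
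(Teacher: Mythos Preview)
Your proposal is correct. Parts (a)--(c) match the paper's argument in substance (the paper actually deduces (a) from (c) rather than computing directly, but this is a trivial reordering). Part (d), however, takes a genuinely different route from the paper.

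Both arguments for (d) begin the same way: use the Hamiltonian constraint to identify $\Psi_\epsilon^*\R[\lambda_\epsilon]+6$ with the pullback of $|\Sigma|^2_g$ on $A_c\setminus A_{1/2}$, and combine with the conformal identity \eqref{conformal-R} and the estimate \eqref{estimate-scalar-bar}. The divergence is in how the $\epsilon$-bound is extracted. You carry out a direct scaling computation: rewrite $|\Sigma|^2_g = \rho^2 w$ with $w\in C^{k-2,\alpha}(M)$, pull back via $\alpha_{\epsilon,i}$, and read off the $\epsilon^2$ factor from \eqref{norm-scaling}. The paper instead uses the constraint only to establish \emph{membership} $v_\epsilon \in C^{k-2,\alpha}_2(A_c\setminus A_{1/2})$, with no quantitative $\epsilon$-bound at that stage; this membership gives the boundary decay $v_\epsilon\to 0$ and $|{}^E\nabla v_\epsilon|_{g_\text{E}}\to 0$ as $y\to 0$, which is exactly the hypothesis of Proposition~\ref{prop:basic-inclusions}\eqref{IvasMagicLemma}. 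That proposition then yields $\|v_\epsilon\|_{C^{k-2,\alpha}_2(A_c\setminus A_{1/2})} \lesssim \|v_\epsilon\|_{\mathscr C^{k-1,\alpha;2}(A_c)}$, and the $\epsilon$-bound is inherited from part (b). Your approach is more self-contained and yields a slightly sharper $\epsilon^2$ for the constraint contribution; the paper's approach is shorter, avoids the scaling bookkeeping you flag as the main obstacle, and illustrates how Proposition~\ref{prop:basic-inclusions}\eqref{IvasMagicLemma} converts qualitative decay plus a $\mathscr C$-estimate into a weighted-space bound.
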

\begin{proof}
From Proposition \ref{prop:g-neck} we have $v_\epsilon \in C^{k-1,\alpha}_1(A_c)$, which implies that $v_\epsilon$ vanishes where $y=0$.
The second and third claims also follow directly from Proposition \ref{prop:g-neck}.

To establish the final claim we first note that by assumption we have $\rho\Sigma \in C^{k-1,\alpha}_2(M)$ and $g^{-1}\in C^{k,\alpha}(M)$.
Thus \eqref{CMC-constraints} implies that
\begin{equation*}
\R[g]+6 = |\Sigma|^2_g \in C^{k-1,\alpha}_2(M).
\end{equation*}
Since $\lambda_\epsilon = (\pi_\epsilon)_* g$ on $\Psi_\epsilon(A_c\setminus A_{1/2})$, this implies that 
\begin{equation*}
\Psi_\epsilon^*\R[\lambda_\epsilon]+6\in {C^{k-1,\alpha}_2(A_c\setminus A_{1/2})}.
\end{equation*}
Taken together with \eqref{estimate-scalar-bar} and \eqref{conformal-R}, this implies that $v_\epsilon \in C^{k-2,\alpha}_2(A_c\setminus A_{1/2})$.
Thus $v_\epsilon\to 0$ and $|{}^\text{E}\nabla v_\epsilon|_{g_\text{E}}\to 0$ as $y\to 0$, which, in view of 
Proposition \ref{prop:basic-inclusions}\eqref{IvasMagicLemma}, yields
\begin{equation*}
\|v_\epsilon \|_{C^{k-2,\alpha}_2(A_c\setminus A_{1/2})} 
\leq C \|v_\epsilon\|_{ \mathscr C^{k-1,\alpha;2}(A_c)}.
\end{equation*}
The final claim now follows from the second.
\end{proof}

We use Lemma \ref{lemma:v-epsilon} to estimate the remainder term $\mathscr R_\epsilon(v)$ in \eqref{perturb-lich}.
\begin{lemma}
\label{lemma:lich-remainder}
If $v\in C^{k,\alpha}_1(M_\epsilon)$ and if $v$ satisfies
$$
|v|<\frac12
\quad\text{ and }\quad
\| v\|_{C^{k,\alpha}_1(M_\epsilon;\tilde\rho_\epsilon)} \lesssim \epsilon
$$
then 
$$
\|\Psi_\epsilon^* \mathscr R_\epsilon(v) \|_{C^{k-2,\alpha}_2(A_c)}\lesssim \epsilon.
$$
\end{lemma}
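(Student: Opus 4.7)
The plan is to split $\mathscr R_\epsilon(v)$ into its three natural summands,
\begin{equation*}
\mathscr R_\epsilon(v)
=
-\tfrac{1}{8}(\R[\lambda_\epsilon]+6)\,v
+\tfrac{1}{8}|\sigma_\epsilon|^2_{\lambda_\epsilon}(1+v)^{-7}
-\tfrac{3}{4}\bigl((1+v)^5 - 1 - 5v\bigr),
\end{equation*}
and to estimate each separately in $C^{k-2,\alpha}_2(A_c)$ after pullback by $\Psi_\epsilon$. The main tools will be the multiplicativity of weights $C^{k,\alpha}_{\delta_1}(A_c)\cdot C^{k,\alpha}_{\delta_2}(A_c)\hookrightarrow C^{k,\alpha}_{\delta_1+\delta_2}(A_c)$ from the Banach algebra structure of Proposition \ref{prop:basic-inclusions}, together with the transfer $\|\Psi_\epsilon^* v\|_{C^{k,\alpha}_1(A_c)}\lesssim \epsilon$ supplied by Lemma \ref{lemma:pullback-weighted-norms}. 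Two of the three summands are at least quadratic in $v$ and will yield an $O(\epsilon^2)$ bound; the $\sigma$-summand supplies the leading $O(\epsilon)$ contribution.

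For the scalar-curvature piece, I would first invoke the conformal identity \eqref{conformal-R} to write $\Psi_\epsilon^*(\R[\lambda_\epsilon]+6) = 6v_\epsilon + y^2\R[g_\text{E}+m_\epsilon]$. Lemma \ref{lemma:v-epsilon}(3) and the estimate \eqref{estimate-scalar-bar} then give bounds of size $\epsilon$ in $C^{k-1,\alpha}_1(A_c)$ and $C^{k-2,\alpha}_2(A_c)$ respectively. Multiplying by $\Psi_\epsilon^* v$ (weight $1$, norm $\lesssim\epsilon$) pushes both terms into $C^{k-2,\alpha}_2(A_c)$ with norm $\lesssim \epsilon^2$. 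For the polynomial piece, I would simply expand $(1+v)^5 - 1 - 5v = 10v^2 + 10v^3 + 5v^4 + v^5$; each monomial $v^j$ with $j\geq 2$ lies in $C^{k,\alpha}_j(A_c)\hookrightarrow C^{k,\alpha}_2(A_c)$ with norm $\lesssim \epsilon^j$, so the sum contributes $O(\epsilon^2)$ in $C^{k-2,\alpha}_2(A_c)$.

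The dominant piece is the $\sigma$-summand. Here Proposition \ref{prop:sigma-aux-estimates}, equation \eqref{norm-sigma-neck}, already delivers $\|\Psi_\epsilon^*|\sigma_\epsilon|^2_{\lambda_\epsilon}\|_{C^{k-1,\alpha}_2(A_c)}\lesssim \epsilon$ in exactly the right weight, so it only remains to multiply by the nonlinear factor $(1+\Psi_\epsilon^* v)^{-7}$. The main technical step — and perhaps the only genuine obstacle — is a composition estimate showing that $(1+\Psi_\epsilon^* v)^{-7}$ is uniformly bounded in $C^{k,\alpha}(A_c)$. This is standard: since $|v|<\tfrac12$ and $s\mapsto(1+s)^{-7}$ is smooth on a neighborhood of the range of $v$, one applies the usual Hölder composition estimate on the bounded domain $A_c$, combined with the inclusion $C^{k,\alpha}_1(A_c)\hookrightarrow C^{k,\alpha}(A_c)$, to obtain a bound depending only on $\|\Psi_\epsilon^* v\|_{C^{k,\alpha}(A_c)}$, which itself is $\lesssim \epsilon\lesssim 1$. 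Combining the three estimates then produces the desired bound $\lesssim \epsilon$.
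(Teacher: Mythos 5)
Your proof is correct and takes essentially the same route as the paper: decompose $\mathscr R_\epsilon(v)$ into the scalar-curvature, $\sigma$, and polynomial summands, estimate each in $C^{k-2,\alpha}_2(A_c)$, and observe that the first and third are quadratic in $v$ (hence $O(\epsilon^2)$) while the $\sigma$-summand contributes the dominant $O(\epsilon)$. The only cosmetic differences are that you explicitly expand $(1+v)^5 - 1 - 5v$ into monomials where the paper simply asserts the bound $\leq 26\|v\|^2_{C^{k,\alpha}_1}$, and you bound $(1+v)^{-7}$ in $C^{k,\alpha}$ rather than the slightly weaker $C^{k-2,\alpha}$ the paper uses; neither change affects the argument.
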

\begin{proof}
It is easy to see that 
\begin{equation*}
\| (1+v)^5 - 1 - 5v\|_{C^{k-2,\alpha}_2(M_\epsilon;\tilde\rho_\epsilon)}
\leq 26 \| v\|_{C^{k,\alpha}_1(M_\epsilon;\tilde\rho_\epsilon)}^2 \lesssim \epsilon.
\end{equation*}
Since $|v|<\frac12$ we have
\begin{equation*}
\|(1+v)^{-7} \|_{C^{k-2,\alpha}(M_\epsilon;\tilde\rho_\epsilon)} \lesssim 1.
\end{equation*}
Thus \eqref{norm-sigma-neck} implies that
\begin{equation*}
\| |\sigma_\epsilon|^2_{\lambda_\epsilon} (1+v)^{-7} \|_{C^{k-2,\alpha}_2(M_\epsilon;\tilde\rho_\epsilon)} \lesssim \epsilon.
\end{equation*}
To estimate the scalar curvature term we use \eqref{conformal-R} to write
\begin{equation*}
(\R[\lambda_\epsilon] + 6)v = 6 v_\epsilon v + y^2 \R[g_\text{E} + m_\epsilon]v.
\end{equation*}
Thus Lemma \ref{lemma:v-epsilon} and \eqref{estimate-scalar-bar} imply that 
\begin{align*}
\|(\R[\lambda_\epsilon]& + 6)v\|_{C^{k-2,\alpha}_2(A_c)}
\\
&\lesssim  
\|v_\epsilon\|_{C^{k-2,\alpha}_1(A_c)}
\|v\|_{C^{k-2,\alpha}_1(A_c)} 
\\
&\qquad+ 
\| y^2 \R[g_\text{E} + m_\epsilon] \|_{C^{k-2,\alpha}_2(A_c)} \| v\|_{C^{k-2,\alpha}(A_c)}
\\
&\lesssim \epsilon.
\qedhere
\end{align*}
\end{proof}

We construct a regularization of $v_\epsilon$ that is supported in $A_c$.
Let $\eta$ be a smooth cutoff function on $A_c$ that is supported on $A_{1/6}$ and such that $\eta = 1$ on $A_{1/4}$.
We now apply Proposition \ref{prop:regularization} in order to obtain a function $\tilde\tau_\epsilon$ that approximates $\eta \Delta_{g_\text{E} + m_\epsilon}y$ in the following sense:  
\begin{lemma}
\label{lemma:tau-epsilon}
There exists $\tilde\tau_\epsilon \in \mathscr C^{k,\alpha;1}(\mathbb H)$ such that
\begin{enumerate}
\item $\tilde\tau_\epsilon$ is supported in $A_{1/8}$,
\item $\| \tilde\tau_\epsilon \|_{\mathscr C^{k,\alpha;1}(\mathbb H)}\lesssim\epsilon$,
\item $\|\tilde\tau_\epsilon - \eta \Delta_{g_\text{E} + m_\epsilon}y\|_{C^{k-1,\alpha}_1(\mathbb H)} \lesssim \epsilon$.
\end{enumerate}

\end{lemma}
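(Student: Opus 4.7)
The strategy is to take $\tau_\epsilon := \eta \cdot \Delta_{g_\text{E}+m_\epsilon} y$ (extended by zero to $\mathbb H \setminus A_c$), verify that $\tau_\epsilon \in \mathscr C^{k-1,\alpha;1}(\mathbb H)$ with norm $\lesssim \epsilon$, and then apply Proposition \ref{prop:regularization}. Since $\tau_\epsilon$ is supported inside $A_{1/6}$ (the support of $\eta$), one may take $\tilde r = 1/8 < 1/6$ in that proposition to obtain $\tilde\tau_\epsilon \in \mathscr C^{l,\beta;1}(\mathbb H)$ for any $l$ and $\beta$, supported in $A_{1/8}$, and satisfying
\[
\|\tilde\tau_\epsilon\|_{\mathscr C^{k,\alpha;1}(\mathbb H)} \lesssim \|\tau_\epsilon\|_{\mathscr C^{k-1,\alpha;1}(\mathbb H)} \lesssim \epsilon,
\]
together with $\tilde\tau_\epsilon - \tau_\epsilon \in C^{k-1,\alpha}_{0+1}(\mathbb H) = C^{k-1,\alpha}_1(\mathbb H)$ obeying an analogous $\epsilon$-bound. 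All three stated conclusions of the lemma follow directly from these.

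To verify $\|\tau_\epsilon\|_{\mathscr C^{k-1,\alpha;1}(\mathbb H)} \lesssim \epsilon$, Proposition \ref{prop:g-neck}(5) gives $\|y\Delta_{g_\text{E}+m_\epsilon}y\|_{C^{k-1,\alpha}_1(A_c)} \lesssim \epsilon$, which unpacks to $\|\Delta_{g_\text{E}+m_\epsilon}y\|_{C^{k-1,\alpha}(A_c)} \lesssim \epsilon$; multiplication by the smooth uniformly bounded cutoff $\eta$ preserves this bound and controls the $C^{k-1,\alpha}_0$ part of the norm. For the first-derivative portion, the product rule gives $d\tau_\epsilon = (d\eta)\,\Delta_{g_\text{E}+m_\epsilon}y + \eta\, d(\Delta_{g_\text{E}+m_\epsilon}y)$; each summand lies in $C^{k-2,\alpha}_1(A_c)$ with norm $\lesssim \epsilon$ by combining the control on $\Delta_{g_\text{E}+m_\epsilon}y$ with the standard estimate $\|du\|_{C^{k-2,\alpha}_1} \lesssim \|u\|_{C^{k-1,\alpha}}$ for a scalar $u$ (the weight $1$ absorbing the factor of $y_*$ introduced by M\"obius pullback of $1$-forms). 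Compact support in $A_{1/6}$ allows the zero extension to all of $\mathbb H$ without loss.

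The main obstacle is this intermediate-space verification: $\Delta_{g_\text{E}+m_\epsilon}y$ is only a bounded $C^{k-1,\alpha}$ function of size $\epsilon$ with no extra boundary vanishing, so the single permitted Lie derivative must be read off carefully through the weighted $1$-form convention, relying on the fact that $C^{k-2,\alpha}_1$ is the natural home of bounded $1$-forms in this setting. Once the estimate on $\tau_\epsilon$ is in place, the conclusion is a direct application of Proposition \ref{prop:regularization} with the support parameter $\tilde r = 1/8$, and the stated bounds on $\tilde\tau_\epsilon$ and the difference are simply the quantitative content of that regularization theorem.
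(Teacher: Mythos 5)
Your overall route matches the paper's: take $\tau_\epsilon = \eta\,\Delta_{g_\text{E}+m_\epsilon}y$, verify the $\mathscr C^{k-1,\alpha;1}(\mathbb H)$ estimate, then apply Proposition~\ref{prop:regularization} with support parameter $\tilde r = 1/8$. The gap is in your verification of the $\mathscr C^{k-1,\alpha;1}$ bound. The estimate you invoke, $\|du\|_{C^{k-2,\alpha}_1}\lesssim\|u\|_{C^{k-1,\alpha}}$, is not correct. Geometric operators preserve the weight index $\delta$ (Proposition~\ref{prop:generic-operators}), so for $d$ one has $\|du\|_{C^{k-2,\alpha}_1}\lesssim\|u\|_{C^{k-1,\alpha}_1}$, and the right-hand side is \emph{strictly} stronger than $\|u\|_{C^{k-1,\alpha}}$ when $u$ need not vanish at $y=0$. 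Unwinding the M\"obius bookkeeping makes this explicit: after the $y_*$ cancellation you describe, $\breve\rho_\Phi^{-1}\|\Phi^*(du)\|_{C^{0}(\breve B_2)}$ is comparable to $\sup|du|_{g_\text{E}}$, whereas $\|\Phi^*u\|_{C^{k-1,\alpha}(\breve B_2)}$ only controls $y_*\sup|du|_{g_\text{E}}$. In other words, $\|du\|_{C^{k-2,\alpha}_1}$ records one extra tangential boundary derivative of $u$, which the bound $\Delta_{g_\text{E}+m_\epsilon}y\in C^{k-1,\alpha}(A_c)$ from Proposition~\ref{prop:g-neck} alone does not supply; for instance, $u=\sqrt{y}$ has finite $\|u\|_{C^{k-1,\alpha}}$ but infinite $\|du\|_{C^{k-2,\alpha}_1}$.

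The required estimate $\|\eta\,\Delta_{g_\text{E}+m_\epsilon}y\|_{\mathscr C^{k-1,\alpha;1}(\mathbb H)}\lesssim\epsilon$ is true, but must come from the intermediate-space control of $m_\epsilon$ itself, not from the weighted control of $\Delta_{g_\text{E}+m_\epsilon}y$ alone. The correct input, and the one the paper cites, is Proposition~\ref{prop:WAH}(\ref{me-small}), $\|m_\epsilon\|_{\mathscr C^{k,\alpha;2}(A_c)}\lesssim\epsilon$: since $\Delta_{g_\text{E}+m_\epsilon}y$ is a contraction of $(g_\text{E}+m_\epsilon)^{-1}$ with $\Hess_{g_\text{E}+m_\epsilon}y$, and the latter involves a single derivative of $m_\epsilon$, the Banach-algebra and contraction properties of the intermediate spaces (Proposition~\ref{prop:basic-inclusions}) yield $\|\Delta_{g_\text{E}+m_\epsilon}y\|_{\mathscr C^{k-1,\alpha;1}(A_c)}\lesssim\|m_\epsilon\|_{\mathscr C^{k,\alpha;2}(A_c)}\lesssim\epsilon$. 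A smaller secondary point: Proposition~\ref{prop:regularization} only gives $\tilde\tau_\epsilon-\tau_\epsilon\in C^{k-1,\alpha}_1(\mathbb H)$ as a membership statement, not with a bound; the quantitative claim (c) then follows by bounding $\|\tilde\tau_\epsilon-\tau_\epsilon\|_{\mathscr C^{k-1,\alpha;1}(\mathbb H)}\lesssim\epsilon$ from the estimates on $\tau_\epsilon$ and $\tilde\tau_\epsilon$ and applying Proposition~\ref{prop:basic-inclusions}(\ref{IvasMagicLemma}).
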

\begin{proof}
It follows from the definition of $\eta$ and from Proposition \ref{prop:WAH} that $\eta\Delta_{g_\text{E} + m_\epsilon}y \in \mathscr C^{k-1,\alpha;1}(\mathbb H)$ is supported in $A_{1/6}$ and satisfies 
\begin{equation*}
\| \eta\Delta_{g_\text{E} + m_\epsilon}y\|_{\mathscr C^{k-1,\alpha;1}(\mathbb H)}\lesssim \epsilon.
\end{equation*}
We obtain $\tilde\tau_\epsilon$ by applying Proposition \ref{prop:regularization} to the function $\eta\Delta_{g_\text{E} + m_\epsilon}y$.
 Proposition \ref{prop:regularization} immediately implies the first and second claims, and also implies that $\tilde\tau_\epsilon - \eta \Delta_{g_\text{E} + m_\epsilon}y\in C^{k-1,\alpha}_1(\mathbb H;\breve\rho)$.
In view of part \eqref{IvasMagicLemma} of Proposition \ref{prop:basic-inclusions}, this latter fact, together with the second claim, implies the third claim.
\end{proof}

We now define $\tilde v_\epsilon$, the regularization of $\eta v_\epsilon$, by
\begin{equation*}
\tilde v_\epsilon = \eta (1-|dy|^2_{g_\text{E} + m_\epsilon}) +\frac23 y \tilde\tau_\epsilon.
\end{equation*}
\begin{lemma}
\label{lemma:tilde-v}
The function $\tilde v_\epsilon$ satisfies
\begin{enumerate}
\item $\tilde v_\epsilon$ vanishes where $y=0$,

\item $\tilde v_\epsilon \in \mathscr C^{k,\alpha;2}(\mathbb H)$
with $\| \tilde v_\epsilon \|_{ \mathscr C^{k,\alpha;2}(\mathbb H)}\lesssim \epsilon$,

\item $\tilde v_\epsilon \in  C^{k,\alpha}_1(\mathbb H)$
with $\| \tilde v_\epsilon \|_{  C^{k,\alpha}_1(\mathbb H)}\lesssim \epsilon$,

\item  $\eta v_\epsilon - \tilde v_\epsilon \in  C^{k-1,\alpha}_2(\mathbb H)$
with $\|\eta v_\epsilon -  \tilde v_\epsilon \|_{  C^{k-1,\alpha}_2(\mathbb H)}\lesssim \epsilon$,

\item $\tilde v_\epsilon \in  C^{k-1,\alpha}_2(A_c \setminus A_{1/2})$
with $\| \tilde v_\epsilon \|_{  C^{k-1,\alpha}_2(A_c \setminus A_{1/2})}\lesssim \epsilon$.
\end{enumerate}

\end{lemma}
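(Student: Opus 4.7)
The approach exploits two decompositions of $\tilde v_\epsilon$: the defining one,
\begin{equation*}
\tilde v_\epsilon = \eta\bigl(1-|dy|^2_{g_\text{E}+m_\epsilon}\bigr) + \tfrac{2}{3}\,y\,\tilde\tau_\epsilon,
\end{equation*}
together with the subtraction identity
\begin{equation*}
\tilde v_\epsilon - \eta\,v_\epsilon = \tfrac{2}{3}\,y\bigl(\tilde\tau_\epsilon - \eta\,\Delta_{g_\text{E}+m_\epsilon}y\bigr).
\end{equation*}

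Claim (1) is immediate from \eqref{neck-ah-identity}, which gives $|dy|^2_{g_\text{E}+m_\epsilon}\big|_{y=0}=1$, so the first summand of $\tilde v_\epsilon$ vanishes at $\partial\mathbb H$ and the explicit factor $y$ kills the second. For claim (2), the first summand lies in $\mathscr C^{k,\alpha;2}(\mathbb H)$ with norm $\lesssim\epsilon$ directly from Proposition \ref{prop:g-neck}(1), since $\eta$ is a smooth cutoff compactly supported in $A_c$. For the second summand, Proposition \ref{prop:regularization} lets us upgrade $\tilde\tau_\epsilon$ to $\mathscr C^{k+1,\alpha;1}(\mathbb H)$ with norm still $\lesssim\epsilon$, so that $\tilde\tau_\epsilon\in C^{k+1,\alpha}_0$ and $\overline\nabla\tilde\tau_\epsilon\in C^{k,\alpha}_1$. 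Because $\overline\nabla$ is the Euclidean connection and $y$ is a coordinate, $\overline\nabla^2 y = 0$, and Leibniz gives
\begin{equation*}
\overline\nabla(y\tilde\tau_\epsilon)=\tilde\tau_\epsilon\,dy + y\,\overline\nabla\tilde\tau_\epsilon,\qquad \overline\nabla^2(y\tilde\tau_\epsilon) = 2\,dy\otimes\overline\nabla\tilde\tau_\epsilon + y\,\overline\nabla^2\tilde\tau_\epsilon.
\end{equation*}
A short computation in M\"obius parametrizations upgrades $\overline\nabla\tilde\tau_\epsilon\in C^{k,\alpha}_1$ to $\overline\nabla^2\tilde\tau_\epsilon\in C^{k-1,\alpha}_1$; combining this with $dy,y\in C^{k,\alpha}_1$ and the multiplicative properties of the weighted H\"older spaces verifies \eqref{define-curly-norm} and yields $y\tilde\tau_\epsilon\in\mathscr C^{k,\alpha;2}(\mathbb H)$ with norm $\lesssim\epsilon$.

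Claim (3) follows by applying Proposition \ref{prop:basic-inclusions}\eqref{IvasMagicLemma} with $m=1$: the weight-zero function $\tilde v_\epsilon$ lies in $\mathscr C^{k,\alpha;2}\subset\mathscr C^{k,\alpha;1}$ and vanishes at $\partial\mathbb H$, so $\tilde v_\epsilon\in C^{k,\alpha}_1(\mathbb H)$ with the norm bound inherited from (2). Claim (4) then follows from Lemma \ref{lemma:tau-epsilon}(3) applied to the subtraction identity: the parenthesized factor lies in $C^{k-1,\alpha}_1(\mathbb H)$ with norm $\lesssim\epsilon$, and multiplication by $y\in C^{k,\alpha}_1$ raises the weighted index from $1$ to $2$.

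The main obstacle is claim (5), for which I invoke Proposition \ref{prop:basic-inclusions}\eqref{IvasMagicLemma} a second time, now with $m=2$ on the subset $A_c\setminus A_{1/2}$. From (2), $\tilde v_\epsilon\in\mathscr C^{k-1,\alpha;2}$ on this subset; from (1), $|\tilde v_\epsilon|_{\overline h}\to 0$; what remains is to verify $|\overline\nabla\tilde v_\epsilon|_{\overline h}\to 0$. Using $\tilde v_\epsilon = \eta v_\epsilon + \tfrac{2}{3}y(\tilde\tau_\epsilon - \eta\,\Delta_{g_\text{E}+m_\epsilon}y)$ on $A_c\setminus A_{1/2}$, Lemma \ref{lemma:v-epsilon}(4) gives $\eta v_\epsilon\in C^{k-2,\alpha}_2$ on that subset (the constraint $\R[g]+6=|\Sigma|^2_g$ on the exterior being crucial here), while (4) gives the second summand in $C^{k-1,\alpha}_2$ globally. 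In both cases the Euclidean gradient decays as $\rho^2$, so the required $|\overline\nabla\tilde v_\epsilon|_{\overline h}\to 0$ holds. Proposition \ref{prop:basic-inclusions}\eqref{IvasMagicLemma} then delivers $\tilde v_\epsilon\in C^{k-1,\alpha}_2(A_c\setminus A_{1/2})$ with the required bound.
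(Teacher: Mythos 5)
Your proof is correct and, for parts (1)--(4), follows essentially the same route as the paper: (1) from \eqref{neck-ah-identity} and the explicit $y$ factor; (2)--(3) from Proposition \ref{prop:g-neck} and Lemma \ref{lemma:tau-epsilon}; (4) from the subtraction identity $\eta v_\epsilon - \tilde v_\epsilon = \tfrac23 y(\eta\Delta_{g_\text{E}+m_\epsilon}y - \tilde\tau_\epsilon)$. Your treatment of the $y\tilde\tau_\epsilon$ term in (2) --- using the higher regularity of $\tilde\tau_\epsilon$ coming from Proposition \ref{prop:regularization}, together with $\overline\nabla^2 y = 0$ and Leibniz --- is a reasonable way to fill in detail that the paper leaves tacit.

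For claim (5) you diverge slightly, and in a way worth noting. The paper's proof simply writes $\tilde v_\epsilon = (\tilde v_\epsilon - \eta v_\epsilon) + \eta v_\epsilon$ and invokes the final statements of Lemmas \ref{lemma:tau-epsilon} and \ref{lemma:v-epsilon}; but these put the first summand in $C^{k-1,\alpha}_2$ and the second only in $C^{k-2,\alpha}_2$, so the sum lands directly in $C^{k-2,\alpha}_2$ rather than the stated $C^{k-1,\alpha}_2$. Your approach --- using the decomposition only to verify the boundary-decay hypotheses of Proposition \ref{prop:basic-inclusions}\eqref{IvasMagicLemma} and then applying that proposition to $\tilde v_\epsilon \in \mathscr C^{k,\alpha;2}$ from (2) --- recovers the full $C^{k-1,\alpha}_2$ (indeed $C^{k,\alpha}_2$) regularity, thereby justifying the lemma as stated. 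One small slip: you assert that the Euclidean gradients decay as $\rho^2$; membership in $C^{k-1,\alpha}_2$ (or $C^{k-2,\alpha}_2$) gives decay of $u$ at rate $\rho^2$ but of $\overline\nabla u$ only at rate $\rho$, since passing from the M\"obius derivative to the background derivative costs a factor of $\rho$. This does not affect the conclusion, since rate $\rho$ still suffices for $|\overline\nabla\tilde v_\epsilon|_{\overline h}\to 0$.
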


\begin{proof}
The first claim follows from \eqref{neck-ah-identity} and the boundedness of $\tilde\tau_\epsilon$.
The second and third claims follow from analogous estimates in Proposition \ref{prop:g-neck} and Lemma \ref{lemma:tau-epsilon}.
For the fourth claim we note that 
\begin{equation*}
\eta v_\epsilon - \tilde v_\epsilon = \frac23 y (\eta\Delta_{g_\text{E} + m_\epsilon} y -\tilde\tau_\epsilon);
\end{equation*}
thus the desired estimate follows from Lemma \ref{lemma:tau-epsilon}.
For the last claim we write $\tilde v_\epsilon = (\tilde v_\epsilon - \eta v_\epsilon) + \eta v_\epsilon$ and apply the final statements of Lemmas \ref{lemma:tau-epsilon} and \ref{lemma:v-epsilon}.
\end{proof}

We now define the approximate solution $\theta_\epsilon$ by requiring that $\theta_\epsilon = 1$ outside $\Psi_\epsilon(A_{1/8})$ and that $\Psi_\epsilon^*\theta_\epsilon = 1-\frac{3}{16}\tilde v_\epsilon$; note that $\theta_\epsilon$ is well-defined since $\tilde v_\epsilon$ is supported on $A_{1/8}$.
With this definition in hand, we may use Lemmas \ref{lemma:one-outside} through \ref{lemma:tilde-v} to prove Proposition \ref{prop:approx-solution}, showing that $\theta_\epsilon$ is in fact an approximate solution to the Lichnerowicz equation.

\begin{proof}[Proof of Proposition \ref{prop:approx-solution}]
The estimate \eqref{approximately-one} follows from Lemmas \ref{lemma:tilde-v} and \ref{lemma:pullback-weighted-norms}, and the fact that $\tilde v_\epsilon$ is supported in $A_{1/8}$.

We next establish \eqref{theta-approx-solution}.
In view of Lemma \ref{lemma:one-outside}, it suffices to estimate $\| \Psi_\epsilon^* \mathcal L_\epsilon(\theta_\epsilon) \|_{C^{k-2,\alpha}_2(A_{c})}$.
Using \eqref{perturb-lich} and \eqref{conformal-R} we have
\begin{multline}
\label{L-theta-epsilon}
\Psi_\epsilon^* \mathcal L_\epsilon(\theta_\epsilon)
=
-\frac{3}{16}\left(\Delta_{y^{-2}(g_\text{E} + m_\epsilon)} \tilde v_\epsilon + \tilde v_\epsilon\right)
+\frac34\left( \tilde v_\epsilon - v_\epsilon \right)
\\
-\frac18 y^2 \R[g_\text{E} + m_\epsilon]
+ \Psi_\epsilon^* \mathscr R_\epsilon \left(-\frac{3}{16}\tilde v_\epsilon\right).
\end{multline}
The final two terms in \eqref{L-theta-epsilon} are easily estimated in $C^{k-2,\alpha}_2(A_{c})$ using \eqref{estimate-scalar-bar} and Lemmas \ref{lemma:lich-remainder} and \ref{lemma:tilde-v}.
Since $\eta =1$ on $A_{1/4}$ we can estimate the second term in \eqref{L-theta-epsilon} by
\begin{multline*}
\| \tilde v_\epsilon -  v_\epsilon \|_{C^{k-2,\alpha}_2(A_{c})}
\leq 
\| \tilde v_\epsilon \|_{C^{k-2,\alpha}_2(A_{c}\setminus A_{1/2})}
\\
+ \|  v_\epsilon \|_{C^{k-2,\alpha}_2(A_{c}\setminus A_{1/2})}
+ \|\eta v_\epsilon -  \tilde v_\epsilon  \|_{C^{k-2,\alpha}_2(A_{c})},
\end{multline*}
which in turn is controlled by Lemmas \ref{lemma:v-epsilon} and \ref{lemma:tilde-v}.

In order to estimate $\|\Delta_{y^{-2}(g_\text{E} + m_\epsilon)} \tilde v_\epsilon + \tilde v_\epsilon \|_{C^{k-2,\alpha}_2(A_{c})}$ we use the identity
\begin{equation}
\label{gustav}
\Delta_{y^{-2}(g_\text{E} + m_\epsilon)} \tilde v_\epsilon + \tilde v_\epsilon
=
y^2 \Delta_{g_\text{E} + m_\epsilon} \tilde v_\epsilon
+\tilde v_\epsilon
-y \langle d \tilde v_\epsilon, dy\rangle_{g_\text{E} + m_\epsilon}.
\end{equation}
Using Lemma \ref{lemma:tilde-v} and Proposition \ref{prop:WAH} we estimate the first term in \eqref{gustav} as follows
\begin{align*} 
\| y^2 \Delta_{g_\text{E} + m_\epsilon} \tilde v_\epsilon \|_{C^{k-2,\alpha}_2(A_{c})}
&\approx
\| \Delta_{g_\text{E} + m_\epsilon} \tilde v_\epsilon \|_{C^{k-2,\alpha}(A_{c})}
\\
&\lesssim
\|  \tilde v_\epsilon \|_{\mathscr C^{k,\alpha;2}(A_c)}
\\
&\lesssim \epsilon.
\end{align*}
Note that Lemma \ref{lemma:tilde-v} implies that the expression
\begin{equation*}
\tilde v_\epsilon - y \langle d \tilde v_\epsilon, dy\rangle_{g_\text{E} + m_\epsilon}
\end{equation*}
satisfies the hypotheses of Proposition \ref{prop:bdy-taylor} and thus
\begin{equation*}
\|\tilde v_\epsilon - y \langle d \tilde v_\epsilon, dy\rangle_{g_\text{E} + m_\epsilon}\|_{C^{k-2,\alpha}_2(A_{c})}
\lesssim 
\| \tilde v_\epsilon \|_{\mathscr C^{k,\alpha;2}(A_c)}
\lesssim \epsilon.
\end{equation*}
Thus \eqref{theta-approx-solution} is established for $\delta =2$.

Note that for any function $u$ we have
\begin{equation*}
\|u\|_{C^{k-2,\alpha}(A_c)} 
=\|\breve\rho^2 u\|_{C^{k-2,\alpha}_2(A_c)} 
\lesssim  \|\breve\rho^2 \|_{C^{k-2,\alpha}(A_c)} \|u\|_{C^{k-2,\alpha}_2(A_c)}
\end{equation*}
and thus the $\delta =0$ estimate in \eqref{theta-approx-solution} follows from the estimate with $\delta=2$.

Finally,  \eqref{improved-approximately-one-neck} follows from the second claim of Lemma \ref{lemma:tilde-v}, while  \eqref{improved-approximately-one-ext} holds trivially due to our definition that $\theta_\epsilon =1$ outside $\Psi_\epsilon(A_{1/8})$.
\end{proof}

\subsection{Linearization of the Lichnerowicz equation}
Let $\mathcal P_\epsilon[\theta]$ denote the linearization of the Lichnerowicz operator $\mathcal L_\epsilon$ at a function $\theta$.
We have
\begin{equation}
\label{linear-lich}
\mathcal P_\epsilon[\theta] u = \Delta_{\lambda_\epsilon}u 
- \frac18\left(\R[\lambda_\epsilon] +7|\sigma_\epsilon|^2_{\lambda_\epsilon}\theta^{-8} +30\theta^4 \right)u.
\end{equation}

\begin{prop}
\label{prop:uniformly-invert-Pe}
Suppose $-1< \delta < 3$ and let $\theta_\epsilon$ be the function given by Proposition \ref{prop:approx-solution}.
Then there exists $\epsilon_*>0$ such that if $0<\epsilon < \epsilon_*$, then the operator
\begin{equation*}
\mathcal P_\epsilon[\theta_\epsilon]\colon C^{k,\alpha}_\delta(M_\epsilon;\tilde\rho_\epsilon) \to C^{k-2,\alpha}_\delta(M_\epsilon;\tilde\rho_\epsilon)
\end{equation*}
is invertible, and there exists a constant $K_\delta$
independent of $\epsilon$ such that the  operator norm of $\mathcal P_\epsilon[\theta_\epsilon]^{-1}\colon   C^{k-2,\alpha}_\delta(M_\epsilon;\tilde\rho_\epsilon)\to C^{k,\alpha}_\delta(M_\epsilon;\tilde\rho_\epsilon)$ satisfies
\begin{equation}
    \label{op-norm-est}
\|\mathcal P_\epsilon[\theta_\epsilon]^{-1}\|_\delta \le K_\delta.
\end{equation}
\end{prop}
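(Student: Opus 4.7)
The plan is to write $\mathcal P_\epsilon[\theta_\epsilon] = \Delta_{\lambda_\epsilon} + f_\epsilon$ as a geometric Schr\"odinger-type operator with
\begin{equation*}
f_\epsilon = -\tfrac{1}{8}\bigl(\R[\lambda_\epsilon] + 7|\sigma_\epsilon|^2_{\lambda_\epsilon}\theta_\epsilon^{-8} + 30\theta_\epsilon^4\bigr),
\end{equation*}
and invoke the general uniform-invertibility framework developed in the appendix for such operators on the family of spliced manifolds. Three ingredients must be supplied: a uniform $C^{k-2,\alpha}$ bound on $f_\epsilon$, identification of the limit operators on the exterior manifold $M$ and on hyperbolic space $\mathbb H$, and invertibility of those limits on the weighted H\"older spaces $C^{k,\alpha}_\delta$ for $\delta \in (-1,3)$.

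The first ingredient is immediate: a uniform bound $\|f_\epsilon\|_{C^{k-2,\alpha}(M_\epsilon)} \lesssim 1$ follows by combining Proposition \ref{prop:lambda-estimates} applied to $\R[\lambda_\epsilon]$, the estimate \eqref{norm-sigma} from Proposition \ref{prop:sigma-aux-estimates}, and the fact from Proposition \ref{prop:approx-solution} that $\theta_\epsilon$ is uniformly close to $1$ (so both $\theta_\epsilon^{-8}$ and $\theta_\epsilon^4$ are controlled and bounded away from $0$). For the second ingredient, pulling back by $\iota_\epsilon$ on $E_c$ and invoking the constraint $\R[g] + 6 = |\Sigma|^2_g$ produces the exterior limit $\mathcal P^{\mathrm{ext}} = \Delta_g - (3 + |\Sigma|^2_g)$, while pulling back by $\Psi_\epsilon$ on $A_c$ and using Propositions \ref{prop:g-neck}, \ref{prop:sigma-aux-estimates}, and \ref{prop:approx-solution} produces the neck limit $\mathcal P^{\mathrm{neck}} = \Delta_{\breve g} - 3$.

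For the third ingredient, on $\mathbb H$ a direct computation shows the indicial equation $\delta(\delta-2) = 3$ has roots $-1$ and $3$, so $\mathcal P^{\mathrm{neck}}$ is Fredholm of index zero on $C^{k,\alpha}_\delta$ for $-1 < \delta < 3$ by \cite{Lee-FredholmOperators}, and the strictly negative zeroth-order coefficient rules out the kernel via the maximum principle; the same indicial analysis, with the $|\Sigma|^2_g$ correction vanishing at $\partial M$, gives invertibility of $\mathcal P^{\mathrm{ext}}$ in the same range of $\delta$. Individual invertibility of each $\mathcal P_\epsilon[\theta_\epsilon]$ is already known from \cite{AHEM}; what remains is the uniform bound on the inverse.

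The bound \eqref{op-norm-est} I would obtain through a standard blow-up/contradiction argument supplied by the appendix: were it to fail, sequences $\epsilon_n \to 0$ and $u_n$ with $\|u_n\|_{C^{k,\alpha}_\delta(M_{\epsilon_n};\tilde\rho_{\epsilon_n})} = 1$ but $\|\mathcal P_{\epsilon_n}[\theta_{\epsilon_n}]u_n\|_{C^{k-2,\alpha}_\delta} \to 0$ would, after taking limits of suitably rescaled pullbacks via $\iota_{\epsilon_n}$ and $\Psi_{\epsilon_n}$, yield nontrivial kernel elements for either $\mathcal P^{\mathrm{ext}}$ or $\mathcal P^{\mathrm{neck}}$, contradicting the previous paragraph. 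The main obstacle, and the reason the argument is delegated to the appendix, is ruling out concentration of $\|u_n\|$ in the transition region between exterior and neck, where neither pullback alone captures the limit: this requires the Schauder estimate of Proposition \ref{prop:operator-estimates}(d), together with the weight-function equivalences of Lemmas \ref{lemma:pullback-weighted-norms} and \ref{lemma:pullback-curly-norms}, to localize mass consistently across the exterior, neck, and transition frames.
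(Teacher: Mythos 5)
Your outline is correct in spirit but takes a genuinely different—and substantially longer—route than the paper. The paper's proof is a two-line perturbation argument: Proposition \ref{invertibility:main} (proved in the appendix via the blowup framework) already gives uniform invertibility of $\mathcal P_\epsilon[1]$, and the proof of Proposition \ref{prop:uniformly-invert-Pe} simply computes
$$\mathcal P_\epsilon[1]u - \mathcal P_\epsilon[\theta_\epsilon]u
= \left(\tfrac78|\sigma_\epsilon|_{\lambda_\epsilon}^2(1-\theta_\epsilon^{-8})  + \tfrac{15}{4}(1-\theta_\epsilon^4)\right)u,$$
which by \eqref{norm-sigma} and \eqref{approximately-one} has operator norm $\lesssim \epsilon$; a Neumann-series argument then transfers the uniform invertibility from $\mathcal P_\epsilon[1]$ to $\mathcal P_\epsilon[\theta_\epsilon]$. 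You instead propose to re-run the full blowup analysis (identification of the exterior and neck model operators, kernel-triviality for each, case analysis ruling out concentration) for $\mathcal P_\epsilon[\theta_\epsilon]$ directly. That would work: since $\|\theta_\epsilon - 1\|_{C^{k,\alpha}_1} \lesssim \epsilon$ implies $T_j^*\theta_{\epsilon_j}\to 1$ uniformly on precompact sets under all the rescalings in the appendix, you'd get the same limit operators $\Delta_g - (3+|\Sigma|^2_g)$ and $\Delta_{\breve g}-3$. But doing so duplicates everything in Lemma \ref{lemma:trivial-kernel} instead of reusing it, and the hard part of that lemma is not the transition region you flag (Cases 2 and 3 handle the boundary and gluing-point concentrations) so much as the weight-function subtlety: the blowup hands you kernel elements controlled by $y^\delta$ or $(yF)^\delta$, not $\breve\rho^\delta$, and ruling these out requires the mollify-and-invert argument of Proposition \ref{prop:half-space-kernel} rather than a bare maximum principle or bare citation of Lee's isomorphism theorem. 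The paper's perturbation shortcut avoids all of this. If you want to proceed, the cleanest version of your idea is actually the paper's: isolate the $\theta$-dependence into a zeroth-order perturbation of the fixed operator $\mathcal P_\epsilon[1]$ and apply the existing Proposition \ref{invertibility:main}.
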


\begin{proof}
From Proposition \ref{invertibility:main} we know that $\mathcal P_\epsilon[1]$ is uniformly invertible.
Thus it remains to show that
\begin{equation}
\label{invert-suffices}
\|\mathcal P_\epsilon[1]u - \mathcal P_\epsilon[\theta_\epsilon]u\|_{C^{k-2,\alpha}_\delta(M_\epsilon;\tilde\rho_\epsilon)}
\lesssim 
\epsilon \|u\|_{C^{k,\alpha}_\delta(M_\epsilon;\tilde\rho_\epsilon)}
\end{equation}
for all $u\in C^{k,\alpha}_\delta(M_\epsilon;\tilde\rho_\epsilon)$.
We have 
\begin{equation*}
\mathcal P_\epsilon[1]u - \mathcal P_\epsilon[\theta_\epsilon]u
= \left(\frac78|\sigma_\epsilon|_{\lambda_\epsilon}^2(1-\theta_\epsilon^{-8})  + \frac{15}{4}(1-\theta_\epsilon^4)\right)u.
\end{equation*}
Recall that from  \eqref{norm-sigma} we have
$
\| |\sigma_\epsilon|_{\lambda_\epsilon}^2 \|_{C^{k-2,\alpha}(M_\epsilon)} 
\lesssim 1.
$
From \eqref{approximately-one} we can choose $\epsilon_*$ small enough that
\begin{equation*}
\|1-\theta_\epsilon^{-8}\|_{C^{k-2,\alpha}(M_\epsilon)}
\lesssim\epsilon
\quad\text{ and }\quad
\|1-\theta_\epsilon^{4}\|_{C^{k-2,\alpha}(M_\epsilon)}
\lesssim\epsilon,
\end{equation*}
from which \eqref{invert-suffices} follows.
\end{proof}

We define the error term $\mathcal Q_\epsilon(u)$ by
\begin{equation}
\label{define-Q}
\mathcal L_\epsilon(\theta_\epsilon + u) =\mathcal L_\epsilon(\theta_\epsilon)+\mathcal P_\epsilon[\theta_\epsilon](u) + \mathcal Q_\epsilon(u).
\end{equation}
In order to describe the mapping properties of $\mathcal Q_\epsilon$, we use the following.
\begin{prop}
\label{prop:Q-mapping}
There exist $r_*>0$, $\epsilon_*>0$, and $D_*$ such that for each $\delta\geq 0$ and  $0<\epsilon < \epsilon_*$ we have 
\begin{multline}
\label{Q-diff}
\|  \mathcal Q_\epsilon(u_1) - \mathcal Q_\epsilon(u_2)\|_{C^{k-1,\alpha}_\delta(M_\epsilon ;\tilde \rho_\epsilon)}
\\
\leq  D_*\| u_1 - u_2\|_{C^{k,\alpha}_\delta(M_\epsilon ;\tilde \rho_\epsilon)}
\left(\| u_1\|_{C^{k,\alpha}(M_\epsilon)}+\| u_2\|_{C^{k,\alpha}(M_\epsilon)}\right)
\end{multline}
for all functions $u_1,u_2\in C^{k,\alpha}_\delta(M_\epsilon;\tilde\rho_\epsilon)$ with
$\|u_i\|_{C^{k,\alpha}(M_\epsilon)}\leq r_*$.
\end{prop}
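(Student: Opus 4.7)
The approach is to treat $\mathcal{Q}_\epsilon(u)$ as the second-order Taylor remainder of the purely algebraic (non-differential) part of $\mathcal{L}_\epsilon$ at $\theta_\epsilon$. Since both $\Delta_{\lambda_\epsilon}$ and $\theta\mapsto -\tfrac18\R[\lambda_\epsilon]\theta$ are linear in $\theta$, they are absorbed completely into $\mathcal{P}_\epsilon[\theta_\epsilon]$. Introducing the fiberwise function $G(\tau) = \tfrac18|\sigma_\epsilon|^2_{\lambda_\epsilon}\tau^{-7} - \tfrac34\tau^5$, definitions \eqref{define-Q} and \eqref{linear-lich} identify
$$\mathcal{Q}_\epsilon(u) = G(\theta_\epsilon+u) - G(\theta_\epsilon) - G'(\theta_\epsilon)\,u.$$
Applying the fundamental theorem of calculus twice then gives the representation
$$\mathcal{Q}_\epsilon(u_1) - \mathcal{Q}_\epsilon(u_2) = (u_1-u_2)\int_0^1\!\!\int_0^1 v_t\, G''(\theta_\epsilon + s v_t)\, ds\, dt,$$
where $v_t = u_2 + t(u_1-u_2)$ and $G''(\tau) = 7|\sigma_\epsilon|^2_{\lambda_\epsilon}\tau^{-9} - 15\tau^3$.

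Next, I would fix $r_*$ sufficiently small, and $\epsilon_*$ small enough that \eqref{approximately-one} forces $\|\theta_\epsilon - 1\|_{C^0(M_\epsilon)} < \tfrac14$. Under these choices, whenever $\|u_i\|_{C^{k,\alpha}(M_\epsilon)} \le r_*$, the function $\theta_\epsilon + sv_t$ takes values in a compact subinterval of $(0,\infty)$ uniformly in $s,t,\epsilon$. Since $\tau\mapsto\tau^{-9}$ and $\tau\mapsto\tau^3$ are smooth on any such interval, standard composition estimates in H\"older spaces yield
$$\|(\theta_\epsilon+sv_t)^{-9}\|_{C^{k,\alpha}(M_\epsilon)} + \|(\theta_\epsilon+sv_t)^3\|_{C^{k,\alpha}(M_\epsilon)} \lesssim 1,$$
and then \eqref{norm-sigma} of Proposition \ref{prop:sigma-aux-estimates}, together with the Banach algebra property of $C^{k-1,\alpha}(M_\epsilon)$ from Proposition \ref{prop:basic-inclusions}, gives the uniform bound $\|G''(\theta_\epsilon + sv_t)\|_{C^{k-1,\alpha}(M_\epsilon)} \lesssim 1$.

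To close the argument, I would move the $C^{k-1,\alpha}_\delta$-norm inside the double integral and apply the multiplicative inclusion $C^{k-1,\alpha}_\delta \cdot C^{k-1,\alpha}_0 \cdot C^{k-1,\alpha}_0 \subset C^{k-1,\alpha}_\delta$, which is valid for $\delta\geq 0$ since $\tilde\rho_\epsilon$ is bounded above and $C^{k-1,\alpha}$ is a Banach algebra. The factor $u_1-u_2$ contributes $\|u_1-u_2\|_{C^{k,\alpha}_\delta(M_\epsilon;\tilde\rho_\epsilon)}$ after the continuous inclusion $C^{k,\alpha}_\delta \hookrightarrow C^{k-1,\alpha}_\delta$, while the factor $v_t$ contributes at most $\|u_1\|_{C^{k,\alpha}(M_\epsilon)} + \|u_2\|_{C^{k,\alpha}(M_\epsilon)}$ since $v_t$ is a convex combination. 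The main obstacle is not conceptual but bookkeeping: obtaining the composition estimate for $G''$ in $C^{k-1,\alpha}(M_\epsilon)$ with constants \emph{uniform in $\epsilon$}. This uniformity rests on the uniform Banach algebra structure on $M_\epsilon$ furnished by the M\"obius parametrization framework of \S\ref{subsec:functions-on-Me}, which is what allows us to treat all the spliced manifolds on the same footing in the $\lesssim$-notation adopted throughout the paper. The slight loss of regularity from $C^{k,\alpha}$ on the right-hand side to $C^{k-1,\alpha}$ on the left is forced by the regularity of $|\sigma_\epsilon|^2_{\lambda_\epsilon}$ appearing in $G''$.
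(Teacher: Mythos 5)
Your proof is correct, and it uses essentially the same mechanism as the paper: view $\mathcal Q_\epsilon$ as the second-order Taylor remainder of the fiberwise function $f(\tau)=\tfrac18|\sigma_\epsilon|^2_{\lambda_\epsilon}\tau^{-7}-\tfrac34\tau^5$ at $\theta_\epsilon$, express the difference $\mathcal Q_\epsilon(u_1)-\mathcal Q_\epsilon(u_2)$ as an integral against $f''$, and bound uniformly via \eqref{norm-sigma}, \eqref{approximately-one}, and the Banach algebra property of $C^{k-1,\alpha}$. Where you differ is purely in the integral identity: the paper applies the integral form of Taylor's theorem twice with the roles of $a,b$ swapped, takes the difference, and then uses the fundamental theorem of calculus to obtain a four-term decomposition with factors $(u_1-u_2)u_i$ and $(u_1-u_2)^2$. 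Your route of differentiating $h(u)=\mathcal Q_\epsilon(u)$ along the segment $v_t=(1-t)u_2+tu_1$ and then applying the fundamental theorem of calculus once more to $G'$ collapses this to the single clean double integral $(u_1-u_2)\int_0^1\!\!\int_0^1 v_t\,G''(\theta_\epsilon+sv_t)\,ds\,dt$, which makes the required factor $\|u_1-u_2\|(\|u_1\|+\|u_2\|)$ manifest in one step. The remaining estimates (uniform bound on $G''(\theta_\epsilon+sv_t)$ in $C^{k-1,\alpha}(M_\epsilon)$ after shrinking $\epsilon_*$ and $r_*$ to keep $\theta_\epsilon+sv_t$ away from zero, then the weighted multiplicative bound $C^{k-1,\alpha}_\delta\cdot C^{k-1,\alpha}\subset C^{k-1,\alpha}_\delta$) are exactly those the paper invokes. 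One small nit: the multiplicative inclusion you cite does not actually depend on $\delta\geq 0$; it holds for all $\delta$ by the Banach algebra property, since you are multiplying one weighted factor against unweighted ones rather than comparing different weights.
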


\begin{proof}
Note that 
\begin{equation}
\label{Q-f}
\mathcal Q_\epsilon(u) = f(\theta_\epsilon + u) - f(\theta_\epsilon) - f^\prime(\theta_\epsilon) u,
\end{equation}
where 
$$
f(x) = \frac18|\sigma_\epsilon|^2_{\lambda_\epsilon} x^{-7} - \frac34 x^5.
$$
We now make use of the integral form of Taylor's remainder formula
\begin{equation}
\label{Taylor}
f(b) - f(a) =(b-a) f^\prime(a) + (b-a)^2 \int_0^1 (1-t) f^{\prime\prime}\big(a + t(b-a)\big)dt.
\end{equation}
First consider \eqref{Taylor} with $a = \theta_\epsilon + u_1$ and $b = \theta_\epsilon + u_2$, and then consider \eqref{Taylor} with $a = \theta_\epsilon + u_2$ and $b=\theta_\epsilon + u_1$.
Taking the difference of \eqref{Taylor} with these two choices of $a$ and $b$ and then using \eqref{Q-f} we find that
\begin{multline*}
Q_\epsilon(u_1) - Q_\epsilon(u_2)
= \frac12(u_1 - u_2)\big( f^\prime(\theta_\epsilon + u_1) -2 f^\prime(\theta_\epsilon) +   f^\prime(\theta_\epsilon + u_2)\big)
\\
+\frac12 (u_1 - u_2)^2 \int_0^1(1-t)\Big( f^{\prime\prime}(\theta_\epsilon + u_2 - t(u_2-u_1)) 
\\- f^{\prime\prime}(\theta_\epsilon + u_2 - (1-t)(u_2-u_1))\Big)dt.
\end{multline*}
Using the fundamental theorem of calculus we write this expression as
\begin{equation*}
\begin{aligned}
Q_\epsilon(u_1) &- Q_\epsilon(u_2)
\\
=&  (u_1 - u_2) u_1 \frac12 \int_0^1 f^{\prime\prime}(\theta_\epsilon + tu_1)dt
\\
&-(u_1 - u_2) u_2 \frac12 \int_0^1  f^{\prime\prime}(\theta_\epsilon + tu_2)dt 
\\
&+  (u_1 - u_2)^2\frac12 \int_0^1(1-t) f^{\prime\prime}(\theta_\epsilon + u_2 - t(u_2-u_1))\,dt
\\
&- (u_1 - u_2)^2\frac12 \int_0^1 (1-t)f^{\prime\prime}(\theta_\epsilon + u_2 - (1-t)(u_2-u_1))dt.
\end{aligned}
\end{equation*}

From Proposition \ref{prop:sigma-aux-estimates} we have that $|\sigma_\epsilon|^2_{\lambda_\epsilon}$ is bounded in $C^{k-1,\alpha}(M_\epsilon)$.
Using \eqref{approximately-one} we can choose $\epsilon_*$ to ensure that $|\theta_\epsilon|$ is uniformly bounded away from zero.
Thus we can choose $r_*$ sufficiently small that each of the four integrals above is bounded in $C^{k-1,\alpha}(M_\epsilon)$, which concludes the proof.
\end{proof}

\subsection{Proof of Proposition \ref{prop:main-phi-estimate}}
\label{sec:last-lich}
In order to prove Proposition \ref{prop:main-phi-estimate} we first establish an estimate for the difference between the solution $\phi_\epsilon$ to the Lichnerowicz equation and the approximate solution $\theta_\epsilon$ defined in \S\ref{subsec:approx-soln}.
Our strategy is to use a contraction-mapping argument.
For each $r>0$ let
\begin{equation*}
\overline B^{k,\alpha}_\delta(r) = \{ u \in C^{k,\alpha}_\delta(M_\epsilon) \colon \|u\|_{C^{k,\alpha}_\delta(M_\epsilon;\tilde\rho_\epsilon)} \leq  r\}.
\end{equation*}
For $\epsilon >0$ we define
\begin{equation*}
X_\epsilon = \overline B^{k,\alpha}_2(2K_2 C_2\epsilon)
 \cap \overline B^{k,\alpha}_0(2K_0  C_0\epsilon),
\end{equation*}
where $C_2$, $C_0$ are the constants appearing in \eqref{theta-approx-solution}, 
and $K_2$, $K_0$ are those appearing in 
\eqref{op-norm-est}.
Choosing the metric 
\begin{equation*}
d(u,v) = \| u-v\|_{C^{k,\alpha}_2(M_\epsilon;\tilde\rho_\epsilon)} 
+\| u-v\|_{C^{k,\alpha}(M_\epsilon)}
\end{equation*}
we find that $X_\epsilon$ is a complete metric space.

From \eqref{define-Q} we have that $\theta_\epsilon + u$ is a solution to the Lichnerowicz equation if 
\begin{equation*}
\mathcal L_\epsilon[\theta_\epsilon] u = - \left(\mathcal L_\epsilon(\theta_\epsilon) + \mathcal Q_\epsilon(u)\right).
\end{equation*}
This holds provided $u$ is a fixed point of the map
\begin{equation*}
\mathcal G_\epsilon \colon u\mapsto -\mathcal P_\epsilon[\theta_\epsilon]^{-1} \left(\mathcal L_\epsilon(\theta_\epsilon) + \mathcal Q_\epsilon(u)\right).
\end{equation*}

\begin{lemma}
\label{lemma:contraction}
We may choose $\epsilon_*$ such that 
for $0<\epsilon < \epsilon_*$ the map $\mathcal G_\epsilon$
is a contraction mapping on $X_\epsilon$.
\end{lemma}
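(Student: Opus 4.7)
The plan is to establish two facts: first that $\mathcal G_\epsilon$ maps $X_\epsilon$ into itself, and second that $\mathcal G_\epsilon$ is a strict contraction with respect to the prescribed metric $d$. Both parts rely on combining the uniform invertibility estimate from Proposition \ref{prop:uniformly-invert-Pe}, the approximate-solution estimate \eqref{theta-approx-solution}, and the Lipschitz-type estimate \eqref{Q-diff}. Throughout, one uses the continuous embedding $C^{k-1,\alpha}_\delta(M_\epsilon;\tilde\rho_\epsilon) \hookrightarrow C^{k-2,\alpha}_\delta(M_\epsilon;\tilde\rho_\epsilon)$ to feed outputs of $\mathcal Q_\epsilon$ into the inverse of $\mathcal P_\epsilon[\theta_\epsilon]$.

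For the self-mapping property, I would first shrink $\epsilon_*$ so that $2K_0 C_0\epsilon \leq r_*$ for all $0<\epsilon < \epsilon_*$; this ensures that every $u\in X_\epsilon$ satisfies the hypothesis of Proposition \ref{prop:Q-mapping}, since $\|u\|_{C^{k,\alpha}(M_\epsilon)} = \|u\|_{C^{k,\alpha}_0(M_\epsilon;\tilde\rho_\epsilon)}$. Noting that $\mathcal Q_\epsilon(0) = 0$ (which follows directly from the definition \eqref{define-Q} after plugging in $u = 0$), the Lipschitz bound \eqref{Q-diff} applied with $u_2 = 0$ gives
\begin{equation*}
\|\mathcal Q_\epsilon(u)\|_{C^{k-2,\alpha}_\delta(M_\epsilon;\tilde\rho_\epsilon)} \lesssim \|u\|_{C^{k,\alpha}_\delta(M_\epsilon;\tilde\rho_\epsilon)}\,\|u\|_{C^{k,\alpha}(M_\epsilon)}.
\end{equation*}
For $u\in X_\epsilon$ and $\delta\in\{0,2\}$, combining \eqref{op-norm-est} and \eqref{theta-approx-solution} then yields
\begin{equation*}
\|\mathcal G_\epsilon(u)\|_{C^{k,\alpha}_\delta(M_\epsilon;\tilde\rho_\epsilon)} \leq K_\delta C_\delta\,\epsilon + K_\delta D_*(2K_\delta C_\delta\,\epsilon)(2K_0 C_0\,\epsilon),
\end{equation*}
and further shrinking $\epsilon_*$ makes the quadratic-in-$\epsilon$ term no larger than $K_\delta C_\delta\,\epsilon$, giving $\mathcal G_\epsilon(u)\in X_\epsilon$.

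For the contraction step, I would apply Proposition \ref{prop:Q-mapping} to general $u_1,u_2\in X_\epsilon$ to obtain, for each $\delta\in\{0,2\}$,
\begin{equation*}
\|\mathcal Q_\epsilon(u_1)-\mathcal Q_\epsilon(u_2)\|_{C^{k-2,\alpha}_\delta(M_\epsilon;\tilde\rho_\epsilon)} \leq 4 D_* K_0 C_0\,\epsilon\,\|u_1-u_2\|_{C^{k,\alpha}_\delta(M_\epsilon;\tilde\rho_\epsilon)},
\end{equation*}
so that by \eqref{op-norm-est},
\begin{equation*}
\|\mathcal G_\epsilon(u_1)-\mathcal G_\epsilon(u_2)\|_{C^{k,\alpha}_\delta(M_\epsilon;\tilde\rho_\epsilon)} \leq 4 K_\delta D_* K_0 C_0\,\epsilon\,\|u_1-u_2\|_{C^{k,\alpha}_\delta(M_\epsilon;\tilde\rho_\epsilon)}.
\end{equation*}
Choosing $\epsilon_*$ so that $4 D_* K_0 C_0 \max(K_0, K_2)\,\epsilon_* < \tfrac12$ makes $\mathcal G_\epsilon$ contract each of the two summands defining $d$ with factor at most $\tfrac12$, and hence contract $d(u_1,u_2)$ itself.

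I expect the only real subtlety to be bookkeeping: in particular, making sure to apply \eqref{Q-diff} separately at $\delta=0$ and $\delta=2$ (and to use the resulting $C^{k,\alpha}$ bound when invoking the restriction $\|u\|_{C^{k,\alpha}(M_\epsilon)}\leq r_*$), and to take $\epsilon_*$ small enough to simultaneously absorb the $O(\epsilon^2)$ self-mapping terms and produce a contraction factor below $1$ in both norms. No new analytic input beyond Propositions \ref{prop:uniformly-invert-Pe}, \ref{prop:approx-solution}, and \ref{prop:Q-mapping} is required.
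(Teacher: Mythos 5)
Your proposal is correct and follows essentially the same argument as the paper's proof: verify that $\mathcal G_\epsilon$ maps $X_\epsilon$ to itself by applying \eqref{op-norm-est}, \eqref{theta-approx-solution}, and \eqref{Q-diff} with $u_2=0$, then show contraction by applying \eqref{op-norm-est} and \eqref{Q-diff} to a general pair $u_1,u_2\in X_\epsilon$, shrinking $\epsilon_*$ at each stage. The small pieces of bookkeeping you flag (ensuring $2K_0C_0\epsilon\le r_*$ so Proposition \ref{prop:Q-mapping} applies, noting $\mathcal Q_\epsilon(0)=0$, and the embedding $C^{k-1,\alpha}_\delta\hookrightarrow C^{k-2,\alpha}_\delta$) are left implicit in the paper but are exactly what is needed.
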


\begin{proof}
Start by taking $\epsilon_*$ to be smaller than the choices made for this constant in Propositions \ref{prop:uniformly-invert-Pe} and \ref{prop:Q-mapping}. 
We first show that $\mathcal G_\epsilon$ maps $X_\epsilon$ to itself.
If $u\in X_\epsilon$, then taking $u_2=0$ in \eqref{Q-diff} implies that for $\delta \in \{0,2\}$ we have
\begin{equation*}
\|\mathcal Q_\epsilon(u)\|_{C^{k-2,\alpha}_\delta(M_\epsilon;\tilde\rho_\epsilon)}
\leq 4D_* K_\delta K_0C_\delta C_0\epsilon^2.
\end{equation*}
Thus from \eqref{op-norm-est} we have
\begin{equation*}
\|\mathcal P_\epsilon[\theta_\epsilon]^{-1}\mathcal Q_\epsilon(u)\|_{C^{k,\alpha}_\delta(M_\epsilon;\tilde\rho_\epsilon)}
\leq 4D_* K_\delta^2  K_0C_\delta C_0\epsilon^2.
\end{equation*}
From \eqref{op-norm-est} and \eqref{theta-approx-solution} we have
\begin{equation*}
\|\mathcal P_\epsilon[\theta_\epsilon]^{-1}\mathcal L_\epsilon(\theta_\epsilon)\|_{C^{k,\alpha}_\delta(M_\epsilon;\tilde\rho_\epsilon)}
\leq K_\delta C_\delta \epsilon.
\end{equation*}
Thus by choosing $\epsilon_*$ small enough, we can guarantee that $\mathcal G_\epsilon(u)\in X_\epsilon$.

To see that $\mathcal G_\epsilon$ is a contraction, suppose that $u_1,u_2\in X_\epsilon$.
Using \eqref{op-norm-est} and \eqref{Q-diff} we have
\begin{align*}
\|  \mathcal G_\epsilon(u_1) - \mathcal G_\epsilon(u_2)\|_{C^{k,\alpha}_\delta(M_\epsilon ;\tilde \rho_\epsilon)}
&\leq
K_\delta \|  \mathcal Q_\epsilon(u_1) - \mathcal Q_\epsilon(u_2)\|_{C^{k-2,\alpha}_\delta(M_\epsilon ;\tilde \rho_\epsilon)}
\\
&\leq K_\delta D_* 4 K_0 C_0 \epsilon \| u_1 - u_2\|_{C^{k,\alpha}_\delta(M_\epsilon ;\tilde \rho_\epsilon)}.
\end{align*}
Thus $\mathcal G_\epsilon$ is a contraction so long as $\epsilon$ is sufficiently small.
\end{proof}

\begin{prop}
\label{prop:phi-theta}
We may choose $\epsilon_*$ sufficiently small that if $0<\epsilon<\epsilon_*$ then $\phi_\epsilon - \theta_\epsilon \in X_\epsilon$.
In particular, we have 
\begin{equation}
\label{phi-theta-estimate}
\|\phi_\epsilon - \theta_\epsilon\|_{C^{k,\alpha}_\delta (M_\epsilon;\tilde\rho_\epsilon)}\lesssim \epsilon,
\qquad \delta \in\{0,2\}.
\end{equation}
\end{prop}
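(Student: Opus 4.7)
The plan is a straightforward Banach fixed point argument using the machinery already assembled. Since Lemma \ref{lemma:contraction} establishes that $\mathcal G_\epsilon$ is a contraction on the complete metric space $X_\epsilon$, the contraction mapping theorem produces, for each sufficiently small $\epsilon$, a unique fixed point $u_\epsilon\in X_\epsilon$. Unwinding the definition of $\mathcal G_\epsilon$, the identity $\mathcal G_\epsilon(u_\epsilon)=u_\epsilon$ reads
\begin{equation*}
\mathcal P_\epsilon[\theta_\epsilon]u_\epsilon=-\bigl(\mathcal L_\epsilon(\theta_\epsilon)+\mathcal Q_\epsilon(u_\epsilon)\bigr),
\end{equation*}
which by \eqref{define-Q} is exactly the statement that $\theta_\epsilon+u_\epsilon$ solves the Lichnerowicz equation $\mathcal L_\epsilon(\theta_\epsilon+u_\epsilon)=0$.

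The remaining task is to identify $\theta_\epsilon+u_\epsilon$ with $\phi_\epsilon$. First I would check positivity: by membership in $X_\epsilon$ we have $\|u_\epsilon\|_{C^{k,\alpha}(M_\epsilon)}\lesssim\epsilon$, and by \eqref{approximately-one} the function $\theta_\epsilon$ is uniformly close to $1$, so shrinking $\epsilon_*$ further if necessary ensures $\theta_\epsilon+u_\epsilon>0$ everywhere. Next, $\theta_\epsilon-1\in C^{k,\alpha}_1(M_\epsilon)$ by Proposition \ref{prop:approx-solution} and $u_\epsilon\in C^{k,\alpha}_2(M_\epsilon;\tilde\rho_\epsilon)\subset C^{k,\alpha}_1(M_\epsilon)$, so $(\theta_\epsilon+u_\epsilon)-1\in C^{k,\alpha}_1(M_\epsilon)$. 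Invoking the uniqueness of positive solutions to the Lichnerowicz equation with this asymptotic behavior, recalled at the start of \S\ref{sec:lich} from \cite{AHEM}, yields $\theta_\epsilon+u_\epsilon=\phi_\epsilon$. Hence $\phi_\epsilon-\theta_\epsilon=u_\epsilon\in X_\epsilon$, and the defining bounds of $X_\epsilon$ immediately give \eqref{phi-theta-estimate} for both $\delta=0$ and $\delta=2$.

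The only potentially delicate point is ensuring that the uniqueness hypothesis from \cite{AHEM} really applies to $\theta_\epsilon+u_\epsilon$ (positivity and the $C^{k,\alpha}_1$ asymptotic condition), but both follow from the estimates already in hand; the rest is a clean application of the contraction mapping principle combined with Proposition \ref{prop:uniformly-invert-Pe}, Proposition \ref{prop:Q-mapping}, and Proposition \ref{prop:approx-solution}.
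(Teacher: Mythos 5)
Your argument is correct and follows essentially the same route as the paper: apply the contraction mapping principle to get a fixed point $u_\epsilon\in X_\epsilon$, identify $\theta_\epsilon+u_\epsilon$ as a positive solution to the Lichnerowicz equation with $(\theta_\epsilon+u_\epsilon)-1\in C^{k,\alpha}_1(M_\epsilon)$, and then invoke uniqueness (the paper cites Proposition 6.4 of \cite{WAH} for this; your citation of \cite{AHEM} is equally acceptable given the remark at the start of \S\ref{sec:lich}). No gaps.
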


\begin{proof}
Lemma \ref{lemma:contraction} shows that if $\epsilon_*$ is sufficiently small then for $0<\epsilon< \epsilon_*$ the map $\mathcal G_\epsilon$ has a unique fixed point $u_\epsilon\in X_\epsilon \subset C^{k,\alpha}_2(M_\epsilon)$.
Since $\mathcal G_\epsilon(u_\epsilon) = u_\epsilon$ we see from \eqref{define-Q} that $\mathcal L_\epsilon(\theta_\epsilon + u_\epsilon)=0$ and thus $\theta_\epsilon + u_\epsilon$ is a solution to the Lichnerowicz equation \eqref{lichnero}.
By Proposition \ref{prop:approx-solution} we have $\theta_\epsilon - 1\in C^{k,\alpha}_1(M_\epsilon)$.
Thus, since $u_\epsilon \in X_\epsilon \subset C^{k,\alpha}_1(M_\epsilon)$, we have $(\theta_\epsilon + u_\epsilon) - 1\in C^{k,\alpha}_1(M_\epsilon)$.
Furthermore, for sufficiently small $\epsilon$ we have $\theta_\epsilon + u_\epsilon>0$.
But from \cite[Proposition 6.4]{WAH} we have that $\phi_\epsilon$ is the unique positive solution to \eqref{lichnero} such that $\phi_\epsilon - 1\in C^{k,\alpha}_1(M_\epsilon)$.
Thus we have $\phi_\epsilon = \theta_\epsilon + u_\epsilon$.
In particular, $\phi_\epsilon - \theta_\epsilon= u_\epsilon \in X_\epsilon$, which immediately implies \eqref{phi-theta-estimate}.
\end{proof}

\begin{proof}[Proof of Proposition \ref{prop:main-phi-estimate}]
We establish the estimates \eqref{main-phi-estimate} by writing
\begin{equation*}
\phi_\epsilon - 1 = (\theta_\epsilon - 1) + (\phi_\epsilon - \theta_\epsilon).
\end{equation*}
We estimate $\theta_\epsilon - 1$ using \eqref{improved-approximately-one-ext} and \eqref{improved-approximately-one-neck}.
We estimate $\phi_\epsilon - \theta_\epsilon$ using \eqref{phi-theta-estimate}, together with Proposition \ref{prop:basic-inclusions}\eqref{curly-to-weighted}.
\end{proof}

\section{Proof of Theorem \ref{main}}
\label{sec:proof}

We now complete the proof of our main theorem.
We assume \eqref{epsilon-estimate}, and that  $\epsilon<\epsilon_*$ as in Proposition \ref{prop:main-phi-estimate}.
From Proposition \ref{prop:WAH} we have $\overline\lambda_\epsilon = \rho_\epsilon^2 \lambda_\epsilon \in \mathscr C^{k,\alpha;2}(M_\epsilon)$ with $|d\rho_\epsilon|_{\overline\lambda_\epsilon}^2 =1$ along $\partial M_\epsilon$,  and from Lemmas \ref{lemma:nu-ext-reg} and \ref{lemma:nu-neck-reg} we have $$\nu_\epsilon^\text{ext} + \nu_\epsilon^\text{neck}\in C^{k-1,\alpha}_2(M_\epsilon).$$
Thus the results of \cite{AHEM} imply that 
\begin{equation*}
g_\epsilon = \phi_\epsilon^4 \lambda_\epsilon
\quad\text{ and }\quad
\Sigma_\epsilon = \phi_\epsilon^{-2} \sigma_\epsilon
\end{equation*}
constitutes an appropriate seed data set so that the conformal method produces a CMCSF hyperboloidal data set on $M_\epsilon$.
It remains to verify the convergence statements \eqref{ext-convergence} and \eqref{neck-convergence}.

We first consider the metric $g_\epsilon$.
In the exterior region $E_c$ we have 
\begin{align*}
\rho^2 (\iota_\epsilon^*g_\epsilon - g)
&= \iota_\epsilon^*\left( \rho_\epsilon^2 (g_\epsilon - \lambda_\epsilon)\right)
\\
&= \iota_\epsilon^* \left( (\phi_\epsilon^4-1) \rho_\epsilon^2 \lambda_\epsilon\right)
\\
&= \iota_\epsilon^* (\phi_\epsilon^4-1) \rho^2 g.
\end{align*}
By hypothesis we have $\overline g = \rho^2 g \in \mathscr C^{k,\alpha;2}(M)$. Thus \eqref{cor:phi} implies that
\begin{align*}
\| \rho^2 (\iota_\epsilon^*g_\epsilon - g)\|_{\mathscr C^{k,\alpha;2}(E_c)}
&\lesssim \|\iota_\epsilon^* (\phi_\epsilon^4-1)\|_{\mathscr C^{k,\alpha;2}(E_c)} 
\|\rho^2 g \|_{\mathscr C^{k,\alpha;2}(M)}
\\
&\lesssim \epsilon.
\end{align*}
In the neck region $A_c$ we write
\begin{equation*}
\breve \rho^2 (\Psi_\epsilon^*g_\epsilon - \breve g)
=
\Psi_\epsilon^*(\phi_\epsilon^4 - 1) \breve\rho^2 \Psi_\epsilon^*\lambda_\epsilon 
+
\breve\rho^2\left(\Psi_\epsilon^*\lambda_\epsilon - \breve g \right).
\end{equation*}
Applying \eqref{cor:phi} and Proposition \ref{prop:WAH}, and using \eqref{rho-tilde-equivalence}, we have
\begin{align*}
\| \breve \rho^2 (\Psi_\epsilon^*g_\epsilon - \breve g)\|_{\mathscr C^{k,\alpha;2}(A_c)}
&\lesssim 
\|\Psi_\epsilon^*(\phi_\epsilon^4 - 1)\|_{\mathscr C^{k,\alpha;2}(A_c)}
\|y^2\Psi_\epsilon^* \lambda_\epsilon \|_{\mathscr C^{k,\alpha;2}(A_c)}
\\
&\qquad+
\| y^2( \Psi_\epsilon^*\lambda_\epsilon - \breve g)\|_{\mathscr C^{k,\alpha;2}(A_c)}
\\
&\lesssim \epsilon.
\end{align*}

We now turn attention to the tensor $\Sigma_\epsilon$.
In the exterior region $E_c$ we have
\begin{equation*}
\rho (\iota_\epsilon^*\Sigma_\epsilon - \Sigma)
=
\iota_\epsilon^*(\phi_\epsilon^{-2}-1) \rho\iota_\epsilon^*\sigma_\epsilon 
+
\rho \left( \iota_\epsilon^*\sigma_\epsilon - \Sigma\right).
\end{equation*}
Using Proposition \ref{estimate:sigma} we have
\begin{equation*}
\| \rho\iota_\epsilon^*\sigma_\epsilon\|_{\mathscr C^{k-1,\alpha;1}(E_c)}
\leq
\| \rho(\iota_\epsilon^*\sigma_\epsilon - \Sigma)\|_{\mathscr C^{k-1,\alpha;1}(E_c)}
+
\| \rho\Sigma\|_{\mathscr C^{k-1,\alpha;1}(E_c)}
\lesssim 1.
\end{equation*}
Thus applying \eqref{cor:phi} and Proposition \ref{estimate:sigma} we have
\begin{align*}
\| \rho (\iota_\epsilon^*\Sigma_\epsilon - \Sigma)\|_{\mathscr C^{k-1,\alpha;1}(E_c)}
&\lesssim 
\|\iota_\epsilon^*(\phi_\epsilon^{-2}-1)\|_{\mathscr C^{k,\alpha;2}(E_c)}
\| \rho\iota_\epsilon^*\sigma_\epsilon\|_{\mathscr C^{k-1,\alpha;1}(E_c)}
\\ &\qquad+ 
\| \rho(\iota_\epsilon^*\sigma_\epsilon - \Sigma)\|_{\mathscr C^{k-1,\alpha;1}(E_c)}
\\
& \lesssim \epsilon.
\end{align*}
In the neck region $A_c$, the analogous decomposition yields
\begin{align*}
\| \breve \rho (\Psi_\epsilon^*\Sigma_\epsilon - 0)\|_{\mathscr C^{k-1,\alpha;1}(A_c)}
&\lesssim
\|\Psi_\epsilon^*(\phi_\epsilon^{-2}-1)\|_{\mathscr C^{k,\alpha;2}(A_c)}
\| \rho\Psi_\epsilon^*\sigma_\epsilon\|_{\mathscr C^{k-1,\alpha;1}(A_c)}
\\ &\qquad+ 
\| \rho\Psi_\epsilon^*\sigma_\epsilon\|_{\mathscr C^{k-1,\alpha;1}(A_c)}
\\
&\lesssim \epsilon.
\end{align*}
This concludes the proof of the main theorem.\hfill \qed

\appendix

\section{Uniform invertibility for elliptic operators}
\label{uniforminvertibility}
In this appendix we study the vector Laplace operator $L_{\lambda_\epsilon}$ defined in \eqref{vl-eqn} and  the linearized Lichnerowicz operator 
\begin{equation}
\label{LinLich-1}
\mathcal P_\epsilon[1] = \Delta_{\lambda_\epsilon} - \frac18\left(\R[\lambda_\epsilon] + 7 |\sigma_\epsilon|^2_{\lambda_\epsilon} + 30 \right)
\end{equation}
 given by \eqref{linear-lich} in the special case $\theta=1$.
We obtain uniform invertibility of these operators in the following sense.

\begin{prop}
\label{invertibility:main}
Let $\lambda_\epsilon$ be the metrics constructed in \eqref{def-g}. 
For each $\delta\in [0,3)$ there exists a constant $C_\delta$, independent of $\epsilon$, such that:

\begin{enumerate}
\item $L_{\lambda_\epsilon}\colon C^{k,\alpha}_{\delta}(M_\epsilon)\to C^{k-2,\alpha}_{\delta}(M_\epsilon)$ is invertible with
\begin{equation*}
\| X\|_{C^{k,\alpha}_{\delta}(M_\epsilon;\tilde\rho_\epsilon)} \leq C_\delta \|L_{\lambda_\epsilon} X\|_{C^{k-2,\alpha}_{\delta}(M_\epsilon;\tilde\rho_\epsilon)}
\end{equation*}
for all vector fields $X\in C^{k,\alpha}_{\delta}(M_\epsilon)$,

\item $\mathcal{P}_\epsilon[1]\colon C^{k,\alpha}_{\delta}(M_\epsilon)\to C^{k-2,\alpha}_{\delta}(M_\epsilon)$ is invertible with
\begin{equation*}
\| u \|_{C^{k,\alpha}_{\delta}(M_\epsilon;\tilde\rho_\epsilon)} \leq C_\delta \| \mathcal{P}_\epsilon[1] u \|_{C^{k-2,\alpha}_{\delta}(M_\epsilon;\tilde\rho_\epsilon)}
\end{equation*}
for all functions $u\in C^{k,\alpha}_{\delta}(M_\epsilon)$.
\end{enumerate}

\end{prop}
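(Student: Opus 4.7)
The plan is to argue by contradiction using a blow-up and M\"obius-parametrization extraction, exploiting the known invertibility of the limiting operators on $(M,g)$ and on hyperbolic space $(\mathbb{H},\breve g)$. This follows the template used in \cite{ILS-Gluing}, adapted to the weighted intermediate spaces developed here.

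First, Proposition \ref{prop:operator-estimates} provides the uniform interior-regularity estimate
\begin{equation}
\label{plan:schauder}
\|u\|_{C^{k,\alpha}_\delta(M_\epsilon;\tilde\rho_\epsilon)} \lesssim \|\mathcal{A}_\epsilon u\|_{C^{k-2,\alpha}_\delta(M_\epsilon;\tilde\rho_\epsilon)} + \|u\|_{C^0_\delta(M_\epsilon;\tilde\rho_\epsilon)},
\end{equation}
where $\mathcal{A}_\epsilon$ stands for either $L_{\lambda_\epsilon}$ or $\mathcal{P}_\epsilon[1]$. For $\mathcal{P}_\epsilon[1]$ the zeroth-order coefficient $-\tfrac{1}{8}(\R[\lambda_\epsilon] + 7|\sigma_\epsilon|^2_{\lambda_\epsilon} + 30)$ is uniformly bounded in $C^{k-2,\alpha}(M_\epsilon)$ by Propositions \ref{prop:lambda-estimates} and \ref{prop:sigma-aux-estimates}, so the constant in \eqref{plan:schauder} is $\epsilon$-independent. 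Thus it suffices to establish the uniform $C^0_\delta$ bound
\begin{equation}
\label{plan:C0}
\|u\|_{C^0_\delta(M_\epsilon;\tilde\rho_\epsilon)} \lesssim \|\mathcal{A}_\epsilon u\|_{C^{k-2,\alpha}_\delta(M_\epsilon;\tilde\rho_\epsilon)};
\end{equation}
surjectivity then follows by the method of continuity applied to the family $\mathcal{A}_\epsilon$, using as anchor the Fredholm theory of \cite{Lee-FredholmOperators,AHEM} at $\epsilon = 0$.

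To obtain \eqref{plan:C0}, suppose it fails: there are sequences $\epsilon_n \to 0$ and $u_n$ with $\|u_n\|_{C^0_\delta(M_{\epsilon_n};\tilde\rho_{\epsilon_n})} = 1$ while $\|\mathcal{A}_{\epsilon_n} u_n\|_{C^{k-2,\alpha}_\delta} \to 0$. Choose points $p_n\in M_{\epsilon_n}$ realizing at least half the weighted supremum, and pick M\"obius parametrizations $\Phi_n$ centered near $p_n$ from the two-type collection of \S\ref{subsec:functions-on-Me}. Passing to a subsequence, either a definite fraction of the $\Phi_n$ factor through $\iota_{\epsilon_n}$ from parametrizations remaining in some fixed exterior region $E_c$, or a definite fraction factor through $\Psi_{\epsilon_n}$ from hyperbolic M\"obius parametrizations. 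In the first case, Lemma \ref{lemma:pullback-weighted-norms} combined with \eqref{plan:schauder} and Arzel\`a-Ascoli yields a subsequential limit $u_\infty\in C^{k,\alpha}_\delta(M)$ satisfying either $L_g u_\infty = 0$ or $(\Delta_g - 3 - |\Sigma|^2_g)u_\infty = 0$; the latter uses the constraint $\R[g] + 6 = |\Sigma|^2_g$ from \eqref{CMC-constraints} together with the convergence of $|\sigma_\epsilon|^2$ to $|\Sigma|^2$ provided by \eqref{norm-sigma-ext}. In the second case, the coefficients of $\Psi_{\epsilon_n}^*\mathcal{A}_{\epsilon_n}$ converge on compact subsets of $\mathbb{H}$ to those of the corresponding hyperbolic operator by Proposition \ref{prop:WAH}\eqref{me-small} and \eqref{norm-sigma-neck}, producing a nonzero $u_\infty\in C^{k,\alpha}_\delta(\mathbb{H})$ with $L_{\breve g}u_\infty = 0$ or $(\Delta_{\breve g} - 3)u_\infty = 0$. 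Either outcome contradicts the injectivity of the limit operator on $C^{k,\alpha}_\delta$ for $\delta\in[0,3)$, where the range is dictated by the indicial roots (see \cite{Lee-FredholmOperators,AHEM}).

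The main obstacle is guaranteeing that the extracted limit $u_\infty$ is nontrivial and lies in the correct weighted space. The concentration point $p_n$ may simultaneously approach the conformal boundary and the neck, so a naive rescaling could produce a trivial limit. The weight function $\tilde\rho_\epsilon$ was introduced in \eqref{define-tilde-rho} precisely to avoid this: by \eqref{rho-tilde-equivalence}, its pullback under each M\"obius parametrization $\breve B_2 \to M_\epsilon$ is uniformly equivalent to the hyperbolic defining function $\breve\rho$, independent of the center of the parametrization, so the normalized weighted value at $p_n$ corresponds to a nontrivial bound at a fixed point in the model ball $\breve B_r$, and the standard scaling behavior of $\delta$-weighted H\"older norms forces the limit to live in $C^{k,\alpha}_\delta$ of the limit space. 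A secondary point is the overlap between the two parametrization types in the transitional region of $M_\epsilon$, where both viewpoints produce equivalent rescalings (as discussed in \S\ref{subsec:functions-on-Me}), so the extraction is unambiguous.
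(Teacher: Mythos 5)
Your overall strategy matches the paper's: reduce to a uniform $C^0_\delta$ estimate via the Schauder estimates of Proposition \ref{prop:operator-estimates}, then prove that estimate by a blow-up contradiction argument using the invertibility of the model operators on $(M,g)$ and $(\mathbb H, \breve g)$. However, there are three genuine gaps in the execution.

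\emph{Surjectivity.} The method-of-continuity anchor at $\epsilon=0$ does not apply cleanly, because the manifolds $M_\epsilon$ themselves change with $\epsilon$; there is no fixed Banach space in which the family $\mathcal A_\epsilon$ is continuous. The paper instead invokes Theorem 1.6 of \cite{WAH}, which gives that $L_{\lambda_\epsilon}$ and $\mathcal P_\epsilon[1]$ are already Fredholm of index zero on each $M_\epsilon$; the estimate then forces injectivity, and index zero gives surjectivity. Your route needs replacing by this.

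\emph{Case dichotomy.} Your split (concentration in a fixed $E_c$ versus factoring through $\Psi_{\epsilon_n}$) is too coarse, and the statement ``a subsequential limit $u_\infty\in C^{k,\alpha}_\delta(M)$'' in the first branch is incorrect in general. If the concentration points approach a boundary point $q\in\partial M\setminus\{p_1,p_2\}$, the M\"obius parametrizations centered near $q_n$ shrink and the blow-up limit lives on $\mathbb H$, not $M$. The paper separates three cases according to where $q_j\to q\in\overline M$ lands: interior of $M$, $\partial M\setminus\{p_1,p_2\}$, or $\{p_1,p_2\}$. Only the first produces a limit on $M$.

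\emph{Weight function control.} This is the most serious gap. Your paragraph argues that since $\Phi^*\tilde\rho_\epsilon$ is uniformly equivalent to $\breve\rho$ on each M\"obius ball $\breve B_2$, ``the standard scaling behavior of $\delta$-weighted H\"older norms forces the limit to live in $C^{k,\alpha}_\delta$ of the limit space,'' and you then invoke injectivity on $C^{k,\alpha}_\delta(\mathbb H)$ from the Lee indicial-radius theory. But the M\"obius-ball equivalence is a \emph{local} statement; after blow-up near $p_1$ or $p_2$ the limiting weight function on $\mathbb H$ is governed by the rescaling of $\tilde\rho_\epsilon$, and in the various sub-cases it becomes $(yF)^\delta$, $y^\delta$, or $(y/r)^\delta$ rather than $\breve\rho^\delta$. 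These functions are pairwise equivalent on a fixed annulus $A_c$ but are not globally equivalent to $\breve\rho$ on $\mathbb H$ (for instance $yF=y$ when $r\geq 2$ while $\breve\rho\approx y/r^2$). The isomorphism theorem of \cite{Lee-FredholmOperators} applies to $\breve\rho$-weighted spaces; a separate argument is needed for the other weights. The paper supplies exactly this in Proposition \ref{prop:half-space-kernel}, whose proof uses a pair of horizontal convolutions combined with inversion to reduce the $(yF)^\delta$ and $y^\delta$ kernel problems to the $\breve\rho^\delta$ one. Without something like that proposition, the blow-up at the gluing points does not yield the contradiction you claim.
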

Theorem 1.6 of \cite{WAH} implies that $L_{\lambda_\epsilon}$ and $\mathcal P_\epsilon[1]$ are Fredholm of index zero; see also \cite{Lee-FredholmOperators}.
Thus Proposition \ref{invertibility:main} is an immediate consequence of the elliptic regularity estimates in Proposition \ref{prop:operator-estimates} and of the following lemma, which controls the kernels of the operators, and is proven in section \ref{subsec:proof-of-kernel-lemma} below.

\begin{lemma}
\label{lemma:trivial-kernel}
For each  $\delta\in [0,3)$ there exists $C_\delta$, independent of $\epsilon$, such that:
\begin{enumerate}
\item 
$
\| X\|_{C^0_\delta(M_\epsilon;\tilde\rho_\epsilon)}
\leq C_\delta
\| L_{\lambda_\epsilon} X\|_{C^0_\delta(M_\epsilon;\tilde\rho_\epsilon)}
$
for all $X\in C^{2,\alpha}_\delta(M_\epsilon)$,

\item  
$
\| u \|_{C^0_\delta(M_\epsilon;\tilde\rho_\epsilon)}
\leq C_\delta
\| \mathcal P_\epsilon[1] u \|_{C^0_\delta(M_\epsilon;\tilde\rho_\epsilon)}
$
for all $u\in C^{2,\alpha}_\delta(M_\epsilon)$.
\end{enumerate}
\end{lemma}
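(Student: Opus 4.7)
The plan is a blow-up/contradiction argument in the style of \cite{Lee-FredholmOperators} and \cite{ILS-Gluing}. I sketch the argument for the vector Laplace operator; the case of $\mathcal P_\epsilon[1]$ is parallel. Suppose the estimate fails for some fixed $\delta \in [0,3)$: then there exist $\epsilon_n \to 0$ and vector fields $X_n \in C^{2,\alpha}_\delta(M_{\epsilon_n})$ with
\begin{equation*}
\|X_n\|_{C^0_\delta(M_{\epsilon_n};\tilde\rho_{\epsilon_n})} = 1, \qquad \|L_{\lambda_{\epsilon_n}} X_n\|_{C^0_\delta(M_{\epsilon_n};\tilde\rho_{\epsilon_n})} \to 0.
\end{equation*}
Pick points $p_n \in M_{\epsilon_n}$ at which the weighted supremum is nearly attained and a M\"obius parametrization $\Phi_n \colon \breve B_2 \to M_{\epsilon_n}$ of one of the two types in \S\ref{subsec:functions-on-Me} with $\Phi_n(0,1) = p_n$. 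Setting $Y_n = \tilde\rho_{\epsilon_n}(p_n)^{-\delta}\Phi_n^* X_n$, the scaling hypothesis and Lemma \ref{lemma:basic-weight-function-equivalence} give $\|Y_n\|_{C^0(\breve B_2)} \lesssim 1$ and $|Y_n(0,1)|_{\Phi_n^*\lambda_{\epsilon_n}} \geq \frac{1}{2}$. The uniform metric bounds of Proposition \ref{prop:lambda-estimates} together with the interior elliptic estimate of Proposition \ref{prop:generic-operators} promote this to a uniform $C^{2,\alpha}(\breve B_r)$ bound on $Y_n$ for any $r < 2$.

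Passing to a subsequence, I identify the limit model according to the type of $\Phi_n$. Case (i): if $\Phi_n = \pi_{\epsilon_n}\circ\Phi^{(n)}$ for M\"obius parametrizations $\Phi^{(n)}$ of $M$, then along a subsequence either $\Phi^{(n)}$ converges to a M\"obius parametrization $\Phi^{(\infty)}$ of $M$ with $(\Phi^{(n)})^*g \to (\Phi^{(\infty)})^*g$ in $C^{k,\alpha}(\breve B_r)$ (in which case Arzel\`a--Ascoli yields $Y_\infty \in C^2(\breve B_{3/2})$ with $(\Phi^{(\infty)})^* L_g Y_\infty = 0$), or its center drifts to the gluing points, reducing to case (ii). Case (ii): if $\Phi_n = \Psi_{\epsilon_n}\circ\breve\Phi_n$, then since each $\breve\Phi_n$ is a $\breve g$-isometry and Proposition \ref{prop:WAH}\eqref{me-small} gives $y^2\Psi_{\epsilon_n}^*\lambda_{\epsilon_n} \to y^2\breve g$ in $\mathscr C^{k,\alpha;2}$, we obtain $\Phi_n^*\lambda_{\epsilon_n} \to \breve g|_{\breve B_2}$ in $C^{k,\alpha}$, and the limit satisfies $L_{\breve g} Y_\infty = 0$ on $\breve B_{3/2}$. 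A standard diagonal procedure, letting the scale of the chosen parametrizations grow, extends $Y_\infty$ to a global nonzero solution in $C^{2,\alpha}_\delta$ of either $L_g Y_\infty = 0$ on $M$ or $L_{\breve g} Y_\infty = 0$ on $\mathbb H$.

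To close the contradiction, invoke injectivity of the limiting operators on the relevant weighted spaces. Both $L_g\colon C^{2,\alpha}_\delta(M) \to C^{0,\alpha}_\delta(M)$, via the analysis of \cite{WAH, AHEM}, and $L_{\breve g}\colon C^{2,\alpha}_\delta(\mathbb H) \to C^{0,\alpha}_\delta(\mathbb H)$, via the indicial root calculation on hyperbolic space in \cite{Lee-FredholmOperators}, are injective for $\delta \in [0,3)$. Hence $Y_\infty \equiv 0$, contradicting $|Y_\infty(0,1)| \geq \frac{1}{2}$. The argument for $\mathcal P_\epsilon[1]$ proceeds identically with limit operators $\Delta_g - (3 + |\Sigma|^2_g)$ on $M$ (using the constraint $\R[g] = -6 + |\Sigma|^2_g$) and $\Delta_{\breve g} - 3$ on $\mathbb H$; the potentials are bounded below by $3$ (enabling a maximum principle on $M$), while the indicial roots of $\Delta_{\breve g} - 3$ on $\mathbb H^3$ are $-1$ and $3$, giving injectivity on $C^0_\delta$ for $\delta \in [0,3)$.

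The hard part will be the diagonal globalization of $Y_\infty$: one must carefully track how $p_n$ moves as $\epsilon_n \to 0$ in order to decide between models (i) and (ii), and to rule out degenerate limit behavior along the transitional region between exterior and neck where both parametrization types are available. The key mechanism is that on this transition the two parametrization types coincide (as noted just before Lemma \ref{lemma:pullback-norms}), and Proposition \ref{prop:WAH}\eqref{me-small} guarantees that $\Psi_{\epsilon_n}^*\lambda_{\epsilon_n}$ converges to $\breve g$ on any fixed compact set of $\mathbb H$ as $\epsilon_n \to 0$, so either model leads to a limit operator with trivial $C^0_\delta$-kernel.
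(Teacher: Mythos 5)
Your overall strategy — blow-up by contradiction, renormalize at points where the weighted sup is nearly attained, extract a limiting nontrivial kernel element of a model operator on $M$ or $\mathbb H$, and then invoke injectivity of the model — matches the paper. However, there is a genuine gap in the final step: you assert that the diagonal limit $Y_\infty$ lies in $C^{2,\alpha}_\delta(\mathbb H)$ with the standard hyperbolic weight $\breve\rho$, and then invoke injectivity of $L_{\breve g}$ (resp.\ $\Delta_{\breve g}-3$) on $C^0_\delta(\mathbb H;\breve\rho)$ via indicial roots. But the weight that actually survives the blow-up in the neck is $\tilde\rho_\epsilon$, whose pullback to $\mathbb H$ converges (depending on the rate at which $q_j\to p_i$) to functions of the form $yF/(2(r+1/r))$, $y/(2r)$, or $y$ — see \eqref{rho-tilde-equivalence} and the definition \eqref{define-tilde-rho}. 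These are \emph{not} comparable to $\breve\rho$ on all of $\mathbb H$: for $r\to\infty$ one has $\breve\rho \sim 2y/r^2$ while $y$ and $yF$ stay much larger, so for $\delta\ge 0$ the bound $|Y_\infty|\le C y^\delta$ (or $C(yF)^\delta$) is strictly weaker than $|Y_\infty|\le C\breve\rho^\delta$. The standard isomorphism on $C^{k,\alpha}_\delta(\mathbb H;\breve\rho)$ from \cite{Lee-FredholmOperators} therefore does not directly apply to $Y_\infty$. The paper closes this gap with Proposition \ref{prop:half-space-kernel}, a genuinely stronger injectivity statement allowing growth like $y^\delta$ or $(yF)^\delta$; it is proved by a two-step convolution-in-$x$ and inversion-by-$\mathcal I$ argument that downgrades the $y^\delta$ (resp.\ $(yF)^\delta$) bound to a genuine $\breve\rho^{\min(\delta,2-\delta)}$ bound. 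Your proposal does not supply, or even flag, this lemma.

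A secondary, related omission is the part you yourself call ``the hard part'': when the near-maximum points $q_j$ drift to a gluing point $p_i$, one must split into the three sub-cases of Case~3 in \S\ref{subsec:proof-of-kernel-lemma}, keyed to whether $|\hat\theta_j|/\hat\rho_j$ and $|(\hat\theta_j,\hat\rho_j)|/\epsilon_j$ stay bounded, because these choices determine different rescalings $T_j$ and, crucially, \emph{different} limiting weight functions ($(yF)^\delta$, $(y/r)^\delta$, $y^\delta$) on $\mathbb H$. Without that case analysis you cannot even identify which form of Proposition~\ref{prop:half-space-kernel} you need, let alone prove it. The remainder of your outline — the Case~1 limit on $M$ killed by Proposition~\ref{prop:inject-on-M} (or equivalently Propositions~6.3 and~6.5 of \cite{AHEM}), the uniform elliptic estimates from Propositions~\ref{prop:lambda-estimates} and~\ref{prop:generic-operators}, the Arzel\`a--Ascoli extraction — is sound and matches the paper in spirit; the gap is concentrated in the hyperbolic-space endgame.
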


Prior to proving Lemma \ref{lemma:trivial-kernel}, we introduce a general framework for blowup analysis and establish some results concerning the kernels of model operators.

\subsection{Exhaustions of weighted Riemannian manifolds}
\label{subsec:exhaustion}
Let $(M _*, g _*)$ be a Riemannian manifold. We say that a sequence of Riemannian manifolds \Defn{$(M_j, g_j)$ forms an exhaustion of $(M _*, g _*)$} if 
\begin{itemize}
\item $M_j$ are non-empty precompact open subsets of $M _*$,

\item $M_1\subseteq \overline{M}_1\subseteq M_2\subseteq \overline{M}_2 \subseteq M_3 \subseteq \cdots$ ,

\item $\bigcup_{j=1}^\infty M_j=M _*$, and

\item $\|g_j-g _*\|_{C^2(K, g_*)}\to 0$ on each precompact set $K\subset M _*$.
\end{itemize}
If in addition we have continuous functions $w_j\colon M_j\to (0,\infty)$ and $w _*\colon M _*\to (0,\infty)$ such that $\|w_j-w_*\|_{C^0(K)}\to 0$ on each precompact set $K\subset M _*$, then we say that \Defn{$(M_j, g_j, w_j)$ forms an exhaustion of $(M _*, g _*, w _*)$}.

We now give a definition of convergence for linear differential operators.
Let $(M_j, g_j)$ be an exhaustion of $(M _*, g _*)$. Consider a second order linear differential operator $P_*$ acting on sections of some tensor bundle over $M_*$, and operators $P_j$ acting on the restriction of that bundle to $M_j$.
We write $P_j=A_j\nabla^2+B_j\nabla+C_j$ where $A_j, B_j, C_j$ are appropriate bundle homomorphisms and where $\nabla$ is the connection associated to $g_j$.
Similarly, we write $P_*=A _*\nabla^2+B _*\nabla+C _*$. 
We say that \Defn{$P_j$ converges to $P_*$}, and write $P_j\to P_*$, if  
$$\|A_j-A _* \|_{C^2(K, g_*)}+ \|B_j-B _* \|_{C^1(K, g_*)}+ \|C_j-C _* \|_{C^0(K, g_*)}\to 0$$
on each precompact $K\subset M _*$.

Clearly, if $P_j\to P _*$ then for each precompact $K$ and each smooth tensor field $\eta$ we have 
$$\|P_j \eta-P _* \eta\|_{C^0(K, g_*)}\to 0 
\quad\text{ and }\quad
 \|P_j^\dagger \eta-P _*^\dagger \eta\|_{C^0(K, g_*)}\to 0,$$
where $P_j^\dagger$ and $P_*^\dagger$ denote the formal adjoints of $P_j$ and $P_*$, respectively. 
If in addition the operators $P_j$ and $P _*$ are elliptic then the  constants in the interior elliptic regularity estimates can be chosen independently of (sufficiently large) $j$. Finally, the reader should notice that if $(M_j, g_j)$ is an exhaustion of $(M _*, g _*)$ then any family of second order geometric operators $P_j=P[g_j]$ and $P _*=P[g _*]$ satisfies $P_j\to P _*$.

\begin{prop}\label{blowup:BigMamma}
Let $(M_j, g_j, w_j)$ be an exhaustion of $(M _*, g _*, w _*)$, and let $P_j$ and $P _*$ be second order elliptic linear differential operators on $(M_j, g_j)$ and $(M _*, g _*)$ with $P_j\to P _*$. 
Suppose also that there exists points $q_j\in M_1$ converging to $q_*\in M_1$ with respect to $g_*$, a sequence of tensor fields $u_j\in C^2(M_j)$, and constants $c,C>0$ such that: 
\begin{enumerate}

\item 
\label{B-assume-below}
for all $j$ we have $\big(w_j^{-1}|u_j|_{g_j}\big)\Big|_{q_j}\ge c$; 

\item 
\label{B-assume-above}
for all $j$ we have $\displaystyle{\sup_{M_j}}\, w_j^{-1}|u_j|_{g_j}\le C$;

\item 
\label{B-assume-operator}
we have $\displaystyle{\sup_{M_j}}\, w_j^{-1}|P_ju_j|_{g_j}\to 0$ as $j\to \infty$.
\end{enumerate}
Then there is a non-zero tensor field $u_*\in C^{0}(M_*, g_*)$ and a subsequence $\{u_{j_n}\}$ such that 
\begin{itemize}
\item $u_{j_n}\to u_*$ uniformly on compact sets;

\item $\displaystyle{\sup_{M}}\, w _*^{-1}|u_*|_{g _*}<\infty$;

\item $P_* u_*=0$ in the weak sense.
\end{itemize}
\end{prop}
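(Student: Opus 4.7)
My plan is a standard compactness-and-blowup argument: extract from $\{u_j\}$ a subsequence that converges locally in $C^0$ to a candidate limit $u_*$, and then use assumptions (a), (b), (c) to conclude that $u_*$ is nonzero, globally bounded by $w_*$, and annihilated by $P_*$ in the weak sense.

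First I would fix a precompact exhaustion $K_1\Subset K_2\Subset \cdots$ of $M_*$ with $\bigcup_l K_l = M_*$ and $q_*\in K_1$. The exhaustion hypothesis gives $\overline{K_{l+1}}\subset M_j$ for all $j$ sufficiently large, and, since $w_j\to w_*$ uniformly on $\overline{K_{l+1}}$ and $w_*$ is continuous and strictly positive, the bounds $c_l\le w_j\le C_l$ hold there for $j$ large, with constants depending only on $l$. Assumption (b) then gives a uniform $L^\infty$ bound on $|u_j|_{g_j}$ over $K_{l+1}$, and since $g_j\to g_*$ in $C^2$ on compact sets, also on $|u_j|_{g_*}$. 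Assumption (c) gives $\sup_{K_{l+1}}|P_ju_j|_{g_*}\to 0$. Because $P_j\to P_*$, the leading coefficients of $P_j$ are uniformly elliptic and its coefficients are uniformly $C^1$-bounded on $\overline{K_{l+1}}$ for large $j$. Standard interior Schauder estimates applied patch by patch in local coordinates and frames therefore yield, for some $\gamma\in(0,1)$, a uniform bound of the form
\[
\|u_j\|_{C^{1,\gamma}(K_l)}\le C_l'\bigl(\|u_j\|_{C^0(K_{l+1})}+\|P_ju_j\|_{C^{0,\gamma}(K_{l+1})}\bigr).
\]

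By Arzelà--Ascoli and a diagonal extraction, a subsequence $u_{j_n}$ converges in $C^0_{\mathrm{loc}}(M_*)$ to a continuous tensor field $u_*$. The pointwise inequality $w_*^{-1}|u_*|_{g_*}\le C$ is immediate, and the convergence $q_{j_n}\to q_*$ inside $K_1$, together with assumption (a), gives $(w_*^{-1}|u_*|_{g_*})(q_*)\ge c>0$, so $u_*\not\equiv 0$. To verify $P_*u_*=0$ weakly, I would pair against a smooth compactly supported test field $\eta$ with $\operatorname{supp}\eta\subset K_l$. Assumption (c) implies
\[
\left|\int_{M_*}\langle P_{j_n}u_{j_n},\eta\rangle_{g_{j_n}}\,dV_{g_{j_n}}\right|\le \|\eta\|_{C^0}\cdot \vol_{g_{j_n}}(K_l)\cdot\sup_{K_l}|P_{j_n}u_{j_n}|_{g_{j_n}}\longrightarrow 0.
\]
Integration by parts rewrites the same integral as $\int\langle u_{j_n},P_{j_n}^\dagger\eta\rangle_{g_{j_n}}\,dV_{g_{j_n}}$; because $u_{j_n}\to u_*$ uniformly on $K_l$, $g_{j_n}\to g_*$ in $C^2$, and $P_{j_n}\to P_*$ (so $P_{j_n}^\dagger\eta\to P_*^\dagger\eta$ uniformly on $K_l$), this passes to the limit $\int\langle u_*,P_*^\dagger\eta\rangle_{g_*}\,dV_{g_*}$, which must therefore vanish. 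Since $\eta$ was arbitrary, $P_*u_*=0$ in the weak sense.

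The only nontrivial step is securing the uniform interior regularity estimate above, where one must choose Schauder constants independently of $j$. This is precisely what the hypothesis $P_j\to P_*$ is designed to deliver: uniform ellipticity and uniform coefficient control on each compact set. Everything else is a direct application of Arzelà--Ascoli, a diagonal subsequence argument, and passage to the limit under the distributional pairing, using the $C^2$ convergence of the metrics to handle convergence of volume forms and pointwise tensor norms.
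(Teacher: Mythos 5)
Your overall strategy — uniform interior elliptic estimates, compactness, diagonal extraction, then passing the pairing with test fields to the limit — is the same as the paper's, and the nonvanishing, boundedness, and weak-solution steps all go through exactly as you describe. There is, however, a genuine gap in your choice of elliptic estimate. The interior Schauder estimate you invoke,
\[
\|u_j\|_{C^{1,\gamma}(K_l)}\le C_l'\bigl(\|u_j\|_{C^0(K_{l+1})}+\|P_ju_j\|_{C^{0,\gamma}(K_{l+1})}\bigr),
\]
needs a uniform $C^{0,\gamma}$ bound on $P_j u_j$ on the right, but hypothesis (c) hands you only a $C^0$ (sup-norm) bound: $\sup_{M_j} w_j^{-1}|P_j u_j|_{g_j}\to 0$. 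Nothing in the hypotheses controls H\"older seminorms of $P_j u_j$, so the estimate as stated is not justified by the data.

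The fix is to replace the Schauder route with $L^p$ elliptic theory, which is exactly what the paper does. Fix $p > \dim M_*$, so $H^{1,p}$ embeds into $C^0$. On each precompact nest $M_l \Subset M_{l+1}$ the sup-norm bounds on $u_j$ and $P_j u_j$, together with bounded volume, give uniform $L^p$ bounds. Interior $W^{2,p}$ elliptic estimates — whose constants are uniform in $j$ because $P_j \to P_*$ gives uniform ellipticity and uniform $C^1$ coefficient control — then give uniform $H^{2,p}(M_l,g_j)$ bounds on $u_j$, and Rellich compactness yields a subsequence converging in $H^{1,p}(M_l)$, hence in $C^0(M_l)$. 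Iterating over $l$ and diagonalizing recovers your $C^0_{\mathrm{loc}}$ convergent subsequence. Alternatively, you could keep your $C^{1,\gamma}$/Arzel\`a--Ascoli phrasing but replace the Schauder citation with the $L^p$-to-$C^{1,\gamma}$ chain ($L^p$ interior estimates plus Morrey embedding), which only needs $\|P_j u_j\|_{C^0}$ on the right. The rest of your argument then stands as written.
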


\begin{proof}
Fix $p>\dim(M_*)$ so that the Sobolev space $H^{1,p}(M_j, g_*)$ embeds continuously into $C^0(M_j, g_*)$ for each $j$.

We now describe a process for extracting a subsequence of $\{u_j\}$ that we use iteratively in order to produce the desired subsequence via a diagonal argument.
Given the sequence $\{u_j\}$ and the sets $M_1\subset\overline M_1\subset M_2$, we extract a subsequence $u_{j_n,1}$ as follows.
Our assumptions imply that for sufficiently large $j$ we have
$$
|u_j|_{g _j}\le 2Cw_*,
\quad
 |P_j u_j|_{g_j}\le Cw_*
 \quad \text{ on }M_2.
$$
As the volumes $\vol_{g_j}(M_2)$ are uniformly bounded  for $j>2$, we have that the Sobolev norms $\|u_j\|_{H^{0,p}(M_2,g_j)}$ and $\|P_ju_j\|_{H^{0,p}(M_2,g_j)}$ are bounded uniformly. 
Since the assumption $P_j\to P_*$ implies 
$$\|u_j\|_{H^{2,p}(M_1,g_j)}\le C'\left(\|P_ju_j\|_{H^{0,p}(M_2,g_j)}+\|u_j\|_{H^{0,p}(M_2,g_j)}\right)$$
for some constant $C'$ independent of $j$, we have that $\|u_j\|_{H^{2,p}(M_1,g_j)}$ are bounded, and thus so are $\|u_j\|_{H^{2,p}(M_1,g_*)}$.
Applying Rellich's lemma yields a subsequence $\{u_{j_n,1}\}$ that converges in $H^{1,p}(M_1, g_*)$ to some function $u_1$. 
Since $p$ has been chosen such that $H^{1,p}(M_1,g_*)\subset C^0(M_1)$, it follows that we have uniform pointwise convergence 
$$u_{j_n, 1}\to u_{1}\quad\text{ in } C^0(M_1,g_*).$$
Furthermore,  assumptions \eqref{B-assume-below} and \eqref{B-assume-above} imply that  
$$|u_{1}(q_*)|_{g_*}\geq \frac{c}{2},
\quad
|u_{1}|_{g_*}\le 2C w_* 
\quad\text{ on } M_1.
$$

The process that produces the subsequence $\{u_{j_n,1}\}$ from the sequence $\{u_j\}$ and the sets $M_1 \subset \overline M_1 \subset M_2$ is now applied iteratively.
For example, applying this process to the sequence $\{u_{j_n, 1}\}$ and the sets $M_2 \subset \overline M_2 \subset M_3$ gives rise to the subsequence $\{ u_{j_n, 2}\}$ of $\{u_{j_n,1}\}$  that converges  in $C^0(M_2, g_*)$ to some limit $u_2$.
Since $u_{j_n,1} \to u_1$ in $C^0(M_1, g_*)$, we see that the function $u_2$ is a continuous extension of $u_1$ to the domain $M_2$.
Furthermore, we have that
$$|u_{2}(q_*)|_{g_*}\geq \frac{c}{2},
\quad
|u_{2}|_{g_*}\le 2C w_* 
\quad\text{ on } M_2.
$$

Repeating this process inductively we obtain subsequences $\{u_{j_n,l}\}$ of $\{u_j\}$ and limiting functions $u_l\in C^0(M_l)$ such that 
\begin{equation*}
u_{j_n, l} \to u_l \quad \text{ in }C^0(M_l, g_*)
\end{equation*}
as $n\to\infty$.
Consequently, the diagonal sequence $\{u_{j_n,n}\}$ is uniformly convergent on every compact subset of $M_*$ to a limit $u_*\in C^0(M_*, g_*)$.
Furthermore, we have
\begin{equation*}
|u_{*}(q_*)|_{g_*}\geq \frac{c}{2}, 
\quad\text{ and }\quad|u_*|_{g_*}\le 2C w_* \quad\text{ on }M_*.
\end{equation*}
For the remainder of the proof we denote the subsequence $\{u_{j_n, n}\}$ by $\{u_{j_n}\}$.

We now show that $P_*u_*=0$ weakly. 
Consider a smooth tensor field $\eta$ supported on some $\Omega\subseteq \overline{\Omega}\subseteq M_*$, where $\overline\Omega$ is compact. Since $P_{j_n}\to P_*$ and $g_{j_n}\to g_*$ we have
\begin{align*}
\left|\int_{M_*} \langle P_*^\dagger \eta, u_*\rangle_{g_*} dV_{g_*}\right|
&=\lim_{n\to \infty} \left|\int_{\Omega} \langle P_{j_n}^\dagger \eta, u_{j_n}\rangle_{g_{j_n}} dV_{g_{j_n}}\right|
\\
&\le \lim_{n\to \infty} \int_{\Omega} \left|\langle \eta, P_{j_n} u_{j_n}\rangle_{g_{j_n}}\right| dV_{g_{j_n}}
\\
&\le \|\eta\|_{C^0(\Omega, g_*)} \mathrm{Vol}_{g_*}(\Omega) \cdot \lim_{n\to \infty} \|P_{j_n}u_{j_n}\|_{C^0(\Omega, g_{j_n})}.
\end{align*}
It follows from our assumptions that $\sup_\Omega w_{j_n}^{-1}|P_{j_n}u_{j_n}|_{g_{j_n}}\to 0$. 
As the functions $w_{j_n}$ converge uniformly to the positive function $w_*$ on the precompact set $\Omega$, they are uniformly bounded from above and below on $\Omega$.
Thus $\|P_{j_n}u_{j_n}\|_{C^0(\Omega, g_{j_n})}\to 0$ and hence 
$$\int_{M_*} \langle P_*^\dagger \eta, u_*\rangle_{g_*} dV_{g_*}=0.$$ 
Therefore $P_* u_*=0$ weakly. 
\end{proof}

\subsection{Invertibility of model operators}
Our blowup analysis uses the mapping properties of elliptic geometric operators defined using one of two model CMCSF hyperboloidal initial data sets: the data assumed in the main theorem, given by $(g,\Sigma)$ on $M$, which serves as a model away from the gluing region, and the data given by $(\breve g,0)$ on $\mathbb H^3$, which serves as a model in the gluing region.
In the first case, our aim is to establish the injectivity of the vector Laplace operator $L_g$ and of the operator $\mathcal P_0$ given by
\begin{align*}
\mathcal P_0u &= \Delta_gu -\frac18\left(\R[g] + 7|\Sigma|^2_g + 30\right) u\\
&= \Delta_g u - (3+|\Sigma|^2_g),
\end{align*}
where we have used \eqref{CMC-constraints}.
The operator $\mathcal P_0$ serves as a model for the linearization $\mathcal P_\epsilon[1]$ of the Lichnerowicz operator about the function $1$.
In the second case, we establish injectivity of the analogous operators defined by $(\breve g, 0)$.

First we consider the case of the data assumed in the main theorem.
\begin{prop}
\label{prop:inject-on-M}
Let $(g,\Sigma)$ be initial data on $M$ as in Theorem \ref{main} and suppose $|1-\delta|<2$.
\begin{enumerate}
\item If a continuous vector field $X$ on $M$ satisfies $|X|_g\leq C\rho^\delta$ for some constant $C$ and if $L_gX =0$, then $X=0$.

\item If a continuous function $u$ on $M$ satisfies $|u|\leq C\rho^\delta$ for some constant $C$ and if $\mathcal P_0u=0$, then $u=0$.
\end{enumerate}
\end{prop}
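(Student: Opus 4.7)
The plan is to treat both parts by a unified strategy: exploit the indicial structure of the operators at $\partial M$ to bootstrap the decay of any putative kernel element up to the critical indicial rate, and then conclude via integration by parts. A direct computation using the conformal-change formula together with $|d\rho|_{\overline g} = 1+O(\rho)$ gives $\Delta_g \rho^s = s(s-2)\rho^s + O(\rho^{s+1})$, and combined with $|\Sigma|^2_g = \rho^2|\overline\Sigma|^2_{\overline g} = O(\rho^2)$ this yields
\[
\mathcal P_0(\rho^s) = (s+1)(s-3)\rho^s + O(\rho^{s+1}),
\]
so $\mathcal P_0$ has indicial roots $-1$ and $3$ at the conformal boundary. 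A parallel computation on a frame adapted to $\partial M$ shows that $L_g$ shares these indicial roots. Hence the hypothesis $|1-\delta|<2$ places $\delta$ strictly between the two indicial roots of both operators.

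For part (b), rewrite $\mathcal P_0 u = 0$ as $(\Delta_g - 3)u = |\Sigma|^2_g u$: if $u \in C^0_\delta(M)$ then the right-hand side lies in $C^0_{\delta+2}(M)$, and the indicial regularity theory of \cite{Lee-FredholmOperators} (extended in \cite{WAH}) upgrades $u$ to $C^{k,\alpha}_{\delta+2}(M)$ whenever $\delta+2 < 3$. Iterating this improvement finitely many times yields $u \in C^{k,\alpha}_{3-\eta}(M)$ for every small $\eta>0$. With this decay, integrating $u\,\mathcal P_0 u = 0$ by parts over the exhaustion $\{\rho > \epsilon\}$ gives
\[
\int_{\{\rho>\epsilon\}}\bigl(|\nabla u|^2_g + V u^2\bigr)\,dV_g = \int_{\{\rho=\epsilon\}} u\,\partial_\nu u\,dA_g,\qquad V = 3+|\Sigma|^2_g \ge 3.
\]
The interior integrand is of order $\rho^{3-2\eta}\,d\overline V$ and hence integrable, while the boundary term scales as $\epsilon^{4-2\eta}\to 0$ (using $\partial_\nu u = O(\rho^{3-\eta})$ for the $g$-unit normal and $dA_g \sim \rho^{-2}d\overline A$). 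Since $V > 0$, passing to the limit forces $u \equiv 0$.

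Part (a) follows the same template. The bootstrap yields $X \in C^{k,\alpha}_{3-\eta}(M)$, after which the analogous integration by parts produces $\int_M |\mathcal D_g X|^2_g\,dV_g = 0$, so $X$ is a conformal Killing field of $(M,g)$ whose $g$-norm decays to zero at the boundary. To close I would appeal to the indicial structure of the first-order operator $\mathcal D_g$ at $\partial M$: its admissible rates are again bounded below by $-1$, so a solution with $|X|_g = O(\rho^{3-\eta})$ lies strictly above the upper rate and hence must vanish identically by uniqueness of the asymptotic expansion. The main obstacle is the rigorous execution of the bootstrap near the critical indicial root $3$ — the weighted regularity theory delivers decay only up to but not including $\rho^3$, and one must verify that the resulting $\rho^{3-\eta}$ decay is precisely what is needed for the boundary term in the integration by parts to vanish — and, more substantively in part (a), justifying the vanishing of conformal Killing fields with this decay for a general $\mathscr C^{k,\alpha;2}$ metric rather than invoking the specific geometry of $(\mathbb H^3,\breve g)$.
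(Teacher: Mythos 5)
Your approach takes a genuinely different route from the paper's, and the comparison is instructive. The paper dispatches both parts in four lines: weighted elliptic regularity (\cite[Lemma 5.1]{WAH}, \cite[Lemma 4.8]{Lee-FredholmOperators}) upgrades the $C^0_\delta$ hypothesis to $C^{k,\alpha}_\delta$, and then it cites the isomorphism results of \cite[Propositions 6.3 and 6.5]{AHEM} directly. What you are doing instead is unrolling the proof of those isomorphism theorems: an indicial-root computation (which you get right — the roots of $\mathcal P_0$ are $-1$ and $3$, matching the indicial radius $R=2$ centered at $1$), a decay bootstrap to near the upper root, and then the $L^2$ integration by parts that shows the $L^2$ kernel is trivial. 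That is exactly the skeleton by which Lee's theorem reduces isomorphism to absence of $L^2$ kernel, so you are not avoiding the cited result so much as re-deriving it.

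Two specific problems. First, the bootstrap step is stated too loosely. Elliptic regularity at a fixed weight does not by itself ``upgrade $u$ to $C^{k,\alpha}_{\delta+2}$''; the correct mechanism is: $(\Delta_g-3)$ is an isomorphism $C^{k+1,\alpha}_{\delta'}\to C^{k-1,\alpha}_{\delta'}$ for all $\delta'\in(-1,3)$ (a self-adjoint geometric operator with trivial $L^2$ kernel, so Lee's theorem applies), so one solves $(\Delta_g-3)v=|\Sigma|^2_g u$ at the improved weight $\delta'=\min(\delta+2,3-\eta)$ and then uses injectivity at the original weight $\delta$ to conclude $u=v$. You need to make clear which isomorphism you are invoking; once you do, the argument is fine for part (b) and at most two iterations are needed.

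Second, and more seriously, the last step of part (a) is a genuine gap that you yourself flag. Once $\int_M |\mathcal D_g X|^2_g\,dV_g=0$, you are left needing to show that a conformal Killing field on a weakly asymptotically hyperbolic manifold with $|X|_g=O(\rho^{3-\eta})$ (in particular, in $L^2$) must vanish. Your appeal to ``the indicial structure of the first-order operator $\mathcal D_g$'' does not settle this: $\mathcal D_g X=0$ is an overdetermined first-order system, not the kind of scalar ODE at the boundary where indicial roots of the second-order operator classify decay rates, and the vanishing of $L^2$ conformal Killing fields on general $\mathscr C^{k,\alpha;2}$ weakly asymptotically hyperbolic manifolds is precisely the nontrivial rigidity fact that underlies the invertibility of $L_g$ in \cite{Lee-FredholmOperators} and \cite{AHEM}. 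Your argument relocates this difficulty rather than resolving it, so as written part (a) is incomplete; the paper's citation of \cite[Proposition 6.3]{AHEM} is doing real work here.
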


\begin{proof}
For the first claim, we note that $L_g$ is an elliptic geometric operator. 
Thus from the elliptic regularity results in \cite[Lemma 5.1]{WAH} we have $X\in C^{k,\alpha}_\delta(M)$.
From Proposition 6.3 of \cite{AHEM} we have that
\begin{equation*}
L_g \colon C^{k,\alpha}_\delta(M) \to C^{k-2,\alpha}_\delta(M)
\end{equation*}
is invertible, and thus $X=0$.

For the second claim, we note that $\Delta_g -3$ is an elliptic geometric operator.
Since $|\Sigma|^2_g\in C^{k-1,\alpha}_2(M)$, 
adding $- |\Sigma|^2_g u$ to the lower order term does not affect the arguments leading to elliptic regularity results for $\mathcal P_0$; see \cite[Lemma 4.8]{Lee-FredholmOperators} and \cite[Lemma 5.1]{WAH}.
(Note that the sign convention for the Laplacian $\Delta_g$ in \cite{Lee-FredholmOperators} is the opposite of the one used here.)
Thus $u\in C^{k,\alpha}_\delta(M)$.
Since Proposition 6.5 of \cite{AHEM} implies that
\begin{equation*}
\mathcal P_0 \colon C^{k,\alpha}_\delta(M) \to C^{k-2,\alpha}_\delta(M)
\end{equation*}
is invertible, we conclude that $u=0$.
\end{proof}

We now turn to the model of hyperbolic space.
As in section \ref{sec:hyperbolic-space} we use the coordinates $(x,y)$ on the half-space model of hyperbolic space and write $r^2 = |x|^2 + y^2$.
Recall also the function $\breve\rho$ defined in \eqref{breve-rho} and the function $F$ described in Proposition \ref{lemma:F}.
It is established in \cite[Theorem 5.9]{Lee-FredholmOperators} that any self-adjoint elliptic geometric operator $\breve P$ on hyperbolic space is an isomorphism
\begin{equation*}
\breve P\colon C^{k,\alpha}_\delta(\mathbb H^3) \to C^{k-2,\alpha}_\delta(\mathbb H^3)
\end{equation*}
provided $|\delta -1|<R$, where $R$ is the indicial radius of the operator $\breve P$.
In particular, this applies to the vector Laplace operator $L_{\breve g}$ and to the  operator $\Delta_{\breve g} - 3$ for $|\delta -1|<2$.

The isomorphism property of $\breve P$, together with interior elliptic regularity, implies that any continuous tensor field $v\in \ker\breve P$ with $|v|_{\breve g}\leq C \breve\rho^\delta$ must in fact vanish.
In our blowup analysis we require a slight strengthening of this statement that makes use of the functions $y$ and $yF$ on the half-space model of hyperbolic space.
The argument we present is a generalization of the proof of Proposition 13 and Corollary 14 in \cite{ILS-Gluing}.

\begin{prop}
\label{prop:half-space-kernel}
Let $\breve P$ be a self-adjoint elliptic geometric operator on $\mathbb H$ with indicial radius $R>0$.
Suppose that for some $\delta$ satisfying $|1-\delta|<R$ there exists $v\in \ker \breve P$ satisfying either $|v|_{\breve g} \leq C(yF)^\delta$ or $|v|_{\breve g} \leq Cy^\delta$ .
Then $v$ is identically zero.
\end{prop}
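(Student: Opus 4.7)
The plan is to reduce both of the hypothesized pointwise bounds to the standard weighted bound $v\in C^{k,\alpha}_\delta(\mathbb H;\breve\rho)$. Once this reduction is achieved, the isomorphism theorem \cite[Theorem 5.9]{Lee-FredholmOperators}, which asserts that the self-adjoint elliptic geometric operator $\breve P$ has trivial kernel on $C^{k,\alpha}_\delta(\mathbb H;\breve\rho)$ whenever $|1-\delta|<R$, immediately forces $v\equiv 0$. Interior elliptic regularity for $\breve P$ promotes the merely continuous hypothesis on $v$ to $v\in C^{k,\alpha}_{\mathrm{loc}}(\mathbb H)$, so all that remains is to establish the uniform weighted pointwise estimate.

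The key observation is that on any annular region $A_c\subset\mathbb H$ with $c>0$ (as defined in \eqref{sets-in-hyperbolic-space}), the three weight functions $\breve\rho$, $y$, and $yF$ are pairwise comparable, by \eqref{annular-defining-functions}. Hence the hypothesized pointwise bound on $v$ directly translates into $|v|_{\breve g}\lesssim\breve\rho^\delta$ on $A_c$. The complement $\mathbb H\setminus\overline A_c$ consists of two pieces: a neighborhood of the origin and a neighborhood of the point at infinity (in half-space coordinates), which are interchanged by the hyperbolic inversion $\mathcal I$ of \eqref{define-inversion}. Since $\mathcal I$ is an isometry of $(\mathbb H,\breve g)$ and $\breve P$ is a geometric operator, the pulled-back tensor field $\tilde v:=\mathcal I^*v$ also lies in $\ker\breve P$. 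Moreover $yF$ is $\mathcal I$-invariant while $\mathcal I^*y=y/r^2$, so the hypothesized bound on $v$ pulls back to the analogous bound on $\tilde v$. Applying the $v$-bound directly on one of the two components and transporting the $\tilde v$-bound through $\mathcal I$ on the other component then produces the required $\breve\rho^\delta$ control on all of $\mathbb H$.

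The main technical obstacle is the careful bookkeeping required at the interfaces between the three regions, verifying that the direct bound on $v$ and the bound deduced from $\tilde v$ match up coherently with the intermediate $A_c$ estimates. This follows the argument pattern of \cite[Proposition 13 and Corollary 14]{ILS-Gluing}; since the ambient metric $\breve g$ here is exactly hyperbolic (not weakly asymptotically hyperbolic), that argument applies essentially verbatim, with $\breve P$ playing the role of the specific operator treated there. The hypothesis $|1-\delta|<R$ is essential: at or beyond the indicial radius one can construct explicit nonzero model solutions (e.g.\ $\breve\rho^{1\pm R}$ for scalar operators, with tensor-valued analogues for the vector Laplacian) that satisfy the stated $yF$- or $y$-bound but are nontrivial.
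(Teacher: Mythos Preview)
Your reduction strategy has a genuine gap. You claim that the bound $|v|_{\breve g}\le C(yF)^\delta$ (or $Cy^\delta$) can be upgraded to $|v|_{\breve g}\le C\breve\rho^\delta$ by patching the annular estimate with an inversion argument on the two complementary half-balls. But the weight functions $(yF)^\delta$ and $\breve\rho^\delta$ are \emph{not} globally comparable for $\delta>0$, and inversion does not repair this. Concretely, at the point $(x,y)=(R,1)$ with $R$ large we have $r>2$, so $F=1$ and $(yF)^\delta=1$, while $\breve\rho^\delta\approx R^{-2\delta}\to 0$. Since $yF$ is $\mathcal I$-invariant and $\breve P$ is $\mathcal I$-equivariant, the pulled-back field $\mathcal I^*v$ satisfies the \emph{same} $(yF)^\delta$ bound, so transporting through $\mathcal I$ yields no new information. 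The analogous obstruction occurs for the $y^\delta$ bound on $\{r>1/c\}$. Thus the patching you describe cannot produce $\breve\rho^\delta$ control from pointwise comparison of weights alone; something more must be extracted from the equation $\breve Pv=0$.

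The paper's argument (which is indeed the one from \cite{ILS-Gluing}) supplies the missing idea: convolution in the horospherical directions. One sets
\[
\tilde v(x,y)=\int_{\mathbb R^2} v(x-\xi,y)\,\varphi_0(\xi)\,d\xi
\]
for a cutoff $\varphi_0$; translation invariance of $\breve g$ in $x$ gives $\breve P\tilde v=0$, and a change of variables converts the bad $r^{-2\delta}$ factor in $|v|_{\breve g}\le Cy^\delta/r^{2\delta}$ (valid near the origin) into the mixed estimate $|\tilde v|_{\breve g}\le C(y^{2-\delta}+y^\delta)$. Pulling back by $\mathcal I$ and convolving once more yields a nonzero $\tilde u\in\ker\breve P$ with $|\tilde u|_{\breve g}\le C(\breve\rho^{2-\delta}+\breve\rho^\delta)$, hence $\tilde u\in C^0_\nu(\mathbb H)$ for $\nu=\min(\delta,2-\delta)$, which still satisfies $|1-\nu|<R$. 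This is the step that genuinely uses the PDE structure beyond mere inversion symmetry, and it is absent from your outline.
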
 

\begin{proof}
We argue by contradiction and consider first the case that there exists a nonzero tensor field $v\in \ker \breve P$ and constants $C,\delta\in \mathbb R$ such that $|\delta -1|<R$ and $|v|_{\breve g}\leq C (yF)^\delta$.
Let $r_0>0$ be such that $v$ does not vanish identically on the set where $r<r_0$.
From Proposition \ref{lemma:F} we have
\begin{gather}
\label{v-est-1}
|v|_{\breve g} \leq C y^\delta \quad \text{ where }r\geq r_0,
\\
\label{v-est-2}
|v|_{\breve g} \leq C \frac{y^\delta}{r^{2\delta}} \quad \text{ where }r\leq 3r_0,
\end{gather}
where here, and in the following, the value of $C$ may vary from line to line.

Let $\varphi_0\colon \mathbb R^2 \to [0,1]$ be a smooth cutoff function supported on $|x|\leq 2r_0$, where $x = (x^1,x^2)$ are Cartesian coordinates on $\mathbb R^2$, and define $\tilde v$ on $\mathbb H$ by
$$
\tilde{v}(x,y)=\int_{\mathbb{R}^2} v(x-\xi,y)\varphi_0(\xi)\,d\xi.
$$
Since  $v$ does not vanish on $r<2r_0$ by assumption, one can always choose $\varphi_0$ so that $\tilde{v}$ is not identically zero.
Differentiation under the integral sign shows that $\breve P \tilde v=0$. 

We claim that
\begin{equation}
\label{tv:est}
|\tilde{v}|_{\breve{g}}\leq C\left( y^{2-\delta}+y^{\delta}\right).
\end{equation}
On the region where $r\geq 3r_0$, the estimate \eqref{v-est-1} implies \eqref{tv:est} and thus we focus attention on the region where $r\leq 3r_0$.
There, \eqref{v-est-2} implies that
$$
|\tilde{v}|_{\breve{g}}\leq C \int_{|\xi|\le 2r_0} \frac{y^{\delta}}{(y^2+|x-\xi|^2)^{\delta}} d\xi.
$$
We now use the change of variables $\xi=x-y\zeta$ and observe that $r\le 3r_0$ and $|\xi|\le 2r_0$ implies $|\zeta|\leq \tfrac{5r_0}{y}$. 
Thus using polar coordinates yields 
$$
\begin{aligned}
|\tilde{v}|_{\breve g}
&\leq C y^{2-\delta}\int_{|x-y\zeta|\le 2r_0}\frac{d\zeta}{(1+|\zeta|^2)^\delta}
\\
&\leq C y^{2-\delta}\int_0^{5r_0/y} \frac{t}{(1+t^2)^\delta}\,dt.
\end{aligned}
$$
It follows from 
$$
\frac{1}{(1+t^2)^\delta}\le
\begin{cases} 
C & \text{\ \ for\ \ }t\le 1,\\
C t^{-2\delta} &  \text{\ \ for\ \ }t\ge 1  
\end{cases}
$$
that 
$$
\int_0^{5r_0/y} \frac{t}{(1+t^2)^\delta}\,dt\leq C( 1+y^{2\delta -2}).
$$
This completes the proof of \eqref{tv:est}.

We now define $u=\mathcal{I}^*\tilde{v}$, where $\mathcal I$ is the inversion operator defined in \eqref{define-inversion}.
Note that $\breve Pu=0$ due to $\mathcal{I}$-invariance of $\breve g$, and consequently the $\mathcal I$-invariance of $\breve P$. 
Choose $r_1>0$ so that $u\neq 0$ on $r\le r_1$. 
Choose also a smooth function $\varphi_1 \colon \mathbb{R}^2\to[0,1]$ supported on $|x|<2r_1$ such that the tensor field
$$
\tilde{u}(x,y)=\int_{\mathbb{R}^2} u(x-\xi,y)\varphi_1(\xi)\,d\xi
$$ 
is non-zero. 
Differentiation under the integral sign shows that $\breve P\tilde{u}=0$. 

We claim that 
\begin{equation}
\label{tu:est}
|\widetilde{u}|_{\breve{g}}\lesssim \breve{\rho}^{2-\delta}+\breve{\rho}^\delta.
\end{equation}
To this end, observe that \eqref{tv:est} implies 
\begin{equation*}
|u|_{\breve{g}}\lesssim \left(\frac{y}{r^2}\right)^{2-\delta}+\left(\frac{y}{r^2}\right)^{\delta}.
\end{equation*}
The same change of variables argument involved in the proof of  \eqref{tv:est} shows that 
$$
|\widetilde{u}|_{\breve{g}} \leq C y^{2-\delta}+y^{\delta} 
\quad\text{ on }\quad r\le 3r_1.
$$
In the region where $r\leq 3r_1$, we have $C^{-1} y\leq  \breve{\rho}\leq C y$, which implies the estimate \eqref{tu:est} in that region.

In the region where $r\ge 3r_1$ and $|\xi|\le 2r_1$ we have  
$$
C^{-1}r^2
\leq y^2+|x-\xi|^2
Cr^2.
$$ 
The estimate \eqref{tu:est} now follows from the fact that 
$C^{-1}\breve{\rho}\leq  \frac{y}{r^2}\leq C \breve{\rho}$ where $r\ge 3r_1$.

Since $\breve\rho \leq C$, the estimate \eqref{tu:est} implies that $|\tilde u|_{\breve g} \leq C \breve \rho^\nu$ where $\nu = \min{(2-\delta, \delta)}$.
Thus $\tilde u\in \ker \breve P$ and $\tilde u\in C^0_\nu(\mathbb H^3)$, where $|1-\nu|<R$.
The isomorphism property of $\breve P$ implies that $\tilde u =0$, which is the desired contradiction.

Suppose now that $\breve Pv=0$ and that $|v|_{\breve g} \leq C y^\delta$. If $\delta<0$, then the fact that $y\geq C\breve\rho$ implies $|v|_{\breve g} \leq C \breve\rho^\delta$ and hence the isomorphism property of $\breve P$ implies $v=0$.
If $\delta\geq 0$, then the fact that $F^\delta \geq C$ implies that $|v|_{\breve g} \leq C (yF)^\delta$ and thus $v=0$ by the previous argument.
\end{proof}

\subsection{Proof of Lemma \ref{lemma:trivial-kernel}}
\label{subsec:proof-of-kernel-lemma}
We now establish Lemma \ref{lemma:trivial-kernel}.
We present the argument for the estimate 
\begin{equation}
\label{Pe-kernel}
\| u\|_{C^0_\delta(M_\epsilon;\tilde\rho_\epsilon)}\lesssim \| \mathcal P_\epsilon[1]u\|_{C^0_\delta(M_\epsilon;\tilde\rho_\epsilon)}; 
\end{equation}
the estimate for the vector Laplace operator follows from analogous reasoning.

We argue by contradiction and assume that \eqref{Pe-kernel} does not hold.
Thus there exists $\delta\in [0,3)$ and a sequence $\epsilon_j\to 0$, together with functions $u_j\in C^{2,\alpha}_\delta(M_{\epsilon_j})$, such that 
\begin{equation}
\label{assume-norm-1}
\|u_j\|_{C^0_\delta(M_{\epsilon_j};\tilde\rho_{\epsilon_j})} =1
\end{equation}
 and
\begin{equation}
\label{assume-almost-kernel}
\| \mathcal P_{\epsilon_j}[1]u_j\|_{C^0_\delta(M_{\epsilon_j};\tilde\rho_{\epsilon_j})} \to 0.
\end{equation}
Hence there exist points $q_j\in M\setminus (U_{1,\epsilon_j} \cup U_{2,\epsilon_j})$, where we recall \eqref{preferred-half-ball}, such that at the point $\pi_{\epsilon_j}(q_j)\in M_{\epsilon_j}$ we have
\begin{equation}
\label{u-point-est}
\big(\tilde\rho_{\epsilon_j}^{-\delta} |u_j|\big)\Big|_{\pi_{\epsilon_j}(q_j)} \geq \frac12 .
\end{equation}
Passing to a subsequence if necessary, we may assume that $q_j \to q\in \overline M$.
We now consider several cases, depending on the location of $q\in \overline M$, obtaining a contradiction in each case.

\subsubsection*{Case 1: $q\in  M$}
In this case we define $(M_j, g_j, w_j)$ by setting
\begin{equation*}
M_j = \{p\in M\setminus (U_{1,\epsilon_j} \cup U_{2,\epsilon_j})\colon \rho_{\epsilon_j}(\pi_{\epsilon_j}(p))> \epsilon_j\},
\end{equation*}
$g_j = \pi_{\epsilon_j}^*\lambda_{\epsilon_j}$, and 
$w_j = \pi_{\epsilon_j}^*\tilde\rho_{\epsilon_j}^\delta = \pi_{\epsilon_j}^*({\rho_{\epsilon_j}}/{\omega_{\epsilon_j}} )^\delta;$
see \eqref{define-tilde-rho}.

Let $\psi\colon(0,\infty) \to (0,1]$ be the smooth cutoff function used in \eqref{define-w-epsilon}, and define the function $\omega_*\colon \overline M\to (0,\infty)$ by setting $\omega=1$ outside the domain of the preferred background coordinates $\Theta_i = (\theta, \rho)$ and by requiring that $\omega_* = \psi(|(\theta,\rho)|)$ in each background coordinate chart.
Set $w_* = (\rho/\omega_*)^\delta$ on $M$.

We claim that  $(M_j, g_j, w_j)$ forms an exhaustion of $(M, g, w_*)$.
The convergence of the metrics is immediate from the fact that $\iota_\epsilon^*\lambda_\epsilon = g$ on $E_c$; see Proposition \ref{prop:WAH}.
To see the convergence of the weight functions, we recall from \S\ref{subsec:functions-on-Me} that in preferred background coordinates $\Theta_i = (\theta, \rho)$ we have $\pi_\epsilon^*\omega_\epsilon = \psi(|(\theta,\rho)| + \epsilon^2|(\theta,\rho)|^{-1})$.
Thus $\pi_{\epsilon_j}^*\omega_{\epsilon_j} \to \omega$ uniformly on every precompact subset of $M$.
As $\pi_\epsilon ^*\rho_\epsilon = \rho$ on $E_c$ we see that $w_j\to w_*$ uniformly on precompact sets as well.

Let $v_j = \pi_{\epsilon_j}^* u_j$.
As $q_*\in M_j$ for sufficiently large $j$, we may pass to a subsequence to ensure that $q_j\in M_1$ for all $j$, and hence that $q_*\in M_1$.
From \eqref{u-point-est} and \eqref{assume-norm-1} we have
\begin{equation*}
\big(w_j^{-1} |v_j|\big)\Big|_{q_j}\geq \frac12
\quad\text{ and }\quad
\sup_{M_j} w_j^{-1}|u_j|\leq 1.
\end{equation*}
Furthermore, setting $P_j = \pi_{\epsilon
_j}^*\mathcal P_{\epsilon_j}[1]$, we have $\sup_{M_j} w_j^{-1}|P_j v_j| \to 0$.

The convergence of $\iota_\epsilon^*|\sigma_\epsilon|^2_{\lambda_\epsilon}$ to $|\Sigma|^2_g$ in the exterior region given by \eqref{norm-sigma-ext} implies that $\mathcal P_{\epsilon_j}[1]\to \mathcal P_0$ as described in \S\ref{subsec:exhaustion}.
Thus from Proposition \ref{blowup:BigMamma} there exists a nonzero function $v_*\in C^0(M)$ such that $|v_*|\leq C (\rho/\omega_*)^\delta$ and $\mathcal P_0 v_* = 0$.
Note that $\rho\leq C \omega_* \leq C.$
Thus, since $\delta \geq 0$, we have $|v_*|\leq C$.
But Proposition \ref{prop:inject-on-M} implies that the only continuous and bounded function in the kernel of $\mathcal P_0$ is the zero function, contradicting that $v_*$ is nonzero.

\subsubsection*{Case 2: $q\in \partial\overline M \setminus \{p_1, p_2\}$}
Let $\Theta = (\theta,\rho)$ be background coordinates on $M$ centered at $q$ as introduced in \S\ref{subsec:background-coord}.
After an affine change of coordinates we can arrange that at $q$ we have $\bar g_{ij}d\Theta^id\Theta^j = \delta_{ij}d\Theta^id\Theta^j$.
For $j$ sufficiently large, $q_j$ is contained in the domain $Z(q)$ of $\Theta$; denote $\Theta(q_j)$ by $(\hat\theta_j, \hat\rho_j)$.
Let $r_*>0$ be such that neither $p_1$ nor $p_2$ is contained in that part of $Z(q)$ where $|(\theta,\rho)|\leq r_*$.
Without loss of generality, we may assume that $|\hat\theta_j|< r_*/2$.

Set $M_j = \{ (x,y)\in \mathbb H \colon |(x,y)| < r_*/4\hat\rho_j, y>\hat\rho_j/2\}$ and use the background coordinates $(\theta,\rho)$ about $q$ to define $\Phi_j\colon M_j \to M$ by
\begin{equation*}
\Phi_j\colon (x,y) \mapsto (\theta,\rho) = (\hat\theta_j + \hat\rho_j x, \hat\rho_j y).
\end{equation*}
Note that $\Phi_j(0,1) = q_j$ and that for sufficiently small $c$ (and hence, in view of \eqref{epsilon-estimate}, sufficiently small $\epsilon_j$) the image of $\Phi_j$ is contained in the exterior region $E_c$.
Thus we may define $T_j \colon M_j \to M_{\epsilon_j}$ by $T_j = \iota_{\epsilon_j}\circ\Phi_j$.

Set $g_j = T_j^*\lambda_{\epsilon_j}$.
Let $\hat\omega_j = \omega_{\epsilon_j}(\iota_{\epsilon_j}(q_j))$ and, recalling from \eqref{define-tilde-rho} that $\tilde\rho_\epsilon = \rho_\epsilon/\omega_\epsilon$, 
define
\begin{equation}
\label{case2:weight}
w_j = \tilde\rho_{\epsilon_j}(\iota_{\epsilon_j}(q_j))^{-\delta} T_j^*\tilde\rho_{\epsilon_j}^\delta
= \left( \frac{\hat\rho_j}{\hat\omega_j}\right)^{-\delta}T_j^*\tilde\rho_{\epsilon_j}^\delta.
\end{equation}

We claim that $(M_j, g_j, w_j)$ forms an exhaustion of $(\mathbb H, \breve g, y^\delta)$.
Since $\hat\rho_j\to 0$ we have $\bigcup_{j} M_j = \mathbb H$.
To see that $g_j\to \breve g$, recall from Proposition \ref{prop:WAH} that  $\iota_\epsilon^*\lambda_\epsilon = g$ on $E_c$, and thus $g_j$ is simply the pullback of $g$ by $\Phi_j$.
In coordinates $\Theta = (\theta,\rho)$, we write $g = \rho^{-2} \overline g_{ij}(\theta,\rho) d\Theta^i d\Theta^j$.
Thus in coordinates $X = (x,y)$ we have $g_j = \Phi_j^* g= y^{-2} \overline g_{ij} (\hat\theta_j + \hat\rho_jx, \hat\rho_j y) dX^i dX^j$.
Thus on any precompact set $K\subset\mathbb H$ we have $g_j  \to  y^{-2} \overline g_{ij}(q)dX^idX^j = y^{-2} g_\text{E} = \breve g$ uniformly.
Finally, to see the convergence of the weight function, note that
$\hat\rho_j^{-1}T_j^*\rho_{\epsilon_j} = y$
and that on any precompact $K\subset \mathbb H$ we have 
$\hat\omega_j^{-1} T_j^*\omega_{\epsilon_j} \to 1$ uniformly.
Thus the claim is verified.

Define the functions $v_j$ on $M_j$ by $v_j = (\hat\rho_j/\hat\omega_j)^{-\delta}T_j^*u_j$.
Using \eqref{case2:weight} we see that $w_j^{-1} |v_j| = T_j^* (\tilde\rho_{\epsilon_j}^{-\delta} |u_j|)$.
Thus from \eqref{u-point-est} and \eqref{assume-norm-1}  we have 
$$
\big(w_j^{-1}|v_j| \big)\Big|_{(0,1)}
\geq \frac12
\quad\text{ and }\quad
\sup_{M_j} w_j^{-1} |v_j| \leq 1;
$$
hence assumptions \eqref{B-assume-below} and \eqref{B-assume-above} of Proposition \ref{blowup:BigMamma} hold.

Define the differential operator $P_j$ on $M_j$ by $P_j = T_j^*\mathcal P_{\epsilon_j}[1]$.
We claim that $P_j\to \Delta_{\breve g} - 3$.
To see this, note first that since $\iota_\epsilon^*\lambda_\epsilon = g$ in the exterior region $E_c$, applying the constraint equations \eqref{CMC-constraints} to \eqref{LinLich-1} yields
\begin{equation}
\begin{aligned}
\iota_\epsilon^*\mathcal P_{\epsilon_j}[1]
&= \Delta_g - 3 - \frac18|\Sigma|^2_g - \frac78\iota_\epsilon^*|\sigma_\epsilon|^2_{\lambda_\epsilon}
\\
&=
\Delta_{g} - 3 
- |\Sigma|^2_{g}
-\frac78\left( \iota_{\epsilon_j}^*|\sigma_{\epsilon_j}|^2_{\lambda_{\epsilon_j}} - |\Sigma|^2_g\right).
\end{aligned}
\end{equation}
By the hypotheses of Theorem \ref{main} we have that $|\overline\Sigma|^2_{\overline g}$  is bounded, and thus  for some constant $C$ we have $\Phi_j^*|\Sigma|^2_{g} =\Phi_j^*(\rho^2 |\overline\Sigma|^2_{\overline g})\leq C(\hat\rho_j y)^2$, which tends to zero uniformly on any precompact set.
Furthermore, from  \eqref{norm-sigma-ext} we have 
\begin{equation*}
\Phi_j^*\left| \iota_{\epsilon_j}^*|\sigma_{\epsilon_j}|^2_{\lambda_{\epsilon_j}} - |\Sigma|^2_g \right| \leq C (\hat\rho_j y)^2 \epsilon_j,
\end{equation*}
which also tends to zero.
Finally, since $g_j = \Phi_j^*g\to \breve g$ we have $\Delta_g\to \Delta_{\breve g}$, which establishes the claim.

From \eqref{assume-almost-kernel} we have $w_j^{-1}|P_j v_j| \to 0$.
Applying Proposition \ref{blowup:BigMamma} we obtain a continuous, nonzero function $v_*$ on $\mathbb H$ such that $|v_*| \leq C y^\delta$ and $\Delta_{\breve g} v_* - 3v_*=0$.
This, however, contradicts Proposition \ref{prop:half-space-kernel}.

\subsubsection*{Case 3: $q\in \{p_1, p_2\}$}
In this case we may assume, without loss of generality, that $q = p_1$ and, by passing to a subsequence if necessary, that the points $q_j$ are contained in the domain $Z(p_1)$ of the preferred background coordinates $(\theta,\rho)$ centered about $p_1$.
Let $(\hat\theta_j,\hat\rho_j)$ be the background coordinates of $q_j$.
Since $q_j\in M\setminus (U_{1,\epsilon_j} \cup U_{2,\epsilon_j})$ we have $|(\hat\theta_j,\hat\rho_j)| \geq \epsilon_j$. 
Since $|(\hat\theta_j,\hat\rho_j)| \to 0$ we may assume that $|\hat\theta_j|+\hat\rho_j<1/8$.

Below, we consider three sub-cases, depending on the nature of the convergence $(\hat\theta_j,\hat\rho_j)\to (0,0)$.
In each case we define nested precompact subsets $M_j \subset \mathbb H$ and maps $T_j\colon M_j \to M_{\epsilon_j}$.
We arrange $T_j$ so that the preferred background coordinate expression for $T_j(x,y)$ satisfies
\begin{equation}
\label{3-domain-ok}
8\epsilon_j^2 < |T_j(x,y)| < \frac18,
\end{equation}
which ensures that  $g_j = T_j^*\lambda_{\epsilon_j}$ is well defined.
We then show that $(M_j, g_j)$ forms an exhaustion of $(\mathbb H, \breve g)$, and that $P_j = T_j^*P_{\epsilon_j}[1]\to \Delta_{\breve g} - 3$.
Finally, in each case we construct a sequence of functions $v_j$ and weights $w_j$ satisfying the hypotheses of Proposition \ref{blowup:BigMamma}.
We thus obtain a nonzero limiting function $v_*$, from which we obtain a contradiction via Proposition \ref{prop:half-space-kernel}.

\subsubsection*{Case 3(a): Both $|\hat\theta_j|/\hat\rho_j$ and $|(\hat\theta_j,\hat\rho_j)|/\epsilon_j$ are bounded above.}
Thus there exists $C>1$ such that $|\hat\theta_j|\leq C\hat\rho_j$ and $|(\hat\theta_j,\hat\rho_j)|\leq C\epsilon_j$ for all $j$.
Thus $\hat\rho_j \leq |(\hat\theta_j,\hat\rho_j)| \leq |\hat\theta_j| + \hat\rho_j \leq 2C\hat\rho_j$.
Furthermore, since $\epsilon_j\leq |(\hat\theta_j,\hat\rho_j)|$ we have $\epsilon_j \leq 2C \hat \rho_j$ and $\hat\rho_j \leq |(\hat\theta_j, \hat\rho_j)| \leq C \epsilon_j$.
Combining these yields
\begin{equation}
\label{case3a-equivalences}
\frac{1}{2C}\hat\rho_j \leq |(\hat\theta_j,\hat\rho_j)| \leq 2C \hat\rho_j
\quad\text{ and }\quad
\frac{1}{2C}\epsilon_j\leq \hat\rho_j \leq 2C\epsilon_j.
\end{equation}
Let
\begin{equation*}
M_j = \left\{ (x,y)\in \mathbb H \colon |(x,y)|< \frac{1}{8\epsilon_j}, y>8\epsilon_j  \right\}
\end{equation*}
and define $T_j\colon M_j \to M_{\epsilon_j}$ by setting $T_j(x,y) = \Psi_{\epsilon_j}(x,y) = (\epsilon_j x, \epsilon_j y)$.
For each $(x,y)\in M_j$  we have, for sufficiently large $j$, that
\begin{equation*}
8\epsilon_j^2
<\epsilon_j y
\leq|(\epsilon_j x, \epsilon_j y)|
< \frac{1}{8}
\end{equation*}
and thus \eqref{3-domain-ok} holds.
Let 
\begin{equation*}
w = \left(\frac{yF}{2(r+ 1/r)}\right)^\delta.
\end{equation*}
Thus from \eqref{defn-of-rho-epsilon} and \eqref{omega-epsilon-cases} we have $w = T_j^*\tilde\rho_{\epsilon_j}^\delta$.
From Proposition \ref{prop:WAH} we have $g_j = \Psi_{\epsilon_j}^*\lambda_{\epsilon_j} \to \breve g$ on precompact sets of $\mathbb H$, and thus $(M_j, g_j, w)$ is an exhaustion of $(\mathbb H, \breve g, w)$.
Furthermore, it follows from \eqref{norm-sigma-neck} that $T_j^* |\sigma_{\epsilon_j}|^2_{\lambda_{\epsilon_j}} \to 0$ uniformly on precompact subsets of $\mathbb H$; hence $P_j = T_j^* \mathcal P_{\epsilon_j}[1] \to \Delta_{\breve g} - 3$.

Let $(\hat x_j,\hat y_j) = (\hat \theta_j/\epsilon_j, \hat \rho_j/\epsilon_j) \in M_j$ so that $T_j (\hat x_j, \hat y_j) = (\hat \theta_j, \hat \rho_j)$.
From \eqref{case3a-equivalences} the sequence $(\hat x_j, \hat y_j)$ is bounded, and $\hat y_j$ is bounded away from zero.
Thus by passing to a subsequence we have $(\hat x_j, \hat y_j) \to (\hat x_*, \hat y_*)$ with $\hat y_*>0$.

Set $v_j = T_j^* u_j$. By assumption we have 
\begin{equation*}
\big(w^{-1}|v_j|\big)\Big|_{(\hat x_j, \hat y_j)} 
= \big(\tilde\rho_{\epsilon_j}^{-\delta} |u_j|\big)\Big|_{\pi_{\epsilon_j}(q_j)} 
\geq \frac12
\end{equation*}
and
\begin{equation*}
\sup_{M_j} w^{-1}|v_j| 
=\sup_{M_j} T_j^*\big(\tilde\rho_{\epsilon_j}^{-\delta} |u_j|\big) \leq 1.
\end{equation*}
Furthermore
\begin{equation*}
\sup_{M_j}w^{-1}|P_j v_j|
= \sup_{M_j}T_j^*\big(\tilde\rho_{\epsilon_j}^{-\delta} |P_{\epsilon_j}[1]u_j|\big)
\leq \| \mathcal P_{\epsilon_j}[1]u_j\|_{C^0_\delta(M_{\epsilon_j};\tilde\rho_{\epsilon_j})}\to 0.
\end{equation*}
Thus the hypotheses of Proposition \ref{blowup:BigMamma} are satisfied and there exists a nonzero function $v_*$ on $\mathbb H$ with $\Delta_{\breve g}v_* - 3v_* =0$ and
\begin{equation*}
|v_*| \leq w = \left(\frac{yF}{2(r+ 1/r)}\right)^\delta.
\end{equation*}
As we are assuming $\delta \geq 0$ this implies $|v_*| \leq (yF)^\delta$ and the desired contradiction is obtained from Proposition \ref{prop:half-space-kernel}.

\subsubsection*{Case 3(b): $|\hat\theta_j|/\hat\rho_j$ is bounded above, but $|(\hat\theta_j,\hat\rho_j)|/\epsilon_j$ is not bounded above.}
In this case we may, after passing to a subsequence if necessary, suppose that $ |\hat\theta_j|\leq C\hat\rho_j$ for some constant $C>1$ and that $\hat r_j = |(\hat\theta_j,\hat\rho_j)|/\epsilon_j \to \infty$.
Thus
\begin{equation}
\label{3b-constraints}
\hat\rho_j
 \leq \epsilon_j \hat r_j
=|(\hat\theta_j,\hat\rho_j)| \leq 2C \hat\rho_j
\quad\text{ and }\quad
\frac{\hat\rho_j}{\epsilon_j}\to \infty.
\end{equation}

Let 
\begin{equation*}
M_j = \left\{ (x,y) \in \mathbb H \colon |(x,y)|<\frac{1}{16C\hat\rho_j}, y> \frac{8\epsilon_j}{\hat \rho_j} \right\}.
\end{equation*}
For sufficiently large $j$ we have $\epsilon_j / \hat\rho_j < 8C$ and thus $M_j\subset A_{\epsilon_j}$.
Hence we may define $T_j \colon M_j \to M_{\epsilon_j}$ by $T_j(x,y) = \Psi_{\epsilon_j}(\hat r_j x, \hat r_j y)$.
In preferred background coordinates $(\theta,\rho)$ about $p_1$ we have $T_j(x,y) = \left(\epsilon_j\hat r_jx, \epsilon_j\hat r_jy\right)$ and thus from \eqref{3b-constraints} we see that 
\begin{equation*}
8\epsilon_j^2
< 8 \epsilon_j
< \hat\rho_j y
\leq \epsilon_j\hat r_j y
\leq |T_j(x,y)|
\leq \epsilon_j\hat r_j |(x,y)| < \frac18;
\end{equation*}
thus \eqref{3-domain-ok} holds.

Let $(\hat x_j, \hat y_j) = (\epsilon_j \hat r_j)^{-1} (\hat\theta_j, \hat \rho_j)$ so that $T_j(\hat x_j, \hat y_j) = \pi_{\epsilon_j}(q_j)$.
By construction we have $|(\hat x_j, \hat y_j)| = 1$ and it follows from \eqref{3b-constraints} that $\hat y_j \geq 1/2C$.
Thus we may pass to a subsequence such that $(\hat x_j, \hat y_j) \to (\hat x_*, \hat y_*)$ with $|(\hat x_*, \hat y_*)| = 1$ and $y_*>0$.

Setting $g_j = T_j^*\lambda_{\epsilon_j}$ and $w_j = T_j^*\tilde\rho_{\epsilon_j}^{\delta}$, we claim that $(M_j, g_j, w_j)$ forms an exhaustion of $(\mathbb H, \breve g, (y/2r)^\delta)$, where as usual we write $r = |(x,y)|$.
To see this, first note that Proposition \ref{prop:WAH} implies that $\Psi_{\epsilon_j}^*\lambda_{\epsilon_j} \to \breve g$ uniformly on precompact sets.
Dilation by $\hat r_j$ is an isometry of hyperbolic space that preserves unweighted norms; see \eqref{scaling-of-norms}.
Thus $g_j\to \breve g$ uniformly on precompact sets.
Next observe from \eqref{define-tilde-rho}, while using \eqref{defn-of-rho-epsilon} and \eqref{omega-epsilon-cases}, that
\begin{equation*}
T_j^*\tilde\rho_{\epsilon_j} = \frac{y F(\hat r_j r)}{2\left( r + {1}/{\hat r_j^2 r}\right)}.
\end{equation*}
The hypotheses that define this case include $\hat r_j \to \infty$, while Proposition \ref{lemma:F} states that $F(\hat r_j r) = 1$ if $\hat r_j r >2$.
Thus we see that $T_j^*\tilde\rho_{\epsilon_j} \to y/2r$ uniformly on precompact subsets of $\mathbb H$ and the claim is established.

Set $v_j = T_j^* u_j$ and $P_j = T_j^*\mathcal P_{\epsilon_j}[1]$.
We now verify that the hypotheses of Proposition \ref{blowup:BigMamma} are satisfied.
The assumption \eqref{u-point-est} implies that
\begin{equation*}
\big(w_j^{-1} |v_j|\big)\Big|_{(\hat x_j, \hat y_j)} \geq \frac12
\end{equation*}
and thus hypothesis \eqref{B-assume-below} holds.
As the assumptions \eqref{assume-norm-1} and \eqref{assume-almost-kernel} imply that hypotheses \eqref{B-assume-above} and \eqref{B-assume-operator} hold, it remains to establish the convergence of the operators $P_j$.
Proposition \ref{prop:sigma-aux-estimates} implies that $T_j^* |\sigma_{\epsilon_j}|^2_{\lambda_{\epsilon_j}}\to 0$ uniformly on precompact subsets of $\mathbb H$.
Thus the convergence $g_j \to \breve g$ implies that $P_j \to \Delta_{\breve g} - 3$.

We now invoke Proposition \ref{blowup:BigMamma} to conclude that there exists a nonzero continuous function $v_*$ on $\mathbb H$ such that $|v_*|\leq C(y/r)^\delta \leq Cy^0$ and $\Delta_{\breve g} v_* - 3 v_* =0$.
Consequently, Proposition \ref{prop:half-space-kernel} yields a contradiction.

\subsubsection*{Case 3(c): $|\hat\theta_j|/\hat\rho_j$ is not bounded above.}
Passing to a subsequence we may assume that $ |\hat\theta_j|/\hat\rho_j  \to \infty$.
We may further assume that $\hat\rho_j / |\hat\theta_j| < 1/2$; when combined with the fact that $\epsilon_j \leq |(\hat\theta_j, \hat\rho_j)|$ we find that $|\hat\theta_j|\geq \epsilon_j/2$.

Let $(\hat x_j, \hat y_j) = \epsilon_j^{-1}(\hat\theta_j, \hat\rho_j)$ so that $\Psi_{\epsilon_j}(\hat x_j, \hat y_j) = \pi_{\epsilon_j}(q_j)$.
Set
\begin{equation*}
M_j = \left\{(x,y)\in \mathbb H \colon |(x,y)|< \frac{|\hat\theta_j|}{2\hat\rho_j},
y > \epsilon_j \right\}.
\end{equation*}
For $(x,y)\in M_j$ we may use $|\hat\theta_j| < 1/8$ to conclude that 
\begin{equation}
\label{3c-upper}
|(\hat x_j + \hat y_j x, \hat y_j y)|
\leq \frac{|\hat\theta_j|}{\epsilon_j} + \frac{\hat\rho_j}{\epsilon_j}|(x,y)|
< 2\frac{|\hat\theta_j|}{\epsilon_j}
<\frac{1}{8\epsilon_j}
\end{equation}
and use $\hat\rho_j |x| < |\hat\theta_j|/2$ to obtain
\begin{equation}
\label{3c-lower}
|(\hat x_j + \hat y_j x, \hat y_j y)|
\geq |\hat x_j + \hat y_j x|
= \frac{1}{\epsilon_j} |\hat\theta_j + \hat\rho_j x|
\geq \frac{|\hat\theta_j|}{2\epsilon_j}
> \frac14
> 8 \epsilon_j.
\end{equation}
Thus the map $\Phi_j \colon M_j \to A_{8\epsilon_j}$ given by $\Phi_j (x,y) = (\hat x_j + \hat y_j x, \hat y_j y)$ is well defined, and the map $T_j= \Psi_{\epsilon_j}\circ \Phi_j\colon M_j \to M_{\epsilon_j}$ satisfies \eqref{3-domain-ok}.
In preferred background coordinates $(\theta,\rho)$ about $p_1$ we have $T_j (x,y) = (\epsilon_j\hat x_j + \epsilon_j\hat y_j x, \epsilon_j\hat y_j y) = (\hat\theta_j + \hat\rho_j x, \hat\rho_j y)$ and thus $T_j(0,1) = \pi_{\epsilon_j}(q_j)$.

We now estimate $T_j^*\tilde\rho_{\epsilon_j}$ using \eqref{rho-tilde-equivalence}, which implies that
\begin{equation*}
T_j^*\tilde\rho_{\epsilon_j}
= \frac{\hat y_j y F(|\Phi_j(x,y)|)}{2\left(|\Phi_j(x,y)| + \frac{1}{|\Phi_j(x,y)|} \right)}.
\end{equation*}
The estimate \eqref{3c-lower} implies that $|\Phi_j(x,y)| > 1/4$ and thus from Proposition \ref{lemma:F} we have $F(|\Phi_j(x,y)|)$ uniformly bounded above and below.
The lower bound $|\Phi_j(x,y)| > 1/4$ furthermore implies that 
\begin{equation*}
|\Phi_j(x,y)| \leq |\Phi_j(x,y)| + \frac{1}{|\Phi_j(x,y)|} \leq 17 |\Phi_j(x,y)|.
\end{equation*}
As  \eqref{3c-upper} and \eqref{3c-lower} imply that 
\begin{equation*}
\frac{|\hat\theta_j|}{2\epsilon_j} 
\leq |\Phi_j(x,y)|
\leq 2\frac{|\hat\theta_j|}{\epsilon_j},
\end{equation*}
and as $\hat y_j = \hat \rho_j / \epsilon_j$, we find that 
\begin{equation}
\label{3c-weight}
\frac{1}{C }\frac{\hat\rho_j}{|\hat\theta_j|} y
\leq T_j^* \tilde\rho_{\epsilon_j} 
\leq C \frac{\hat\rho_j}{|\hat\theta_j|} y
\end{equation}
for some constant $C$.

Let $g_j = T_j^*\lambda_{\epsilon_j}$ and set $w_j = y^\delta$.
From Proposition \ref{prop:WAH} we see that $g_j \to \breve g$ on precompact sets and thus $(M_j, g_j, w_j)$ forms an exhaustion of $(\mathbb H, \breve g, y^\delta)$.
We seek to apply Proposition \ref{blowup:BigMamma} to the functions 
\begin{equation*}
v_j = \left(\frac{\hat\rho_j}{|\hat\theta_j|}\right)^{-\delta} T_j^* u_j
\end{equation*}
and operators $P_j = T_j^*\mathcal P_{\epsilon_j}[1]$.
Proposition \ref{prop:sigma-aux-estimates} implies that $T_j^* |\sigma_{\epsilon_j}|^2_{\lambda_{\epsilon_j}}\to 0$ uniformly on precompact subsets of $\mathbb H$.
Thus the convergence $g_j \to \breve g$ implies that $P_j \to \Delta_{\breve g} - 3$.
Thus by applying \eqref{3c-weight} to \eqref{assume-norm-1}, \eqref{assume-almost-kernel}, and \eqref{u-point-est} we have that the hypotheses of Proposition \ref{blowup:BigMamma} are satisfied.
The result is a nonzero function $v_*$ satisfying both $|v_*| \leq Cy^\delta$ and $\Delta_{\breve g} v - 3v=0$.
This, however, is in contradiction to Proposition \ref{prop:half-space-kernel}.

With all cases exhausted, the proof of Lemma \ref{lemma:trivial-kernel} is complete.

\bibliographystyle{plain}
\bibliography{SFGluing}

\end{document}